\documentclass[12pt, oneside]{book} 
\usepackage{latexsym}
\usepackage{amssymb}
\usepackage[toc,page]{appendix}
\usepackage{enumerate}
\usepackage{chngcntr}
\usepackage{commath}
\usepackage{esint}
\usepackage{enumitem}
\usepackage{multicol}
\usepackage{comment}
\usepackage{amsfonts}
\usepackage[labelfont=bf]{caption}
\usepackage{amsmath,amsthm}
\usepackage{cases}
\usepackage[export]{adjustbox}[2011/08/13]
\usepackage{graphicx}
\usepackage{tabularx, cellspace}
\setlength\cellspacetoplimit{3pt}
\setlength\cellspacebottomlimit{3pt}
\usepackage[paper=a4paper,left=30mm,right=20mm,top=25mm,bottom=30mm]{geometry}

\textheight=21.5cm \textwidth=15cm
\topmargin=-0.8cm
\oddsidemargin=0.3cm \evensidemargin=0.3cm

\newenvironment{@abssec}[1]{%
\if@twocolumn

\section*{#1}%
\else

\vspace{.05in}\footnotesize
\parindent .2in
{\upshape\bfseries #1. }\ignorespaces
\fi}

{\if@twocolumn\else\par\vspace{.1in}\fi}

\newcommand\keywordsname{Key words}
\newcommand\AMSname{AMS subject classifications}
\newcommand\AMname{AMS subject classification}
\newcommand\restr[2]{{% we make the whole thing an ordinary symbol
\left.\kern-\nulldelimiterspace % automatically resize the bar with \right
#1 % the function
\vphantom{|} % pretend it's a little taller at normal size
\right|_{#2} % this is the delimiter
}}
\newtheorem{theorem}{Theorem}[chapter]
\newtheorem{lemma}[theorem]{Lemma}
\newtheorem{corollary}[theorem]{Corollary}
\newtheorem{proposition}[theorem]{Proposition}
\newtheorem{remark}[theorem]{Remark}

\newtheorem{example}[theorem]{Example}
\newtheorem{test}{Theorem}

\newtheorem{thm}{Theorem}

\newtheorem{lem}{Lemma}

\renewcommand{\theequation}{\arabic{chapter}.\arabic{equation}}
\newcommand\setbld[2]{\left\{ #1 \;\middle |\; #2\right\}}
\counterwithout{figure}{chapter}

\newcommand{\NN}{\mathbb{N}}

\newcommand{\RR}{\mathbb{R}}

\renewcommand{\SS}{\mathbb{S}}
\newcommand{\sss}{{\SS^{N-1}}}
\newcommand{\id}{{{\rm Id}}}
\newcommand{\mapto}{\mapsto}

\renewcommand{\theequation}{\arabic{chapter}.\arabic{equation}}

\def\XXint#1#2#3{{\setbox0=\hbox{$#1{#2#3}{\int}$}
\vcenter{\hbox{$#2#3$}}\kern-.5\wd0}}

\newcommand{\dist}{\mathop{\mathrm{dist}}} 
\newcommand{\link}{\mathop{\circ\kern-.35em -}}

\newcommand{\ol}{\overline}
\newcommand{\pa}{\partial}

\newcommand{\tr}{\mathop{\mathrm{tr}}}

\newcommand{\dv}{{\mathop{\mathrm{div}}}}

\newcommand{\gr}{{\nabla}}
\newcommand{\ali}{\infty}
\newcommand\wt[1]{\widetilde{#1}}

\newcommand{\al}{\alpha}
\newcommand{\be}{\beta}
\newcommand{\ga}{\gamma} 
\newcommand{\Ga}{\Gamma}

\newcommand{\De}{\Delta}

\newcommand{\eps}{\varepsilon}
 
\newcommand{\la}{\lambda}

\newcommand{\te}{\theta}
\newcommand{\Te}{\Theta}

\newcommand{\om}{\omega}
\newcommand{\Om}{\Omega}
\newcommand{\rn}{{{\mathbb{R}}^N}}
\newcommand{\sg}{\sigma}
\newcommand{\vol}{{\rm Vol}}
\newcommand{\per}{{\rm Per}}
\newcommand{\bc}{{\rm Bar}}

\newcommand{\A}{\mathcal{A}}
\newcommand{\cA}{\mathcal{A}}
\newcommand{\cB}{\mathcal{B}}
\newcommand{\C}{\mathcal{C}}
\newcommand{\cC}{\mathcal{C}}

\newcommand{\cE}{\mathcal{E}}
\newcommand{\cF}{\mathcal{F}}
\newcommand{\cG}{{\mathcal G}}
\newcommand{\cH}{{\mathcal H}}

\newcommand{\cJ}{{\mathcal J}}

\newcommand{\cO}{{\mathcal O}}

\newcommand{\cS}{{\mathcal S}}

\newcommand{\cU}{{\mathcal U}}

\newcommand{\PP}{\mathbb{P}}
\newcommand{\HH}{\mathbb{H}}
\newcommand{\YY}{\mathbb{Y}}

\newcommand{\dn}{{\pa_n}}

\newcommand{\cdottone}{{\boldsymbol{\cdot}}}
\newcommand{\ri}{-}
\newcommand{\ro}{+}

%% Sobolev spaces and such

\newcommand{\hoi}{H_0^1}
\newcommand{\hi}{H^1}
\newcommand{\rnrn}{(\rn,\rn)}

\newcommand{\pato}{\restr{\pa_t}{t=0}}
\newcommand{\dato}{\restr{\frac{d}{dt}}{t=0}}
\newcommand{\pp}[1]{\left(#1\right)}
\newcommand{\Ome}{(\Om)}
\newcommand{\grt}{{{\gr}_\tau}}
\newcommand{\inv}{^{-1}}
\newcommand{\sm}{\setminus}
\newcommand{\ton}{\text{ on }}
\newcommand{\tor}{\text{ or }}
\newcommand{\tand}{\text{ and }}
\newcommand{\tin}{\text{ in }}
\newcommand{\tif}{\text{ if }}
\newcommand{\tas}{\text{ as }}
\newcommand{\tfor}{\text{ for }}
\newcommand{\tforall}{\text{ for all }}

\title{Analysis of two-phase shape optimization problems by means of shape derivatives}

\author{Lorenzo Cavallina\thanks{Research Center for Pure and Applied Mathematics,Graduate School of Information Sciences, Tohoku University, Sendai}}%, 980-8579, Japan ({\tt cava@ims.is.tohoku.ac.jp}).} 

%\date{}

\begin{document}

%\maketitle
\begin{titlepage}
    \begin{center}
        \vspace*{1cm}
        
        \LARGE
        \textbf
        {Analysis of two-phase shape optimization problems by means of shape derivatives}
        
        \vspace{0.5cm}
        \vfill
        \large 
        A thesis submitted for the degree of\\
        Doctor of Philosophy  \\
        
        \vspace{1.0cm}
        by\\
      	\vspace{1.0cm}
        
        \LARGE
%        Thesis Subtitle
        
%        \vspace{1.8cm}
        
        %\textbf
        {Lorenzo Cavallina}
        
        \vfill
        
%        A thesis presented for the degree of\\
 %       Doctor of Philosophy
        
  %      \vspace{0.8cm}
        
       % \includegraphics[width=0.4\textwidth]{university}
        
        \large
        Division of Mathematics\\
		Graduate School of Information Sciences\\
		Tohoku University\\
        %Japan\\
        \vspace{0.8cm}
        
        July 2018
        
    \end{center}
\end{titlepage}
%\clearpage
%\vspace*{\fill}
%\thispagestyle{empty} % optional -- suppress showing of page number
%\begin{quotation}
%\em % optional -- to switch to emphasis (italics) mode
%A policeman sees a drunk man searching for something under a streetlight and asks what the drunk has lost. He says he lost his keys and they both look under the streetlight together. After a few minutes the policeman asks if he is sure he lost them here, and the drunk replies, no, and that he lost them in the park. The policeman asks why he is searching here, and the drunk replies, {\bf \em``this is where the light is"}.
%\medskip
%\raggedleft
%(Adrian Mathesis)
%\end{quotation}
%\vspace*{\fill}

\tableofcontents
\vspace*{-0.5cm}
%\begin{abstract}
%\end{abstract}
%\begin{keywords}
%\end{keywords}
%\begin{AMS}
%\end{AMS}

\pagestyle{plain}
\thispagestyle{plain}

\chapter*{Notations}\label{sec-preliminaries}
\addcontentsline{toc}{chapter}{Notations}
\subsection*{Euclidean space}
\begin{tabularx}{\linewidth}{@{}SlX@{}}
$\NN$ & the set of positive integers $\{1,2,3\dots\}$\\
$\RR$ & the set of real numbers \\
$\rn$ & the $N$-dimensional Euclidean space, $N\ge2$ \\
$a \cdot b$ & the inner product in $\rn$, $\sum_{i=1}^N a_i b_i$\\
$\norm{x}$ & the Euclidean norm, $\sqrt{x_1^2+\dots x_N^2}$, sometimes also used to denote a generic norm of some Banach space \\
$\id$ & the identity map $x\mapto x$\\
$B_r$ & the open ball with radius $r>0$ centered at the origin\\
$\ol A$ & the closure of the open set $A$\\
$\pa A$ & the boundary of the open set $A$, given by $\ol A \sm A$\\
$\int_\Omega f$ & the integral of $f$ over $\Omega$ with respect to the $N$-dimensional Lebesgue measure\\
$\int_{\partial\Omega} f$ & the (surface) integral of $f$ over $\pa\Om$ with respect to the $(N-1)$-dimensional Hausdorff measure
\end{tabularx}\\
\subsection*{Matrix notation}
\begin{tabularx}{\linewidth}{@{}SlX@{}}
$\RR^{N\times N}$ & the set of real square matrices\\
$I$ & the identity matrix\\
$\det A$ & the determinant of the square matrix $A$\\
$\tr A$ & the trace of the square matrix $A$\\
$A^T$ & the transpose of $A$, $(A^T)_{i,j}=A_{j,i}$\\
$A\inv$ & the inverse of an invertible square matrix $A$\\
$A^{-T}$ & the transpose of the inverse of $A$\\ 
\end{tabularx}\\
\subsection*{Differential operators} 
\begin{tabularx}{\linewidth}{@{}SlX@{}}
$\gr f$ & the gradient of the function $f$ with respect to the space variables $x_i$\\
$Dw$ & the Jacobian matrix of the vector field $w$, $(Dw)_{i,j}= \frac{\pa w_i}{\pa x_j}$\\
$\dv w$ & the divergence of the vector field $w$, given by $\tr D w$\\
$D^2 f$ & the Hessian matrix of the function $f$, given by $(D^2 f)_{i,j}=\frac{\pa^2 f}{\pa x_i \pa x_j}$\\
$\De f$ & the Laplace operator of the function $f$, given by $\tr D^2 f$\\
$\gr_\tau f$ & the tangential gradient of f, see Appendix A\\
$\dv_\tau w$ & the tangential divergence of w, see Appendix A\\
$\De_\tau f$ & the Laplace--Beltrami operator of f, see Appendix A\\
$\pa_s f$ & the partial derivative of $f$ with respect to the variable $s$
\end{tabularx}

\subsection*{Function spaces}
\begin{tabularx}{\linewidth}{@{}SlX@{}}
$L^p(\Om,\RR^M)$ & the space of p-summable functions $\Om\to\RR^M$, $1\le p\le\ali$, endowed with the usual norm $\norm{\cdottone}_p$\\
$L^p(\Om)$ & abbreviate notation for $L^p(\Om,\RR)$\\
$W^{k,p}(\Om,\RR^M)$ & the space of functions $\Om\to \RR^M$ whose partial derivatives up to the $k$-th order are p-summable, $1\le p\le \ali$, endowed with usual norm $\norm{\cdottone}_{k,p}$\\
$W^{k,p}(\Om)$ & abbreviate notation for $W^{k,p}(\Om,\RR)$\\
$H^1(\Om)$ & alternative notation for $W^{1,2}(\Om)$\\
$H_0^1(\Om)$ & the subset of $H^1(\Om)$ of functions with vanishing trace on $\pa\Om$\\
$\C^k(\Om)$ & the class of functions that are continuously differentiable $k$ times\\
$\C^{k,\ali}(\Om)$ & the space $\C^{k}(\Om)\cap W^{k,\ali}(\Om)$ endowed with the norm $\norm{\cdottone}_{k,\ali}$\\
$\C^{k+\al}(\Om)$ & the subclass of $\C^k(\Om)$ made of functions whose $k$-th partial derivatives are H\" older continuous with exponent $\al\in(0,1]$
\end{tabularx}

\chapter{Introduction and main results}
\label{ch intro}

\begin{comment}
We will often work with the classical Sobolev spaces $L^p(A,\RR^M)$ and $W^{k,p}(A,\RR^M)$ (for ease of notation, the codomain $\RR^M$ will not be explicitly written when $M=1$): moreover, to improve readability, their norms will be simply denoted by $\norm{\cdottone}_p$ and $\norm{\cdottone}_{k,p}$. 
Integrals have to be intended with respect to the space variable $x$ unless otherwise specified. In this case, integration is performed with respect to the Lebesgue measure or the ($N-1$)-dimensional Hausdorff measure depending on the case and, in order to improve readability, the volume/surface element will never be written explicitly. Lastly, when integrating along the boundary of a domain%(say $\int_{\pa D}$)
, by convention, the integrand function should be intended in the sense of traces taken from the {\bf inside} of that domain. 
\end{comment}
%Given a subset $A$ of $\R^N$ we denote by $\interno(A)$ and $\clos(A)$ its interior and its closure, respectively; while the boundary of $A$ will be denoted by $\partial A:=\clos(A)\setminus \interno(A)$.

%As usual, we will denote by $O(n)$ and $SO(n)$ respectively, the group of rotations and of proper rotations of $\R^n$ and we will denote by 
%a rigid motion the composition of a rotation and a translation of $\R^n$. 

%When talking about monotonicity we will often use the terms ``monotone'', ``increasing'' and ``decreasing'' referring to their non strict meaning unless the word ``strict'' is used (for instance ``monotone non-decreasing'' and ``increasing'' should be considered synonyms).\newline

%\section{Main results}
Let $D\subset\Om$ be a pair of bounded domains in the $N$-dimensional Euclidean space $\rn$ ($N\ge2$). Moreover, assume that $\ol{D}\subset \Om$. In this way $\Om$ gets partitioned into two subsets: $D$ and $\Om\sm D$ (from now on they will be referred to as \emph{core} and \emph{shell} respectively). Take two (possibly distinct) positive constants $\sg_c$ and $\sg_s$ and set
\begin{equation}
\sg(x)=\sg_{D,\Om}(x):=\left\{
\begin{aligned}
\sg_c \quad& \text{ for } x\in D, & \text{{\em (core)}}\\
\sg_s \quad & \text { for } x\in \Om\setminus D & \text{{\em (shell)}}. 
\end{aligned}
\right.
\end{equation}
Consider the following boundary value problem 
\begin{equation}\label{pb u formal}
\left\{
\begin{aligned}
-\dv(\sg\gr u)&=1 &\quad \text{ in }\Om,\\
u&=0&\quad \text{ on }\pa\Om.
\end{aligned}
\right.
\end{equation}
We say that a function $u\in\hoi\Ome$ is a solution of \eqref{pb u formal} if it verifies the following weak formulation:
\begin{equation}\label{pb weak}
\int_{\Om} \sg \gr u \cdot \gr \psi = \int_\Om \psi\quad \tforall \psi\in\hoi\Ome.
\end{equation}
Since $\sg$ attains a different value at each \emph{phase} ($D$ and $\Om\sm D$), problems like \eqref{pb u formal} are usually called \emph{two-phase} problems (of course, the term \emph{multi-phase} is also used, when the phases are more than two).
In the sequel, 
the subscripts $c$ and $s$ will be used to denote the restriction of any function to the \emph{core} and the \emph{shell} respectively, moreover we will also employ the use of the notation $[f]:=f_c-f_s$ to refer to the jump of a function $f\in L^2(\Om)\cap H^1(D)\cap H^1(\Om\sm\ol{D})$ along the interface $\pa D$. 
\begin{figure}[h]
\centering
\includegraphics[width=0.5\linewidth,center]{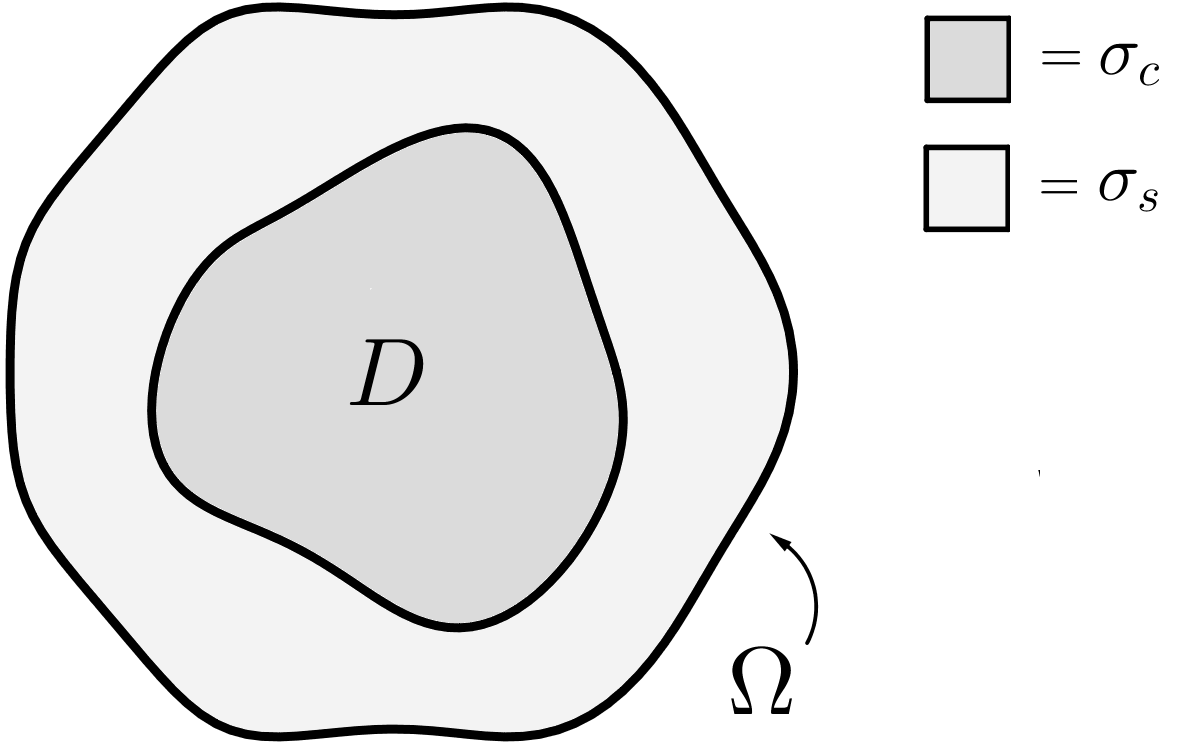}
\caption{Our problem setting
} 
\label{pbsett}
\end{figure}
When $\pa D$ and $\pa \Om$ are at least of class $\C^2$, then the solution $u$ of problem \eqref{pb weak} enjoys higher regularity, namely $u\in \hoi\Ome\cap H^2(D)\cap H^2(\Om\sm\ol{D})$ (see \cite[Theorem 1.1]{athastra96}). 
Under these regularity assumptions on $D$ and $\Om$, problem \eqref{pb weak} admits the following alternative formulation (see \cite{athastra96}):
\begin{equation}\label{pb u transmission}
\left\{
\begin{aligned}
-\sg \De u&=1 &\quad \text{ in }D\cup (\Om\sm\ol{D}),\\
%-\sg_s\De u&=1&\quad \tin \Om\sm\ol{D},\\
[\sg \dn u]&=0&\quad\ton\pa D,\\
[u]&=0&\quad \ton \pa D,\\
u&=0&\quad \text{ on }\pa\Om.
\end{aligned}
\right.
\end{equation}
Here, the letter $n$ is used indistinctly to refer to both the outward unit normal to $\pa D$ and $\pa \Om$, and hence we will agree that, for smooth enough $f$, $\dn f=\gr f\cdot n$ stands the usual normal derivative (in the outward direction). 
The conditions 
\begin{equation}\label{tc}
[\sg\dn u]=[u]=0 \quad\ton \pa D
\end{equation}
are usually called \emph{transmission conditions} in the literature and therefore problems like \eqref{pb u transmission}, where the jump along the interface is prescribed, are usually referred to as \emph{transmission problems}.

%Let $B_r$ denote the $N$-dimensional open ball of radius $r>0$ centered at the origin. 
When $(D,\Om)=(B_R,B_1)$, with $0<R<1$, then problem \eqref{pb u transmission} admits an explicit radial solution:
\begin{equation}\label{u}
u(x)=\begin{cases}\displaystyle
\frac{1-R^2}{2N\sg_s}+\frac{R^2-|x|^2}{2N\sg_c}&\quad |x|\in[0,R],\\
\vspace*{-6mm}\\
\displaystyle\frac{1-|x|^2}{2N\sg_s}&\quad|x|\in (R,1].
\end{cases}
\end{equation}

One of the main topics of this work is the study of the following functional
\begin{equation}\label{E}
E(D,\Om):=\int_\Om\sg|\gr u|^2=\int_\Om u,
\end{equation}
where $u$ is the solution of problem \eqref{pb u formal}.

Physically speaking, the function $u$, solution to \eqref{pb u formal}, plays the role of \emph{stress function} while its integral, $E(D,\Om)$, represents the \emph{torsional rigidity} of an infinitely long composite beam $\Om\times \RR$ made of two different materials, such that their distribution is the one given in Figure \ref{pbsett} for each cross section $\Om\times\{x_{N+1}\}$. The constants $\sg_c$ and $\sg_s$ are linked to the hardness of the materials in question (the smaller the constant, the harder the corresponding material, hence the higher the torsional rigidity $E(D,\Om)$, as one can see by \eqref{E} and \eqref{pb u formal}).

The one-phase case (i.e. when $D=\emptyset$) was studied by P\'olya by means of spherical rearrangement inequalities. In \cite{polya}, he proved that the ball maximizes the functional $E(\emptyset,\cdottone)$ among all open sets of a given volume (see Theorem \ref{thm polya}). 
Unfortunately, the methods employed by P\'olya do not generalize well to a two-phase setting. We decided to perform a local analysis of the functional $E(\cdottone,\cdottone)$ by means of shape derivatives. Inspired by the work of P\'olya, we aim to find the relationship between the radial symmetry of the configuration $(D,\Om)$ and local optimality for the functional $E$. The following theorem is one of our original results, concerning the first order shape derivative of $E$. From now on, let $(D_0,\Om_0)$ denote a pair of concentric balls $(B_R,B_1)$ with $0<R<1$. 
\begin{test}[\cite{cava2}]\label{thm1}
The pair $(D_0,\Omega_0)$ is a critical shape for the functional $E$ under the fixed volume constraint.
\end{test}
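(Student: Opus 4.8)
The plan is to compute the first-order shape derivative of $E$ at $(D_0,\Om_0)$ and to show that it can be written as a linear combination of the first variations of the two volumes $|D|$ and $|\Om|$. Criticality under the volume constraint is then immediate: any admissible (volume-preserving) deformation annihilates both variations, so the derivative vanishes; equivalently, the coefficients appearing in that combination play the role of the Lagrange multipliers associated with the constraint.

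Concretely, I would deform the configuration by a family of diffeomorphisms $\Phi_t=\id+tV+o(t)$, setting $D_t=\Phi_t(D_0)$ and $\Om_t=\Phi_t(\Om_0)$ and letting $u_t$ solve \eqref{pb weak} on the perturbed configuration. Transporting the weak formulation back to the fixed reference configuration $(D_0,\Om_0)$ via $\Phi_t$ and differentiating in $t$ establishes that $t\mapsto u_t$ is differentiable and identifies the shape derivative of the state. The decisive simplification is that the problem is self-adjoint: since $E=\int_\Om\sg|\gr u|^2=\int_\Om u$, the state $u$ serves as its own adjoint, so that no separate adjoint state is required and, after integration by parts, $\dato E(D_t,\Om_t)$ collapses to a pure boundary integral.

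By the Hadamard--Zol\'esio structure theorem this integral depends on $V$ only through the normal trace $V\cdot n$ on the two interfaces, and takes the form
\[
\dato E(D_t,\Om_t)=\int_{\pa\Om} f_\Om\,(V\cdot n)+\int_{\pa D} f_D\,(V\cdot n),
\]
with $f_\Om=\sg_s(\dn u)^2$ on the outer Dirichlet boundary, and with $f_D$ on the interface built out of the jump quantities compatible with the transmission conditions \eqref{tc} — a combination of $[\sg]\,|\grt u|^2$ and $[1/\sg]\,(\sg\dn u)^2$, the normal flux $\sg\dn u$ and the tangential gradient $\grt u$ being precisely the quantities that remain continuous across $\pa D$. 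I would then evaluate $f_\Om$ and $f_D$ on the concentric configuration by means of the explicit radial solution \eqref{u}. The key observation is that for a radial $u$ the gradient is purely radial: hence $\grt u\equiv0$ on each sphere, while $\dn u$ and $\sg\dn u$ are constant on each of $\pa B_R$ and $\pa B_1$. Therefore both $f_\Om$ and $f_D$ are \emph{constants}, and
\[
\dato E(D_t,\Om_t)=f_\Om\int_{\pa\Om}(V\cdot n)+f_D\int_{\pa D}(V\cdot n)=f_\Om\,\dato|\Om_t|+f_D\,\dato|D_t|.
\]
Since a volume-preserving deformation makes both integrals vanish, $(D_0,\Om_0)$ is critical.

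I expect the main obstacle to be the rigorous derivation of the boundary expression, and in particular of the interface coefficient $f_D$. Unlike the one-phase case, $u$ and its normal derivative jump across $\pa D$, so differentiating the transported weak form demands careful bookkeeping of these jumps and the correct tangential/normal splitting dictated by the transmission conditions. Establishing shape differentiability of the two-phase state — together with the regularity $u\in H^2(D)\cap H^2(\Om\sm\ol D)$ that gives the boundary traces a meaning — is the technical heart of the argument. Once the Hadamard formula is in place, the radial symmetry renders the verification of criticality essentially immediate.
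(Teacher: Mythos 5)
Your proposal is correct and follows essentially the same route as the paper: differentiate $E$ via the Hadamard formula, use the self-adjoint structure (testing the equation for $u'$ against $u$) to reduce $E'$ to a boundary integral whose interface density is exactly the combination $[1/\sg](\sg\dn u)^2-[\sg]|\grt u|^2=(1-\sg_c)\pp{\sg_c|\dn u|^2+|\grt u|^2}$ you describe, and then observe that radial symmetry makes all densities constant so that the derivative is a combination of $\int_{\pa D_0}h\cdot n$ and $\int_{\pa \Om_0}h\cdot n$, both of which vanish under the first-order volume-preserving conditions. The technical core you flag (differentiability of the two-phase state and the $H^2$ regularity of $u$ in each phase) is precisely what the paper establishes in Theorems \ref{thm diff state func 1}, \ref{diff u in H1} and \ref{u' general thm} before the computation in Theorem \ref{E'=}.
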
 
Theorem \ref{thm1} can be improved by looking at second order shape derivatives. Exact computations are carried on with the aid of spherical harmonics at the end of Chapter \ref{ch 2-ph torsion}. We get the following symmetry breaking result.
\begin{test}[\cite{cava2}]\label{thm2}
The pair $(D_0,\Omega_0)$ is a local maximum for the functional $E$ under the fixed volume and barycenter constraint if $\sg_c\ge \sg_s$, otherwise it is a saddle shape.
\end{test}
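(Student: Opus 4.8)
The plan is to certify Theorem \ref{thm2} by computing the second-order shape derivative (the shape Hessian) of $E$ at the concentric configuration $(D_0,\Om_0)=(B_R,B_1)$ and reading off its signature. Since Theorem \ref{thm1} already guarantees that $(D_0,\Om_0)$ is critical, the first-order term drops out for every admissible (volume- and barycenter-preserving) deformation, so local optimality is decided entirely by the sign of this quadratic form. The whole computation is made tractable by the rotational invariance of the base configuration, which I would exploit by expanding every boundary perturbation in spherical harmonics and letting the modes decouple.

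Concretely, I would first describe admissible deformations by a vector field $V$ and reduce, via the Hadamard structure theorem, to the normal traces of $V$ on the two interfaces, say $\fhi:=V\cdot n$ on $\pa B_R$ and $\psi:=V\cdot n$ on $\pa B_1$. Expanding $\fhi$ and $\psi$ in an $L^2(\sss)$-orthonormal basis of spherical harmonics and using their orthogonality, the quadratic form $E''$ decouples across harmonic degrees; grouping the coefficients of a common degree into pairs $(\al_k,\be_k)$ (one pair per basis harmonic of that degree), the Hessian becomes a sum of elementary quadratic forms indexed by $k$. To evaluate each of them, I would compute the shape derivative $u'$ of the state $u$: since the right-hand side of \eqref{pb u transmission} is fixed, $u'$ is $\sg$-harmonic in each phase and inherits inhomogeneous transmission data across $\pa B_R$ (proportional to $\fhi$) together with a Dirichlet datum on $\pa B_1$ (proportional to $\psi$). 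On the ball and the annulus these mode equations are radial ODEs solved explicitly by the harmonics $r^k$ and $r^{-(k+N-2)}$, so $u'$---and hence $E''$---can be written out in closed form mode by mode, yielding for each $k$ a symmetric matrix $M_k=M_k(N,R,\sg_c,\sg_s)$.

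The constraints then remove the lowest modes: preservation of the volumes of the two phases annihilates the degree-$0$ coefficients $\al_0,\be_0$, while the fixed-barycenter condition annihilates the degree-$1$ coefficients, which correspond exactly to infinitesimal translations. Hence only $k\ge2$ survives, and the theorem reduces to the following dichotomy: $M_k$ is negative definite for every $k\ge2$ precisely when $\sg_c\ge\sg_s$, whereas some $M_k$ acquires a positive eigenvalue as soon as $\sg_c<\sg_s$. For the saddle case it suffices to exhibit one unstable direction, so I would locate the sign change in the simplest admissible mode (starting from $k=2$); for the maximality case I must instead control all modes simultaneously.

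The main obstacle is precisely this last point. Deriving the second-derivative formula for a two-phase functional is delicate because both interfaces move and the transmission conditions must be differentiated carefully (boundary-curvature and tangential terms appear), and the mode-by-mode ODE bookkeeping, though explicit, is lengthy. But the genuinely hard step is passing from ``$M_k<0$ for each $k$'' to a bona fide local maximum: negative definiteness mode by mode is not by itself sufficient, and I would need to track the asymptotics of the eigenvalues of $M_k$ as $k\to\ali$ in order to secure coercivity (a spectral gap) of the Hessian in the appropriate Sobolev norm of $(\fhi,\psi)$, thereby ruling out the two-norm discrepancy that typically afflicts such shape-optimality arguments. Establishing this uniform control, and pinning down the exact sign of each $M_k$ as a function of $\sg_c-\sg_s$, is where the explicit spherical-harmonic computations at the end of Chapter \ref{ch 2-ph torsion} would do the decisive work.
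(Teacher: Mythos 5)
Your proposal follows, in its computational core, the same route as the paper: criticality from Theorem \ref{thm1}, reduction to normal traces on the two spheres, spherical-harmonic expansion with the explicit radial solutions $r^k$ and $r^{2-N-k}$ of the mode ODEs, and a sign analysis of one small quadratic form per mode. Your matrix $M_k$ is exactly the paper's triple $E''_-(k)$, $E''_+(k)$, $E''_{\rm res}(k)$ appearing in \eqref{E''=in+out+res}, and ``negative definiteness of $M_k$'' is precisely the content of Lemma \ref{lemma res then <=0}, proved there by a discriminant computation.

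There are, however, three points where your outline diverges from (or falls short of) the paper's argument. First, the constraint bookkeeping: the paper's admissible class $\A^*=\A_{\vol(D)}\cap\A_{\vol(\Om)}\cap\A_{\bc(\Om)}$ fixes the barycenter of $\Om$ \emph{only}, so core translations (degree-one modes on $\pa D_0$) remain admissible and your claim that ``only $k\ge2$ survives'' is not correct in that setting. These modes are not a technicality: for $\sg_c<\sg_s$ they are the paper's unstable direction ($E''_\pm(1)=2(1-\sg_c)/F(1)>0$, i.e.\ off-centering the harder core increases the torsional rigidity), and for $\sg_c>\sg_s$ the degenerate direction $\al_{1,i}^-=\al_{1,i}^+$ (simultaneous translation of both balls, along which $E''$ vanishes) is excluded exactly because the $\bc(\Om)$ constraint kills $\al_{1,i}^+$ --- this is the equality case of Lemma \ref{lemma res then <=0}. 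Under your stronger both-barycenter constraint you prove a weaker maximality statement, and your saddle direction must then be located at $k\ge2$; this does work ($E''_\pm(2)>0$ whenever $\sg_c<1$, as follows from \eqref{in out res li seme}), but it genuinely requires those explicit formulas: monotonicity plus a sign change (Lemma \ref{lemma decreasing}, Proposition \ref{behavior of E''_pm}) does not by itself place the sign change at or beyond $k=2$.

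Second, be careful with ``the first-order term drops out'': criticality kills $l_1^E(h\cdot n)$, but the structure theorem's second-order expansion \eqref{second structure t} also contains $l_1^E(Z)$, which does \emph{not} vanish at a constrained critical point; it is instead converted by the second-order volume-preservation identity \eqref{2nd} into curvature integrals that must be retained (this is how the paper passes from Proposition \ref{prop l2e} to Theorem \ref{thm E''=}). Dropping that term outright would produce wrong mode matrices. Third, the notion of ``local maximum'': the paper's final theorem asserts literally that $E''(D_0,\Om_0)(\Phi)<0$ for all $\Phi\in\A^*$, and calls this local maximality; no coercivity, spectral gap, or remainder estimate is ever established, so the two-norm discrepancy you single out as the ``genuinely hard step'' lies entirely outside the paper's proof. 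With the paper's curve-wise, second-order notion that step is unnecessary; with the stronger topological notion your proposal is incomplete precisely there --- and completing it would prove strictly more than the paper does. Finally, a small correction: at $\sg_c=\sg_s$ the form is only negative \emph{semi}-definite ($E''_-\equiv0$, since inner perturbations then leave $E$ unchanged), so your dichotomy ``negative definite iff $\sg_c\ge\sg_s$'' must be weakened at the equality case.
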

Theorem \ref{thm2} shows a substantial difference between the one-phase maximization problem studied by P\'olya and our two-phase analogue. As a matter of fact, as we will show in Section \ref{sec 1-ph equivalence}, the one-phase functional $E(\phi,\cdottone)$ subject to the volume preserving constraint does not possess any critical point other than its global maximum. 

An obvious observation concerning the radially symmetric configuration $(D_0,\Om_0)$ is the following: the related stress function $u$ is itself radially symmetric and thus its normal derivative is constant on $\pa\Om_0$. It is well known that, when $D=\emptyset$ then this property characterizes the ball. In \cite{serrin} Serrin showed that if the stress function corresponding to $(\emptyset,\Om)$ has a normal derivative that is constant on the boundary $\pa\Om$, then $\Om$ must be a ball (see Theorem \ref{serrin thm}). The original proof by Serrin is based on an ingenious adaptation of Aleksandrov's reflection principle (see \cite{aleks}) nowadays referred to as \emph{method of moving planes}. This technique takes advantage of the invariance properties that characterize the Laplace operator and thus cannot be extended to our two-phase setting in any obvious way. It is not even clear at first glance whether an analogous characterization of the two-phase radial configuration $(D_0,\Om_0)$ holds true. For $\be\ge0$ and $\ga>0$ we consider the following overdetermined problem
\begin{equation}\label{pb 2-ph serrin}
\left\{
\begin{aligned}
{\rm div} (\sg \gr u)&=\be u - \ga & \text{ in }\Om,\\
u&=0 & \text{ on }\pa\Om,\\
\sg_s\pa_n u &= -d &\text{ on }\pa\Om, 
\end{aligned}
\right.
\end{equation}
where $d$ is a positive constant to be determined, depending on the geometry of the solution $(D,\Om)$. By means of a perturbation argument based on the implicit function theorem for Banach spaces, we manage to disprove the analogue of Serrin's result for the operator $-\dv\pp{\sg \gr\cdottone}$. 
\begin{test}[\cite{camasa}]\label{thm3}
For all domain $\Omega$ of class $\C^{2+\al}$ sufficiently close to $\Omega_0$ there exists a $\C^{2+\al}$-domain $D$ close to $D_0$ such that $(D,\Omega)$ is a solution to the overdetermined problem \eqref{pb 2-ph serrin}. In particular, there are infinitely many non radially symmetric solutions $(D,\Omega)$ of problem \eqref{pb 2-ph serrin} with $D$ and $\Omega$ of class $\C^{2+\al}$. 
\end{test}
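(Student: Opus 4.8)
The plan is to realise the pair $(D,\Om)$ as a perturbation of the concentric configuration $(D_0,\Om_0)=(B_R,B_1)$ and to detect solutions of \eqref{pb 2-ph serrin} as zeros of a nonlinear map between Banach spaces, to which the implicit function theorem applies. Concretely, I would write $\pa\Om=\{(1+\phi(\theta))\,\theta:\theta\in\sss\}$ and $\pa D=\{(R+\psi(\theta))\,\theta:\theta\in\sss\}$ for small $\phi,\psi\in\C^{2+\al}(\sss)$, transport the transmission problem back to the fixed pair $(D_0,\Om_0)$ by a diffeomorphism of the form $\id+\Phi$, and let $u=u[\psi,\phi]$ denote the pulled-back weak solution of the resulting Dirichlet transmission problem, which depends smoothly on $(\psi,\phi)$ by elliptic regularity and the smooth dependence of the coefficients on $\Phi$. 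The overdetermination is then encoded by the map
\begin{equation*}
\cF(\psi,\phi,d):=\sg_s\,\dn u[\psi,\phi]+d\quad\text{on }\pa\Om,
\end{equation*}
read as a function on $\sss$ after pull-back; a zero of $\cF$ is exactly a solution of \eqref{pb 2-ph serrin}, the constant (degree-zero) component of the equation serving only to fix the free parameter $d$.

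First I would check that the base point is a solution. For $(\psi,\phi)=(0,0)$ the configuration is $(D_0,\Om_0)$, the solution $u$ is radial (for $\be=0$, $\ga=1$ it is the explicit function \eqref{u}; in general it solves the corresponding radial ODE with modified-Bessel profiles), so $\dn u$ is constant on $\pa\Om_0$ and $\cF(0,0,d_0)=0$ for the corresponding $d_0>0$. The heart of the matter is then the linearisation at this base point. I would compute the shape derivative $u'$ of $u$ with respect to the interface variable $\psi$: it solves a transmission problem on $(D_0,\Om_0)$ with homogeneous Dirichlet data on $\pa\Om_0$ and inhomogeneous jump conditions on $\pa D_0$ carrying the factor $\psi$, the jumps being expressed through $[\,\cdot\,]$ and the known radial profile. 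Because the base configuration is rotationally invariant, $\pa_\psi\cF(0,0,d_0)$ is diagonalised by spherical harmonics, acting on the degree-$k$ component $Y_k$ by multiplication by an explicit constant $c_k$ obtained from matching radial profiles across $\pa D_0$ and $\pa\Om_0$.

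Granting that the $c_k$ are nonzero once the two neutral mode families are removed — degree $0$ (absorbed by $d$ together with the volume) and degree $1$ (infinitesimal translations of the core, absorbed by re-centring $D$) — the linearisation is a bijection onto the appropriate complement, the implicit function theorem produces for every $\Om$ close to $\Om_0$ an interface $D$ and a constant $d$ solving \eqref{pb 2-ph serrin}, and letting $\phi$ range over the infinite-dimensional space of non-radial perturbations yields the stated infinitude of non-symmetric solutions.

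The main obstacle I anticipate is precisely the invertibility of this linearisation in the correct functional-analytic framework, and the difficulty is structural rather than computational. The influence of an interface perturbation on the normal derivative read on the distant boundary $\pa\Om$ is strongly smoothing: a direct modal computation gives $c_k$ decaying like $R^{k}$ as $k\to\ali$, so the naive map above has a \emph{compact} $\psi$-derivative and fails to be invertible in the natural Hölder scale. To repair this I would reformulate the overdetermined condition so that the interface appears as a free boundary governed by an elliptic operator: using that solutions of the constant-coefficient equation $\sg_s\De w=\be w-\ga$ are real-analytic in the interior, the Cauchy data $(0,-d/\sg_s)$ prescribed on $\pa\Om$ can be propagated inward to a function $w$ whose traces on a fixed intermediate sphere are analytic, and $\pa D$ is then determined by matching $w$ and its conormal derivative to the core solution across $\pa D$. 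In this formulation the $\psi$-linearisation is the order-one (non-compact) Dirichlet-to-Neumann-type operator of the core, Fredholm of index zero with exactly the two neutral families above, and the implicit function theorem applies. Verifying that this propagation is well defined and smooth on the chosen Hölder spaces, and that no intermediate spherical-harmonic mode degenerates, is the step that will require the most care.
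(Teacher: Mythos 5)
Up to bookkeeping, the first three paragraphs of your proposal \emph{are} the paper's own proof: Chapter 5 introduces the zero-mean H\"older spaces $\cF,\cG,\cH$, the map $\Psi(f,g)=\left\{(\gr u_{f,g}\cdot n_g)\circ(\id+g\,n)+d_{f,g}\right\}J_\tau(g)$ (with the constant $d$ eliminated through the divergence-theorem identity \eqref{whatisd} rather than carried as an unknown), the identification $\pa_f\Psi(0,0)f=\dn u'$ with $u'$ solving \eqref{pbu' eq}--\eqref{pbu' bdary}, the diagonalisation \eqref{u' in sphar} over spherical harmonics, and an application of the implicit function theorem (Theorem \ref{ift}) to produce $f=f(g)$ (Theorem \ref{mainthm cava}). (One slip on your side: degree-one modes are not neutral here, since translating $D$ while $\Om$ stays fixed is not a symmetry; the paper quotients out constants only and proves $\pa_r s_1(1)\ne0$.) The decisive part of your proposal is the fourth paragraph, and there you are right: the obstacle you anticipate is genuine, and it is exactly the step at which the paper's argument fails. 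Your asymptotics are correct: solving \eqref{the 2nd order ODE} with its jump conditions gives $|\pa_r s_k(1)|\asymp k\,R^{k}$ as $k\to\ali$ (cleanest for $\be=0$, but the exponential damping of the $k$-th mode across the annulus persists for every $\be\ge0$). Hence $\pa_f\Psi(0,0)$, acting as $Y_{k,i}\mapsto\pa_r s_k(1)\,Y_{k,i}$, has eigenvalues tending to zero exponentially fast; every function in its range has spherical-harmonic coefficients decaying like $k R^k$, so it extends harmonically past the unit sphere and in particular is real-analytic. The range is thus a dense \emph{proper} subspace of $\cH$, and the operator, being injective and compact, has no bounded inverse. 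The paper's assertion that $\pa_f\Psi(0,0)$ is invertible \emph{if and only if} $\pa_r s_k(1)\ne0$ for all $k$ is false in infinite dimensions: the unique-continuation argument given there proves injectivity and nothing more, so the hypothesis of Theorem \ref{ift} is never verified. You have identified a real gap in the paper's proof.

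Your proposed repair, however, does not close this gap, and no argument can at the stated level of generality. Solving $\De w=\be w-\ga$ inward from the Cauchy data $(0,-d)$ on $\pa\Om$ is an elliptic Cauchy problem: when $\pa\Om$ is of class $\C^{2+\al}$ but not real-analytic, such a problem in general admits \emph{no} solution in any fixed one-sided neighbourhood of $\pa\Om$ (Hadamard ill-posedness is a failure of existence, not merely of stability), so the ``propagation'' step presupposes exactly what has to be proved. In fact the statement of Theorem \ref{thm3} is itself too strong: since $\sg\equiv1$ near $\pa\Om$, any solution of \eqref{pb 2-ph serrin} satisfies $\De u=\be u-\ga$ in a one-sided neighbourhood of $\pa\Om$ with the analytic Cauchy data $u=0$, $\dn u=-d$, $d>0$, and the Kinderlehrer--Nirenberg theorem on the regularity of free boundaries then forces $\pa\Om$ to be real-analytic; consequently, for a non-analytic $\C^{2+\al}$ domain $\Om$ arbitrarily close to $\Om_0$ there exists no admissible $D$ whatsoever. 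What can be proved, and still yields infinitely many non-radially-symmetric solutions, is either (a) your scheme restricted to real-analytic $\Om$ close to $\Om_0$ in an analytic topology, where Cauchy--Kovalevskaya makes the propagation meaningful and the matching across $\pa D$ becomes, as you say, an order-one elliptic free-boundary problem; or (b) the statement with the quantifiers exchanged: for every $\C^{2+\al}$ core $D$ near $D_0$, find the outer boundary $\Om$. In direction (b) the relevant derivative $\pa_g\Psi(0,0)$ is an order-one Dirichlet-to-Neumann-type operator with eigenvalues growing like $k$; there, Fredholmness together with non-vanishing of the eigenvalues genuinely gives bounded invertibility in the H\"older scale, the implicit function theorem applies, and the outer boundary so produced is automatically real-analytic, consistently with the obstruction above.
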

As a final remark, notice that by a scaling argument, it is enough to prove Theorems \ref{thm1}--\ref{thm3} under the assumption that $\sg_s=1$. Therefore, in what follows {\bf we will always assume} {\boldmath{$\sg_s=1$}}.

As the title of this thesis suggests, shape derivatives will be our main tool. The concept of differentiating a shape functional with respect to a varying domain is actually really old. It dates back to the beginning of the $20$th century with the pioneering work of Hadamard \cite{hadamard}. It is virtually impossible to give an exhaustive list of all the contributions that have been made to this theory. We refer to the monographs \cite{SG, henrot} for some good introductory material on the classical theory of shape derivatives and shape optimization in general. Among others, we would like to refer to \cite{new, structure, simon} for their theoretical contributions and the related formalism. Moreover, one can not avoid mentioning works like \cite{conca} or \cite{sensitivity} where shape derivatives are used to investigate the local optimality of concentric balls for some two-phase eigenvalue problem (which is deeply related to the two-phase torsional rigidity functional $E$). As a final note, we might as well point the potential applications of this theory to the realm of numerical shape optimization (see for instance \cite{cz98} and \cite{kv obstacle}).

%\section{Structure of the paper}
This thesis is organized as follows. In Chapter 2 we discuss the proofs of the classical results by P\'olya \cite{polya} and Serrin \cite{serrin} that take place in a one-phase setting. Chapter 3 provides the necessary theoretical background about shape derivatives. We used \cite{henrot} as the main reference here. Chapter 4 is devoted to the exposition of our first original result: the detailed analysis of the first and second order shape derivative of the functional $E$ and the subsequent proof of Theorems \ref{thm1}--\ref{thm2} (see \cite{cava, cava2}). Finally, the two-phase overdetermined problem of Serrin-type \eqref{pb 2-ph serrin} is analyzed in Chapter 5, where Theorem \ref{thm3} is proved by the implicit function theorem (see \cite{camasa}).

\chapter{Classical results in the one-phase setting}
\label{ch torsion+serrin}
\section{Optimal shape for the torsional rigidity}
\label{sec 1-ph torsion}
For all open sets $\Om\subset\rn$ of finite volume we denote by $\Om^*$ the ball centered at the origin whose volume agrees with $\Om$:
\begin{equation}\label{symmetrization}
\Om^*:=\setbld{x\in\rn}{\vol(B_1)|x|^N< \vol(\Om)}.
\end{equation}
In \cite{polya}, P\'olya gave a very elegant proof of the following result.
\begin{theorem}\label{thm polya}
For all open sets $\Om\subset\rn$ of finite volume, the following holds
\[
E(\emptyset,\Om)\le E(\emptyset,\Om^*)
\]
\end{theorem}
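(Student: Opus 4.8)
The plan is to reduce the inequality to two classical facts about the Schwarz symmetrization (spherical decreasing rearrangement) of the stress function, namely equimeasurability and the P\'olya--Szeg\H{o} inequality, after first recasting $E$ as a maximum over test functions.

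First I would establish a variational characterization of the torsional rigidity that uses only a competitor, not the exact solution. Writing $u$ for the stress function of $\Om$ (with $\sg\equiv1$), the weak formulation \eqref{pb weak} gives $\int_\Om\gr u\cdot\gr\psi=\int_\Om\psi$ for every $\psi\in\hoi\Ome$; choosing $\psi=u$ yields the identity $\int_\Om|\gr u|^2=\int_\Om u=E(\emptyset,\Om)$. Applying the Cauchy--Schwarz inequality to $\int_\Om\psi=\int_\Om\gr u\cdot\gr\psi$ gives $\left(\int_\Om\psi\right)^2\le\left(\int_\Om|\gr u|^2\right)\left(\int_\Om|\gr\psi|^2\right)$, so that
\begin{equation*}
E(\emptyset,\Om)=\max_{\psi\in\hoi\Ome\setminus\{0\}}\frac{\left(\int_\Om\psi\right)^2}{\int_\Om|\gr\psi|^2},
\end{equation*}
the maximum being attained at $\psi=u$ by the identity above.

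Next I would introduce the symmetrization. Since $-\De u=1\ge0$ with $u=0$ on $\pa\Om$, the maximum principle gives $u\ge0$, so its spherical decreasing rearrangement $u^*$ is well defined and lies in $\hoi(\Om^*)$. The two properties I would invoke are: (i) equimeasurability, which preserves every $L^p$ norm and in particular yields $\int_{\Om^*}u^*=\int_\Om u$; and (ii) the P\'olya--Szeg\H{o} inequality $\int_{\Om^*}|\gr u^*|^2\le\int_\Om|\gr u|^2$. Using $u^*$ as a competitor in the variational characterization on the ball $\Om^*$ and combining these two facts then gives
\begin{equation*}
E(\emptyset,\Om^*)\ge\frac{\left(\int_{\Om^*}u^*\right)^2}{\int_{\Om^*}|\gr u^*|^2}\ge\frac{\left(\int_\Om u\right)^2}{\int_\Om|\gr u|^2}=E(\emptyset,\Om),
\end{equation*}
where the middle inequality uses (i) in the numerator and (ii) in the denominator, and the last equality is the identity noted above.

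The genuinely nontrivial step, and the main obstacle, is the P\'olya--Szeg\H{o} inequality (ii): its proof rests on the coarea formula together with the isoperimetric inequality (the ball minimizes perimeter among sets of a given volume), applied to the level sets of $u$. Everything else is either the elementary variational reformulation or the routine equimeasurability of rearrangements, so I would expect to spend the bulk of the effort justifying (ii).
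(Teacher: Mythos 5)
Your proposal is correct, and its second half (rearranging the extremal function and invoking equimeasurability together with the P\'olya--Szeg\H{o} inequality, Theorem \ref{polyaszego}) is exactly the paper's proof. Where you genuinely diverge is in how you establish the variational characterization of $E(\emptyset,\Om)$: the paper (Lemma \ref{kekkyokuissho}, following Brasco) introduces the auxiliary concave functional $\cF_\Om(v)=2\int_\Om v-\int_\Om|\gr v|^2$ of \eqref{F}, identifies its maximizer with the torsion function via the G\^ateaux derivative, and then passes from $\max\cF_\Om$ to the Rayleigh-type quotient $T(\Om)$ of \eqref{T} through the elementary Young-type inequality of Lemma \ref{young brasco}; you instead prove $E(\emptyset,\Om)=\max_{\psi}\pp{\int_\Om\psi}^2/\int_\Om|\gr\psi|^2$ in one stroke by applying Cauchy--Schwarz to the weak formulation \eqref{pb weak} (with equality at $\psi=u$). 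Your duality argument is shorter and avoids the intermediate functional altogether, at the price of leaning directly on the existence of the solution $u$ as the attaining competitor; the paper's route is longer but exhibits the concavity structure ($E=\max\cF_\Om=T$ as three coinciding functionals), which is the formulation that interacts naturally with the shape-derivative machinery used later in the thesis. Two small points you share with the paper and should make explicit in a complete write-up: the rearranged function must be shown to lie in $\hoi(\Om^*)$ (a standard companion to Theorem \ref{polyaszego}, applied after extending $u$ by zero to $\rn$), and the nonnegativity of $u$ on a general open set of finite volume is obtained by testing \eqref{pb weak} with the negative part of $u$ rather than by a pointwise maximum principle.
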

Actually the original proof by P\'olya employed the use of the following equivalent definition of the (one-phase) torsional rigidity of an open set $\Om$:
\begin{equation}\label{T}
T(\Om):=\max_{v\in \hoi\Ome\sm\{0\}}\frac{\pp{\int_\Om |v|}^2}{\int_\Om |\gr v|^2}.
\end{equation}
In order to prove the equivalence between the functionals $E(\emptyset,\cdottone)$ and $T(\cdottone)$, we will follow \cite{brasco} and introduce a third functional that will serve as a bridge between the two:
\begin{equation}\label{F}
\cF_\Om(v):= 2\int_\Om v-\int_\Om |\gr v|^2 \quad \tfor v\in \hoi\Ome.
\end{equation}
We will also need the following simple lemma. It follows immediately from Young's inequality for products and therefore the proof will be omitted.
\begin{lemma}\label{young brasco}
Let $A,B>0$, then we have
\begin{equation}\label{yb}
A t- B \frac{t^2}{2} \le \frac{A^2}{2B} \quad\tfor t\ge0,
\end{equation}
and equality in \eqref{yb} holds only for $t=A/B$.
\end{lemma}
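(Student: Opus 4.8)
The plan is to verify \eqref{yb} by a direct completion of the square, which simultaneously settles the equality case. First I would move everything to one side and compute the difference between the right- and left-hand sides:
\begin{equation}
\frac{A^2}{2B}-\pp{A t-B\frac{t^2}{2}}=\frac{1}{2B}\pp{A^2-2ABt+B^2t^2}=\frac{(A-Bt)^2}{2B}.
\end{equation}
Since $B>0$, the right-hand side is manifestly nonnegative, which establishes \eqref{yb} for every $t\ge0$ (in fact for every real $t$). Moreover $(A-Bt)^2/(2B)$ vanishes if and only if $A-Bt=0$, i.e. exactly when $t=A/B$; because $A,B>0$ this value lies in the admissible range $t\ge0$, so the equality case is precisely $t=A/B$, as claimed.

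As the statement itself suggests, the same estimate can be read off Young's inequality for products with the conjugate exponents $p=q=2$, namely $ab\le a^2/2+b^2/2$. Writing $a=A/\sqrt{B}$ and $b=\sqrt{B}\,t$ yields $At\le A^2/(2B)+Bt^2/2$, which is \eqref{yb} after rearranging, and the equality condition $a=b$ of Young's inequality becomes $t=A/B$. The two routes are algebraically identical, so the completion-of-square identity above is really just Young's inequality in disguise.

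There is essentially no obstacle here: the claim reduces to a one-line algebraic identity together with the positivity of $B$. The only point deserving a word of care is that the unconstrained maximizer $t=A/B$ of the concave parabola $t\mapsto At-Bt^2/2$ happens to satisfy the constraint $t\ge0$ (thanks to $A,B>0$), so that restricting attention to $t\ge0$ changes neither the maximum value nor the equality case.
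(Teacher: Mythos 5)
Your proof is correct and matches the paper's (omitted) argument: the paper simply states that the lemma follows immediately from Young's inequality for products, which is exactly the route you take, with the completion-of-square identity supplying the details the paper leaves out. Nothing further is needed.
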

%We have
\begin{lemma}\label{kekkyokuissho}
Let $\Om\subset \rn$ be an open set of finite volume. Then 
\[
E(\emptyset,\Om)=\max_{v\in\hoi\Ome} \cF_\Om(v)= T(\Om).
\]
\end{lemma}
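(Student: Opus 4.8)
The plan is to establish the two equalities separately: I would obtain $\max_{v}\cF_\Om(v)=E(\emptyset,\Om)$ by a variational (Euler--Lagrange) argument, and $\max_{v}\cF_\Om(v)=T(\Om)$ by optimizing along rays through the origin with the help of Lemma \ref{young brasco}.

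First I would show that $\cF_\Om$ attains a unique maximum at the solution $u$ of \eqref{pb weak} (here $D=\emptyset$, so $\sg\equiv1$). The functional $v\mapsto\cF_\Om(v)=2\int_\Om v-\int_\Om|\gr v|^2$ is strictly concave on $\hoi\Ome$, since its quadratic part $-\int_\Om|\gr v|^2$ is negative definite by the Poincar\'e inequality, which also provides coercivity; hence a maximizer exists and is unique. Computing the first variation, $\dato\cF_\Om(v+t\psi)=2\int_\Om\psi-2\int_\Om\gr v\cdot\gr\psi$, and setting it to zero for every $\psi\in\hoi\Ome$ recovers exactly the weak formulation \eqref{pb weak}; thus the maximizer is $u$. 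Testing \eqref{pb weak} with $\psi=u$ gives $\int_\Om|\gr u|^2=\int_\Om u$, whence $\cF_\Om(u)=2\int_\Om u-\int_\Om|\gr u|^2=\int_\Om u=E(\emptyset,\Om)$ by \eqref{E}.

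For the second equality I would exploit the one-dimensional behaviour of $\cF_\Om$ along the ray $t\mapsto tv$. For fixed $v\in\hoi\Ome$ with $\int_\Om|\gr v|^2>0$ we have $\cF_\Om(tv)=\pp{2\int_\Om v}t-\pp{2\int_\Om|\gr v|^2}\frac{t^2}{2}$, so Lemma \ref{young brasco}, applied with $A=2\int_\Om v$ and $B=2\int_\Om|\gr v|^2$, yields $\cF_\Om(tv)\le\frac{(\int_\Om v)^2}{\int_\Om|\gr v|^2}$ for every $t$, with equality precisely when $t=A/B$. Since $\pp{\int_\Om v}^2\le\pp{\int_\Om|v|}^2$, the right-hand side is bounded above by $T(\Om)$ from \eqref{T}; taking $t=1$ gives $\cF_\Om(v)\le T(\Om)$ for all $v$, hence $\max_{v}\cF_\Om(v)\le T(\Om)$. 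For the reverse inequality I would use the equality case: choosing the optimal $t=A/B$ produces $\cF_\Om(tv)=\frac{(\int_\Om v)^2}{\int_\Om|\gr v|^2}$, and passing to the supremum over $v$ should recover $T(\Om)$.

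The main subtlety---and the step I expect to be the chief obstacle---is matching $\int_\Om v$ with $\int_\Om|v|$ when passing to the supremum. To close the gap I would restrict the maximization to nonnegative competitors: since $|v|\in\hoi\Ome$ with $\big|\gr|v|\big|=|\gr v|$ almost everywhere and $\int_\Om|v|\ge\int_\Om v$, replacing $v$ by $|v|$ decreases neither $\cF_\Om(tv)$ nor the quotient in \eqref{T}, so both suprema may be computed over $\{v\ge0\}$, where $\int_\Om v=\int_\Om|v|$. This forces the two suprema to coincide and, together with the attainment established in the first part (where $T(\Om)=\max_{v}\cF_\Om(v)$ is reached at $u$), upgrades the suprema to maxima, completing the proof.
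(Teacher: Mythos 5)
Your proposal is correct, and its skeleton matches the paper's: the first equality is obtained exactly as in the paper (strict concavity of $\cF_\Om$, Euler--Lagrange identification of its maximizer with the torsion function $u$, then testing with $u$ to get $\cF_\Om(u)=\int_\Om u=E(\emptyset,\Om)$), and the second equality rests on Lemma \ref{young brasco} applied along rays $t\mapsto tv$. The genuine difference is in how the second equality is closed. The paper starts from a maximizer $v_0$ of $T(\Om)$ --- whose existence is built into the ``max'' in \eqref{T} and is never actually proved --- assumes it nonnegative without loss of generality, sets $\la_0=\int_\Om v_0/\int_\Om|\gr v_0|^2$, and checks that $v=\la_0 v_0$ turns the whole chain $\cF_\Om(v)\le \pp{\int_\Om|v|}^2/\int_\Om|\gr v|^2\le T(\Om)$ into equalities, so that $\la_0v_0$ simultaneously maximizes $\cF_\Om$ and realizes $T(\Om)$. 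You never invoke a maximizer of $T(\Om)$: you prove the inequality $\sup\cF_\Om\le T(\Om)$ by taking $t=1$, prove $\sup\cF_\Om\ge T(\Om)$ by scaling each individual competitor optimally ($t=A/B$), reduce both suprema to nonnegative competitors via the replacement $v\mapsto|v|$ (which plays exactly the role of the paper's ``WLOG $v_0\ge0$''), and then deduce attainment of $T(\Om)$ at $u$ as a byproduct of the attainment of $\max\cF_\Om$ from the first part. Your route is slightly longer but more self-contained: it does not presuppose that the supremum in \eqref{T} is achieved, and in fact proves it; the paper's route is shorter but logically relies on that unproved attainment. One small point to patch in your write-up: Lemma \ref{young brasco} is stated for $A,B>0$, so the application with $A=2\int_\Om v$ is not legitimate when $\int_\Om v\le 0$; either restrict to nonnegative $v$ from the start (as your last paragraph does anyway, making $A>0$ for $v\ne0$), or observe that in that case $\cF_\Om(v)\le 0\le T(\Om)$ trivially.
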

\begin{proof}
First of all, let us prove that $E(\emptyset,\cdottone)=\max_{v\in \hoi\Ome}\cF_\Om(v)$. 
Since $\cF_\Om$ is a strictly concave functional, it has a unique maximizer, say $v_M\in \hoi\Ome$. Moreover, by computing its G\^ateaux derivative, we get
\[
0=\lim_{t\to0} \frac{\cF_\Om(v_M+t\psi)-\cF_\Om(v_M)}{t}= 2\int_\Om \psi -2\int_\Om \gr v_M\cdot \gr \psi \quad\tforall \psi\in\hoi\Ome.
\]
In other words, $v_M$ is a weak solution of \eqref{pb weak} for $D=\emptyset$ and $\sg_s=1$. This implies that 
\[
E(\emptyset,\Om)=\max_{v\in\hoi\Ome} \cF_\Om(v).
\]
Let now $v_0$ be a maximizer in \eqref{T} (which, without loss of generality, we will suppose non negative). Set 
\[
\la_0:=\frac{\int_\Om v_0}{\int_\Om |\gr v_0|^2}.
\]
It is not difficult to show that $w_0:=\la_0 v_0\in\hoi\Ome$ is a maximizer for $\cF_\Om$. Indeed, by Lemma \ref{young brasco} and definitions \eqref{T} and \eqref{F} we can write, for all $v\in\hoi\Ome$,
\[
\cF_\Om(v)=2\int_\Om v-\int_\Om |\gr v|^2\le 2 \max_{\la\ge0} \left\{ \la\int_\Om |v|-\frac{\la^2}{2}\int_\Om |\gr v|^2 \right\}=\frac{\pp{\int_\Om |v|}^2}{\int_\Om |\gr v|^2}\le T(\Om).
\]
Notice that equality holds in the chain of inequalities above if $v=\la_0 v_0$. In particular, 
$\cF_\Om(v)\le T(\Om)= \cF_\Om(\la_0 v_0)$ for all $v\in\hoi\Ome$ and therefore
\[
E(\emptyset,\Om)= \max_{v\in\hoi\Ome} \cF_\Om(v) = \cF_\Om(\la_0 v_0) = T(\Om),
\] 
which concludes the proof.
\end{proof}

The key to P\'olya's proof lies in spherical rearrangements of measurable functions and the related inequalities. 
Let $f$ be a nonnegative measurable function vanishing at infinity, in the sense that all its positive superlevel sets $\{f>t\}$ with $t>0$ have finite measure. We define its spherical decreasing rearrangement $f^*$ as the measurable function whose superlevel lets are the $*$-symmetrization of those of $f$ (see \eqref{symmetrization}):
\begin{equation}\label{levelsets}
\{f^*>t\}:=\{f>t\}^*\quad \tforall t>0.
\end{equation}
\vspace*{-0.5cm}
\begin{figure}[h]
\centering
\includegraphics[width=0.9\linewidth,center]{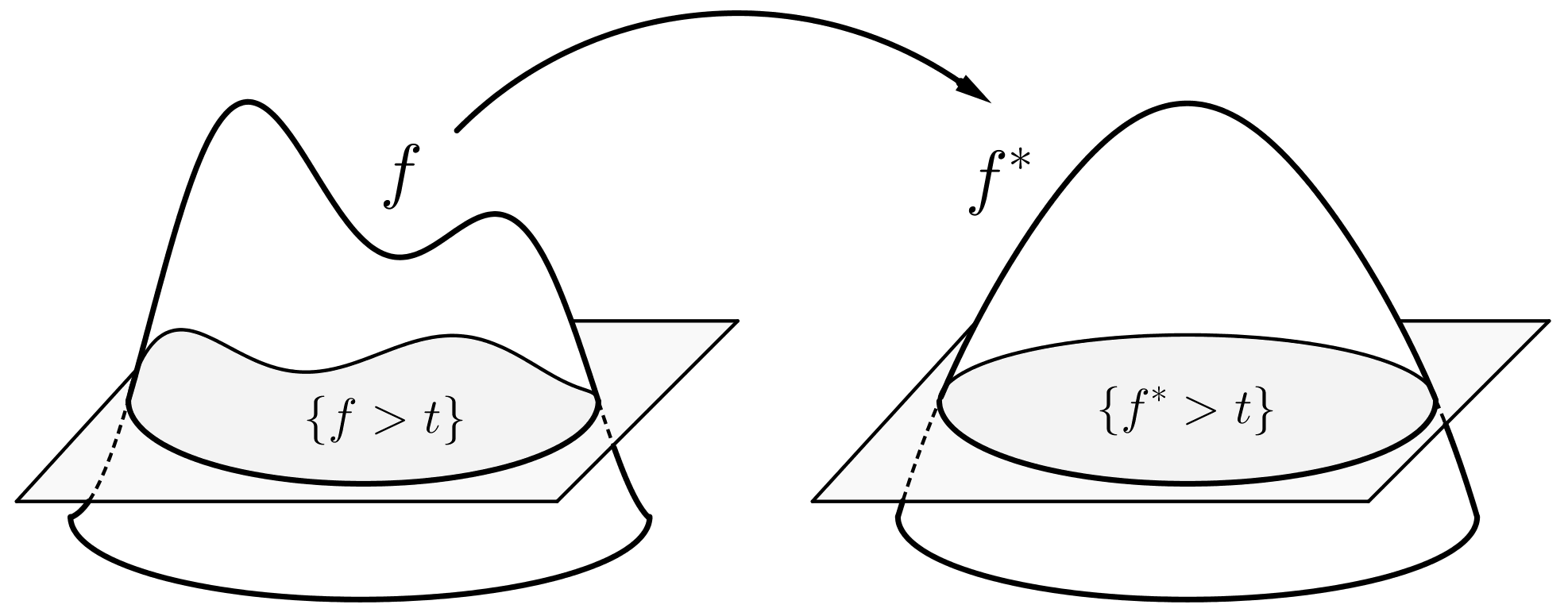}
\caption{Spherical decreasing rearrangement.
} 
\label{rearr}
\end{figure}

Such function $f^*$ is uniquely determined by the measure of the superlevel sets of $f$ and admits the following ``layer cake'' decomposition:
\[
f^*=\int_0^\ali \chi_{\{{f>t}\}^*} \,dt.
\]
By \eqref{levelsets} and Cavalieri's principle, it follows that $f$ and $f^*$ are equimeasurable, i.e. for every measurable function $g:[0,\ali)\to\RR$ the following holds
\[
\int_\rn g\circ f = \int_\rn g\circ f^*.
\] 
In particular, this implies that $L^p$-norms are preserved after spherical rearrangements, in the sense that, if $f\in L^p(\rn)$, $1\le p\le \ali$, is a nonnegative function vanishing at infinity, then 
\[
\norm{f}_p=\norm{f^*}_p.
\]
On the other hand, the $L^p$-norm of the gradient is not preserved by spherical rearrangements, as the following result shows. 
\begin{thm}[P\'olya-Szeg\H{o} inequality]\label{polyaszego}
Let $f\in W^{1,p}(\rn)$, $1\le p \le \ali$, be a nonnegative measurable function vanishing at infinity, then
\[
\norm{\gr f}_p \ge \norm{\gr f^*}_p.
\] 
\end{thm}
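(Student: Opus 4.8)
The plan is to combine the coarea formula with the isoperimetric inequality, using H\"older's inequality to bridge the two for general exponents $p$. Throughout, write $\mu(t):=\vol(\{f>t\})$ for the distribution function of $f$; since $f$ and $f^*$ are equimeasurable, $\mu$ is also the distribution function of $f^*$. The superlevel set $\{f^*>t\}$ is by construction a ball of volume $\mu(t)$, so the isoperimetric inequality supplies the crucial perimeter comparison
\begin{equation*}
\per(\{f^*>t\})\le \per(\{f>t\})\qquad\text{for a.e. }t>0,
\end{equation*}
the left-hand side being the smallest perimeter compatible with the enclosed volume.

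First I would treat the case $1<p<\ali$. By the coarea formula, $\int_\rn |\gr f|^p=\int_0^\ali\pp{\int_{\{f=t\}}|\gr f|^{p-1}\,d\cH^{N-1}}dt$, and, on the set where $\gr f\ne0$, the distribution function satisfies $-\mu'(t)=\int_{\{f=t\}}|\gr f|^{-1}\,d\cH^{N-1}$ for a.e. $t$. Applying H\"older's inequality on the level set $\{f=t\}$ to the trivial factorization $1=|\gr f|^{(p-1)/p}|\gr f|^{-(p-1)/p}$ gives
\begin{equation*}
\per(\{f>t\})=\cH^{N-1}(\{f=t\})\le\pp{\int_{\{f=t\}}|\gr f|^{p-1}}^{1/p}\pp{-\mu'(t)}^{(p-1)/p},
\end{equation*}
whence $\int_{\{f=t\}}|\gr f|^{p-1}\ge \per(\{f>t\})^p\pp{-\mu'(t)}^{1-p}$. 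For $f^*$, radial symmetry forces $|\gr f^*|$ to be constant on each sphere $\{f^*=t\}$, so the same coarea identities hold with \emph{equality} in H\"older, yielding $\int_\rn|\gr f^*|^p=\int_0^\ali \per(\{f^*>t\})^p\pp{-\mu'(t)}^{1-p}dt$.

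Assembling these and invoking the isoperimetric comparison gives
\begin{equation*}
\int_\rn|\gr f|^p\ge\int_0^\ali\frac{\per(\{f>t\})^p}{\pp{-\mu'(t)}^{p-1}}\,dt\ge\int_0^\ali\frac{\per(\{f^*>t\})^p}{\pp{-\mu'(t)}^{p-1}}\,dt=\int_\rn|\gr f^*|^p,
\end{equation*}
which is the claim. The borderline case $p=1$ is simpler: coarea reduces both sides directly to $\int_0^\ali\per(\{f>t\})\,dt$ and $\int_0^\ali\per(\{f^*>t\})\,dt$, and the inequality is immediate from the perimeter comparison. The case $p=\ali$, concerning Lipschitz constants, I would handle separately by showing that rearrangement does not increase the Lipschitz seminorm, or by passing to the limit $p\to\ali$.

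The main obstacle will be the rigorous justification of the coarea identities rather than the inequalities themselves. One must control the critical set $\{\gr f=0\}$, where level sets need not be smooth, establish the almost-everywhere differentiability of $\mu$ together with the precise relation between $\mu'(t)$ and the coarea slice, and confirm that for almost every level $t$ the set $\{f=t\}$ is rectifiable, so that $\per(\{f>t\})=\cH^{N-1}(\{f=t\})$. A clean way to circumvent these difficulties is to first prove the inequality for smooth functions with nondegenerate level sets, where Sard's theorem guarantees that almost every $t$ is a regular value, and then recover the general $W^{1,p}$ statement by approximation and lower semicontinuity; the delicate point there is the behaviour of the rearrangement map under such approximation.
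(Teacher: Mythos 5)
The paper itself does not prove this statement: Theorem \ref{polyaszego} is quoted as a classical result (cf.\ the reference of Burchard in the bibliography) and is used as a black box in the one-line proof of Theorem \ref{thm polya}, so there is no in-paper argument to compare yours against. On its own merits, your proposal is the classical Talenti-style proof via coarea formula, isoperimetric inequality and H\"older, and its skeleton is sound: every inequality you invoke points in the right direction, since on the side of $f$ you only need $\per(\{f>t\})\le \cH^{N-1}(\{f=t\})$ and $\int_{\{f=t\}}\abs{\gr f}^{-1}\,d\cH^{N-1}\le -\mu'(t)$, both of which hold for a.e.\ $t>0$, and the isoperimetric comparison $\per(\{f^*>t\})\le\per(\{f>t\})$ enters with a positive power in the numerator, as required.

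The one step stated too strongly is the claimed \emph{equality} $\int_\rn\abs{\gr f^*}^p=\int_0^\ali \per(\{f^*>t\})^p\pp{-\mu'(t)}^{1-p}dt$. Equality in H\"older on each sphere is indeed automatic from radial symmetry, but the other ingredient, $-\mu'(t)=\int_{\{f^*=t\}}\abs{\gr f^*}^{-1}$, can fail: if the distribution function $\mu$ has a singular part, one only gets $-\mu'(t)\ge\int_{\{f^*=t\}}\abs{\gr f^*}^{-1}$, and feeding this into your chain yields for $f^*$ the same inequality ``$\ge$'' you derived for $f$ --- which is the wrong direction for the final comparison. So you must either prove that the radial profile of $f^*$ is (locally) absolutely continuous, i.e.\ that no singular part occurs --- this is precisely the delicate point in Brothers--Ziemer-type treatments, where one also has to rule out gaps in the essential range of a Sobolev function --- or bypass it through the reduction you sketch in your last paragraph, which does work: for smooth compactly supported approximations (say Morse functions, for which $\mu$ is absolutely continuous and a.e.\ $t$ is a regular value) the computation for $f^*$ is legitimate; then, since rearrangement is an $L^p$-contraction, $f_k\to f$ in $W^{1,p}$ gives $f_k^*\to f^*$ in $L^p$, and $\norm{\gr f_k^*}_p\le\norm{\gr f_k}_p$ passes to the limit by weak compactness and lower semicontinuity of the norm. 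Note that this limiting argument only needs an inequality, not convergence of the gradients, so the known failure of continuity of $f\mapsto f^*$ in $W^{1,p}$ (Almgren--Lieb) is not an obstruction; for $p=1$ the same scheme requires BV-type lower semicontinuity, and your direct treatment of $p=\ali$ via the Lipschitz seminorm is fine.
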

\begin{proof}[Proof of Theorem {\rm\ref{thm polya}}]
Once all the ingredients are ready, the proof just takes one line. Let $\Om$ be a measurable set of finite measure and set $v_0\in\hoi\Ome$ to be the maximizer in the definition of $T(\Om)$. We have
\[
T(\Om)=\frac{\pp{\int_\Om v_0}^2}{\int_\Om |\gr v_0|^2}\le \frac{\pp{\int_{\Om^*}v_0^*}^2}{\int_{\Om^*}|\gr v_0^*|^2}\le T(\Om^*),
\] 
where we used equimeasurability and Theorem \ref{polyaszego} in the first inequality.
\end{proof}
\section{Serrin's overdetermined problem}
\label{sec 1-ph serrin}
In this section we will deal with the original one-phase Serrin's overdetermined problem. We are looking for a bounded domain $\Om\subset \rn$ of class $\C^2$ such that the following overdetermined boundary value problem admits a solution for some constant $d$:
\begin{equation}\label{serrin op} 
\left\{
\begin{aligned}
-\De u&=1&\quad \tin \Om,\\
u&=0& \quad \ton \pa\Om,\\
\dn u&=-d & \quad\ton\pa\Om.
\end{aligned}
\right.
\end{equation}
In \cite{serrin}, Serrin proved the following theorem, characterizing the solutions to \eqref{serrin op}.
\begin{theorem}\label{serrin thm}
Let $\Om\subset\rn$ be a bounded domain of class $\C^2$. If the overdetermined problem \eqref{serrin op} admits a solution for some constant $d>0$, then $\Om$ is an open ball of radius $R=Nd$. 
\end{theorem}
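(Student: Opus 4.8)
The plan is to reproduce Serrin's \emph{method of moving planes}, which exploits the translation and reflection invariance of the Laplacian. First I would record the preliminary facts: since $-\De u = 1 > 0$ in $\Om$ and $u = 0$ on $\pa\Om$, the strong maximum principle gives $u > 0$ in $\Om$; and since $u$ is constant on $\pa\Om$, its tangential gradient there vanishes, so $|\gr u| = |\dn u| = d$ on $\pa\Om$. Fixing a direction, say $e_1$, I introduce the hyperplanes $T_\la = \{x_1 = \la\}$, the caps $\Si_\la = \{x \in \Om : x_1 > \la\}$, and the reflection $x \mapto x^\la$ across $T_\la$. Writing $u_\la(x) = u(x^\la)$ for the reflected solution on the reflected cap $\Si_\la'$, both $u$ and $u_\la$ solve $-\De(\cdottone) = 1$, so $w := u - u_\la$ is harmonic on $\Si_\la'$.

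Next I would run the reflection. For $\la$ just below $\sup_\Om x_1$ the reflected cap $\Si_\la'$ lies inside $\Om$, and a boundary comparison (one has $w = 0$ on $T_\la$ and $w \ge 0$ on the reflected portion of $\pa\Om$, since there $u > 0 = u_\la$) together with the maximum principle yields $w \ge 0$ on $\Si_\la'$. I then decrease $\la$ to the critical value $\la_*$ at which this configuration first degenerates, which happens in exactly one of two ways: either (a) $\pa\Si_{\la_*}'$ becomes internally tangent to $\pa\Om$ at an interior point $P \notin T_{\la_*}$, or (b) $T_{\la_*}$ meets $\pa\Om$ orthogonally at some point $Q$.

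The heart of the argument is to show $\Om$ is symmetric about $T_{\la_*}$. Suppose not; then $w \not\equiv 0$ is harmonic and $\ge 0$, so by the strong maximum principle $w > 0$ in the interior of $\Si_{\la_*}'$. In case (a), $w(P) = 0$ at an interior tangency point, so Hopf's boundary point lemma forces $\dn w(P) < 0$; but the overdetermined condition $\dn u \equiv -d$ is reflection-invariant, giving $\dn u_{\la_*}(P) = \dn u(P^{\la_*}) = -d = \dn u(P)$, hence $\dn w(P) = 0$, a contradiction. In case (b) the comparison point $Q$ sits at the corner where $T_{\la_*}$ meets $\pa\Om$, where the ordinary Hopf lemma no longer applies; here I must invoke \emph{Serrin's corner lemma}, a refinement providing a strict sign for a second-order directional derivative of $w$ at the corner, which again contradicts $\dn u \equiv -d$. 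This corner case is the main obstacle: it requires constructing a suitable barrier in a wedge and is precisely the technical innovation of Serrin's paper.

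Once symmetry about $T_{\la_*}$ is established in the direction $e_1$, the same argument applies to every direction, so $\Om$ admits a hyperplane of symmetry orthogonal to each direction, which forces a bounded domain to be a ball. Finally, to pin down the radius, I would use the explicit radial solution $u(x) = (R^2 - |x|^2)/(2N)$ on $B_R$, for which $\dn u = -R/N$ on $\pa B_R$; matching with $\dn u = -d$ gives $R = Nd$. As an alternative to moving planes I would keep in mind Weinberger's integral proof, based on the subharmonicity of the $P$-function $|\gr u|^2 + \tfrac{2}{N}u$ combined with a Pohozaev identity, which reaches the same conclusion while avoiding the corner lemma entirely.
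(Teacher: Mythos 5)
Your proposal follows essentially the same route as the paper: Serrin's method of moving planes, with the Hopf lemma handling the interior tangency case and Serrin's corner lemma handling the case where the critical hyperplane meets $\pa\Om$ orthogonally, concluding with the radial solution to identify $R=Nd$. The one step you compress --- explaining why the strict sign given by the corner lemma is actually contradicted --- is precisely the paper's local-coordinate computation showing that $w$ and all of its first and second order derivatives vanish at the corner point (which requires both boundary conditions \emph{and} the geometric fact that the reflected cap lies inside $\Om$), but this is a matter of detail rather than of approach.
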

If \eqref{serrin op} has a solution, then, $d$ must be positive by the Hopf lemma. Moreover, by the divergence theorem
\[
d=\frac{\vol(\Om)}{\per(\Om)},
\]
and thus if $\Om=B_R$, then $d=R/N$. On the other hand, the fact balls are the only domains that allow for a solution to problem \eqref{serrin op} in not obvious at all.
This has led many mathematicians to devise their own proofs: each of them shedding light on the problem from a different angle. In what follows, we will present the original proof by Serrin, nevertheless, the interested reader is encouraged to read the survey papers \cite{magna review} and \cite{nittr}.
Serrin's proof heavily relies on the invariance with respect to rigid motions of the Laplace operator and on the maximum principle. In particular, both the classical Hopf lemma and the following refined version for domains with corners (see \cite{serrin} for a proof) play a fundamental role in the proof of Theorem \ref{serrin thm}.
\setcounter{thm}{2}
\begin{lem}[Serrin's corner lemma, \cite{serrin}]\label{scl}
Let $\Om\subset\rn$ be a bounded domain of class $\C^2$. Fix a point $P\in\pa\Om$ and let $\theta$ be a direction orthogonal to $n(P)$ (the outward unit normal to $\pa\Om$ at $P$). Moreover, let $H_\theta$ be an open half-plane that is orthogonal to $\theta$ and such that $\Om\cap H_\theta\ne\emptyset$. Let $w\in \C^2(\ol \Om\cap \ol H_\theta)$ satisfy
\[
-\De w\ge 0 \quad \tand w\ge0\quad\tin \Om\cap H_\theta.
\]
If $w(P)=0$, then, for all directions $\ell$ in $P$ entering $\Om\cap H_\theta$, i.e. such that $\ell\cdot n<0$ at $P$, then 
\[
\text{ either } \;\pa_\ell w(P)>0\quad \tor \quad\pa_{\ell \ell } w(P)>0,
\]
unless $w\equiv 0$ in $\Om\cap H_\theta$.
\end{lem}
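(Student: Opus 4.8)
The plan is to prove this corner lemma by a barrier (comparison-function) argument adapted to the right-angle corner that $\pa\Om$ and $\pa H_\theta$ form at $P$, in the spirit of the classical Hopf boundary-point lemma (which I may invoke, as it is listed as available). First I would normalize the geometry by a rigid motion: assume $P=0$, that the outward unit normal is $n(P)=e_N$ so that $\Om$ is locally the subgraph $\{x_N<\phi(x')\}$ with $\phi(0)=0$ and $\gr\phi(0)=0$, and that $H_\theta=\{x_1>0\}$ with $\theta=e_1$. Since $\theta\perp n(P)$, the plane $\pa H_\theta=\{x_1=0\}$ contains the direction $n(P)$, and in the two-dimensional slice spanned by $\theta$ and $n(P)$ the surfaces $\pa\Om$ and $\pa H_\theta$ cross at $P$ \emph{orthogonally}; this right-angle corner is the feature that obstructs a naive interior-ball barrier and forces the refinement. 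Two elementary remarks set the stage: because $w\ge0$ with $w(P)=0$, the point $P$ is a boundary minimum, so along any entering ray $t\mapsto w(P+t\ell)$ is minimized at $t=0^+$, giving $\pa_\ell w(P)\ge0$ automatically and $\pa_{\ell\ell}w(P)\ge0$ whenever $\pa_\ell w(P)=0$; and by the strong minimum principle for the superharmonic $w$ ($-\De w\ge0$), either $w\equiv0$ in $\Om\cap H_\theta$ (the excluded alternative) or $w>0$ throughout the interior, which I assume henceforth.

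The content of the lemma is therefore to rule out the simultaneous vanishing of $\pa_\ell w(P)$ and $\pa_{\ell\ell}w(P)$, and I would do this with a comparison function $z$ on a small region $\cD\subset\Om\cap H_\theta$ abutting the corner, chosen so that $-\De z\le0$ in $\cD$, $z\le w$ on $\pa\cD$, and $z$ itself exhibits the desired growth at $P$. The decisive requirement is that $z\le0$ on \emph{both} faces of the corner (where $w$ may vanish): on the flat face $\{x_1=0\}$ and on the curved face $\pa\Om$. For entering directions transversal to the corner the role of $z$ is played by a Hopf-type exponential profile of the form $e^{-\al|x-y|^2}-e^{-\al\rho^2}$, with interior center $y$ and radius $\rho$ tuned so the sphere stays outside $\Om$ along $\pa\Om$ and so subharmonicity holds for $\al$ large, yielding the first-order lower bound. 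For the degenerate corner configuration, where the interior-sphere construction collapses, I would instead use a corner-adapted barrier modeled on the wedge-harmonic $-x_1 x_N$, which vanishes on both faces, is positive inside the model wedge $\{x_1>0,\ x_N<0\}$, and grows quadratically, $\pa_{\ell\ell}(-x_1 x_N)(0)=-2\ell_1\ell_N>0$, for directions entering the wedge; one reinforces it by an exponential cutoff so that it remains dominated by $w$ on $\pa\cD$ while staying a subsolution.

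With such a $z$ in hand, the finish is routine. On the interior portion of $\pa\cD$ the function $w$ is strictly positive on a compact set, so a sufficiently small $\ve>0$ gives $\ve z\le w$ there; together with $\ve z\le0\le w$ on the two faces, the maximum principle applied to $w-\ve z$ (which satisfies $-\De(w-\ve z)\ge0$) yields $w\ge\ve z$ on $\cD$. Since $z(P)=0=w(P)$, differentiating this inequality at $P$ along an entering $\ell$ transfers the growth of $z$ to $w$: in the transversal regime one obtains $\pa_\ell w(P)\ge\ve\,\pa_\ell z(P)>0$, while in the corner regime the first-order term drops out and $\pa_\ell w(P)=0$ is accompanied by $\pa_{\ell\ell}w(P)\ge\ve\,\pa_{\ell\ell}z(P)>0$. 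These are exactly the two alternatives in the statement, and the case $w\equiv0$ was already accounted for.

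The main obstacle I anticipate is precisely the corner barrier: one must exhibit a single comparison function that is simultaneously a subsolution of $-\De$ up to the corner, dominated by the nonnegative $w$ on all of $\pa\cD$ — including both faces, where the usual tangent-ball argument fails because no ball fits inside the pinched right-angle region — and whose own corner asymptotics are strong enough to force the stated derivative signs. Balancing the exponential decay against the wedge-harmonic factor so that subharmonicity survives right at $P$ while the domination on the faces is preserved, and checking that the $\C^2$ error terms coming from the graph $\phi$ do not spoil these inequalities, is the delicate heart of the argument; everything else is a standard application of the maximum principle.
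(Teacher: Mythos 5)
The paper does not actually prove this lemma: it is quoted as a known result and the proof is explicitly deferred to Serrin's article \cite{serrin}, so your proposal has to stand on its own as a reconstruction of Serrin's barrier argument. Its skeleton --- normalize to a right-angle corner, build a subsolution modeled on the wedge harmonic $-x_1x_N$, force $w\ge\varepsilon z$ by the maximum principle, and read the alternatives off the corner asymptotics of $z$ --- is indeed the correct strategy. But your two-regime structure (Hopf exponential barrier for ``transversal'' entering directions, wedge barrier only in a ``degenerate corner configuration'') cannot work. At $P$ the tangent cone of $\Omega\cap H_\theta$ is a quarter-space, so \emph{no} interior ball of $\Omega\cap H_\theta$ touches its boundary at $P$ (any such ball would have to be tangent to $\{x_1=0\}$ at $P$, hence centered on the $x_1$-axis, and then it sticks out of $\Omega$); the Hopf construction is unavailable for every direction, not just degenerate ones. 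Worse, first-order positivity is simply false in general: take $\Omega=B_1(-e_N)$, $H_\theta=\{x_1>0\}$, $w=-x_1x_N$, which is harmonic, positive in $\Omega\cap H_\theta$, vanishes at $P=0$, and has $\nabla w(P)=0$, so $\partial_\ell w(P)=0$ for all directions. The single corrected wedge barrier must therefore carry the whole proof: from $w\ge\varepsilon z$ in $\mathcal{D}$, $\nabla z(P)=0$ and $\partial_{\ell\ell}z(P)>0$, one gets $\partial_\ell w(P)\ge 0$ for entering $\ell$, and when this vanishes a one-dimensional Taylor comparison of $w-\varepsilon z\ge 0$ along the ray gives $\partial_{\ell\ell}w(P)\ge\varepsilon\,\partial_{\ell\ell}z(P)>0$; no case distinction on $\ell$ is needed (though, as your own formula $-2\ell_1\ell_N>0$ shows, the argument requires $\ell$ to enter the quarter-space non-tangentially, i.e.\ $\ell_1>0$ as well as $\ell\cdot n<0$, which is what ``entering'' must mean here).

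The two steps you yourself flag as delicate are genuine gaps, and the fixes you gesture at do not close them. First, the barrier: to have $z\le 0$ on the curved face $\{x_N=\phi(x')\}$, where $|\phi(x')|\le K|x'|^2$ and $\phi$ may be negative, you must subtract from $-x_1x_N$ a term at least as large as $Kx_1|x'|^2$ there; but $x_1|x'|^2$ is \emph{subharmonic} on $\{x_1>0\}$ (indeed $\Delta\bigl(x_1|x'|^2\bigr)=(2N+2)x_1\ge0$), so subtracting it destroys the required inequality $-\Delta z\le0$, and an ``exponential cutoff'' multiplying or added to $-x_1x_N$ does not resolve this sign-versus-subharmonicity tension. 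A working correction must have a favorable Laplacian, e.g.\ $-Cx_1\bigl(|x'|^2-a\,x_N^2\bigr)$ with $a$ large enough that the subtracted term is superharmonic, together with a smallness condition on the radius $r$ of $\mathcal{D}$ to absorb the cross terms; nothing of this sort is in your sketch, and it is exactly the content of the lemma. Second, the closing comparison is invalid as stated: the spherical part of $\partial\mathcal{D}$ is \emph{not} compactly contained in $\{w>0\}$ --- its closure meets both faces, and on the flat face $w$ may vanish identically (it does in the application to Theorem \ref{serrin thm}) --- so ``$w$ strictly positive on a compact set'' produces no admissible $\varepsilon$. One needs either that $z\le0$ on a full neighborhood of the cap's edge, or a quantitative lower bound of the type $w\ge c\,x_1$ on the part of the cap away from the curved face (obtained from the minimum principle/Hopf lemma applied on the flat face), before $\varepsilon$ can be chosen.
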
 
\begin{proof}[Proof of Theorem {\rm \ref{serrin thm}}]
Serrin's proof is based on the following idea: a domain $\Om$ is a ball if and only if it is mirror-symmetric with respect to any fixed direction $\theta$. Suppose by contradiction that $\Om$ is not mirror-symmetric in the direction $\theta$ (which, up to a rotation, can be assumed to be the upward vertical direction). Take now a hyperplane $\pi$ perpendicular to $\theta$ that does not intersect $\Om$ (this can be done because $\Om$ is bounded). Now, move $\pi$ along the direction $\theta$ until it intersects $\Om$. Let $\cS$ denote the portion of $\Om$ that lies below the hyperplane and $\cS'$ its mirror-symmetric image with respect to it. If $\cS'\subset\Om$, then we can continue moving the hyperplane upwards. This motion will eventually stop, namely when (at least) one of the two following cases occur (see Figure \ref{mp}):
\begin{enumerate}[label=(\roman*)]
\item $\cS'$ becomes internally tangent to $\pa\Om$ at some point $P\in \pa\Om\sm\pi$ 
\item the hyperplane $\pi$ is orthogonal to $\pa\Om$ at some point $P\in\pa\Om\cap \pi$. 
\end{enumerate}
\begin{figure}[h]
\centering
\includegraphics[width=0.7\linewidth,center]{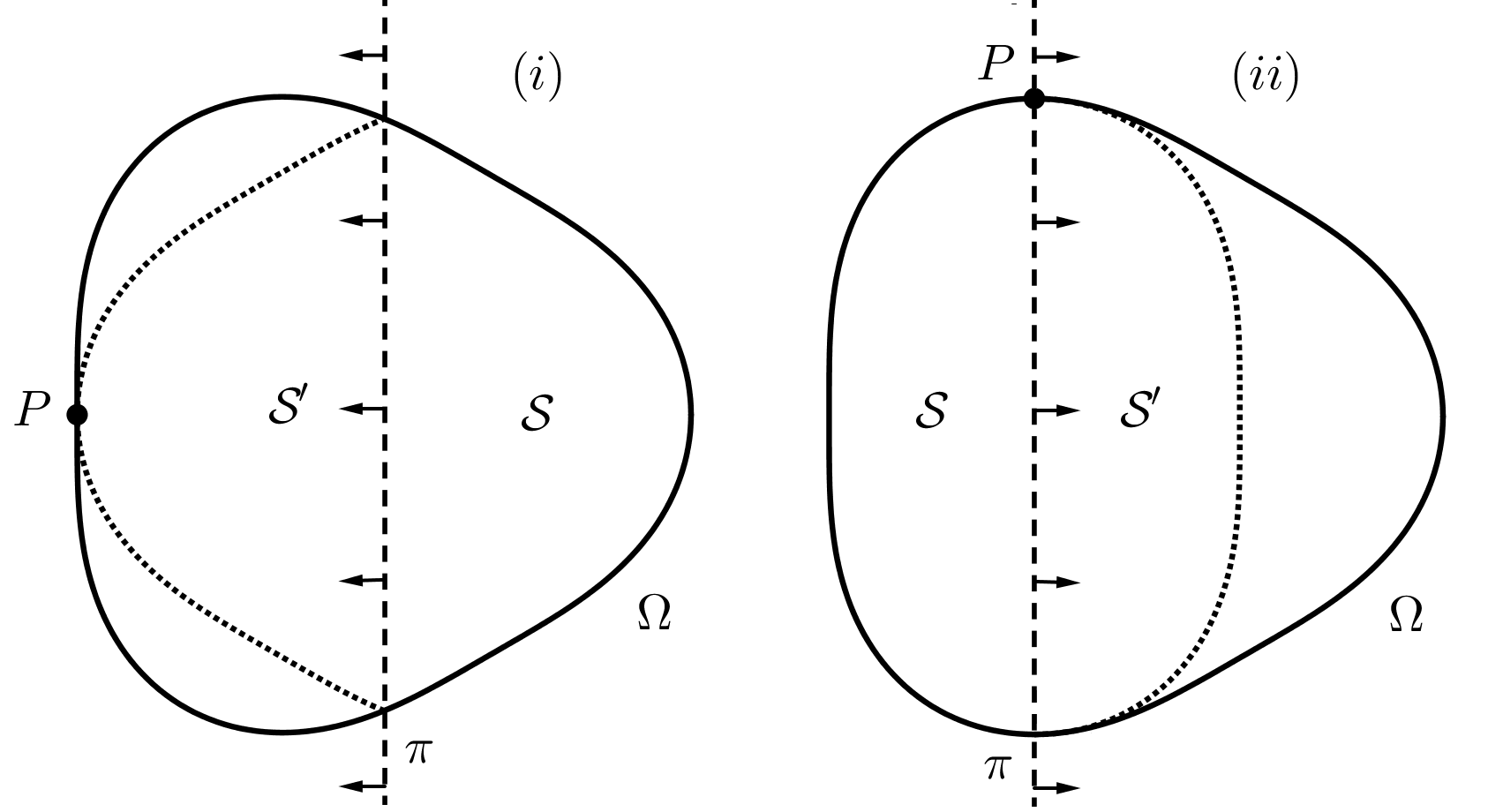}
\caption{The method of moving planes: case (i) on the left, case (ii) on the right.
} 
\label{mp}
\end{figure}
For all $x\in \rn$, let $x'$ denote the reflection of $x$ across the hyperplane $\pi$. We define the following auxiliary function on $\cS'$:
\[
u'(x) := u(x') \quad \tfor x\in \cS'.
\]
Consider now the function $w:=u-u'$ in $\cS'$. It is easy to see that $w$ verifies 
\[
\De w=0 \,\tin \cS',\quad w=0 \,\ton \pa \cS'\cap\pi,\quad w\ge 0\,\ton \pa\cS'\sm\pi,
\]
where we applied the maximum principle to $u$ in order to obtain the last inequality. 
A further application of the maximum principle yields either $w>0$ in $\cS'$ or $w\equiv 0$ in $\cS'$. The latter is excluded because we are supposing by contradiction that $\Om$ is not symmetric with respect to $\theta$.
Assume now that case (i) occurs, that is $\cS'$ is internally tangent to $\pa\Om$ at some point $P$ that does not belong to $\pi$. Then, by the Hopf lemma,
\[
\dn w (P)<0 ,
\]
but by construction 
\begin{equation}\label{gr w=0}
\dn w(P)= \dn u(P)-\dn u'(P) = -d+d=0.
\end{equation} 
In other words case (i) cannot occur if $\Om$ is not symmetric with respect to $\pi$.
Suppose now that case (ii) happens. In this case the Hopf lemma is not enough and we will resort to Lemma \ref{scl}. We are going to prove that, under these circumstances, the point $P$ is a second order zero for $w$, i.e. $w$ and all its first and second order derivatives computed at $P$ vanish. If this is the case, then by Lemma \ref{scl}, $w\equiv 0$ in $\cS'$, which is a contradiction. We will now show that $P$ is a second order zero for the function $w$. To this end, let us consider a coordinate system with the origin at $P$, the $x_N$ axis pointing in the direction $\theta$ and the $x_1$ axis in the direction of $n$. Locally there exists a $\C^2$-function $f:\RR^{N-1}\to \RR$ such that a portion of $\pa\Om$ is given by 
\[
\pp{f(x_2,\dots,x_N),x_2,\dots, x_N}\quad \tfor \norm{(x_2,\dots,x_N)} \text{ small}.
\]
Therefore, $u\equiv 0$ on $\pa\Om$ can be locally rewritten as 
\begin{equation}\label{u=0}
u(f,x_2,\dots,x_N)=0,
\end{equation}
and since the outward normal $n$ admits the local expression
\[
n(f,x_2,\dots,n_N)=\frac{\pp{1,-{\pa_{x_2}} f,\dots, -\pa_{x_N} f}}{\sqrt{1+\sum_{i=2}^N (\pa_{x_i} f)^2}},
\]
the overdetermined condition $\dn u=-d$ on $\pa\Om$ is locally expressed by 
\begin{equation}\label{dnuc}
\pa_{x_1} u- \sum_{i=2}^N \pa_{x_i} u \pa_{x_i} f = -d \bigg\{1+\sum_{i=2}^N \pp{\pa_{x_i} f}^2\bigg\}^{1/2}. 
\end{equation}
Differentiating \eqref{u=0} with respect to $x_i$, $i=2,\dots, N$, yields 
\begin{equation}\label{a1.6}
\pa_{x_1}u\pa_{x_i}f+\pa_{x_i} u =0.
\end{equation}
Evaluate now \eqref{a1.6} and \eqref{dnuc} at $P$. Since $\pa_{x_i}f(0)=0$ for $i=2,\dots,N$, we have
\begin{equation}\label{a.17}
\pa_{x_1}u(P)=-d,\quad \pa_{x_i} u(P)=0 \,\tfor i=2,\dots,N.
\end{equation}
This means that $\dn u=-d$ and $\grt u=0$ at $P$, in other words, all first derivatives of $u$ and $u'$ coincide at $P$, hence $\gr w(P)=0$. In order to show that also $D^2 w(P)=0$, notice that, in the new coordinates
\[
u'(x_1,\dots, x_{N-1},x_N)=u(x_1,\dots, x_{N-1},-x_N).
\]
In particular, by construction
\begin{equation}
\pa_{x_N x_N} u(P)=(-1)^2\pa_{x_N x_N} u'(P) \; \tand\; \pa_{x_ix_j} u(P) = \pa_{x_ix_j} u'(P), \, i,j=1,\dots,N-1. 
\end{equation}
We are now left to show that all mixed derivatives with respect to $x_i$ and $x_N$ ($i=1,\dots,N-1$) of $u$ and $u'$ coincide as well.
Differentiate \eqref{a1.6} with respect to $x_N$%, $j=2,\dots,N$ to get
\begin{equation}\label{a1.8}
{\pa _{x_i x_N}}u(P)=d\,\pa_{x_i x_N} f(0)=0\quad i=2,\dots,N-1,
\end{equation}
In the second equality above we used the assumption that the reflected cap $\cS'$ lies inside $\Om$ and, therefore, $\pa_{x_i x_N} f(0)=0$ for $i=2,\dots,N-1$. 
We now need to compute $\pa_{x_1 x_N} u$ at $P$. To this end, differentiate \eqref{dnuc} with respect to $x_1$ and use \eqref{gr w=0}. We get
\begin{equation}\label{a1.9}
\pa_{x_1 x_N} u(P)=0
\end{equation} 
as claimed.
We have proved that all the second order derivatives of $u$ and $u'$ coincide at $P$. As remarked before, this contradicts the assumption that $\Om$ is not mirror symmetric with respect to the hyperplane $\pi$, concluding the proof.
\end{proof}

\chapter{Shape derivatives}
\label{ch shape derivatives}

In this chapter we are going to introduce the concept of \emph{shape derivatives} and some of the basic techniques in order to compute them. The contents of this chapter are well known classical results: we will follow \cite{henrot} and \cite{SG} in our exposition. 

It is not unusual to encounter functions that depend on the ``shape'' of a domain $\om$: the volume of $\om$, its surface area, its barycenter or even the solution $u_\om$ of some boundary value problem on $\om$ etc... they are all \emph{shape functionals} and the machinery in this chapter will apply to them all. In what follows, we will study how to deduce optimality conditions for shape functionals. As one knows, in order to find the extremal points of a function $f:\rn\to\RR$, one could resort to studying the points where its gradient $\gr f$ vanishes. When the input variable of $f$ is not a point in the Euclidean space but a ``shape'' (for example an open set), then the above operation will lead to some overdetermined free boundary problem (we will discuss how this relates to the examples in Chapter \ref{ch torsion+serrin} in Section \ref{sec 1-ph equivalence}). Nevertheless, it is not clear at first glance how the concept of derivative could be extended to shape functionals. We will give two (equivalent) formulations of this in Section \ref{sec preliminaries}. The actual computation techniques will be discussed in Section \ref{sec hadam form}, where integral functionals (both on variable domains and on variable boundaries) will be of particular importance. Finally, in Section \ref{sec state functions} it will be discussed how to compute shape derivatives of functionals that take values in a Banach space, in particular, we will be interested in how to compute the shape derivative $u'$ of a functional of the form $\om\mapsto u_\om$, where $u_\om$ solves some boundary value problem on $\om$. We will show how to characterize $u'$, in turn, as a solution of a boundary value problem.

\section{Preliminaries to shape derivatives}
\label{sec preliminaries}

The classical notion of differentiability can be defined in the framework of normed vector spaces. Nevertheless, this is not enough for our purposes, as the set of ``shapes'' is not endowed with any obvious linear structure. In order to overcome this problem, one could opt for the following ``Fr\'echet-derivative'' approach. 
Let $J:\cO\to X$ be a shape functional, where $\cO$ is a family of subsets of $\rn$ and $X$ is a Banach space. One can then consider the application
\begin{equation}\label{frechet app}
\phi\mapsto\cJ(\phi):=J\big((\id+\phi)(\om)\big),\quad \text{ for some fixed }\om\in\cO,
\end{equation}
where $\phi$ ranges in a neighborhood of $0$ of some Banach space $\Theta$ of mappings from $\rn$ to itself. Of course, one should be careful about the choice of $\Theta$, and require that $(\id+\phi)(\om)\in\cO$ at least for $\phi\in\Theta$ small enough.

One could now examine the \emph{Fr\'echet differentiability} of the map $\cJ:\Theta\to X$ in a neighborhood of $0$.
We recall the definition of Fr\'echet differentiability.
Let $V$ and $W$ be Banach spaces (whose norms will be indistinctly denoted by $\norm{\cdottone}$) and let $U\subset V$ be an open subset of $V$. A function $f:U\to W$ is then said to be Fr\'echet differentiable at $x_0\in U$ if there exists a bounded linear operator $A:V\to W$ such that
\[
\lim_{x\to 0} \frac{\norm{f(x_0+x)-f(x_0)-Ax}}{\norm{x}}=0.
\]
It is easy to show that, when such an operator $A$ exists, then it is also unique. Therefore this bounded linear operator will be denoted by $f'(x_0)$ and referred to as the Fr\'echet derivative of $f$ at $x_0$ (the term ``differential" is also commonly used in this case). Moreover we will say that $f:U\to V$ is of class $\C^1$ in $U$ if $f':U\to L(V,W)$ is a continuous map from $U$ to $L(V,W)$, the space of bounded linear operators from $V$ to $W$. Analogously, if $f'$ happens to be Fr\'echet differentiable, say in $U$, then the map 
\[
f'':=(f')': U\to L(V,L(V,W))
\] 
is called the second derivative of $f$. To make it easier to work with, the space $L(V,L(V,W))$ is usually identified with the Banach space %$L^2(V\times V,W)$
of all continuous bilinear maps from $V$ to $W$. We remark that Fr\'echet derivatives of higher order can be defined recursively in the natural way, although for our purposes it will be enough to work with derivatives up to the second order. 
This ``Fr\'echet-derivative'' approach will be very useful to prove theoretical results, such as regularity properties of shape functionals (see for instance Theorem \ref{thm diff state func 1}) and the structure theorem (Theorem \ref{struct thm} on page \pageref{struct thm}).
However, once the above-mentioned results are known, it is easier in practice to compute shape derivatives by means of a differentiation along a ``flow of transformations'' parametrized by a real variable $t$ as follows. 
As before, let $J:\cO\to X$ denote a shape functional. Consider the following flow of transformations $\Phi:[0,1)\to\Theta$, where the map $t\mapsto \Phi(t)$ is differentiable at $0$ and $\Phi(t) = t h+o(t)$ as $t\to 0$ for some $h\in\Theta$. We can now consider the derivative of the following map 
\begin{equation}\label{flow app}
t\mapsto j(t):= J(\om_t):=J\big((\id+\Phi)\om\big), \quad \text{for some fixed }\om\in\cO.
\end{equation}
We will write 
\[
j'(0)= J'(\om)(\Phi).
\]
Notice that the two approaches \eqref{frechet app} and \eqref{flow app} are equivalent in the following sense: when $\cJ$ is Fr\'echet differentiable, then 
\[
\cJ'(0)h=J'(\om)(\Phi),\quad \tif\; \Phi(t)=th+o(t) \text{ as }t\to0. 
\]

%\begin{equation}\nonumber
%\cO_{\rm op}:=\setbld{\om\subset\rn}{\om \text{ is a bounded open set}}, \quad 
%\cO_k:=\setbld{\om\in\cO_{\rm op}}{\pa\om\text{ is of class } \cC^k}. 
%\end{equation}

%\section{Computation of shape derivatives}
%\label{sec computation}

\section{Shape derivatives of integral functionals}
\label{sec hadam form}
That of integral functionals is quite vast subclass of shape functionals. In this subsection we will learn the basic formulas for computing the shape derivatives of functionals of the form 
\[
\om\mapto \int_\om f_\om \quad\tand\quad \om\mapto \int_{\pa\om} g_\om.
\]

\begin{proposition}[Hadamard formula]\label{hadam form}
Let $\Phi:[0,1)\to W^{1,\ali}\rnrn$, differentiable at $0$, with $\Phi(0)=0$ and $\pato \Phi=h$. Suppose that the map $[0,1)\ni t\mapsto f(t)\in L^1(\rn)$ is differentiable at $0$ with derivative $f'(0)$ and that $f(0)\in W^{1,1}(\rn)$.
If $\om$ is a bounded Lipschitz domain, then, the map $t\mapsto i(t)=\int_{\om_t}f(t)$ is differentiable at $t=0$ and we have
\begin{equation}\label{result hadam}
i'(0)=\int_\om f'(0)+\int_{\pa\om}f(0) h\cdot n.
\end{equation}
\end{proposition}
Formula \eqref{result hadam} is without doubts the natural result that one would expect. As a matter of fact, one can {\bf formally} verify it as follows:
By change of variables we have $i(t)=\int_{\om_t} f(t)= \int_{\om} f(t)\circ\pp{\id+\Phi(t)}J(t)$, where $J(t)=\det(I+D\Phi(t))$ is the Jacobian associated to the transformation $x\mapsto x+\Phi(t,x)$. Differentiation, followed by some easy manipulation and the application of the divergence theorem yield:
\begin{equation}\nonumber
\begin{aligned}
i'(0)=\int_\om f'(0)+\gr f(0)\cdot h + f(0) \,\dv h\\
= \int_{\om} f'(0) + \int_\om \dv\pp{f(0)h} = \int_\om f'(0) + \int_{\pa\om} f(0)h\cdot n.
\end{aligned}
\end{equation}
The rigorous proof of \ref{result hadam}, under the weak regularity assumptions of Proposition \ref{hadam form}, turns out to be quite delicate. We choose to postpone it, in order to first illustrate some applications. 
\begin{corollary}\label{generalized hadam form}
Let $\Phi\in \C^1\pp{[0,1),W^{1,\ali}\rnrn}$ and $f=f(t,x)\in\C^1\left([0,T),L^1(\rn)\right)\cap \C\left([0,T),W^{1,1}(\rn)\right)$.
Assume that $\om$ is a bounded open set with Lipschitz continuous boundary, then the function $[0,T)\ni t\mapsto i(t):=\int_{\om_t} f(t)$ is continuously differentiable on $[0,T)$ and we have
$$
i'(t_0)=\int_{\om_{t_0}}\pa_t f(t_0)+\int_{\pa \om_{t_0}} f(t_0) V(t_0)\cdot n_{t_0} \quad \tforall t_0\in[0,1),
$$
where $V(t,x):=\pa_t \Phi(t,(\id+\Phi(t))^{-1}(x))$ and $n_{t}$ is the outward unit normal to $\pa\om_{t}$.
\end{corollary}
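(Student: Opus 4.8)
The plan is to reduce the general statement to the case $t_0=0$, which is exactly the content of Proposition \ref{hadam form}, by \emph{re-centering} the flow at time $t_0$. Write $F_t:=\id+\Phi(t)$, so that $\om_t=F_t(\om)$, and recall that (as is implicit in the setting, and is what guarantees $\om_{t_0}$ is a Lipschitz domain) each $F_t$ is a bi-Lipschitz homeomorphism with $F_t\inv\in \wial\rnrn$. For fixed $t_0$ I would introduce the re-centered flow
\[
s\mapsto \Psi_s:=F_{t_0+s}\circ F_{t_0}\inv-\id,\qquad s\in[0,T-t_0),
\]
so that $\id+\Psi_s=F_{t_0+s}\circ F_{t_0}\inv$ and hence $(\id+\Psi_s)(\om_{t_0})=\om_{t_0+s}$. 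A direct computation using the identity $\Phi(t_0)\circ F_{t_0}\inv=\id-F_{t_0}\inv$ shows that $\Psi_0=0$, so $s\mapsto\Psi_s$ is an admissible flow issuing from the fixed Lipschitz domain $\om_{t_0}$, and $i(t_0+s)=\int_{(\id+\Psi_s)(\om_{t_0})}f(t_0+s)$.

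I would then verify the hypotheses of Proposition \ref{hadam form} with reference domain $\om_{t_0}$, flow $\Psi_s$, and integrand $g(s):=f(t_0+s)$. Writing $\Psi_s=(F_{t_0}\inv-\id)+\Phi(t_0+s)\circ F_{t_0}\inv$, the first summand is independent of $s$ and the second is the image of $\Phi(t_0+s)$ under the bounded linear composition operator $w\mapsto w\circ F_{t_0}\inv$ on $\wial\rnrn$ (boundedness coming from the chain rule and the bi-Lipschitz bounds on $F_{t_0}\inv$). Since $s\mapsto\Phi(t_0+s)$ is $\C^1$ into $\wial\rnrn$, the flow $\Psi_s$ is differentiable at $s=0$ with
\[
\restr{\frac{d}{ds}}{s=0}\Psi_s=\pa_t\Phi(t_0)\circ F_{t_0}\inv=V(t_0),
\]
which is exactly the velocity field appearing in the statement. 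Moreover $f\in\C^1([0,T),L^1(\rn))$ gives $g'(0)=\pa_t f(t_0)$ in $L^1(\rn)$ and $f\in\C([0,T),W^{1,1}(\rn))$ gives $g(0)=f(t_0)\in W^{1,1}(\rn)$. Proposition \ref{hadam form} then produces precisely
\[
i'(t_0)=\int_{\om_{t_0}}\pa_t f(t_0)+\int_{\pa\om_{t_0}}f(t_0)\,V(t_0)\cdot n_{t_0}.
\]

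It remains to show that $i'$ is continuous on $[0,T)$, which upgrades differentiability to the class $\C^1$. For this I would pull both integrals back to the fixed reference domain $\om$ via $F_{t_0}$: the volume term becomes $\int_\om \pp{\pa_t f(t_0)\circ F_{t_0}}\det\pp{I+D\Phi(t_0)}$, while, using $V(t_0)\circ F_{t_0}=\pa_t\Phi(t_0)$, the boundary term becomes an integral over the fixed surface $\pa\om$ of $\pp{f(t_0)\circ F_{t_0}}\pa_t\Phi(t_0)$ against the transported unit normal, weighted by the tangential Jacobian of $F_{t_0}$. By the $\C^1$-dependence of $\Phi$ on $t$ and the continuity of $t\mapsto\pa_t f(t)\in L^1(\rn)$ and of $t\mapsto f(t)\in W^{1,1}(\rn)$, every factor depends continuously on $t_0$, whence $i'$ is continuous.

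The main obstacle is not the final formula but the verification that $s\mapsto\Psi_s$ is a legitimate flow in the sense required by Proposition \ref{hadam form}: one must keep control of the inverse map $F_{t_0}\inv$, ensuring both that $\om_{t_0}$ is Lipschitz (as the image of a Lipschitz domain under a bi-Lipschitz map) and that composition with $F_{t_0}\inv$ is a bounded linear operation on $\wial\rnrn$ preserving the differentiable dependence on the flow parameter. This is the point where the invertibility (smallness) assumption on $\Phi$ is essential; once $F_{t_0}$ and $F_{t_0}\inv$ are under control, everything else reduces, via the chain rule, to the already-established case $t_0=0$.
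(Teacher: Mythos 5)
Your proposal is correct and follows essentially the same route as the paper: your re-centered flow $\Psi_s=F_{t_0+s}\circ F_{t_0}\inv-\id$ and shifted integrand $g(s)=f(t_0+s)$ are exactly the paper's auxiliary perturbation $\ol{\Phi}$ and function $\ol{f}$ in \eqref{proof generalized hadam form}, to which Proposition \ref{hadam form} is then applied. The only difference is that you spell out the verification of the hypotheses (bi-Lipschitz control of $F_{t_0}\inv$, the identification of the velocity with $V(t_0)$) and the continuity of $i'$, details the paper's one-line proof leaves implicit.
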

\begin{proof}
One applies Proposition \ref{hadam form} to the following auxiliary function $\ol{f}$ and perturbation $\ol{\Phi}$:
\begin{equation}\label{proof generalized hadam form}
\ol{f}(t,x):=f(t+t_0,x),\quad \id+\ol{\Phi}(t):=\pp{\id+\Phi(t+t_0)}\circ\pp{\id+\Phi(t_0)}\inv.
\end{equation}
\end{proof}
\begin{remark}\emph{
When computing the second order derivative of integral functionals, we will need to know the expression of the first derivatives in a right neighborhood of $t=0$ and thus we cannot directly employ the use of the Hadamard formula, as stated in Proposition \ref{hadam form}. This is where Corollary \ref{generalized hadam form} comes in handy. 
}\end{remark}
When computing the shape derivative of a surface integral functional, usually the mean curvature comes out in the process. We give here an alternative definition of the (additive) mean curvature that is most natural in the framework of shape derivatives. Let $\om$ be a domain of class $\C^2$ and $n$ denote its outward unit normal. We set
\[
H:=\dv_\tau n,
\] 
where $\dv_\tau$ is the tangential divergence (defined in \eqref{tg div} in Appendix A).
Notice for example, that the (additive) mean curvature $H$ of a sphere $\pa B_R$ is positive and equals $(N-1)/R$ (it corresponds to the sum of the principal curvatures, computed with respect to the {\bf inward} normal $-n$).
The following result is an analogue of the Hadamard formula for surface integrals. Later, we will give a refined version, that relies on weaker regularity assumptions, Proposition \ref{hadam form 2}. 
\begin{corollary}[A first Hadamard formula for surface integrals]\label{surface hadam form}
Let $\Phi:[0,1)\mapto \C^{2,\ali}\rnrn$, differentiable at $t=0$, with $\Phi(0)=0$ and $\pato \Phi=h$. Suppose that $\om$ is a bounded domain of class $\C^3$. Consider a function $t\mapsto g(t)\in W^{1,1}(\rn)$ that is differentiable in a neighborhood of $0$ with derivative $g'(0)$ and such that $g(0)\in W^{2,1}(\rn)$. Then the map $t\mapsto j(t)=\int_{\pa \om_t} g(t)$ is differentiable at $0$ and we have
\[
j'(0)= \int_{\pa\om}g'(0) + \pp{\dn g(0)+H g(0)}h\cdot n. 
\]
\end{corollary}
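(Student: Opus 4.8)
The plan is to pull the moving surface integral back onto the fixed manifold $\pa\om$ by means of the change of variables induced by the diffeomorphism $\Phi_t:=\neige$, exactly as in the formal computation that follows Proposition \ref{hadam form}. Writing $\om_t=\Phi_t(\om)$, the transformation of the $(N-1)$-dimensional surface measure introduces the \emph{tangential Jacobian}
\[
\omega(t):=\det\pp{I+D\Phi(t)}\,\abs{\pp{I+D\Phi(t)}^{-T}n},
\]
so that
\[
j(t)=\int_{\pa\om_t}g(t)=\int_{\pa\om}\pp{g(t)\circ\Phi_t}\,\omega(t).
\]
After this reduction the whole $t$-dependence sits inside the integrand over the fixed domain $\pa\om$, and the problem becomes one of differentiation under the integral sign.

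I would then differentiate at $t=0$, using $\Phi_0=\id$, $\omega(0)=1$ and $\pato\Phi=h$. The chain rule gives $\dato\pp{g(t)\circ\Phi_t}=g'(0)+\gr g(0)\cdot h$, while the expansion $\pp{I+D\Phi(t)}^{-T}n=n-t\,(Dh)^{T}n+o(t)$ yields $\dato\abs{\pp{I+D\Phi(t)}^{-T}n}=-\pp{Dh\,n}\cdot n$; combined with $\dato\det\pp{I+D\Phi(t)}=\dv h$ this produces
\[
\dato\omega(t)=\dv h-\pp{Dh\,n}\cdot n=\dv_\tau h,
\]
the last equality being precisely the definition of the tangential divergence (see Appendix A). Collecting the two contributions gives
\[
j'(0)=\int_{\pa\om}g'(0)+\gr g(0)\cdot h+g(0)\,\dv_\tau h.
\]

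It remains to recast the last two terms in the claimed normal form. I would split the gradient into its normal and tangential parts, $\gr g(0)=\grt g(0)+\pp{\dn g(0)}n$, and invoke both the product rule $\dv_\tau\pp{g(0)\,h}=g(0)\,\dv_\tau h+\grt g(0)\cdot h$ and the tangential divergence theorem on the closed surface $\pa\om$, namely $\int_{\pa\om}\dv_\tau X=\int_{\pa\om}H\,X\cdot n$ (see Appendix A), applied to $X=g(0)\,h$. The two occurrences of $\grt g(0)\cdot h$ then cancel, and using $\grt g(0)\cdot n=0$ one is left with
\[
\int_{\pa\om}\gr g(0)\cdot h+g(0)\,\dv_\tau h=\int_{\pa\om}\pp{\dn g(0)+H\,g(0)}h\cdot n,
\]
which is exactly the asserted formula.

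The only genuinely delicate point is the rigorous justification of differentiating under the integral sign under the weak regularity stated (with $g(0)$ merely in $W^{2,1}(\rn)$); this parallels the postponed proof of Proposition \ref{hadam form}, and I expect to handle it by a dominated-convergence argument, showing that $t\mapsto\pp{g(t)\circ\Phi_t}\omega(t)$ is differentiable at $0$ in $L^1(\pa\om)$. The stronger hypotheses assumed here — $\om$ of class $\C^3$ and $\Phi$ valued in $\C^{2,\ali}\rnrn$ — are precisely what guarantee that $t\mapsto\omega(t)$ is of class $\C^1$ near $t=0$, that the additive mean curvature $H=\dv_\tau n$ is well defined and continuous on $\pa\om$, and that the tangential Green formula of Appendix A is applicable; this is why the surface version requires more regularity than the volume formula of Proposition \ref{hadam form}.
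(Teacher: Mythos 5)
Your proof is correct, but it takes a genuinely different route from the paper's own proof of this corollary. The paper never pulls the integral back to the fixed boundary: it extends the unit normal to all of $\rn$ via the signed distance function \eqref{dist func}, rewrites the surface integral as the volume integral $j(t)=\int_{\om_t}\dv\pp{g(t)\,n_t}$ by the divergence theorem, and then applies the already-proven volume Hadamard formula (Proposition \ref{hadam form}); the extra term $\int_{\pa\om}g(0)\,(\pato n_t)\cdot n$ produced in this way is killed by the fact that the extension $n_t=\gr d_{\om_t}$ is unitary near $\pa\om$, so $\pato n_t$ is orthogonal to $n$, while the remaining terms give $\dv n=\dv_\tau n=H$ on $\pa\om$. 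This is precisely why the corollary assumes $\om$ of class $\C^3$: one needs a $\C^2$ extension of the normal so that $\dv\pp{g(t)n_t}$ lands in the spaces required by Proposition \ref{hadam form}. Your route --- change of variables with the tangential Jacobian \eqref{tg Jacobian}, differentiation under the integral sign (your computation of $\dato\,\omega(t)=\dv_\tau h$ is Lemma \ref{J tau diff}), then the tangential Leibniz rule \eqref{decomp tg div} and the tangential Stokes formula (Lemma \ref{tg stokes}) --- is exactly the strategy the paper adopts later for the refined Proposition \ref{hadam form 2}, and it is in principle sharper, since it never uses the $\C^3$ regularity, which in the paper's argument serves only the normal-extension trick. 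What the paper's approach buys is that all the delicate analysis stays in $L^1(\rn)$, where Lemma \ref{5.2.6} applies directly; your approach must instead justify differentiability of $t\mapsto\pp{g(t)\circ\Phi_t}\,\omega(t)$ in $L^1(\pa\om)$, and here your appeal to ``dominated convergence'' slightly misdiagnoses the mechanism: the clean argument is to show that $t\mapsto g(t)\circ\Phi_t$ is differentiable as a map into $W^{1,1}$ (this is where $g(0)\in W^{2,1}(\rn)$ enters, via Lemma \ref{5.2.6} applied to $\gr g(0)$, exactly as in Lemma \ref{5.2.7}) and then invoke continuity of the trace operator $W^{1,1}(\om)\to L^1(\pa\om)$ --- which is how the paper phrases the hypotheses of Proposition \ref{hadam form 2}. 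With that one repair, your argument is a complete and valid alternative proof.
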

\begin{proof}
Let $n$ (respectively $n_t$) denote an extension of the outward unit normal to $\pa \om$ (respectively $\pa\om_t$) of class $\C^2$ (respectively $\C^1$) on $\rn$. To fix ideas, we might put $n:=\gr d_{\om}$ in a neighborhood of $\pa\om$ and maybe multiply it by a smooth cut off function to be sure that the extension is smooth even far away from the boundary $\pa\om$, where $d_{\om}$ is the signed distance function to $\pa \om$ (see also \cite[Chapter 5]{SG}), defined as
\begin{equation}\label{dist func}
d_\om(x):=\begin{cases}
-\dist (x,\pa\om) \quad\tfor x\in\om,\\
\dist(x,\pa\om)\quad\tfor x\in \rn\sm\om.
\end{cases}
\end{equation}
Of course, the same can be done for $n_t$. 
Now, we just need to apply Proposition \ref{hadam form} to $j(t)=\int_{\om_t} \dv\pp{g(t)n_t}$. By hypothesis we have that $\dv\,n\in\C^1(\rn)$ and $\gr g(0)\in W^{1,1}\rnrn$ and thus $\dv\pp{g(0)n}\in W^{1,1}(\rn)$. Therefore, we just need to check that the map
\[
t\mapsto \dv\pp{g(t)n_t}=g(t)\dv(n_t)+\gr g(t)\cdot n_t \in L^1(\rn)
\]
is differentiable at $t=0$. By construction, $n_t$ is differentiable at $t=0$ (see also Proposition \ref{der of normal}) and so is the map $t\mapsto g(t)\in W^{1,1}(\rn)$ by hypothesis. Now, an application of Proposition \ref{hadam form} yields
\[
j'(0)=\int_{\pa\om} g'(0)+ \int_{\pa\om} g(0) (\pato n_t)\cdot n + \int_{\pa\om} \pp{\gr g(0)\cdot n+ g(0) \dv(n)}\,h\cdot n.
\] 
Since we chose $n_t=\gr d_{\om_t}$, then for $t\ge0$ small, $n_t$ is unitary in a neighborhood of $\pa\om$ and hence $\dato n_t$ is orthogonal to $n$. We conclude by recalling that in this case, $\dv(n)=\dv_\tau(n)= H$. 
\end{proof}

The following lemma is a key ingredient in the proof of Proposition \ref{hadam form}.

\begin{lemma}\label{5.2.6}
Let $g\in W^{1,1}(\rn)$ and $\Psi:[0,1)\to W^{1,\ali}\rnrn$ be continuous at $t=0$, $t\mapsto \Psi(t)\in L^\ali$ differentiable at $0$, with derivative $Z$. Then the map 
\[
t\mapsto G(t):=g\circ \Psi(t)\in L^1(\rn)
\]
is differentiable at $0$ and $G'(0)=\gr g\cdot Z$.
\end{lemma}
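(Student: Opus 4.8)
The plan is to prove this chain rule by first reducing to smooth functions and then removing the regularity of $g$ via a density argument, the latter resting on a \emph{uniform-in-$t$} Lipschitz estimate for the composition operator. Throughout I regard $\Psi(0)=\id$ (as in the applications, where $\Psi(t)=\id+\Phi(t)$), so that the $W^{1,\ali}$-continuity at $0$ forces $\norm{D\Psi(t)-I}_{\linf}\to0$; in particular, for $t$ small, each $\Psi(t)$ — and more generally each convex-combination map $\gamma_s(t):=\id+s(\Psi(t)-\id)$, $s\in[0,1]$ — is a bi-Lipschitz homeomorphism of $\rn$ whose Jacobian satisfies $\abs{\det D\gamma_s(t)}\ge c>0$, with $c$ independent of $s$ and of $t$ small. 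The $\linf$-differentiability moreover yields $\norm{\Psi(t)-\id}_{\linf}\le C\abs{t}$ and $(\Psi(t)-\id)/t\to Z$ in $\linf$.

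The crux is the following uniform estimate: there is $C>0$ such that, for every $h\in W^{1,1}(\rn)$ and every $t$ small, $\norm{h\circ\Psi(t)-h\circ\Psi(0)}_{1}\le C\abs{t}\,\norm{\gr h}_{1}$. I would first prove this for $h\in\C^1_c(\rn)$: writing $h(\Psi(t)(x))-h(\Psi(0)(x))=\int_0^1\gr h(\gamma_s(t)(x))\cdot(\Psi(t)(x)-\Psi(0)(x))\,ds$ by the fundamental theorem of calculus, taking absolute values, using $\abs{\Psi(t)(x)-\Psi(0)(x)}\le C\abs{t}$, integrating in $x$, and then changing variables $y=\gamma_s(t)(x)$ in each slice (this is where the uniform Jacobian lower bound enters) gives the claim. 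The estimate then passes to all $h\in W^{1,1}(\rn)$ by density, since the same change of variables also yields $\norm{h\circ\Psi(t)}_1\le C\norm{h}_1$, making both sides continuous with respect to the $W^{1,1}$-norm of $h$.

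With this in hand, fix $\eps>0$ and pick $g_k\in\C^1_c(\rn)$ with $g_k\to g$ in $W^{1,1}(\rn)$. I would split the difference quotient as
\begin{multline*}
\frac{G(t)-G(0)}{t}-\gr g\cdot Z
=\frac{(g-g_k)\circ\Psi(t)-(g-g_k)\circ\Psi(0)}{t}\\
+\left(\frac{g_k\circ\Psi(t)-g_k\circ\Psi(0)}{t}-\gr g_k\cdot Z\right)+(\gr g_k-\gr g)\cdot Z.
\end{multline*}
The first term is bounded in $L^1$ by $C\norm{\gr(g-g_k)}_1$ thanks to the uniform estimate (this is exactly what defeats the dangerous $1/t$ factor), and the third by $\norm{Z}_{\linf}\norm{\gr(g_k-g)}_1$; both are small, uniformly in $t$, once $k$ is large. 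For the middle term, with $g_k$ now fixed and smooth, I would write it as $\int_0^1\bigl(\gr g_k(\gamma_s(t))\cdot\tfrac{\Psi(t)-\Psi(0)}{t}-\gr g_k\cdot Z\bigr)\,ds$ and let $t\to0$: the difference quotient converges to $Z$ in $\linf$, the arguments $\gamma_s(t)\to\id$ uniformly so $\gr g_k(\gamma_s(t))\to\gr g_k$ uniformly by uniform continuity, and the relevant supports stay in a fixed compact set, whence $L^1$-convergence to $0$. A standard $\eps/2+\eps/2$ argument then concludes.

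The step I expect to be the main obstacle is the uniform Lipschitz estimate, and specifically making the slice-wise change of variables $y=\gamma_s(t)(x)$ legitimate with a Jacobian bound that is uniform in both $s\in[0,1]$ and $t$: this is what prevents the $1/t$ in the first term from blowing up, and it is the only place where the $W^{1,\ali}$-continuity of $\Psi$ (as opposed to mere $\linf$-differentiability) is genuinely needed.
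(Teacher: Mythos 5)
Your proof is correct and follows essentially the same route as the paper's: the paper likewise differentiates via the fundamental theorem of calculus along the segment maps $(1-s)x+s\Psi(t,x)$, controls the resulting error term by the slice-wise change of variables with a uniform Jacobian bound (exactly the point you flagged as the crux), and then passes from smooth $g$ to $g\in W^{1,1}(\rn)$ by density. The only difference is organizational: the paper packages the density step inside an error functional $e(t,g)=\int_0^1\abs{\gr g((1-s)x+s\Psi(t,x))-\gr g(x)}\,ds$, splitting $e(t,g)\le e(t,g-g_k)+e(t,g_k)$, rather than isolating your standalone uniform Lipschitz estimate for the composition operator, but the mechanism defeating the $1/t$ factor is identical to yours.
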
 
\begin{proof}
First of all, we claim that, for every $f\in L^1(\rn)$
\begin{equation}\label{composition}
\lim_{t\to0} f\circ \Psi(t)=f \tin L^1(\rn).
\end{equation} 
We will prove it by an approximation argument. Fix $f\in L^1(\rn)$ and let $\{f_k\}_{k\in\NN}$ be a sequence of functions in $\C_0^\ali(\rn)$ converging to $f$ in $L^1(\rn)$. We get 
\begin{equation}\label{madafnohanashidattakoro}
\begin{aligned}
\norm{f\circ \Psi(t)-f}_1\le \norm{f\circ \Psi(t)-f_k\circ \Psi(t)}_1 + \norm{f_k\circ \Psi(t)-f_k}_1+\norm{f_k-f}_1\\
\le C \norm{f_k-f}_1+\norm{f_k\circ \Psi(t)-f_k}_1,
\end{aligned}
\end{equation}
where in the last inequality we used the fact that the Jacobian of $\Psi(t)$ is uniformly bounded. Since $f_k\in\C_0^\ali(\rn)$, then the last term in the above tends to $0$ for all $k\in\NN$. As a matter of fact, since $f_k\in\C_0^\ali(\rn)$, for some ball $B(f_k)$, whose radius (depending on the support of $f_k$ and on a uniform constant bounding the $L^\ali$-norm of $\Psi(t)$) is large enough,
\[
\norm{f_k\circ\Psi(t)-f_k}_1 = \int_ {B(f_k)} |f_k\circ \Psi - f_k|= \int_{B(f_k)} \big|\gr f_k(x)\cdot (\Psi(t,x)-x)+\eps_1(t,x)\big|\, dx,
\]
where $\norm{\eps_1(t,\cdot)}_\ali\to0$ as $t\to 0$. 
Therefore, 
\begin{equation}\label{himitsuheiki}
\norm{f_k\circ \Psi(t)-f_k}_1\le \vol(B(f_k)) \Big\{\norm{f_k}_{1,\ali}\norm{\Psi(t)-\id}_\ali +\norm{\eps_1(t)}_\ali\Big\}.
\end{equation}
We conclude by taking the limits with respect to $t\to0$ and then $k\to\ali$ in \eqref{madafnohanashidattakoro}.

Suppose now that $g\in\C_0^\ali$. For $y\in\rn$ we have
\[
g(x+y)-g(x)-\gr g(x)\cdot y = \int_0^1 \left\{ \gr g(x+sy)-\gr g(x)\right\}\cdot y\,ds.
\] 
We employ the use of the formula above with $y=\Psi(t,x)-x= t Z(x)+t \eps_2(t,x)$, where $\norm{\eps_2(t,\cdot)}_\ali\to0$ as $t\to0$, and integrate it with respect to $x$ on the whole $\rn$.
We put 
\[
\eta_t:= t\inv \norm{g \circ\Psi(t)-g-t g\cdot Z}_1.
\]
The following estimate holds:
\begin{equation}\label{etat}
\eta_t\le \norm{\gr g}_1\norm{\eps_2(t)}_\ali+ C\norm{e(t,g)}_1,
\end{equation}
where $C$ is a uniform majorant of $\norm{Z+\eps_2(t)}_\ali$ and 
\[
e(t,g)(x):=\int_0^1 \abs{\gr g\big((1-s)x+s\Psi(t,x)\big)-\gr g(x)} \, ds.
\]
By the change of variable $z=(1-s)x+s \Psi(t,x)$ we get the estimate
\[
\norm{e(t,g)}_1\le 2 \norm{\gr g}_{\ali}\norm{\Psi(t)}_{1,\ali}.
\]
Now suppose that $g\in W^{1,1}(\rn)$ and $\{g_k\}_{k\in\NN}$ is a sequence of functions in $\C_0^\ali(\rn)$ converging to $g$ in $W^{1,1}(\rn)$. Inequality \eqref{etat}, that holds for $g_k$, is actually true for $g$ too, by \eqref{composition}. Now, combining the previous estimates and $e(t,g)\le e(t,g-g_k)+e(t,g_k)$, we obtain
\[
\norm{e(t,g)}_1\le 2 \norm{g-g_k}_{1,1}\norm{\Psi(t)}_{1,\ali}+ \vol(B(g_k))\norm{g_k}_{2,\ali}\norm{\Psi(t)-\id}_\ali, 
\] 
where the last term is derived as \eqref{himitsuheiki}. Taking the limits for $t\to0$ and then $k\to\ali$ yields $\eta_t\to 0$, that is the conclusion of the lemma.
\end{proof}

\begin{proof}[Proof of Proposition {\rm\ref{hadam form}}]
By assumption we have 
\[
D\pp{\id+\Phi(t)}=I+ t Dh + t\, \eps_1(t) \quad \text{ almost everywhere in }\rn, 
\]
where $\eps_1(t)=\eps_1(t,\cdottone)\in L^\ali\rnrn$ and $\norm{\eps_1(t)}_\ali\to 0$ as $t\to0$. Now, recall that the map $A\mapsto \det A\in L^\ali(\rn)$ is differentiable in $L^\ali(\rn,\RR^{N\times N})$, and its derivative at the identity matrix $I$ is given by the trace function. Thus the following holds almost everywhere in $\rn$:
\begin{equation}\label{J exp}
J(t)=\det\pp{\id+\Phi(t)}= 1+t\,\dv h +t\, \eps_2(t),
\end{equation}
where $\eps_2(t)$ also tends to $0$ in the $L^\ali$-norm.
We set now $i(t):=\int_{\om_t}f(t)$ and decompose $\{i(t)-i(0)\}/t$ into the sum of three terms:
\begin{equation}\nonumber
\begin{gathered}
A(t):=\frac{1}{t} \int_\om \big\{f(t)-f(0)\big\}\circ\pp{\id+\Phi(t)}\,J(t), \\
B(t):= \frac{1}{t}\int_\om \big\{ f(0)\circ \pp{\id+\Phi(t)}-f(0)\big\} \,J(t),\quad
C(t):=\int_\om f(0) \frac{J(t)-J(0)}{t}.
\end{gathered}
\end{equation}
By \eqref{J exp} and the dominated convergence theorem, $C(t)$ converges to $\int_\om f(0) \,\dv h$ as $t\to0$. By a further change of variable we have
\[
A(t)= \int_{\om_t} \frac{f(t)-f(0)}{t}=\int_\rn \chi_{\om_t} \left\{\frac{f(t)-f(0)}{t}-f'(0)\right\}+\int_\rn \chi_{\om_t} f'(0),
\]
which converges to $\int_\om f'(0)$. Here we used the dominated convergence theorem and the fact that $t\mapsto f(t)\in L^1(\rn)$ is differentiable by assumption. Finally, $B(t)$ converges to $\int_\om \gr f(0)\cdot h$ by Lemma \ref{5.2.6} with $g=f(0)$ and $\Psi(t)=\id+\Phi(t)$. This concludes the proof of Proposition \ref{hadam form}.
\end{proof}

\begin{proposition}\label{der of normal}
Let $\om$ be a bounded open set of class $\C^2$ and $\Phi:[0,1)\to\Phi(t)\in\C^{1,\ali}\rnrn$ be differentiable at $t=0$ with $\Phi(0)=0$ and $\pato \Phi:=h$. Moreover, let $n$ denote an extension of class $\C^{1,\ali}\rnrn$ of the unit normal to $\pa\om$% whose Jacobian matrix $Dn$ is symmetric in a neighborhood of $\pa\om$
. Then, 
\[
t\mapsto n_t:= w(t)/\norm{w(t)}, \quad \text{ where }w(t):=\pp{(I+D\Phi(t))^{-T} n}\circ \pp{\id + \Phi(t)}^{-1}, 
\]
is an extension of $n$ to $\pa\om_t$ that is differentiable at $t=0$ when seen as a map $[0,1)\to \C^{0,\ali}\rnrn$. 
Moreover, for all extensions of the form $t\mapsto \wt n_t\in \C^{0,\ali}\rnrn$, differentiable at $t=0$ and such that $n_0\in\C^{1,\ali}\rnrn$, the following holds:
%\[
%\pato n_t = -\grt (h\cdot n) - (D n_0\, n) h\cdot n \quad \text{ on }\pa\om.
%\]
%In particular, if in addition $n_t$ is unitary in a neighborhood of $\pa\om$, then we have
\[
\pato \wt n_t = -\grt(h\cdot n)-(D \wt n_0 \cdot n)\, h\cdot n \quad \ton \pa\om. 
\]
\end{proposition}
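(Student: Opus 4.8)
The plan is to treat the explicit normalized family first, and then deduce the general formula by relating the (extension dependent) shape derivative $\pato\wt n_t$ to the (extension independent) material derivative along the flow.

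First I would record the algebraic transformation law for normals. Writing $M(t):=I+D\Phi(t)$ and $\Psi(t):=\id+\Phi(t)$, for every vector $\tau$ tangent to $\pa\om$ at $x$ one has $\big(M(t)^{-T}n\big)\cdot\big(M(t)\tau\big)=n\cdot\tau=0$, while the vectors $M(t)\tau=D\Psi(t)\,\tau$ span the tangent space of $\pa\om_t$ at $\Psi(t)x$. Hence $w(t)=(M(t)^{-T}n)\circ\Psi(t)^{-1}$ is a (nonnormalized) normal field on $\pa\om_t$ and $n_t=w(t)/\norm{w(t)}$ is its unit normal, which explains why $n_t$ is an admissible extension. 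For differentiability at $t=0$ into $\C^{0,\ali}\rnrn$ I would assemble three differentiable building blocks: the map $t\mapsto M(t)^{-T}$ is differentiable at $0$ with derivative $-(Dh)^T$, since matrix inversion is analytic near $I$ and $D\Phi(0)=0$; the composition with the inverse diffeomorphism $\Psi(t)^{-1}$ (itself differentiable at $0$ with $\pato\Psi(t)^{-1}=-h$) is controlled by the Hölder-space analogue of Lemma \ref{5.2.6} applied to the fixed field $n\in\C^{1,\ali}\rnrn$; and the normalization $w\mapsto w/\norm{w}$ is smooth near $w(0)=n$, where $\norm{n}=1$ on $\pa\om$. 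This yields $\pato w=-(Dh)^T n-(Dn)h$, and, since $\norm{n_t}\equiv1$ forces $n\cdot\pato n_t=0$, the tangential projection
\[
\pato n_t=P_\tau\,\pato w,\qquad P_\tau:=\id-n\otimes n\quad\ton\pa\om.
\]

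Next I would pass to an arbitrary admissible extension $\wt n_t$, whose only constraint is that its restriction to $\pa\om_t$ be the unit normal. Differentiating $t\mapsto \wt n_t(\Psi(t)x)$ by the chain rule gives, on $\pa\om$,
\[
\dot\nu:=\pato\big(\wt n_t\circ\Psi(t)\big)=\pato\wt n_t+(D\wt n_0)\,h.
\]
The crucial observation is that $\wt n_t\circ\Psi(t)$ equals the genuine unit normal of $\pa\om_t$ transported by $\Psi(t)$, so $\dot\nu$ does not depend on how $\wt n_t$ is extended off $\pa\om_t$; in particular it may be computed from the explicit family of the previous step, for which $\wt n_t\circ\Psi(t)=M^{-T}n/\norm{M^{-T}n}$, giving $\dot\nu=P_\tau\,\pato\!\big(M^{-T}n\big)=-P_\tau(Dh)^T n$. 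Therefore $\pato\wt n_t=-P_\tau(Dh)^T n-(D\wt n_0)h$ on $\pa\om$.

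Finally I would reduce this to the stated form by tangential calculus, taking $n=\wt n_0$ throughout (the value of $\grt(h\cdot n)$ being independent of the chosen extension). The product rule $\gr(h\cdot n)=(Dh)^T n+(Dn)^T h$ gives $-P_\tau(Dh)^T n=-\grt(h\cdot n)+P_\tau(Dn)^T h$. Differentiating $\norm{n}^2\equiv1$ tangentially on $\pa\om$ yields $P_\tau(Dn)^T n=0$, and the tangential–tangential block of $Dn$ is symmetric on $\pa\om$ (it is the second fundamental form, determined by $n|_{\pa\om}$); these two facts together give $P_\tau(Dn)^T h=(Dn)h_\tau$ with $h_\tau:=h-(h\cdot n)n$. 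Substituting and using $h_\tau-h=-(h\cdot n)n$,
\[
\pato\wt n_t=-\grt(h\cdot n)+(Dn)h_\tau-(Dn)h=-\grt(h\cdot n)-(h\cdot n)\,(Dn)n,
\]
which is the asserted identity, with $D\wt n_0\cdot n$ read as the matrix–vector product $(D\wt n_0)n$. I expect the two delicate points to be, on the analytic side, the differentiability in $\C^{0,\ali}$ — the composition with $\Psi(t)^{-1}$ loses one derivative, which is exactly why the hypothesis $n_0\in\C^{1,\ali}$ and the Hölder version of Lemma \ref{5.2.6} are needed — and, on the geometric side, the identity $P_\tau(Dn)^T h=(Dn)h_\tau$; the conceptual crux, however, is that the material derivative $\dot\nu$ is extension independent, which is what lets the single explicit computation of the first step serve every extension at once.
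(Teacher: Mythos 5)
Your proof is correct, and its skeleton is the same as the paper's: you use the same algebraic family $w(t)=\pp{(I+D\Phi(t))^{-T}n}\circ\pp{\id+\Phi(t)}\inv$, the same verification that it is normal to $\pa\om_t$, and the same key fact that, for $x\in\pa\om$ fixed, the derivative of $t\mapsto \wt n_t(x+\Phi(t,x))$ does not depend on the chosen extension --- the paper encodes exactly this by differentiating the identity $(\wt n_t - n_t)(x+\Phi(t,x))=0$, which is precisely your statement that the material derivative $\dot\nu$ is extension-independent. Where you genuinely differ is the finishing step: the paper fixes the reference extension $n=\gr d_\om$, for which $Dn=D^2d_\om$ is symmetric and $Dn\,n=0$, so that $\pato n_t=-\grt(h\cdot n)$ drops out immediately, and then converts $D(\wt n_0-n)h$ into $(D\wt n_0\,n)\,h\cdot n$ via $D_\tau(\wt n_0-n)=0$; you never introduce the signed distance function and instead work directly with $\wt n_0$, using that the tangential part of $(D\wt n_0)^T\wt n_0$ vanishes (since $\norm{\wt n_0}\equiv 1$ on $\pa\om$) together with the symmetry of the tangential--tangential block of $D\wt n_0$ (the second fundamental form, which depends only on the boundary values of $\wt n_0$ and is therefore symmetric for any admissible extension). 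Both finishes are sound: the paper's choice of $\gr d_\om$ makes the algebra trivial at the price of invoking $d_\om$ and its properties, while yours is extension-agnostic but leans on the shape-operator symmetry, which the paper gets for free because $D^2 d_\om$ is a Hessian. A minor point in your favor: your justification of the differentiability of $t\mapsto n_t\in\C^{0,\ali}\rnrn$ (inversion, composition with $\pp{\id+\Phi(t)}\inv$ with the expected one-derivative loss, then normalization) is more explicit than the paper's one-line ``differentiable by composition.''
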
 
\begin{proof}
Let $n\in\C^{1,\ali}\rnrn$ be an extension of the outward unit normal to $\pa\om$. The function $t\mapsto n_t =w(t)/\norm{w(t)}\in\C^{0,\ali}\rnrn$ is differentiable by composition. Moreover, its restriction to $\pa\om_t$ coincides with the unit normal. To show this, fix $x_0\in\pa\om$ and consider a smooth path $s\mapsto x(s)\in\pa\om$ with $x(0)=x_0$ and $x'(0)=p$: we have $p\cdot n(x_0)=0$. Now, since $x(s)+\Phi(t,x(s))\in\pa\om_t$, taking the derivative with respect to $s$ at $s=0$ yields that $q:=(I+D\Phi(t,x_0))p$ is a tangential vector to $\pa\om_t$ at the point $x_0+\Phi(t,x_0)$. It is then immediate to see that $w(t,x_0+\Phi(t,x_0))= (I+D\Phi(t,x_0))^{-T}n(x_0)$ is orthogonal to $q$.

Let us now differentiate the expression $(I+D\Phi(t))^{-T}\circ(\id+\Phi(t))\inv w(t)=n\circ (\id+\Phi(t))\inv$ with respect to $t$ to obtain 
\[
(Dh)^T n+ w'(0)= - D n h \implies w'(0)=-\gr(h\cdot n)+((Dn)^T-Dn)h.
\]
Recalling the definition of $n_t=w(t)/\norm{w(t)}$ we get
\[
\pato n_t= w'(0)- \pp{w'(0)\cdot n}n
\]
In order to carry on our computations we need to choose an extension $n$: let it be defined as $\gr d_\om$ in a neighborhood of $\pa\om$. By construction we have that $Dn=D^2 d_\om$ is symmetric and hence in this case:
\begin{equation}\label{thisoneabove}
\pato n_t= -\grt (h\cdot n). 
\end{equation}
Take now an extension $\wt n_t$ as in the statement of the proposition. As, for all $x\in\pa\om$, $(\wt{n}_t-n_t)(x+\Phi(t,x))=0$, we get
\[
\restr{\frac{\pa(\wt{n}_t-n_t)}{\pa_t}}{t=0}+D(\wt{n}_0-n)h=0.
\]
However, (see \eqref{tg jacobian}) 
\[
D_\tau (\wt{n}_0-n)=0 \implies D(\wt{n}_0-n)h=D(\wt n_0-n)n (h\cdot n) = (D\wt n_0\, n) \, h\cdot n,
\]
where in the last equality we used that $n$ is unitary in a neighborhood of $\pa\om$ and hence $Dn\, n=0$.
By recalling \eqref{thisoneabove} one gets 
\[
\pato \wt n_t = \pato n_t - D(\wt n_0-n) h = -\grt(h\cdot n)-(D\wt n_0 \,n)\, h\cdot n.
\]
\end{proof}

In order to handle surface integrals on variable domains, we will introduce the following change of variable formula. 
Let $\om$ be a bounded open set of class $\C^1$, $\phi\in\C^{1,\ali}\rnrn$ and $g\in L^1(\pa\om_\phi)$. Then $g\circ(\id+\phi)\in L^1(\pa\om)$ and the following holds
\[
\int_{\pa\om_\phi} g =\int_{\pa\om} g\circ (\id+\phi) J_\tau (\phi),
\]
where the term $J_\tau(\phi)$, defined as
\begin{equation}\label{tg Jacobian}
J_\tau(\phi)= \det(I+D\phi)\,\big\Vert{(I+D\phi)^{-T}n}\big\Vert,
\end{equation}
is called \emph{tangential Jacobian} associated to the transformation $\id+\phi$.
\begin{lemma}\label{J tau diff}
Let $\om$ be a bounded open set of class $\C^1$. The application $\phi\mapsto J_\tau (\phi)\in\C(\pa\om)$ is of class $\C^\ali$ in a neighborhood of $0\in\C^{1,\ali}\rnrn$. Moreover we have 
\[
J'_\tau(0)\phi=\dv_\tau \phi. 
\] 
Furthermore, if $t\mapsto \Phi(t)\in\C^{1,\ali}\rnrn$ is differentiable at $0$, with derivative $h$, then $t\mapsto J_\tau(\Phi(t))\in\C(\pa\om)$ is differentiable at $0$ and we have
\[
\pato J_\tau(\Phi(t))=\dv_\tau h.
\]
\end{lemma}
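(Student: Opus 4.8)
The plan is to treat the tangential Jacobian \eqref{tg Jacobian},
\[
J_\tau(\phi)=\det\pp{I+D\phi}\,\norm{\pp{I+D\phi}^{-T}n},
\]
as a composition of elementary maps, each smooth on a suitable open set, and then to read off the three assertions in order. For the first assertion I would argue that, after restriction to the compact boundary $\pa\om$, every operation above takes place in the commutative Banach algebra $\C(\pa\om)$ (with pointwise product and sup norm). The assignment $\phi\mapsto D\phi$ is a bounded linear map $\C^{1,\ali}\rnrn\to\C^{0,\ali}(\rn,\rnn)$, hence $\C^\ali$; the determinant $M\mapsto\det(I+M)$ is a bounded polynomial map and therefore $\C^\ali$; matrix inversion $M\mapsto(I+M)^{-1}$ is given near $0$ by the Neumann series $\sum_k(-M)^k$ and is thus analytic, and transposition is linear. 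Finally $v\mapsto\norm{v}$ is the composition of the bounded quadratic map $v\mapsto\sum_i v_i^2$ with the superposition operator induced by $\sqrt{\cdottone}\in\C^\ali(0,\ali)$, which is $\C^\ali$ on the open set of functions in $\C(\pa\om)$ that are bounded away from $0$. Since at $\phi=0$ the vector $\pp{I+D\phi}^{-T}n$ reduces to the unit vector $n$, continuity keeps it uniformly bounded away from $0$ for small $\phi$, so this last step applies. As a product of $\C^\ali$ maps, $J_\tau$ is $\C^\ali$ in a neighborhood of $0$.

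Next I would compute the differential at $\phi=0$, where $\det\pp{I}=1$ and $\norm{n}=1$, by the Leibniz rule. The first factor contributes $\restr{\frac{d}{ds}}{s=0}\det\pp{I+s\,D\phi}=\tr(D\phi)=\dv\phi$, using that the derivative of the determinant at $I$ is the trace (as already invoked in the proof of Proposition \ref{hadam form}). For the second factor, set $w(s)=\pp{I+s\,D\phi}^{-T}n$; from $\pp{I+s\,D\phi}^{-T}=I-s\,(D\phi)^T+o(s)$ we get $w(0)=n$ and $w'(0)=-(D\phi)^T n$, whence
\[
\restr{\frac{d}{ds}}{s=0}\norm{w(s)}=\frac{w(0)\cdot w'(0)}{\norm{w(0)}}=-\,n\cdot(D\phi)^T n=-(D\phi\,n)\cdot n.
\]
Combining the two, $J'_\tau(0)\phi=\dv\phi-(D\phi\,n)\cdot n$, which is precisely $\dv_\tau\phi$ by the definition of the tangential divergence recalled in Appendix A (see \eqref{tg div}). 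This gives the second assertion.

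The third assertion is then immediate from the chain rule of Section \ref{sec preliminaries}: since $J_\tau$ is Fr\'echet differentiable at $0$ by the first part and $t\mapsto\Phi(t)$ is differentiable at $0$ with $\Phi(0)=0$ and $\pato\Phi=h$, the composition $t\mapsto J_\tau(\Phi(t))$ is differentiable at $0$ and $\pato J_\tau(\Phi(t))=J'_\tau(0)h=\dv_\tau h$ by the second part. The hard part will be the first assertion, and within it the smoothness of the superposition operator $\sqrt{\cdottone}$ on $\C(\pa\om)$, which is the only step where the infinite-dimensional function-space setting genuinely matters; it is classical, however, that on spaces of continuous functions over a compact set the superposition operator is as regular as its generating function, so no loss of derivatives occurs. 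Everything else reduces to the smoothness of polynomial maps and of matrix inversion on a Banach algebra, together with the two finite-dimensional derivative formulas for $\det$ and for the matrix inverse.
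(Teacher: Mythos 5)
Your proposal is correct and follows essentially the same route as the paper's proof: smoothness of $\phi\mapsto J_\tau(\phi)$ by composition of $\C^\ali$ maps, the Leibniz/G\^ateaux computation giving $\dv\phi - n\cdot(D\phi\,n)=\dv_\tau\phi$ via the derivative of $\det$ at $I$ and of the norm factor, and the chain rule for the statement about $t\mapsto J_\tau(\Phi(t))$. The only difference is one of detail: where the paper simply asserts smoothness ``by composition,'' you justify each ingredient (Neumann series for the inverse, the superposition operator for $\sqrt{\cdottone}$ on functions bounded away from zero), which is a welcome elaboration rather than a different argument.
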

\begin{proof}
The application $\phi\mapsto J_\tau(\phi)= \det(I+D\phi)\,\big\Vert (I+D\phi)^{-T} n\big\Vert\in \C(\pa\om)$ is of class $\C^\ali$ by composition of applications of class $\C^\ali$.
Let us then compute the Fr\'echet derivative of $\phi\mapsto J_\tau(\phi)$ as a G\^ateaux derivative, namely, $\dato J_\tau(t\phi)$. 
We know that $\dato \det(I+tD\phi)=\dv(\phi)$. Moreover
\[
\dato \big\Vert (I+tD\phi)^{-T} n \big\Vert = \frac{n\cdot (-D\phi)^T n}{\norm{n}}= - n\cdot (D\phi)n.
\]
The first claim of the lemma follows then from definition \eqref{tg div} and the second is obvious, by composition.
\end{proof}

The last ingredient to prove Proposition \ref{hadam form 2} is the following improvement of Lemma \ref{5.2.6}.

\begin{lemma}\label{5.2.7}
Let $t\mapsto G(t)\in L^1(\rn)$ be differentiable at $t=0$ with $G(0)\in W^{1,1}(\rn)$. Then, if $t\mapsto \Phi(t)\in W^{1,\ali}(\rnrn)$ is differentiable at $t=0$ with $\Phi(0)=0$, $\pato \Phi=h$, then the function $t\mapsto g(t):=G(t)\circ \pp{\id+\Phi(t)}\inv\in L^1(\rn)$ is differentiable at $t=0$ and we have
$g'(0)=G'(0)-\gr g(0)\cdot h$.
\end{lemma}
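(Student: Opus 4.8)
The plan is to treat this as the ``inverse'' counterpart of Lemma~\ref{5.2.6}: there a fixed $W^{1,1}$ function was composed with a moving map, whereas here a moving $L^1$ function $G(t)$ is composed with the inverse flow $\Psi(t):=\pp{\id+\Phi(t)}\inv$. The first task is to understand $\Psi(t)$ near $t=0$. Since $\Phi(0)=0$ and $\Phi$ is differentiable at $0$ in $W^{1,\ali}$, we have $\Phi(t)\to0$ in $W^{1,\ali}$, hence for $t$ small $I+D\Phi(t)$ is uniformly invertible and $\id+\Phi(t)$ is a bi-Lipschitz homeomorphism of $\rn$; thus $\Psi(t)$ is well defined with uniformly bounded Jacobian, and from $D\Psi(t)=\bigl[(I+D\Phi(t))\circ\Psi(t)\bigr]\inv\to I$ one reads off that $\Psi$ is continuous at $0$ in $W^{1,\ali}$. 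To compute its derivative I would start from the defining identity $\Psi(t)+\Phi(t)\circ\Psi(t)=\id$, which gives $\Psi(t)(x)-x=-\Phi(t)\bigl(\Psi(t)(x)\bigr)$, so $\norm{\Psi(t)-\id}_\ali=O(t)$. Writing $\Phi(t)=t\,h+o(t)$ in $W^{1,\ali}$ (so that $h$ is Lipschitz) and using this bound to replace $h\circ\Psi(t)$ by $h$ at the cost of an $O(t^2)$ error, I obtain $\Psi(t)(x)-x=-t\,h(x)+o(t)$ uniformly in $x$. Hence $t\mapsto\Psi(t)\in L^\ali$ is differentiable at $0$ with derivative $Z=-h$.

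Next I would split the difference quotient, using $g(0)=G(0)$ (because $\Psi(0)=\id$), as
\[
\frac{g(t)-g(0)}{t}=\underbrace{\pp{\frac{G(t)-G(0)}{t}}\circ\Psi(t)}_{=:A(t)}+\underbrace{\frac{G(0)\circ\Psi(t)-G(0)}{t}}_{=:B(t)}.
\]
For $A(t)$, write $\dfrac{G(t)-G(0)}{t}=G'(0)+r(t)$ with $r(t)\to0$ in $L^1$, so that $A(t)=G'(0)\circ\Psi(t)+r(t)\circ\Psi(t)$. The first summand tends to $G'(0)$ in $L^1$ by the composition-continuity statement \eqref{composition} (applicable since $\Psi(t)\to\id$ in $W^{1,\ali}$), while for the second the change of variables together with the uniform Jacobian bound gives $\norm{r(t)\circ\Psi(t)}_1\le C\norm{r(t)}_1\to0$. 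Thus $A(t)\to G'(0)$ in $L^1$.

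For $B(t)$ I would invoke Lemma~\ref{5.2.6} verbatim, with the fixed function $G(0)\in W^{1,1}(\rn)$ and the moving map $\Psi(t)$, which we have just shown is continuous at $0$ in $W^{1,\ali}$ and differentiable at $0$ in $L^\ali$ with derivative $Z=-h$. This yields $B(t)\to\gr G(0)\cdot(-h)=-\gr g(0)\cdot h$ in $L^1$, again using $g(0)=G(0)$. Adding the two limits gives $g'(0)=G'(0)-\gr g(0)\cdot h$, as claimed.

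The main obstacle is the first step: proving that the inverse flow $\Psi(t)$ is differentiable at $t=0$ with derivative $-h$ and has a uniformly bounded Jacobian, working only with the $W^{1,\ali}$ regularity of $\Phi$ (so that the implicit-function-theorem heuristic must be replaced by the direct estimate above). Once this is established, the conclusion follows from a routine application of Lemma~\ref{5.2.6} together with the composition-continuity estimate already developed in its proof.
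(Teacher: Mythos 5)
Your proof is correct and follows essentially the same route as the paper: the same splitting of the difference quotient (your $A(t)$ bundles the paper's first two terms $\left\{\frac{G(t)-G(0)}{t}-G'(0)\right\}\circ\Psi(t)$ and $G'(0)\circ\Psi(t)$), the same change-of-variables and composition-continuity estimates, and the same application of Lemma \ref{5.2.6} to the remaining term $\left\{G(0)\circ\Psi(t)-G(0)\right\}/t$. The only difference is that you explicitly verify that the inverse flow $\Psi(t)=\pp{\id+\Phi(t)}\inv$ is continuous at $0$ in $W^{1,\ali}$ and differentiable at $t=0$ in $L^\ali$ with derivative $-h$, a point the paper's proof uses but leaves implicit.
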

\begin{proof}
For ease of notation, let $\psi_t$ denote $\pp{\id+\Phi(t)}\inv$. We write
$\{g(t)-g(0)\}/t=A(t)+B(t)+C(t)$, where
\[
A(t)=\left\{ \frac{G(t)-G(0)}{t}-G'(0)\right\}\circ \psi_t, \, B(t)=G'(0)\circ \psi_t,\, C(t)=\left\{ G(0)\circ\psi_t -G(0)\right\}/t.
\]
By change of variable, the $L^1$-norm of $A(t)$ can be estimated by that of $\big(G(t)-G(0)\big)/t-G'(0)$, which tends to $0$ by assumption.
The term $B(t)$ tends to $G'(0)$ because of \eqref{himitsuheiki} and finally, $C(t)$ tends to $-\gr g(0)\cdot h$ by Lemma \ref{5.2.6}.
\end{proof}

\begin{proposition}[Hadamard formula for surface integrals]\label{hadam form 2}
Let $\om$ be a bounded open set of class $\C^2$ and $t\mapsto \Phi(t)\in\C^{1,\ali}\rnrn$ be differentiable at $0$ with $\Phi(0)=0$ and $\pato\Phi=h$. Suppose that $t\mapsto g(t)\circ \pp{\id+\Phi(t)}\in W^{1,1}(\om)$ is differentiable at $0$, with $g(0)\in W^{2,1}(\om)$.
Then the map $t\mapsto \int_{\pa\om_t} g(t)$ is differentiable at $t=0$, $t\mapsto \restr{g(t)}{U}\in W^{1,1}(U)$ is differentiable at $t=0$ for all open sets $U\subset\ol{U}\subset \om$; the shape derivative $g'(0)$ is then a well defined element of $W^{1,1}(\om)$ and the following expression for the derivative of $j$ holds true:
\[
j'(0)=\int_{\pa\om}g'(0)+\pp{\dn g(0)+Hg(0)}h\cdot n.
\] 
\end{proposition}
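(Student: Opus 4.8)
The plan is to reduce everything to the \emph{fixed} boundary $\pa\om$ by means of the tangential change of variables recorded before Lemma~\ref{J tau diff}. Writing $G(t):=g(t)\circ\pp{\id+\Phi(t)}$, that formula gives
\[
j(t)=\int_{\pa\om_t} g(t)=\int_{\pa\om} G(t)\,J_\tau(\Phi(t)),
\]
an integral over the $t$-independent surface $\pa\om$. Since $\Phi(0)=0$ we have $J_\tau(\Phi(0))=J_\tau(0)=1$ and $G(0)=g(0)$ on $\pa\om$, so once differentiation under the integral sign is justified the derivative will split, by the product rule, into a contribution from $G$ and one from $J_\tau$.

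To differentiate under the integral I would argue as follows. By Lemma~\ref{J tau diff} the scalar $t\mapsto J_\tau(\Phi(t))\in\C(\pa\om)$ is differentiable at $0$ with $\pato J_\tau(\Phi(t))=\dv_\tau h$, and it is uniformly bounded near $t=0$. For the factor $G(t)$, the hypothesis is precisely that $t\mapsto G(t)\in W^{1,1}(\om)$ is differentiable at $0$; composing with the bounded linear trace operator $W^{1,1}(\om)\to L^1(\pa\om)$ shows that the trace of $G(t)$ on $\pa\om$ is differentiable in $L^1(\pa\om)$ with derivative the trace of $G'(0)$. The product rule for these two factors then yields
\[
j'(0)=\int_{\pa\om} G'(0)+\int_{\pa\om} g(0)\,\dv_\tau h.
\]

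The next, and most delicate, step is to re-express $G'(0)$ in terms of the genuine shape derivative $g'(0)$. This is exactly the content of Lemma~\ref{5.2.7}, which (read with $g(t)=G(t)\circ\pp{\id+\Phi(t)}\inv$) gives $g'(0)=G'(0)-\gr g(0)\cdot h$, i.e.\ $G'(0)=g'(0)+\gr g(0)\cdot h$. The subtlety is that Lemma~\ref{5.2.7} is stated on all of $\rn$, whereas here $G(t)$ and $g(0)\in W^{2,1}(\om)$ live only on the deforming domain $\om$. I would circumvent this by localization: for $\chi\in\C_0^\ali(\om)$ apply Lemma~\ref{5.2.7} to $\chi\,G(t)$ (extended by zero to $\rn$, so that $\chi\,G(0)\in W^{1,1}(\rn)$). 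On $\{\chi=1\}$ the resulting function agrees with $g(t)$ for $t$ small, which simultaneously proves that $t\mapsto\restr{g(t)}{U}\in W^{1,1}(U)$ is differentiable for every $U\subset\ol U\subset\om$ and identifies $g'(0)=G'(0)-\gr g(0)\cdot h$ almost everywhere in $\om$; since the right-hand side lies in $W^{1,1}(\om)$ (using $g(0)\in W^{2,1}(\om)$ and $h\in\C^{1,\ali}\rnrn$), this also establishes the asserted well-definedness of $g'(0)$. I expect this localization, together with keeping careful track of traces, to be the main obstacle.

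It then remains to simplify the boundary terms. Substituting $G'(0)=g'(0)+\gr g(0)\cdot h$ gives
\[
j'(0)=\int_{\pa\om} g'(0)+\int_{\pa\om}\pp{\gr g(0)\cdot h+g(0)\,\dv_\tau h}.
\]
Splitting $\gr g(0)\cdot h=\dn g(0)\,(h\cdot n)+\grt g(0)\cdot h$ and using the Leibniz rule $\dv_\tau\pp{g(0)h}=\grt g(0)\cdot h+g(0)\,\dv_\tau h$ reduces the claim to
\[
\int_{\pa\om}\dv_\tau\pp{g(0)h}=\int_{\pa\om} H\,g(0)\,(h\cdot n),
\]
which is the tangential divergence theorem on the closed surface $\pa\om$ applied to the field $g(0)h$ (recall $H=\dv_\tau n$). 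Combining these identities collapses the two boundary integrals into $\int_{\pa\om}\pp{\dn g(0)+H g(0)}h\cdot n$, which is exactly the stated formula.
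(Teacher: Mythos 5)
Your proposal follows the paper's proof almost step for step: the same tangential change of variables $j(t)=\int_{\pa\om}G(t)\,J_\tau(\Phi(t))$, the same use of Lemma \ref{J tau diff} to differentiate (your explicit appeal to the boundedness of the trace operator $W^{1,1}(\om)\to L^1(\pa\om)$ is a nice clarification of a step the paper leaves terse), the same identification $g'(0)=G'(0)-\gr g(0)\cdot h$ via a localized application of Lemma \ref{5.2.7}, and the same final reorganization via the tangential Leibniz rule and the tangential Stokes formula (Lemma \ref{tg stokes}). The derivation of the formula for $j'(0)$ is correct.

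There is, however, one genuine gap. You claim that a single application of Lemma \ref{5.2.7} to $\chi\,G(t)$ ``simultaneously proves'' that $t\mapsto\restr{g(t)}{U}\in W^{1,1}(U)$ is differentiable. It does not: the conclusion of Lemma \ref{5.2.7} is differentiability of $t\mapsto(\chi G(t))\circ\pp{\id+\Phi(t)}\inv$ in $L^1(\rn)$ only, so on $U$ you obtain differentiability of $\restr{g(t)}{U}$ in $L^1(U)$, with no control whatsoever on the $t$-dependence of $\gr g(t)$. To get the asserted $W^{1,1}(U)$ differentiability you must also treat the gradient: by the chain rule $\gr g(t)=\big[(I+D\Phi(t))^{-T}\gr G(t)\big]\circ\pp{\id+\Phi(t)}\inv$, so one applies Lemma \ref{5.2.7} a second time, componentwise, to $\chi\,\gr G(t)$, the matrix factor $t\mapsto(I+D\Phi(t))^{-T}$ being differentiable into $L^\ali\rnrn$ by the Neumann series. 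This is exactly what the paper does, invoking Lemma \ref{5.2.7} for both $(\eta G(t))\circ\pp{\id+\Phi(t)}\inv$ and $(\eta\gr G(t))\circ\pp{\id+\Phi(t)}\inv$. Note that the gap does not contaminate your formula for $j'(0)$: there you only need $G'(0)\in W^{1,1}(\om)$ (which is the hypothesis) and the a.e.\ identity $G'(0)=g'(0)+\gr g(0)\cdot h$, together with $g'(0)=G'(0)-\gr g(0)\cdot h\in W^{1,1}(\om)$, which you justify correctly. But the interior $W^{1,1}(U)$ differentiability is part of the statement, so the second application of the lemma cannot be omitted.
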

\begin{proof}
Let $G(t):=g(t)\circ\pp{\id+\Phi(t)}$. Since, by change of variables, $j(t)=\int_{\pa\om}G(t)J_\tau(\Phi(t))$, the differentiability of $j$ comes from Lemma \ref{J tau diff}. One has 
\[j'(0)=\int_{\pa\om} G'(0)+G(0)\dv_\tau h= \int_{\pa\om} G'(0)+g(0) \dv_\tau h.\]
The differentiability of $t\mapsto \restr{g(t)}{U}\in W^{1,1}(U)$ can be shown as follows. Take a bump function $\eta\in\C^\ali(\rn)\cap\C_0^\ali(\om)$ with $\eta\equiv 1$ in a neighborhood of $\ol U$. By Lemma \ref{5.2.7} applied to $(\eta G(t))\circ \pp{\id+\Phi(t)}\inv$ and $(\eta \gr G(t))\circ \pp{\id+\Phi(t)}\inv$ we get that the map $t\mapsto \restr{g(t)}{U}\in W^{1,1}(U)$ is differentiable at $t=0$ for all open sets $U$ compactly contained in $\om$. Moreover, $g'(0)=G'(0)-\gr g(0)\cdot h\in W^{1,1}(\om)$. Therefore we may write
\[
j'(0)= \int_{\pa\om} g'(0)+\gr g(0)\cdot h +g(0) \dv_\tau h
\]
and conclude by reorganizing the integral above by means of the decomposition formula of tangential divergence \eqref{decomp tg div} and tangential Stokes theorem (Lemma \ref{tg stokes}).
\end{proof}

\section{Structure theorem and examples}
\label{sec examples}

In this section we will introduce the structure theorem for general shape functionals. Loosely speaking, it says that, under some mild regularity assumptions, shape derivatives are ``concentrated at the boundary''. In particular, first order shape derivatives can be written as a linear form that depends only on the normal component of the perturbation on the boundary. Second order derivatives are a bit more complicated, being the sum of a bilinear form and a linear one. At the end of the subsection we provide some geometrical examples. 

We will employ the use of the following notation: for $k\ge0$ integer, set 
\[
\cO_k:=\setbld{\om\subset\rn}{ \om \text{is a bounded open set of class }\C^k}, \quad \cO_k^\ell := \underbrace{\cO_k\times\dots\times \cO_k}_{\ell \text{ times }}.
\]
%\subsection{Structure Theorem}
%%% STRUCTURE THEOREM
\begin{thm}[Structure theorem, \cite{structure}]\label{struct thm}
For integer $k,\ell\ge 1$, let $\cO\subset\cO_k^\ell$ be admissible, $X$ be a Banach space and $J:\cO\to X$ be a shape functional. Consider a fixed element $\boldsymbol{\om}\in\cO$ and define the functional $\cJ=J(\om+\cdottone): \Te_k\to X$, where $\Te_k$ is a sufficiently small neighborhood of $0\in\cC^{k,\ali}(\rn,\rn)$
. Moreover, let $\Ga:=\bigcup_{i=1}^\ell \pa \om_i$ and let $n$ denote the outward unit normal vector to each $\pa\om_i$.
\begin{enumerate}[label=(\roman*)]
\item Assume that $\boldsymbol{\om}\in\cO_{k+1}^\ell$ and that the functional $\cJ$ be differentiable at $0\in\Te_k$. Then there exists a continuous linear map $l_1:\cC^k(\Ga)\to X$ such that 
\begin{equation}
\forall \te\in\cC^k(\rn,\rn), \quad \cJ'(0)\te=l_1(\te\cdot n).
\end{equation} 
\item Assume that $\boldsymbol{\om}\in\cO_{k+2}^\ell$ and that the functional $\cJ$ be twice differentiable at $0\in \Te_k$. Then there exists a continuous bilinear symmetric map 
\[l_2:\cC^k(\Ga)\times\cC^k(\Ga)\to X \;\text{ such that}\] 
\begin{equation}\label{second structure}
\forall \te_1,\te_2\in\cC^{k+1}(\rn,\rn),\quad \cJ''(0)(\te_1,\te_2)=l_2(\te_1\cdot n,\te_2 \cdot n)+l_1(Z_{\te_1,\te_2}),
\end{equation}
where $Z_{\te_1,\te_2}=(\te_1)_\tau\cdot D_\tau n (\te_2)_\tau+n\cdot D_\tau \te_1 (\te_2)_\tau + n\cdot D_\tau \te_2 (\te_1)_\tau$.
\item Suppose that $\cJ$ is twice differentiable at $0\in\Te_k$ and that $l_1$ admits a continuous extension to $\cC^{k-1}(\Ga)\to X$. Then, if $\boldsymbol{\om}\in\cO_{k+1}^\ell$ only, \eqref{second structure} holds true for all $\te_1,\te_2\in\cC^{k}(\rn,\rn)$ instead.
\end{enumerate}
\end{thm}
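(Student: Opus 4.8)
The plan is to establish \eqref{second structure} for the enlarged class $\te_1,\te_2\in\cC^k\rnrn$ and the less regular domain $\boldsymbol{\om}\in\cO_{k+1}^\ell$ by a two-step approximation: first pass from $\cO_{k+2}^\ell$ down to $\cO_{k+1}^\ell$ by regularizing the domain and invoking part (ii), and then pass from $\cC^{k+1}$ to $\cC^k$ perturbations by density. Before anything else I would check that, under the hypotheses of part (iii), the right-hand side of \eqref{second structure} is a well defined continuous bilinear form on $\cC^k\rnrn$. Since $\boldsymbol{\om}\in\cO_{k+1}^\ell$, the normal $n$ is of class $\cC^k$ on $\Ga$, so $\te_i\cdot n\in\cC^k(\Ga)$ and the term $l_2(\te_1\cdot n,\te_2\cdot n)$ is controlled by $\norm{\te_1}_{\cC^k}\norm{\te_2}_{\cC^k}$. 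On the other hand, $Z_{\te_1,\te_2}$ contains the factors $D_\tau n\in\cC^{k-1}(\Ga)$ and $D_\tau\te_i\in\cC^{k-1}(\Ga)$, so it belongs only to $\cC^{k-1}(\Ga)$, not to $\cC^k(\Ga)$. This is exactly where the extra hypothesis enters: the assumed continuous extension of $l_1$ to $\cC^{k-1}(\Ga)\to X$ makes $l_1(Z_{\te_1,\te_2})$ meaningful and bounded by $\norm{\te_1}_{\cC^k}\norm{\te_2}_{\cC^k}$. Thus both sides of \eqref{second structure} are continuous for the $\cC^k$-topology, recalling that the $\cC^{k,\ali}$-norm entering the definition of $\cJ$ is the $W^{k,\ali}$-norm, which on the relevant (compactly supported) fields agrees with the $\cC^k$-norm.

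For the base case I would establish \eqref{second structure} for $\te_1,\te_2\in\cC^{k+1}\rnrn$, still only assuming $\boldsymbol{\om}\in\cO_{k+1}^\ell$, by regularizing the domain. Write $\om_i^{(m)}=(\id+\zeta_m)(\om_i)$ with $\zeta_m\to 0$ in $\cC^{k+1,\ali}\rnrn$ and $\om_i^{(m)}\in\cO_{k+2}^\ell$, and apply part (ii) to the reparametrized functional $\cJ_m:=\cJ\pp{\zeta_m+(\cdottone)\circ(\id+\zeta_m)}$. Because $\cJ_m$ is an affine reparametrization of $\cJ$, the chain rule gives $\cJ_m''(0)(\te_1,\te_2)=\cJ''(\zeta_m)\pp{\te_1\circ(\id+\zeta_m),\te_2\circ(\id+\zeta_m)}\to\cJ''(0)(\te_1,\te_2)$. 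Transporting everything to the fixed interface $\Ga$ via $\id+\zeta_m$, the forms $l_1^{(m)},l_2^{(m)}$ converge to $l_1,l_2$ by uniqueness of the representations in parts (i)--(ii), while $n_m\to n$ in $\cC^k(\Ga)$ yields $\te_i\cdot n_m\to\te_i\cdot n$ in $\cC^k(\Ga)$ and $Z^{(m)}_{\te_1,\te_2}\to Z_{\te_1,\te_2}$ only in $\cC^{k-1}(\Ga)$. Passing to the limit in the two terms (the second one crucially by the $\cC^{k-1}$-continuity of the extended $l_1$, since $Z^{(m)}$ converges only there) produces \eqref{second structure} for $\boldsymbol{\om}\in\cO_{k+1}^\ell$ and $\te_i\in\cC^{k+1}$. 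Equivalently, one may reinspect the computation that proves part (ii): the only role of the stronger assumption $\boldsymbol{\om}\in\cO_{k+2}^\ell$ there is to keep $Z_{\te_1,\te_2}$ in $\cC^k(\Ga)$, and this is now superseded by the extended $l_1$ acting on $\cC^{k-1}(\Ga)$.

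Finally I would conclude by density: given $\te_1,\te_2\in\cC^k\rnrn$, mollify to obtain $\te_i^{(j)}\in\cC^{k+1}\rnrn$ with $\te_i^{(j)}\to\te_i$ in $\cC^k$; the base case applies to each pair, and since both sides of \eqref{second structure} are $\cC^k$-continuous the identity survives in the limit, with $l_2$ symmetric by inheritance from part (ii). The main obstacle is precisely this one-order drop in the regularity of $\boldsymbol{\om}$, which ejects the curvature term $Z_{\te_1,\te_2}$ from the natural domain $\cC^k(\Ga)$ of $l_1$ into $\cC^{k-1}(\Ga)$; it is the hypothesized extension of $l_1$ that rescues $l_1(Z_{\te_1,\te_2})$ and lets the limit in the base case close. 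A minor technical point to watch is that the regularization step uses continuity of $\cJ''$ in a neighborhood of $0$, so if only pointwise twice differentiability at $0$ is available one should instead rely on the direct reinspection of the proof of part (ii).
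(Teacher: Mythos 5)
Your proposal does not prove the theorem as stated: it addresses only part (iii), taking parts (i) and (ii) as given (``invoking part (ii)'', ``by inheritance from part (ii)''). But (i) and (ii) carry the entire substance of the structure theorem, namely the \emph{existence} of $l_1$ and $l_2$ --- the fact that a shape derivative kills perturbations tangent to $\Ga$ and therefore factors through the normal trace $\te\cdot n$, with the second derivative doing so up to the explicit correction $l_1(Z_{\te_1,\te_2})$. That is the genuinely geometric step (in \cite{structure} it rests on the observation that a field tangent to $\Ga$ generates a flow mapping each $\om_i$ onto itself, so that $\cJ'(0)$ vanishes on such fields, plus a second-order expansion producing exactly $Z_{\te_1,\te_2}$), and nothing in your text supplies it. Note also that the thesis itself gives no proof of this theorem --- it is quoted from \cite{structure} --- so the relevant comparison is with the original argument of Novruzi and Pierre, which proceeds by exactly the kind of tangential-flow analysis you bypass.

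Even read as a conditional derivation of (iii) from (i)--(ii), the argument has a critical flaw. The hypothesis of (iii) is that $\cJ$ is twice differentiable \emph{at} $0$ only; your regularization step needs $\cJ''(\zeta_m)$ for $\zeta_m\ne 0$ (to define $\cJ_m''(0)$) and the convergence $\cJ''(\zeta_m)\to\cJ''(0)$, i.e.\ existence and continuity of the second derivative near $0$, which is not assumed. You acknowledge this at the end but call it ``a minor technical point'' to be repaired by ``direct reinspection of the proof of part (ii)''; that reinspection is in fact the whole proof of (iii) --- one reruns the proof of (ii), observes that the hypothesis $\boldsymbol{\om}\in\cO_{k+2}^\ell$ enters only to place $Z_{\te_1,\te_2}$ in $\cC^k(\Ga)$, and substitutes the assumed extension of $l_1$ to $\cC^{k-1}(\Ga)$ --- and it cannot be carried out without having the proof of (ii) in hand. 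The same objection hits the claimed convergences $l_1^{(m)}\to l_1$ and $l_2^{(m)}\to l_2$ (they again require control of $\cJ'$, $\cJ''$ in a neighborhood of $0$), and the assertion that a $\cC^{k+1}$ domain can be written as $(\id+\zeta_m)(\boldsymbol{\om})$ with image of class $\cC^{k+2}$ and $\zeta_m\to0$ in $\cC^{k+1,\ali}(\rn,\rn)$ is itself a nontrivial approximation statement left unproved. What is correct, and worth keeping, is your diagnosis of why the extension hypothesis on $l_1$ is needed: for $\boldsymbol{\om}\in\cO_{k+1}^\ell$ and $\te_i\in\cC^k(\rn,\rn)$ one has $\te_i\cdot n\in\cC^k(\Ga)$ but $Z_{\te_1,\te_2}$ only in $\cC^{k-1}(\Ga)$, so the correction term falls outside the natural domain of $l_1$; that observation, however, is the starting point of the proof, not the proof itself.
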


\begin{corollary}\label{struc coroll}
Let $\boldsymbol{\om}$ and $J$ be as in Theorem {\rm \ref{struct thm} on page \pageref{struct thm}}, moreover let $k=1$. Define $j(t):=J\left((\id+\Phi(t))\boldsymbol{\om}\right)$ for $\Phi\in\cA$ and $t\ge 0$ small. 
\begin{enumerate}[label=(\roman*)]
\item Under the hypothesis of (i) of Theorem {\rm \ref{struct thm} on page \pageref{struct thm}}, we have
\[j'(0)=l_1(h\cdot n).\]
\item Under the hypothesis of (ii) of Theorem {\rm \ref{struct thm} on page \pageref{struct thm}}, for $\Phi$ of class $\cC^2\left([0,1),\cC^2(\rn,\rn)\right)$, then 
\begin{equation}\label{second structure t}
j''(0)= l_2(h\cdot n, h\cdot n)+l_1(Z). 
\end{equation}
Here we have set 
\[
Z:=(V'+(Dh)h)\cdot n + ((D_\tau n)h_\tau)\cdot h_\tau-2h_\tau \cdot \gr_\tau (h\cdot n),
\]
where $V(t,\id+\Phi(t)):=\pa_t \Phi(t)$ and $V'=\restr{\pa_t V(t,\cdottone)}{t=0}$. 
\item Under the hypothesis of (iii) of Theorem {\rm \ref{struct thm} on page \pageref{struct thm}}, then \eqref{second structure t} holds true for all $\Phi\in\cA$.
\end{enumerate}
\end{corollary}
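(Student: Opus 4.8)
The plan is to read the corollary as the translation of the abstract structure theorem (Theorem \ref{struct thm}) from the ``Fr\'echet'' picture to the ``flow'' picture of Section \ref{sec preliminaries}. The bridge is the elementary observation that, with the notation of the statement, $j(t)=\cJ(\Phi(t))$, so that $j$ is the composition of the smooth curve $t\mapsto\Phi(t)$ (valued in $\C^{1,\ali}\rnrn$) with the Fr\'echet differentiable map $\cJ$. All three parts then follow from the chain rule in Banach spaces together with Theorem \ref{struct thm}. For (i): since $\Phi(0)=0$ and $\pato\Phi=h$, the chain rule gives $j'(0)=\cJ'(0)h$, and as $h\in\C^1\rnrn$ part (i) of Theorem \ref{struct thm} applies with $\te=h$ to yield $j'(0)=\cJ'(0)h=l_1(h\cdot n)$, which is exactly the claim.

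For (ii), I would start from the second-order chain rule for the composition $j=\cJ\circ\Phi$,
\[
j''(0)=\cJ''(0)(\Phi'(0),\Phi'(0))+\cJ'(0)\Phi''(0)=\cJ''(0)(h,h)+\cJ'(0)\Phi''(0),
\]
which is legitimate because $\cJ$ is twice Fr\'echet differentiable at $0$ and, under the hypothesis $\Phi\in\C^2\pp{[0,1),\C^2\rnrn}$, the curve $t\mapsto\Phi(t)$ is twice differentiable with $\Phi''(0)\in\C^2\rnrn$. Applying part (ii) of Theorem \ref{struct thm} to the first summand with $\te_1=\te_2=h$, and part (i) to the second, and using the linearity of $l_1$, I obtain
\[
j''(0)=l_2(h\cdot n,h\cdot n)+l_1\big(Z_{h,h}+\Phi''(0)\cdot n\big),
\]
where $Z_{h,h}=((D_\tau n)h_\tau)\cdot h_\tau+2\,n\cdot\big((D_\tau h)h_\tau\big)$ comes from setting $\te_1=\te_2=h$ in the definition of $Z_{\te_1,\te_2}$. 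What remains is purely to identify the argument $Z_{h,h}+\Phi''(0)\cdot n$ of $l_1$ with the field $Z$ of the statement.

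Two computations accomplish this. First, differentiating the defining relation $V(t,\id+\Phi(t))=\pa_t\Phi(t)$ in $t$ and evaluating at $t=0$ — where $\Phi(0)=0$ forces $V(0,\cdottone)=h$ and hence $DV(0,\cdottone)=Dh$ — yields the flow identity $\Phi''(0)=V'+(Dh)h$, so that $\Phi''(0)\cdot n=(V'+(Dh)h)\cdot n$ accounts for the first term of $Z$. Second, the tangential product rule $\grt(h\cdot n)=(D_\tau h)^T n+(D_\tau n)^T h$ on each $\pa\om_i$, combined with $(D_\tau n)^T n=0$ (a consequence of $|n|\equiv1$ along the boundary), lets me trade the cross term $n\cdot((D_\tau h)h_\tau)$ for a combination of $h_\tau\cdot\grt(h\cdot n)$ and the curvature term $((D_\tau n)h_\tau)\cdot h_\tau$; substituting this into $Z_{h,h}+\Phi''(0)\cdot n$ collapses it onto $Z$. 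I expect this tangential reconciliation, rather than the chain rule, to be the delicate point, since it requires careful use of the splitting $h=h_\tau+(h\cdot n)n$ and of the fact that $D_\tau$ kills the normal direction. Finally, part (iii) follows by the very same argument: the assumed continuous extension of $l_1$ to $\C^{k-1}(\Ga)$ permits invoking part (iii) of Theorem \ref{struct thm} in place of part (ii), so that the representation \eqref{second structure t} persists for every $\Phi\in\cA$ without the extra $\C^2$-in-time regularity.
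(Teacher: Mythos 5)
Your strategy --- writing $j=\cJ\circ\Phi$, applying the Banach-space chain rule to get
\[
j''(0)=\cJ''(0)(h,h)+\cJ'(0)\Phi''(0),
\]
and then invoking Theorem \ref{struct thm} --- is the natural (and surely intended) derivation, since the paper states this corollary without proof; parts (i) and (iii) are fine, and your flow identity $\Phi''(0)=V'+(Dh)h$ is correct. The gap is precisely the final ``tangential reconciliation'' in (ii), which you assert but never carry out, and which fails as stated. Performing it: the product rule $\grt(h\cdot n)=(D_\tau h)^T n+(D_\tau n)^T h$ together with $(D_\tau n)^T n=0$ gives $n\cdot\pp{(D_\tau h)h_\tau}=h_\tau\cdot\grt(h\cdot n)-h_\tau\cdot (D_\tau n)h_\tau$, whence
\[
Z_{h,h}+\Phi''(0)\cdot n=(V'+(Dh)h)\cdot n+2\,h_\tau\cdot\grt(h\cdot n)-\pp{(D_\tau n)h_\tau}\cdot h_\tau,
\]
whereas the corollary's $Z$ carries the \emph{opposite} sign on both tangential terms. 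The discrepancy $2\pp{2h_\tau\cdot\grt(h\cdot n)-h_\tau\cdot(D_\tau n)h_\tau}$ does not vanish in general, so no further tangential calculus can close this step: the identity you claim is simply false.

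The root cause is that the expression for $Z_{\te_1,\te_2}$ transcribed in Theorem \ref{struct thm}(ii) has a sign error; the correct Novruzi--Pierre formula is its negative,
\[
Z_{\te_1,\te_2}=(D_\tau n)(\te_1)_\tau\cdot(\te_2)_\tau-(\te_1)_\tau\cdot\grt(\te_2\cdot n)-(\te_2)_\tau\cdot\grt(\te_1\cdot n).
\]
You can detect this with the volume functional on the unit disc and the rotation field $\te(x,y)=(-y,x)$: here $\cJ(t\te)=\pi(1+t^2)$, so $\cJ''(0)(\te,\te)=2\pi$, while $\te\cdot n\equiv0$ and the theorem's stated $Z_{\te,\te}=\te_\tau\cdot(D_\tau n)\te_\tau+2n\cdot(D_\tau\te)\te_\tau\equiv-1$ would give $l_1(Z_{\te,\te})=-2\pi$; the corrected expression gives $+1$, hence $+2\pi$. (So it is the corollary's $Z$ that is correct --- it is also what is used later, e.g.\ in Remark \ref{rmk hadamard is enough} and in \eqref{2nd}.) With the sign fixed, $Z_{h,h}$ becomes $(D_\tau n)h_\tau\cdot h_\tau-2h_\tau\cdot\grt(h\cdot n)$ and your chain-rule computation lands exactly on the statement. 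As written, however, your proof hinges on an impossible identification; a complete argument must first correct (or independently re-derive) the theorem's $Z_{\te_1,\te_2}$ before the chain rule can deliver the corollary.
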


\begin{remark}\emph{\label{rmk hadamard is enough}
Notice that for Hadamard perturbations (i.e. of the form $\Phi(t,x)=t h(x)$ with $h_\tau =0$ on $\Ga$), the term $Z$ appearing in {\rm \eqref{second structure t}} vanishes. As a matter of fact we have $Z=\pp{V'+(Dh)h}\cdot n$, because $h_\tau$ by assumption. Now, as $V(t,x)=h\circ \pp{\id+t h (x)}^{-1}$, we have $V'=-(Dh)h$ and hence $Z=0$ as claimed. In other words, if $\Phi$ is an Hadamard perturbation, then %the first and 
the second order shape derivative of $J$ coincides with the %linear and 
bilinear form 
%$l_1^J$, 
$l_2^J$, that is
%$J'({\boldsymbol{\om}})(\Phi)=l_1^J(h\cdot n)$ and
$J''({\boldsymbol{\om}})(\Phi)=l_2^J(h\cdot n,h\cdot n)$. This remark will be very useful when actually computing second order shape derivatives in Section {\rm \ref{sec second order shape der}}.
}\end{remark}

In what follows we will carry out the explicit calculations of the linear form $l_1$ and bilinear form $l_2$ from Theorem \ref{struct thm} on page \pageref{struct thm} for the following three geometrical shape functionals: volume, barycenter and surface area. 

\begin{example}[Computation of $l_1$]\label{ex l1}
For $\om\in\cO_2$, set $\vol(\om):=\int_\om 1$, $\bc(\om):=\int_\om x$ and $\per(\om):=\int_{\pa\om}1$. For $\xi\in\C^1(\pa\om)$ we have 
\[
l_1^\vol(\xi)=\int_{\pa\om}\xi, \qquad l_1^\bc(\xi)= \int_{\pa\om} x\xi, \qquad l_1^\per(\xi)=\int_{\pa\om}H \xi.
\]
\end{example}
\begin{proof}
The expressions of $l_1^\vol$ and $l_1^\per$ are derived from a direct application of Proposition \ref{hadam form} and Proposition \ref{surface hadam form} respectively. Finally, the computation of $l_1^\bc$ is done component-wise, that is, by applying the Hadamard formula to real valued functional $\om\mapsto \int_\om x_i$ for all $i=1,\dots,N$.
\end{proof}

\begin{example}[Computation of $l_2$]\label{ex l2}
We employ the same notation as in Example {\rm\ref{ex l1}}. For all $\xi\in\C^1(\pa\om)$ the following holds:
\begin{equation}\nonumber
\begin{aligned}
l_2^\vol(\xi,\xi)=\int_{\pa\om}H \xi^2, \qquad 
l_2^\bc(\xi,\xi)=\int_{\pa\om}(n+xH)\xi^2,\\
l_2^\per(\xi,\xi)=\int_{\pa\om}\abs{\grt\xi}^2+\xi^2\pp{H^2-\tr ((D_\tau n)^T D_\tau n)}.
\end{aligned}
\end{equation}
\end{example}
\begin{proof}
As stated in Remark \ref{rmk hadamard is enough}, in order to compute the various bilinear forms $l_2$, it will be enough to compute the shape derivative twice with respect to an Hadamard perturbation. Take now an arbitrary $\xi\in\C^1(\pa\om) $ and an extension $h\in\C^{1,\ali}\rnrn$ that satisfies $h=\xi n$ on $\pa\om$. Put $\Phi(t):=t h$. For ease of exposition, we will first perform our computation for a generic integral functional of the form $i(t)=\int_{\om_t}f(t)$. If $f$ is sufficiently smooth, then by Corollary \ref{generalized hadam form} we have 
\[
i'(t)=\int_{\om_t} f'(t) + \int_{\om_t} \dv\pp{f(t)\, h\circ \pp{\id+th)}\inv}.
\] 
By a further application of Proposition \ref{hadam form} and the divergence theorem we get
\begin{equation}
\dato \int_{\om_t}\dv\pp{f(t)\, h\circ \pp{\id+th}\inv} = \int_{\pa\om} \pp{f'(0)+\dv\pp{f(0)h}}h\cdot n-f(0)(Dh h)\cdot n,
\end{equation} 
where we used the fact that $\pato(\id+th)\inv=-h$ and thus $\pato\pp{h\circ\pp{\id+th}\inv}=-Dhh$.
Recall that $h=\xi n$ and $\dv h-(Dh n)n=\dv_\tau h = H h\cdot n= H\xi$ on $\pa\om$. We get
\begin{equation}\label{a part}
\dato \int_{\om_t}\dv\pp{f(t)\, h\circ \pp{\id+th}\inv}=\int_{\pa\om}f'(0)\xi + \pp{H f(0)+\dn f(0)}\xi^2.
\end{equation}
Therefore, for a functional of the form $t\mapsto \int_{\om_t}f$ (with $f$ independent of $t$) the second order shape derivative consists only of the term in \eqref{a part} and hence
\[
\restr{\frac{d^2}{dt^2}}{t=0}\int_{\om_t}f= \int_{\pa\om}\pp{H f+\dn f}\xi^2.
\]
Now, for $f\equiv 1$ we obtain the bilinear form $l_2^\vol$ and for $f=x_i$ ($i=1,\dots,N$) we get 
\[
\dato \int_{\pa\om_t} x_i= \int_{\pa\om} \pp{H x_i+\gr x_i\cdot n}\xi^2 =\int_{\pa\om} \pp{H x_i+n_i}\xi^2,
\]
which yields the desired expression for $l_2^\bc$. 
As far as the functional $\per$ is concerned, we set $f(t):=\dv\,n_t$, where $n_t$ is a unitary extension of the outward normal to $\pa\om_t$. Hence $\int_{\om_t}\pa_t f=\int_{\pa\om_t}n_t\cdot \pa_t n_t=0$ and the second order shape derivative of $\per(\om_t)$ is given by the term in \eqref{a part} only. In the following we will choose $n=\gr d_{\om}$, where $d_\om$ is the signed distance function, defined in \eqref{dist func}. We get the following (recall the expression for the shape derivative of the unit normal given in Proposition \ref{der of normal}):
\begin{equation}\label{lper almost}
l_2^\per(\xi,\xi)=-\int_{\pa\om} \dv(\grt \xi)\xi+\int_{\pa\om}\pp{H \dv\,n+\dn (\dv\,n )}\xi^2.
\end{equation}
Now, the first integral can be handled as follows using Proposition \ref{tg int by parts}:
\[
-\int_{\pa\om}\dv(\grt \xi)\xi=-\int_{\pa\om}\dv_\tau(\grt \xi)\xi=\int_{\pa\om}\abs{\grt \xi}^2. 
\]
The remaining part of \eqref{lper almost} is simplified by noticing that 
$\dv\,n=\dv_\tau\,n=H$ and that $\dn(\dv\,n)=-\tr((D_\tau n)^T D_\tau n)$. 
To prove the latter, notice that
\[
0= {\De(|\gr d_{\om}|^2)}/{2}= \gr(\De d_{\om})\cdot \gr d_{\om}+ \tr\pp{(D^2 d_{\om})^2}=\dn(\dv\,n)+\tr\pp{(D_\tau n)^T D_\tau n}.
\] 
\end{proof}

\section{State functions and their derivatives}
\label{sec state functions}

Notice that not all integral functionals are like those in Example \ref{ex l1}. Usually, the integrand in those shape functionals depends on the domain indirectly, by means of the solution to some boundary value problem, usually referred to as \emph{ state function} (see for instance the two-phase torsional rigidity functional $E$, defined by \eqref{E}, whose state function $u$ is the solution of \eqref{pb u formal}). In order to compute the shape derivative of such an integral functional, one must first compute the shape derivative of state function (cf. the first term in \eqref{result hadam}). 
The aim of this subsection is twofold: we will first prove some (quite general) differentiability results for state functions and then show how the shape derivative of a state function can in turn be characterized as the solution to some boundary value problem. 

We will give now the definitions of \emph{shape derivative} and \emph{material derivative} of a state function. 
Consider a flow of transformation $\Phi:[0,1)\to \Theta$, where $\Theta$ is a suitable Banach space of mappings from $\rn$ to itself. Fix a sufficiently smooth domain $\om$ and consider a smoothly varying family of functions $u_t$ on $\om_t$: to fix ideas, $u_t$ will be solution to some boundary value problem on $\om_t$ (whose parameters may depend on $t$ indirectly). 
Notice that, for $x\in\om$, then $x\in\om_t$ if $t\ge0$ is small enough. Computing the partial derivative of $u_t$ with respect to $t$ at a fixed point $x\in\om$ yields the so called \emph{shape derivative} of $u_t$; we will write
\begin{equation}\label{def shape der of state function}
u'_{t_0}(x):=\restr{\pa_t}{t=t_0} u_t(x) \quad \tfor t_0\in[0,1).
\end{equation}
On the other hand, differentiating along the trajectories $x\mapsto x+\Phi(t,x)$ gives rise to the \emph{material derivative} of $u_t$:
\begin{equation}\label{def mat der of state function}
\dot u_{t_0}(x):= \restr{\frac{d}{dt}}{t=t_0} u_t(x+\Phi(t,x)) \quad \tfor t_0\in[0,1).
\end{equation}
In what follows, we will also introduce the following auxiliary function $v_t:=u_t\circ \pp{\id+\Phi(t)}$. Notice that, under the notation introduced above, we have $v'_t=\dot u_t$. From now on, for the sake of brevity, we will omit the $t$ subscript in the case $t=0$. 
\begin{remark}\label{u'=u'}\emph{
Notice that the choice of the name and notation in \eqref{def shape der of state function} is not at all a coincidence. Indeed, for fixed $x\in\om$, $u'(x)$ is the shape derivative (intended with the usual meaning) of the functional 
$
t\mapsto u_t(x)\in\RR$.
Of course a Fr\'echet derivative formulation like \eqref{frechet app} is also possible. Moreover, notice that instead of the point-wise definition in \eqref{def shape der of state function} one could define $u'$ ``globally'', as the shape derivative of a shape functional with values in some Banach space $t\mapto u_t\in X$ (so that Theorem \ref{struct thm} of page \pageref{struct thm} can be applied). In this case, notice that, since the domain $\om_t$ changes with $t$, one should first fix a common domain (for instance, extend $u_t$ to the whole $\rn$) in order to properly define $u'$ in this sense. 
}\end{remark}
Although, the {\bf shape} derivative of states functions are an essential constituent in the computation of the shape derivative of integral functionals (see Proposition \ref{hadam form}), it will be easier to prove existence and smoothness result for material derivatives first and then recover the results for shape derivatives by composition. 
%%% IMPLICIT FUNCTION THEOREM 
In order to show the differentiability of the auxiliary function $v_t$, we will employ the use of the following version of the implicit function theorem, for the proof of which we refer to \cite[Theorem 2.7.2, pp. 34--36]{Nams2001}.
\begin{thm}[Implicit function theorem, \cite{Nams2001}]
\label{ift}
Suppose that $X$, $Y$ and $Z$ are three Banach spaces, $U$ is an open subset of $X\times Y$, $(x_0,y_0)\in U$, and $\Psi:U\to Z$ is a Fr\'echet differentiable mapping such that $\Psi(x_0, y_0)=0$. Assume that the partial derivative $\pa_x\Psi(x_0,y_0)$ of $\Psi$ with respect to $x$ at $(x_0,y_0)$, i.e. the map $\Psi'(x_0,y_0)(\cdottone, 0): X\to Z$, is a bounded invertible linear transformation from $X$ to $Z$. 
%\par
Then there exists an open neighborhood $U_0$ of $y_0$ in $Y$ and a unique Fr\'echet differentiable function $f:U_0\to X$ such that $f(y_0)=x_0$, $(f(y),y)\in U$ and $\Psi(f(y),y)=0$ for all $y\in U_0$. 
\end{thm}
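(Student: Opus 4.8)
The plan is to prove Theorem~\ref{ift} by the Banach fixed point theorem, recasting the equation $\Psi(x,y)=0$ as a parametrized fixed point problem. After translating so that $(x_0,y_0)=(0,0)$ and $\Psi(0,0)=0$, I set $A:=\pa_x\Psi(0,0)$; by hypothesis $A$ is a bounded invertible linear map, so $A\inv\in L(Z,X)$ is bounded. For each parameter $y$ I introduce the map $T_y:X\to X$ defined by $T_y(x):=x-A\inv\Psi(x,y)$, whose fixed points are exactly the solutions of $\Psi(x,y)=0$. The whole argument then reduces to showing that, on a suitable small ball, $T_y$ is a uniform contraction depending continuously on $y$.

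First I would establish the contraction estimate. Writing $\pa_x T_y(x)=A\inv\big(A-\pa_x\Psi(x,y)\big)$ and using the continuity of $\pa_x\Psi$ near $(0,0)$, I can choose radii $r,\rho>0$ so that $\norm{\pa_x T_y(x)}\le\tfrac12$ whenever $\norm{x}\le r$ and $\norm{y}\le\rho$; the mean value inequality then yields $\norm{T_y(x)-T_y(x')}\le\tfrac12\norm{x-x'}$ on that ball. Shrinking $\rho$ if necessary, the continuity of $y\mapsto\Psi(0,y)$ forces $\norm{T_y(0)}=\norm{A\inv\Psi(0,y)}\le r/2$, so that $T_y$ maps the closed ball $\ol{B_X(0,r)}$ into itself. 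The Banach fixed point theorem then produces, for each $y$ with $\norm{y}\le\rho$, a unique fixed point $x=f(y)$ in that ball, giving $\Psi(f(y),y)=0$, $f(0)=0$, and the asserted local uniqueness; taking $U_0:=B_Y(0,\rho)$ completes the existence part.

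Next I would upgrade regularity. Continuity, indeed local Lipschitz continuity, of $f$ follows from the uniform contraction together with the bound $\norm{f(y)-f(y')}\le 2\,\norm{T_y(f(y'))-T_{y'}(f(y'))}$ and the continuity of $\Psi$ in $y$. For Fréchet differentiability I would note first that, since the invertible operators form an open set (Neumann series), $\pa_x\Psi(f(y),y)$ stays invertible for $y$ near $0$ with uniformly bounded inverse; the natural candidate for the derivative is then
\[
f'(y)=-\big[\pa_x\Psi(f(y),y)\big]\inv\,\pa_y\Psi(f(y),y).
\]
To confirm this, I would insert $x=f(y+k)$ and $x'=f(y)$ into the first-order Taylor expansion of $\Psi$ at $(f(y),y)$, solve for $f(y+k)-f(y)$ using the bounded inverse, and control the remainder by the differentiability of $\Psi$ and the established continuity of $f$.

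I expect the main obstacle to be this final differentiability step: the remainder in the expansion of $\Psi$ is $o$ of the norm in the \emph{joint} variables, so I must invoke the Lipschitz bound $\norm{f(y+k)-f(y)}\le C\norm{k}$ to convert it into a genuine $o(\norm{k})$ estimate in $k$ alone, while keeping the operator $[\pa_x\Psi(f(y),y)]\inv$ uniformly bounded near $0$. By comparison the contraction argument is routine once the radii are fixed. I would also remark that the Lipschitz and contraction estimates implicitly require continuity of $\pa_x\Psi$ near $(x_0,y_0)$, so the hypothesis is best read as local continuous Fréchet differentiability.
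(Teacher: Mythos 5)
The paper does not prove Theorem \ref{ift} at all: it is quoted as a known result, with the proof explicitly deferred to \cite[Theorem 2.7.2, pp. 34--36]{Nams2001}. So there is no in-paper argument to compare against; what you have written is a reconstruction of the standard proof, and it is in fact essentially the argument of the cited reference: recast $\Psi(x,y)=0$ as the parametrized fixed point problem for $T_y(x)=x-A\inv\Psi(x,y)$, verify a uniform contraction on a small ball, apply the Banach fixed point theorem, and then bootstrap differentiability of $f$ by inverting $\pa_x\Psi(f(y),y)$ via a Neumann series and absorbing the Taylor remainder of $\Psi$ using the (Lipschitz) continuity of $f$. All the individual steps are sound under the reading of the hypotheses you adopt.

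Your closing caveat is the one substantive point, and it deserves to be promoted from a remark to a necessity: the statement as reproduced in the paper assumes only that $\Psi$ is Fr\'echet differentiable on $U$ and that $\pa_x\Psi(x_0,y_0)$ is invertible, and under that hypothesis alone not only does your contraction estimate fail (it needs $\norm{A-\pa_x\Psi(x,y)}$ small on a \emph{neighborhood}), but the theorem itself is false. Already for $X=Y=Z=\RR$, the map $\Psi(x,y)=x+2x^2\sin(1/x)-y$ (with $\Psi(0,y)=-y$) is everywhere differentiable with $\pa_x\Psi(0,0)=1$, yet $\pa_x\Psi$ vanishes at points accumulating at the origin, and the chain rule applied to $\Psi(f(y),y)=0$ shows no differentiable solution branch through $(0,0)$ can exist. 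Nirenberg's Theorem 2.7.2 indeed assumes continuity of $\pa_x\Psi$ on $U$, which is exactly what your choice of radii $r,\rho$ uses, so the hypothesis must be read that way. Two smaller points: the fixed point theorem gives uniqueness only among solutions with values in $\ol{B_X(0,r)}$, so to obtain the stated uniqueness of $f$ on all of $U_0$ you should either restrict competitors to that ball or run the usual connectedness argument (the coincidence set of two continuous solution branches is nonempty, open by repeating the local argument at each coincidence point, and closed); and your Lipschitz bound for $f$ tacitly uses local boundedness of $\pa_y\Psi$, again part of the continuous-differentiability reading, though plain continuity of $f$ already suffices to absorb the remainder in the differentiability step.
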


%Let $m\ge2$ be an integer and take ${\boldsymbol{\om}}=(\om_1,\dots,\om_m)\in \cO_{\rm op}^{m}$ such that ${\om_i}\subset \om$ for all $i=1,\dots,m-1$ and $\pa \om_i\cap \pa \om_j=\emptyset$ for $i\ne j$. Set $\Ga:=\bigcup_{i=1}^m \pa \om_i$ and consider a continuous function $\sg_{\boldsymbol{\om}}: \om_m\setminus\Ga \to (0,\ali)$ (in particular, we will be interested to a locally constant function on $\om_m\setminus \Ga$, namely one that takes a different value in each connected component of $\om_m\setminus \Ga$). 

%Take $\be\ge0, \ga>0$ and, for small $\theta\in W^{1,\ali}(\rn,\rn)$, define the function $u_{\theta}$ as the solution of the following:

%$$
%u_\theta \in H_0^1(\om_m+\theta):\quad \forall \varphi\in H_0^1(\om_m+%\theta), \int_{\om_m+\theta} \gr u_\theta\cdot \gr \varphi
%$$

For $\phi\in W^{1,\ali}(\rn,\rn)$, we set $\sg_\phi:=\sg_c \chi_{D_\phi}+\chi_{{\Om_\phi}}$ and let $u_\phi\in H_0^1({\Om_\phi})$ denote the weak solution to the following boundary value problem for $\be\ge0$, $\ga>0$:
\begin{equation}\label{u_theta eq cl}
\left\{
\begin{aligned}
\dv(\sg_\phi \gr u_\phi) &=\be u_\phi - \gamma &\text{ in }{\Om_\phi},\\
u_\phi&=0 &\text{ on }\pa{\Om_\phi}.
\end{aligned}
\right.
\end{equation}
The function obtained by extending $u_\phi$ with zero on the rest of $\rn$ will be denoted by the same symbol, $u_\phi$. Moreover, for $\phi\in W^{1,\ali}(\rn,\rn)$ sufficiently small, the map $\id+\phi:\rn\to\rn$ is a bi-Lipschitz homeomorphism and therefore the function
\begin{equation}\label{v_phi}
v_\phi:=u_\phi \circ (\id+\phi)
\end{equation}
is well defined and belongs to $H_0^1(\Om)$.

\begin{theorem}\label{thm diff state func 1}
Let $(D,\Om)$ be a pair of domains of class $\C^1$ with $\overline{D}\subset \Om$. 
\begin{enumerate}[label=(\roman*)]
\item The map $\phi\mapsto v_\phi\in H_0^1(\Om)$ defined by \eqref{v_phi} is of class $\cC^\ali$ in a neighborhood of $0\in W^{1,\ali}(\rn,\rn)$.
\item If $D$ and $\Om$ are of class $\C^2$, then the map $\phi \mapsto v_\phi\in \hoi(\Omega)\cap H^2(D)\cap H^2(\Om\setminus\ol{D})$ is of class $\C^\ali$ in a neighborhood of $0\in W^{2,\ali}\rnrn$.
\item If $D$ and $\Om$ are of class $\C^{2+\al}$, then the map $\phi\mapsto v_\phi\in \cB$ is of class $\C^\ali$ in a neighborhood of $0\in \C^{2+\al}(\rn,\rn)$, where
$$
\mathcal{B}:= \hoi({\Om})\cap \C(\ol \Om)\cap \C^{2+\al}(\overline{\Om}\setminus D)\cap \C^{2+\al}(\overline{D}).
$$
\end{enumerate}
\end{theorem}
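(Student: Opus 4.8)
The plan is to derive each of the three statements from the implicit function theorem (Theorem~\ref{ift}), applied on the \emph{fixed} domain $\Om$ with function spaces adapted to the claimed regularity. First I would transport problem~\eqref{u_theta eq cl} from the moving domain $\Om_\phi$ back to $\Om$ through the change of variables $x=(\id+\phi)(y)$. Writing \eqref{u_theta eq cl} weakly, using the chain rule $\gr v_\phi=(I+D\phi)^{T}\,(\gr u_\phi)\circ(\id+\phi)$, the volume element $dx=\det(I+D\phi)\,dy$, and the crucial fact that $\sg_\phi\circ(\id+\phi)=\sg$ (so the interface stays frozen on the fixed set $\pa D$), one finds that $v_\phi$ is characterised by
\[
\int_\Om\sg\,A(\phi)\gr v_\phi\cdot\gr\psi+\be\int_\Om J(\phi)\,v_\phi\psi=\ga\int_\Om J(\phi)\,\psi\qquad\tforall\psi\in\hoi\Ome,
\]
where $J(\phi):=\det(I+D\phi)$ and $A(\phi):=J(\phi)(I+D\phi)^{-1}(I+D\phi)^{-T}$.

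For part (i) I would then set $\Psi:\wial\rnrn\times\hoi\Ome\to H^{-1}(\Om)$,
\[
\lan\Psi(\phi,v),\psi\ran:=\int_\Om\sg\,A(\phi)\gr v\cdot\gr\psi+\be\int_\Om J(\phi)\,v\psi-\ga\int_\Om J(\phi)\,\psi,
\]
so that $\Psi(\phi,v_\phi)=0$ and $\Psi(0,u_0)=0$ (since $A(0)=I$, $J(0)=1$ and $v_0=u_0$). The two hypotheses of Theorem~\ref{ift} must be checked. Smoothness of $\Psi$ is routine: $\phi\mapsto D\phi$ is bounded linear into $\linf$, the matrix maps $M\mapsto\det(I+M)$ and $M\mapsto(I+M)^{-1}$ are real-analytic near $0$ in the Banach algebra of $\linf$ matrix fields (the latter by a Neumann series), and the remaining dependence is through bounded multilinear maps, so $\Psi$ is of class $\C^\ali$. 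For invertibility, $\pa_v\Psi(0,u_0)$ is the map $w\mapsto\pp{\psi\mapsto\int_\Om\sg\gr w\cdot\gr\psi+\be\int_\Om w\psi}$ associated with the original form; since $\sg\ge\min\{\sg_c,1\}>0$ and $\be\ge0$, this form is coercive on $\hoi\Ome$ by Poincar\'e's inequality, so Lax--Milgram makes it an isomorphism onto $H^{-1}(\Om)$. Theorem~\ref{ift} then gives that $\phi\mapsto v_\phi$ is $\C^\ali$, which is (i).

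For (ii) and (iii) I would run the same scheme with a stronger (transmission) target. If $v\in\hoi\Ome\cap H^2(D)\cap H^2(\Om\sm\ol D)$, integration by parts in each phase shows the weak identity is equivalent to the interior equation $-\dv(\sg A(\phi)\gr v)+\be J(\phi)v=\ga J(\phi)$ in $D\cup(\Om\sm\ol D)$ together with the flux condition $[\sg A(\phi)\gr v\cdot n]=0$ on $\pa D$ (continuity of $v$ across $\pa D$ being automatic from $v\in\hoi\Ome$). Accordingly I would set
\[
\Psi(\phi,v):=\Big(-\dv(\sg A(\phi)\gr v)+\be J(\phi)v-\ga J(\phi),\ [\sg A(\phi)\gr v\cdot n]_{\pa D}\Big),
\]
with target $L^2(\Om)\times H^{1/2}(\pa D)$ for (ii) and $\C^\al(\ol D)\times\C^\al(\ol\Om\sm D)\times\C^{1+\al}(\pa D)$ for (iii). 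Here one needs $\phi\in W^{2,\ali}\rnrn$ (resp. $\C^{2+\al}\rnrn$) precisely so that $A(\phi)\in W^{1,\ali}$ (resp. $\C^{1+\al}$): this is the regularity that keeps $\dv(\sg A(\phi)\gr v)$ in the target and, since $W^{1,\ali}$ and $\C^{1+\al}$ are Banach algebras, makes $\Psi$ of class $\C^\ali$.

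The main obstacle --- and the only genuinely analytic input --- is the invertibility of $\pa_v\Psi(0,u_0)$ in these higher-regularity settings. At $\phi=0$ it is the map $w\mapsto\pp{-\dv(\sg\gr w)+\be w,\ [\sg\gr w\cdot n]_{\pa D}}$, and its bijectivity onto the chosen target is exactly the well-posedness and global regularity of the linear transmission problem with prescribed interior source and prescribed flux jump. For (ii) this is the $H^2$ regularity quoted in \cite[Theorem~1.1]{athastra96}, after reducing a nonzero jump datum to the homogeneous case by subtracting a suitable lifting; for (iii) it is the analogous Schauder theory in $\C^{2+\al}$. Once this isomorphism is available, Theorem~\ref{ift} applies verbatim and yields the $\C^\ali$ dependence of $\phi\mapsto v_\phi$ into $\hoi\Ome\cap H^2(D)\cap H^2(\Om\sm\ol D)$ and into $\cB$, respectively.
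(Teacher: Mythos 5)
Your proposal is correct and follows essentially the same route as the paper: transport the problem to the fixed domain via $v_\phi=u_\phi\circ(\id+\phi)$, set up an operator $\Psi(\phi,v)$ whose zero set characterizes $v_\phi$, verify its $\C^\ali$ smoothness through the Neumann series for $(I+D\phi)^{-1}$ and the polynomial determinant, and invoke the implicit function theorem, with invertibility of $\pa_v\Psi$ at $(0,u)$ reduced to the well-posedness of the linear transmission problem (citing \cite{athastra96} for the $H^2$ theory and Schauder-type estimates for the H\"older case, exactly as the paper does via \cite{xb13}). The only deviation is cosmetic: for part (i) you establish the isomorphism $w\mapsto-\dv(\sg\gr w)+\be w$ directly by coercivity and Lax--Milgram, where the paper cites \cite[Theorem 1.1]{ers2007}; your argument is self-contained and equally valid.
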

\begin{proof}
\begin{enumerate}[label=(\roman*)]
\item We will now prove that $W^{1,\ali}\ni \phi\mapsto v_\phi \in H_0^1(\Om)$
is Fr\'echet differentiable infinitely many times in a neighborhood of $0$. First notice that $v_\phi\in H_0^1(\Om)$ is characterized by
\begin{equation}\label{v_te eq wk}
\int_{\Om} A_\phi\gr v_\phi\cdot \gr \psi = \int_\Om (\ga-\be v_\phi) J_\phi\psi \quad\text{ for all }\psi\in H_0^1(\Om),
\end{equation}
where $J_\phi$ is the Jacobian associated to the map $\id+\phi$ and 
\begin{equation}\label{A li seme}
A_\phi:=\sg%_{D,\Om} 
J_\phi \left(I+ D\phi\right)^{-1} ({I}+D\phi^T)^{-1}.
\end{equation}
This can be proved by a change of variable in the weak formulation of $u_\phi$. 
Let us now consider the following operator
\begin{equation}\label{operator F}
F: H_0^1(\Om) \times W^{1,\ali}(\rn, \rn)\ni (v,\phi) \mapsto -\dv \left(A_\phi\gr v \right)+(\be v -\ga)J_\phi \in H^{-1}(\Om).
\end{equation}
By \eqref{v_te eq wk}, we have $F(v_\phi,\phi)=0$. We are going to apply Theorem \ref{ift} of page \pageref{ift} to the map $F$. First, we claim that $F$ is differentiable infinitely many times in a neighborhood of $(u,0)$ (here $u$ is the solution of \eqref{u_theta eq cl} when $\phi\equiv0$ and thus it coincides with $v_0$). As a matter of fact, the map $W^{1,\ali}(\rn,\rn) \ni \phi\mapsto J_\phi=\det( { I}+D\phi)\in L^\ali(\rn)$ is differentiable infinitely many times because also $\phi\mapsto {I}+D\phi\in L^\ali(\rn,\RR^{N\times N})$ is, and the application $\det$ is a polynomial and is continuous with respect to the $L^\ali$ norm. Similarly, the map $\phi\mapsto (I+D\phi)^{-1}$ can be expressed as a Neumann series as $\pp{I+D\phi}\inv=\sum_{k\ge0}(-1)^k (D\phi)^k$ and thus it is $\C^\ali$ in a neighborhood of $0\in W^{1,\ali}(\rn,\rn)$. Therefore $ W^{1,\ali}(\rn,\rn)\ni \phi\mapsto A_\phi\in L^\ali(\rn,\RR^{N\times N})$ is also of class $\C^\ali$. Thus, the map $L^\ali(\rn,\RR^{N\times N})\to H^{-1}(\Om)$ defined by $(A,v) \mapsto -\dv(A \gr v)$ is also of class $\C^\ali$ because both bilinear and continuous. We can conclude that the full map $(v,\phi)\mapsto F(v,\phi)$ is of class $\C^\ali$. Now, its partial Fr\'echet derivative with respect to the variable $v$: $\pa_v F(u,0): H_0^1(\Om)\to H^{-1}(\Om)$ is given by $v\mapsto -\dv(\sg%_{D,\Om}
\gr v)+\be v$ and, since $\be\ge 0$, it is an isomorphism (see \cite[Theorem 1.1]{ers2007}). Therefore, by Theorem \ref{ift} of page \pageref{ift} there exists a $\C^\ali$ branch $\phi\mapsto v(\phi)\in H_0^1(\Om)$ defined for sufficiently small $\phi\in W^{1,\ali}(\rn,\rn)$ such that $F(v(\phi),\phi)=0$. Uniqueness for problem \eqref{v_te eq wk} yields that $v(\phi)=v_\phi$ (and thus the smoothness of the map $\phi\mapsto v_\phi$). 
\item Define the Banach space $X:= \hoi\Ome\cap H^2(D)\cap H^2 (\Om\sm\ol D)$, endowed with the norm $\norm{\cdottone}_X:=\norm{\cdottone}_{\hoi\Ome}+\norm{\cdottone}_{H^2(D)}+\norm{\cdottone}_{H^2(\Om\sm\ol{D})}$ and consider the function 
\begin{equation}\label{wtF}
\begin{aligned}
\wt F: X\times W^{2,\ali}\rnrn \to L^2(D)\times L^2(\Om\sm\ol{D})\times H^{1/2}(\pa D),\\
(v,\phi)\longmapsto\pp{ F(\restr{v}{D},\phi),\, F(\restr{v}{\Om\sm\ol D}, \phi)\,, \big[ (A_\phi \gr v)\cdot n\big]}.
\end{aligned}
\end{equation}
Here, by a slight abuse of notation, $F$ is used to denote the restriction of \eqref{operator F} to $H^2(D)\times W^{1,\ali}\rnrn$ and $H^2(\Om\sm\ol D)\times W^{1,\ali}\rnrn$ respectively. The differentiability of the map $\wt F$ follows from the same arguments used to prove that of $F$. Its partial Fr\'echet derivative $\pa_v \wt F(u,0)(\cdot,0): X\to L^2(D)\times L^2(\Om\sm\ol D)\times H^{1/2}(\pa D)$ is given by 
\[
v\mapsto \pp{-\sg_c \De v+\be v, \, -\De v+\be v,\,[\sg\dn v]}.
\]
The invertibility of $\pa_v\wt F$ amounts to the well posedness of the following transmission problem with data $f\in L^2(D)$, $g\in L^2(\Om\sm\ol D)$ and $h\in H^{1/2}(\pa D)$: 
\begin{equation}
\left\{
\begin{aligned}
-\sg_c \De v + \be &= f& \quad \tin D,\\
- \De v + \be& = g &\quad \tin \Om\sm\ol D,\\
[v]&=0&\quad \ton \pa D,\\
[\sg\dn v]&= h& \quad\ton\pa D,\\
v&=0& \quad\ton\pa\Om.
\end{aligned}
\right.
\end{equation}
The proof of the well posedness of the problem above is based on the extension of the simpler result in the case $h=0$, made possible by means of an auxiliary function $\wt u$. We refer to \cite[Theorem 1.1 and Remark 1.3]{athastra96} for a proof and \cite[Remark 2.1]{cz98} for an explicit construction of $\wt u$.
\item This time one considers the following restriction of the map defined in \eqref{wtF}:
\[
\wt F: %\pp{X\cap \C^{2+\al}(\ol D)\cap \C^{2+\al}(\ol \Om\sm D)}
\mathcal{B}
\times \C^{2+\al}\rnrn \to \C^\al (\ol D)\times \C^\al (\ol \Om \sm D) \times \C^{1+\al}(\pa D).
\]
The proof runs exactly as before, this time employing the use of the sharp Schauder-like estimates given in \cite[Theorem 2.2 and Theorem 2.3]{xb13}. 
\end{enumerate}
\end{proof}

\begin{lemma}\label{5.3.3}
Let $\Psi: W^{1,\ali}\rnrn\to W^{1,\ali}\rnrn$ be continuous at $0$ with $\Psi(0)=\id$ and, for $1\le p<\ali$ let $W^{1,\ali}\rnrn\ni \phi \mapto \pp{g(\phi),\Psi(\phi)}\in L^p(\rn)\times L^\ali\rnrn$ be differentiable at $0$ with $g(0)\in W^{1,p}(\rn)$ and let $g'(0): W^{1,\ali}\rnrn\to W^{1,p}(\rn)$ be continuous. Then the application
\[
\mathcal{G}: W^{1,\ali}\rnrn \to L^p(\rn),\quad\phi\mapto g(\phi)\circ\Psi(\phi)
\]
is differentiable at $0$ and
\[
\mathcal{G}'(0)\phi = g'(0)\phi+\gr g(0)\cdot \Psi'(0)\phi
\]
holds true for all $\phi\in W^{1,\ali}\rnrn$.
\end{lemma}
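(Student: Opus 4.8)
The plan is to mimic the splitting technique used in the proofs of Lemma \ref{5.2.6} and Lemma \ref{5.2.7}, but now tracking the two sources of $\phi$-dependence simultaneously. Since $\Psi(0)=\id$, we have $\mathcal{G}(0)=g(0)\circ\id=g(0)$, and for $\phi$ small $\Psi(\phi)$ is close to $\id$ in $W^{1,\ali}\rnrn$, hence a bi-Lipschitz homeomorphism with Jacobians of $\Psi(\phi)$ and $\Psi(\phi)\inv$ bounded uniformly; this makes all the compositions below well defined in $L^p(\rn)$. I would begin by decomposing the increment as
\[
\mathcal{G}(\phi)-\mathcal{G}(0)=\underbrace{\big(g(\phi)-g(0)\big)\circ\Psi(\phi)}_{\text{(A)}}+\underbrace{\big(g(0)\circ\Psi(\phi)-g(0)\big)}_{\text{(B)}},
\]
so that (A) isolates the dependence of the integrand $g$ on $\phi$ and (B) isolates the dependence of the change of variables $\Psi$ on $\phi$. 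The conclusion will follow once I show that, as $\phi\to0$ in $W^{1,\ali}\rnrn$, one has (A)$\,=g'(0)\phi+o(\norm{\phi})$ and (B)$\,=\gr g(0)\cdot\Psi'(0)\phi+o(\norm{\phi})$ in $L^p(\rn)$.

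Term (B) is handled by the argument proving Lemma \ref{5.2.6}: here $g(0)\in W^{1,p}(\rn)$ is \emph{fixed} while $\Psi(\phi)=\id+\Psi'(0)\phi+o(\norm{\phi})$ in $L^\ali\rnrn$ by differentiability of $\Psi$ at $0$. Expanding $g(0)$ along the displacement $\Psi(\phi)-\id$ and using the convergence \eqref{composition} and the estimate \eqref{himitsuheiki} (stated there for $p=1$, but valid verbatim for general $p$ after the same density argument in $\C_0^\ali$) yields $g(0)\circ\Psi(\phi)-g(0)=\gr g(0)\cdot(\Psi(\phi)-\id)+o(\norm{\phi})=\gr g(0)\cdot\Psi'(0)\phi+o(\norm{\phi})$, where the replacement of $\Psi(\phi)-\id$ by $\Psi'(0)\phi$ costs only $\norm{\gr g(0)}_p\,\norm{\Psi(\phi)-\id-\Psi'(0)\phi}_\ali=o(\norm{\phi})$.

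For term (A) I would split once more,
\[
\big(g(\phi)-g(0)\big)\circ\Psi(\phi)=\big(g(\phi)-g(0)-g'(0)\phi\big)\circ\Psi(\phi)+\big(g'(0)\phi\big)\circ\Psi(\phi).
\]
The first summand is controlled by the differentiability of $\phi\mapsto g(\phi)\in L^p(\rn)$: its $L^p$-norm is $o(\norm{\phi})$ before composition, and composition with $\Psi(\phi)$ changes the $L^p$-norm only by the uniformly bounded Jacobian of $\Psi(\phi)\inv$, so it stays $o(\norm{\phi})$. For the second summand I would establish the quantitative composition estimate underlying \eqref{himitsuheiki}, namely that for $h\in W^{1,p}(\rn)$ one has $\norm{h\circ\Psi(\phi)-h}_p\le C\,\norm{\Psi(\phi)-\id}_\ali\,\norm{h}_{1,p}$ with $C$ uniform in $\phi$ (proved by the mean-value identity for $h\in\C_0^\ali$, a change of variables on $(1-s)x+s\Psi(\phi)(x)$, and density). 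Applying it with $h=g'(0)\phi$ gives
\[
\norm{\big(g'(0)\phi\big)\circ\Psi(\phi)-g'(0)\phi}_p\le C\,\norm{\Psi(\phi)-\id}_\ali\,\norm{g'(0)\phi}_{1,p},
\]
and since $\norm{g'(0)\phi}_{1,p}\le C'\norm{\phi}$ by the assumed continuity of the linear map $g'(0)\colon W^{1,\ali}\rnrn\to W^{1,p}(\rn)$, while $\norm{\Psi(\phi)-\id}_\ali\to0$ by continuity of $\Psi$ at $0$, the right-hand side is $o(\norm{\phi})$. Hence (A)$\,=g'(0)\phi+o(\norm{\phi})$, and adding the two contributions produces the stated formula.

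The main obstacle is precisely this last ``moving target'' term $\big(g'(0)\phi\big)\circ\Psi(\phi)$: because $g'(0)\phi$ itself varies with $\phi$, I cannot simply invoke the pointwise convergence \eqref{composition}, which holds for a \emph{fixed} function. What rescues the argument is the hypothesis that $g'(0)$ maps continuously into $W^{1,p}(\rn)$ rather than merely $L^p(\rn)$: this keeps $\norm{g'(0)\phi}_{1,p}$ of order $\norm{\phi}$, so the quantitative estimate multiplies it by the vanishing factor $\norm{\Psi(\phi)-\id}_\ali$ to yield a genuine $o(\norm{\phi})$. Verifying that composition estimate for general $p$ and general $W^{1,p}$ arguments, with a constant uniform in $\phi$, is the only technical point that requires real care.
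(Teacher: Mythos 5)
Your proposal is correct and follows essentially the same route as the paper: your two-stage splitting produces exactly the paper's four error terms (your (A) is the paper's $A(\phi)$ plus $D(\phi)$, your (B) is the paper's $C(\phi)$ plus $B(\phi)$), and you control them with the same tools — the uniform Jacobian bound for the composed remainder, the Lemma \ref{5.2.6}-style expansion with density in $\C_0^\ali$ for $g(0)\circ\Psi(\phi)-g(0)$, and the quantitative estimate \eqref{himitsuheiki} combined with the continuity of $g'(0)$ into $W^{1,p}$ for the ``moving target'' term $(g'(0)\phi)\circ\Psi(\phi)$. You also correctly identify the latter as the crux, which is precisely the role the hypothesis on $g'(0)$ plays in the paper's treatment of $D(\phi)$.
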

\begin{proof}
We will show that 
\[
\norm{g(\phi)\circ \Psi(\phi)-g(0)-\gr g(0)\cdot \Psi'(0)\phi-g'(0)\phi}_p=o\big(\norm{\phi}_{1,\ali}\big)\quad \tas \phi\to 0.
\]
We decompose it into four terms
\begin{equation}\nonumber
\begin{gathered}
A(\phi):=\big\{g(\phi)-g(0)-g'(0)\phi\big\}\circ\Psi(\phi),\quad B(\phi):=\gr g(0)\cdot \big\{\Psi(\phi)-\Psi(0)-\Psi'(0)\phi \big\},\\
C(\phi):=g(0)\circ \Psi(\phi)-g(0)-\gr g(0)\cdot \big\{\Psi(\phi)-\Psi(0)\big\},\quad D(\phi):=\pp{g'(0)\phi}\circ \Psi(\phi)-g'(0)\phi.
\end{gathered}
\end{equation}
By change of variables for $A(\phi)$, we have
\begin{equation}\nonumber
\begin{aligned}
\norm{A(\phi)}_p\le& \norm{g(\phi)-g(0)-g'(0)\phi}_p \norm{\Psi(\phi)}_{1,\ali}=o\big(\norm{\phi}_{1,\ali}\big),\\
\norm{B(\phi)}_p\le& \norm{g}_{1,p}\norm{\Psi(\phi)-\Psi(0)-\Psi'(0)\phi}_\ali=o\big(\norm{\phi}_{1,\ali}\big).
\end{aligned}
\end{equation}
The estimate for $C(\phi)$ runs along the same lines as the proof of Lemma \ref{5.2.6}. Put $g:=g(0)$ and $\psi:=\Psi(\phi)-\Psi(0)$. We have
\begin{equation}\label{5.33}
\norm{g\circ(\id+\psi)-g-\gr g\cdot \psi}_p= \norm{\int_0^1 \big\{\gr g (\id+s\psi)-\gr g\big\}\cdot \psi\,ds}_p\le \norm{\psi}_\ali \norm{e(g)}_p,
\end{equation}
where, $e(g)=\int_0^1 |\gr g(\id+s\psi)-\gr g|\,ds$. One then approximates $g$ in the $W^{1,p}$-norm by means of a sequence $\{g_k\}_k\subset \C_0^\ali(\rn)$ and, as in the proof of Lemma \ref{5.2.6}, we have
\[
\norm{e(g)}_p\le \norm{e(g-g_k)}_p+\norm{e(g_k)}_p\le 2 \norm{g-g_k}_{1,p} \pp{1+\norm{\psi}_{1,\ali}}+C_k \norm{\psi}_\ali,
\]
where $C_k$ is a positive constant that depends only on $g_k$. Since, by assumption, $\psi$ tends to $0$ in $L^\ali$ and remains bounded in $W^{1,\ali}$ as $\norm{\phi}_{1,\ali}$ tends to $0$, by passing to the limit as $\norm{\phi}_{1,\ali}\to 0$ and $k\to \ali$ respectively, we get that $\norm{e(g)}_p\to 0$ as $\norm{\phi}_{1,\ali}\to 0$. Therefore, by \eqref{5.33}, $C(\phi)=o\big(\norm{\phi}_{1,\ali}\big)$, because $\norm{\psi}_\ali =O \big(\norm{\phi}_{1,\ali}\big)$.

Lastly, for $D(\phi)$ we estimate the increment by means of the gradient of $g'(0)\phi$, as we did when proving \eqref{composition}. We have
\[
\norm{D(\phi)}_p = O\big(\norm{g'(0)\phi}_{1,p} \norm{\Psi(\phi)-\id}_{\ali} \big)= O\big(\norm{\phi}_{1,\ali}^2 \big).
\] 
\end{proof}
\begin{corollary}\label{corol 3.16}
Let $f\in W^{1,p}(\rn)$, $1\le p <\ali$. Then the application 
\[
\mathcal{F}: W^{1,\ali}\rnrn\to L^p(\rn),\quad \phi\mapto f\circ\pp{\id+\phi}
\]
is of class $\C^1$ in a neighborhood $\mathcal{U}$ of $0\in W^{1,\ali}\rnrn$ and 
\begin{equation}\label{F'(phi0)phi}
\mathcal{F}'(\phi_0) \phi = \big\{ \gr f\circ \pp{\id+\phi_0} \big\}\phi
\end{equation}
holds true for all $\phi_0\in\mathcal{U}$ and $\phi\in W^{1,\ali}\rnrn$.
\end{corollary}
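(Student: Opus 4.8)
The plan is to obtain the $\C^1$ regularity from its two standard constituents: Fr\'echet differentiability of $\mathcal{F}$ at every point $\phi_0$ of a neighborhood $\mathcal{U}$ of $0$, with derivative equal to the right-hand side of \eqref{F'(phi0)phi}, together with continuity of $\phi_0\mapsto\mathcal{F}'(\phi_0)$. First I would fix $\mathcal{U}$ small enough that $\id+\phi_0$ is a bi-Lipschitz homeomorphism of $\rn$ with Jacobian bounded above and below uniformly for $\phi_0\in\mathcal{U}$. The candidate derivative $L(\phi_0)\colon\phi\mapsto\pp{\gr f\circ\pp{\id+\phi_0}}\phi$ is then a bounded linear operator $W^{1,\ali}\rnrn\to L^p(\rn)$: from $\abs{v\cdot\phi}\le\abs{v}\,\norm{\phi}_\ali$ one has $\norm{L(\phi_0)\phi}_p\le\norm{\gr f\circ\pp{\id+\phi_0}}_p\norm{\phi}_\ali$, and a change of variables bounds $\norm{\gr f\circ\pp{\id+\phi_0}}_p$ by $C\norm{\gr f}_p$ uniformly on $\mathcal{U}$.

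For differentiability at a base point $\phi_0$ I would deliberately avoid reducing to Lemma \ref{5.3.3} by a shift. Indeed, writing $\id+\phi_0+\eta=\pp{\id+\phi_0}\circ\Psi(\eta)$ forces $\Psi(\eta)=\pp{\id+\phi_0}\inv\circ\pp{\id+\phi_0+\eta}$, and since $\pp{\id+\phi_0}\inv$ is merely Lipschitz when $\phi_0\in W^{1,\ali}\rnrn$, this inner map is \emph{not} differentiable into $L^\ali$, so the hypotheses of Lemma \ref{5.3.3} fail at a general base point. This is the conceptual obstacle, and it is circumvented by expanding additively: for smooth $f$ and a.e. $x$,
\[
f\pp{x+\phi_0(x)+\eta(x)}-f\pp{x+\phi_0(x)}=\int_0^1\gr f\pp{x+\phi_0(x)+s\,\eta(x)}\cdot\eta(x)\,ds,
\]
so that
\[
\mathcal{F}(\phi_0+\eta)-\mathcal{F}(\phi_0)-L(\phi_0)\eta=\int_0^1\Big\{\gr f\pp{\cdottone+\phi_0+s\eta}-\gr f\pp{\cdottone+\phi_0}\Big\}\cdot\eta\,ds,
\]
whose $L^p$-norm is at most $\norm{\eta}_\ali\int_0^1\norm{\gr f\pp{\cdottone+\phi_0+s\eta}-\gr f\pp{\cdottone+\phi_0}}_p\,ds$. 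The bracketed integral tends to $0$ as $\norm{\eta}_\ali\to0$ by the $L^p$-continuity of composition (the $L^p$-analogue of \eqref{composition}), applied for each fixed $s\in[0,1]$ and integrated by dominated convergence, the integrand being bounded by a fixed multiple of $\norm{\gr f}_p$. Since $\norm{\eta}_\ali\le\norm{\eta}_{1,\ali}$, this yields a remainder that is $o\big(\norm{\eta}_{1,\ali}\big)$, hence $\mathcal{F}'(\phi_0)=L(\phi_0)$. The passage from smooth $f$ to $f\in W^{1,p}(\rn)$ is handled by density exactly as in the proofs of Lemma \ref{5.2.6} and Lemma \ref{5.3.3}, the approximation errors being controlled by $\norm{f-f_k}_{1,p}$ times the uniformly bounded change-of-variables Jacobians.

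Finally I would establish continuity of the derivative. From $\abs{v\cdot\phi}\le\abs{v}\,\norm{\phi}_\ali$ one gets
\[
\norm{\pp{\mathcal{F}'(\phi_0)-\mathcal{F}'(\phi_1)}\phi}_p\le\norm{\gr f\circ\pp{\id+\phi_0}-\gr f\circ\pp{\id+\phi_1}}_p\,\norm{\phi}_\ali,
\]
so that, taking the supremum over $\norm{\phi}_{1,\ali}\le1$, the operator norm of $\mathcal{F}'(\phi_0)-\mathcal{F}'(\phi_1)$ is bounded by $\norm{\gr f\circ\pp{\id+\phi_0}-\gr f\circ\pp{\id+\phi_1}}_p$, which tends to $0$ as $\phi_1\to\phi_0$ in $W^{1,\ali}\rnrn$, once more by the $L^p$-continuity of $\phi\mapsto\gr f\circ\pp{\id+\phi}$. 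Hence $\phi_0\mapsto\mathcal{F}'(\phi_0)$ is continuous on $\mathcal{U}$ and $\mathcal{F}$ is of class $\C^1$ there. The delicate points to watch are that the remainder estimate be uniform enough to deliver a genuine Fr\'echet (not merely G\^ateaux) derivative---which is precisely what the dominated-convergence argument on $[0,1]$ secures---and the careful density reduction to smooth $f$ at each occurrence of the continuity of composition.
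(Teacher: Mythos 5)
Your proof is correct, and it takes a genuinely different route from the paper's --- in fact a more robust one. The paper proves the corollary in one line by exactly the shift you reject: it applies Lemma \ref{5.3.3} to the constant family $g(\phi):=f\circ\pp{\id+\phi_0}$ and $\Psi(\phi):=\pp{\id+\phi_0}\inv\circ\pp{\id+\phi_0+\phi}$, and then obtains the $\C^1$ property from the same continuity observation you make (the $L^p$ version of \eqref{composition}). Your objection to that reduction is well-founded: Lemma \ref{5.3.3} requires $\Psi$ to be continuous at $0$ as a map into $W^{1,\ali}\rnrn$ and differentiable at $0$ as a map into $L^\ali\rnrn$, and both requirements can fail when $\phi_0$ is a generic point of a $W^{1,\ali}$-neighborhood of $0$, because $D\pp{\id+\phi_0}\inv$ is then merely an $L^\ali$ matrix field. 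In one dimension, taking $\phi_0(x)=\eps\, x^{+}$ near the origin (suitably cut off) and $\phi\equiv t$, the difference quotient $\left\{\Psi(\phi)-\id\right\}/t$ stays at $L^\ali$-distance of order $\eps$ from any candidate derivative on the transition interval $(-t,0)$, uniformly as $t\to0$; so no Fr\'echet (or even directional) derivative into $L^\ali$ exists there. Consequently the paper's argument, as written, establishes the claim at $\phi_0=0$ and at base points whose gradient is uniformly continuous (e.g. $\phi_0\in\C^{1,\ali}\rnrn$, which is all that its later applications, such as Theorem \ref{diff u in H1}, actually use), but not at every $\phi_0$ in a $W^{1,\ali}$-neighborhood, which is what the statement asserts and what your argument delivers. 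Your additive expansion along the segment $\id+\phi_0+s\eta$ never inverts $\id+\phi_0$: every composition involves uniformly bi-Lipschitz perturbations of the identity, so the only analytic inputs are the $L^p$-continuity of composition and the line-integral estimate, both valid at the $W^{1,\ali}$ level. In short, the paper's route buys brevity by recycling Lemma \ref{5.3.3}; yours buys the statement actually claimed.

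Two points are worth making explicit in a final write-up: (a) phrase Fr\'echet differentiability sequentially (along arbitrary $\eta_j\to 0$ in $W^{1,\ali}\rnrn$), so that your dominated-convergence step over $s\in[0,1]$ legitimately yields the required $o\big(\norm{\eta_j}_{1,\ali}\big)$ remainder; and (b) the density step needs the estimate $\norm{g\circ S-g\circ T}_p\le C\,\norm{\gr g}_p\,\norm{S-T}_\ali$ for uniformly bi-Lipschitz $S$, $T$ close to the identity --- this is exactly the $e(t,g)$-type estimate inside the proofs of Lemma \ref{5.2.6} and Lemma \ref{5.3.3}, so it can be cited rather than reproved.
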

\begin{proof}
Apply Lemma \ref{5.3.3} to $g(\phi):=f\circ\pp{\id+\phi_0}$ and $\Psi(\phi):=\pp{\id+\phi_0}\inv\circ\pp{\id+\phi_0+\phi}$ (cf. \eqref{proof generalized hadam form}). Finally, notice that the map $\phi_0\mapsto\cF'(\phi_0)$ given by \eqref{F'(phi0)phi} is continuous (that is because property \eqref{composition} actually holds for all $L^p$ with $1\le p<\ali$ as one can see by following its proof once again).
\end{proof}

\begin{theorem}\label{thm diff state func 2}
If $(D,\Om)$ is a pair of bounded open sets of class $\C^1$ with $\ol{D}\subset \Om$, then
%\begin{enumerate}[label=(\roman*)]
the map $\phi \mapsto u_\phi \in L^2(\rn)$ is differentiable at $0\in W^{1,\ali}(\rn,\rn)$. Moreover, for all $\phi\in W^{1,\ali}\rnrn$ we have
\[
u' \phi = v'\phi-\gr u \cdot \phi,
\]
where $u'$ and $v'$ denote the Fr\'echet derivatives of $\phi\mapsto u_\phi$ and $\phi\mapto v_\phi$ respectively computed at $0$. 
%\item If $D$ and $\Om$ are of class $\C^2$ and $U$ is an open set compactly contained in either $D$ or $\Om\sm\ol D$, then the map $\phi\mapsto \restr{u_\phi}{U}\in H^1(U)$ is differentiable at $0\in W^{2,\ali}\rnrn$. 
%\item If $D$ and $\Om$ are of class $\C^2$, then
%\end{enumerate}
\end{theorem}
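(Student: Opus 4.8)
The plan is to deduce the differentiability of $\phi\mapsto u_\phi$ from that of the auxiliary map $\phi\mapsto v_\phi=u_\phi\circ(\id+\phi)$, whose smoothness is already guaranteed by Theorem \ref{thm diff state func 1}(i). For $\phi\in W^{1,\ali}\rnrn$ small, $\id+\phi$ is a bi-Lipschitz homeomorphism of $\rn$, so inverting the relation \eqref{v_phi} defining $v_\phi$ and extending by zero to all of $\rn$ yields
\[
u_\phi=v_\phi\circ(\id+\phi)\inv\quad\tin L^2(\rn).
\]
I would then obtain the result by applying the chain rule of Lemma \ref{5.3.3} with $p=2$, $g(\phi):=v_\phi$ and $\Psi(\phi):=(\id+\phi)\inv$.

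First I would verify the hypotheses on $g$. By Theorem \ref{thm diff state func 1}(i) the map $\phi\mapsto v_\phi\in\hoi(\Om)$ is of class $\C^\ali$ near $0$; composing with the bounded extension-by-zero operator $\hoi(\Om)\hookrightarrow W^{1,2}(\rn)$ shows that $\phi\mapsto g(\phi)\in L^2(\rn)$ is differentiable at $0$, with $g(0)=v_0=u\in W^{1,2}(\rn)$ and with derivative $g'(0)=v'$, a bounded linear map $W^{1,\ali}\rnrn\to W^{1,2}(\rn)$. Thus the requirements of Lemma \ref{5.3.3} on $g$ (with $p=2$) are met.

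Next I would analyse the inversion map $\Psi$. Clearly $\Psi(0)=\id$. Writing $y=\Psi(\phi)(x)$, the defining relation $x=y+\phi(y)$ gives $\Psi(\phi)(x)-\pp{x-\phi(x)}=\phi(x)-\phi(y)$, whence
\[
\norm{\Psi(\phi)-(\id-\phi)}_\ali\le\norm{D\phi}_\ali\,\norm{\phi}_\ali\le\norm{\phi}_{1,\ali}^2.
\]
This shows simultaneously that $\Psi$ is continuous at $0$ in the $L^\ali$-topology and that it is differentiable at $0$ as a map into $L^\ali\rnrn$ with $\Psi'(0)\phi=-\phi$. The continuity of $\Psi$ in the stronger $W^{1,\ali}$-topology, also required by Lemma \ref{5.3.3}, follows from the inverse function theorem: one has $D\Psi(\phi)=\pp{I+D\phi\circ\Psi(\phi)}\inv$, and since composition with the bi-Lipschitz bijection $\Psi(\phi)$ preserves the essential supremum, $\norm{D\phi\circ\Psi(\phi)}_\ali=\norm{D\phi}_\ali\to0$, so that $D\Psi(\phi)\to I$ in $L^\ali$ as $\phi\to0$.

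With all hypotheses in place, Lemma \ref{5.3.3} gives the differentiability of $\phi\mapsto u_\phi=g(\phi)\circ\Psi(\phi)\in L^2(\rn)$ at $0$, together with
\[
u'\phi=g'(0)\phi+\gr g(0)\cdot\Psi'(0)\phi=v'\phi-\gr u\cdot\phi,
\]
which is exactly the asserted formula. The main obstacle, as so often with domain perturbations, is the careful treatment of the inversion map $\phi\mapsto(\id+\phi)\inv$: one must reconcile its mere $L^\ali$-differentiability with the $W^{1,\ali}$-continuity needed to invoke Lemma \ref{5.3.3}, and one must check that the extension-by-zero is compatible with the composition, so that the identity $u_\phi=v_\phi\circ(\id+\phi)\inv$ genuinely holds on all of $\rn$ and not merely on $\Om_\phi$.
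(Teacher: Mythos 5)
Your proposal is correct and is essentially identical to the paper's proof, which likewise invokes Lemma \ref{5.3.3} with $g(\phi):=v_\phi$ and $\Psi(\phi):=\pp{\id+\phi}\inv$ (the paper states this in one line, while you additionally verify the hypotheses on $g$ and $\Psi$, all of which check out).
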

\begin{proof}
This is an immediate consequence of Lemma \ref{5.3.3} with $g(\phi):=v_\phi$ and $\Psi(\phi):=\pp{\id+\phi}\inv$.
\end{proof}
\begin{remark}\label{rmk 3.18}\emph{
The reader %comparing Theorem \ref{thm diff state func 1} and Theorem \ref{thm diff state func 2} 
might wonder what is the regularity that the map $\phi\mapsto u_\phi$ enjoys {\bf in a neighborhood} of $0$ (and not only at $0$, as discussed in Theorem \ref{thm diff state func 2}). To this end, notice that $\phi\mapto \Psi(\phi):=\pp{\id+\phi}\inv\in L^\ali(\rn)$ is differentiable in a neighborhood $\cU$ of $0\in W^{1,\ali}\rnrn$ and for $\phi_0\in\cU$ and $\phi\in W^{1,\ali}\rnrn$: 
\[
\Psi'(\phi_0)\phi= - \left\{\pp{I+D\phi_0}\inv\circ \Psi(\phi_0)\right\}\, \left\{\phi\circ \Psi(\phi_0)\right\}.
\]
Therefore $\Psi$ is of class $\C^1$ seen as a map of $W^{1,\ali}\rnrn\cap\C^{1,\ali}\rnrn\to L^{\ali}\rnrn$ but not of $W^{1,\ali}\rnrn\to L^\ali\rnrn$ (this is because \eqref{composition} extends to $L^\ali$ only for smooth $f$). 
}\end{remark}

Sometimes, the differentiability of $\phi\mapto u_\phi$ in $L^2$ is not enough. Especially when dealing with energy-like functionals like \eqref{E}, it will be useful to control also the differentiability of the gradient of $u_\phi$ in the $L^2$-norm. Since we are working with two-phase problems, finding the right formalism to discuss the differentiability of $\phi\mapto u_\phi$ in more regular spaces can be a bit tricky, but nevertheless possible, as shown in the following theorem.

\begin{theorem}\label{diff u in H1}
Let $(D,\Om)$ be a pair of domains of class $\C^2$ with $\ol D\subset \Om$.
The restrictions of $u_\phi$ to the core $D_\phi$ and the shell $\Om_\phi\sm\ol D_\phi$ admit extensions $u_\phi^c$, $u_\phi^s\in H^1(\rn)$ respectively, such that the maps $\phi\mapto u_\phi^c\in H^1(\rn)$ and $\phi\mapto u_\phi^s\in H^1(\rn)$ are of class $\C^1$ in a neighborhood of $0\in C^{2,\ali}\rnrn$. 
\end{theorem}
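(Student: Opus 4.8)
The plan is to transport the smoothness of the auxiliary map $\phi\mapto v_\phi$ established in Theorem \ref{thm diff state func 1}(ii) to the restrictions of $u_\phi$, by first \emph{extending} the two $H^2$-pieces of $v_\phi$ to the whole of $\rn$ and only then composing with the inverse diffeomorphism $(\id+\phi)\inv$. Recall that $u_\phi=v_\phi\circ(\id+\phi)\inv$ and that, by Theorem \ref{thm diff state func 1}(ii), the map $\phi\mapto v_\phi\in\hoi(\Om)\cap H^2(D)\cap H^2(\Om\sm\ol D)$ is of class $\C^\ali$ in a neighborhood of $0\in W^{2,\ali}\rnrn$, hence a fortiori near $0\in\C^{2,\ali}\rnrn$. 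Since $D$ and $\Om\sm\ol D$ are bounded domains of class $\C^2$ (the latter with the two disjoint $\C^2$-components $\pa D$ and $\pa\Om$), standard Sobolev theory provides bounded linear extension operators $E_c:H^2(D)\to H^2(\rn)$ and $E_s:H^2(\Om\sm\ol D)\to H^2(\rn)$. As restriction and $E_c,E_s$ are bounded and linear, the maps $\phi\mapto w_\phi^c:=E_c(\restr{v_\phi}{D})$ and $\phi\mapto w_\phi^s:=E_s(\restr{v_\phi}{\Om\sm\ol D})$ are of class $\C^\ali$ into $H^2(\rn)$. I would then \textbf{define} $u_\phi^c:=w_\phi^c\circ(\id+\phi)\inv$ and $u_\phi^s:=w_\phi^s\circ(\id+\phi)\inv$. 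Since $w_\phi^c$ agrees with $v_\phi$ on $D$, the function $u_\phi^c$ coincides with $u_\phi$ on $D_\phi=(\id+\phi)(D)$, and analogously $u_\phi^s=u_\phi$ on the shell, so these are genuine $\hi(\rn)$-extensions of the restrictions in question.

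It remains to show that $\phi\mapto u_\phi^c\in\hi(\rn)$ (and likewise $u_\phi^s$) is of class $\C^1$ near $0\in\C^{2,\ali}\rnrn$, which I would split into the $L^2$-differentiability of the function and of its gradient. For the function itself, $u_\phi^c=w_\phi^c\circ(\id+\phi)\inv$ has exactly the composition structure of Lemma \ref{5.3.3}, with $g(\phi)=w_\phi^c$ (of class $\C^\ali$ into $H^2\hookrightarrow W^{1,2}$) and $\Psi(\phi)=(\id+\phi)\inv$. Applying that lemma not only at $0$ but, by the reparametrization device used in the proof of Corollary \ref{corol 3.16}, at every nearby $\phi_0$, and then checking the continuity of the resulting derivative in $\phi_0$, yields $\C^1$-dependence into $L^2(\rn)$.

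For the gradient I would invoke the chain rule
\[
\gr u_\phi^c=\big((I+D\phi)^{-T}\circ(\id+\phi)\inv\big)\,\big((\gr w_\phi^c)\circ(\id+\phi)\inv\big).
\]
Here $\phi\mapto\gr w_\phi^c\in H^1(\rn,\rn)=W^{1,2}(\rn,\rn)$ is again of class $\C^\ali$, so the second factor is $\C^1$ into $L^2$ by the very same composition argument as in the previous paragraph; this is precisely where the $H^2$-regularity of the extended pieces is used, since it makes their gradients lie in $W^{1,2}$ so that Lemma \ref{5.3.3} applies. The first, matrix-valued factor is $\C^1$ into $L^\ali$: indeed $\phi\mapto(I+D\phi)^{-T}\in\C^{1,\ali}\rnrn$ is of class $\C^\ali$ via its Neumann series, and composing such a $\C^{1,\ali}$-valued map with $(\id+\phi)\inv$ is $\C^1$ into $L^\ali$ thanks to the regularity of $\phi$ (cf. Remark \ref{rmk 3.18}). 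As multiplication $L^\ali\times L^2\to L^2$ is continuous and bilinear, the product is $\C^1$ into $L^2(\rn,\rn)$; combining this with the previous paragraph gives $\phi\mapto u_\phi^c\in\hi(\rn)$ of class $\C^1$, and the argument for $u_\phi^s$ is verbatim the same.

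The step I expect to be the main obstacle is the control of the Jacobian factor $(I+D\phi)^{-T}\circ(\id+\phi)\inv$ in $L^\ali$: composition with the inverse map $(\id+\phi)\inv$ is differentiable into $L^\ali$ only for sufficiently regular inner functions (Remark \ref{rmk 3.18}), and this is exactly what forces the hypothesis $\phi\in\C^{2,\ali}$ rather than merely $W^{2,\ali}$, so that $(I+D\phi)^{-T}$ is continuous and the composition behaves in a $\C^1$ manner. By contrast, extending the $H^2$-pieces and recognizing the composition structure are routine once Theorem \ref{thm diff state func 1}(ii), Lemma \ref{5.3.3} and Corollary \ref{corol 3.16} are available.
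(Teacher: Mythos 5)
Your proof is correct and follows essentially the same route as the paper's: both extend the $H^2$-piece of $v_\phi$ by a bounded linear extension operator, define $u_\phi^c$ as the composition of that extension with $(\id+\phi)\inv$, and obtain $\C^1$-dependence from Theorem \ref{thm diff state func 1}(ii), the composition results (Lemma \ref{5.3.3}, Corollary \ref{corol 3.16}), and the $\C^1$-smoothness of $\phi\mapsto(\id+\phi)\inv$, which is exactly where the $\C^{2,\ali}$-regularity of $\phi$ enters. The only difference is organizational: the paper establishes joint $\C^1$-smoothness of $(g,\phi)\mapsto g\circ(\id+\phi)$ as a map $H^2(\rn)\times\C^{1,\ali}\rnrn\to H^1(\rn)$ and concludes by composition, whereas you differentiate the function and its gradient separately via an explicit chain rule.
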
 
\begin{proof}
We will prove differentiability for the map $\phi\mapto u_\phi^c$, since $u_\phi^s$ is completely analogous. First, we claim that the map
\begin{equation}\label{differenziabilitas di F}
\cF: H^2(\rn)\times C^{1,\ali}\rnrn\to H^1(\rn),\quad (g,\phi)\mapsto g\circ\pp{\id+\phi}
\end{equation}
is of class $\C^1$ in $H^2(\rn)\times \left\{\norm{\phi}_{1,\ali}<1\right\}$. By Corollary \ref{corol 3.16}, we know that, for fixed $g\in H^1(\rn)$, the map $\cF(g,\cdottone): \C^{1,\ali}\rnrn\to L^2(\rn)$ is of class $\C^1$ for $\norm{\phi}_{1,\ali}<1$. The claimed differentiability of \eqref{differenziabilitas di F} amounts to showing that the map
\[
\gr\pp{ \cF (g,\cdottone)}: \C^{1,\ali}\rnrn\to L^2\rnrn,\quad \phi\mapsto \pp{\id+D\phi}^T \gr g\circ \pp{\id+\phi}
\]
is of class $\C^1$ for $\phi$ small. Indeed, as $\C^{1,\ali}\rnrn\ni\phi\mapsto D\phi\in C^{0,\ali}\rnrn$ is of class $\C^\ali$ and $\gr g\in L^2\rnrn$, the smoothness of $\gr \cF$ follows from a further application of Corollary \ref{corol 3.16}.
By linearity with respect to $g$, we get the differentiability of $\cF:H^1(\rn)\times \C^{1,\ali}\rnrn\to L^2(\rn)$. 
Now, consider the map $\phi\mapto \cF\pp{P_c(\restr{v_\phi}{D}),\pp{\id+\phi}\inv-\id}$, where $P_c$ is a continuous linear extension operator from $H^2(D)$ to $H^2(\rn)$ (see \cite{sobolev spaces}). By Theorem \ref{thm diff state func 1} (ii) we have that $\phi\mapsto \restr{v_\phi}{D}\in H^2(D)$ is of class $\C^1$ in a neighborhood of $0\in \C^{2,\ali}\rnrn$. Moreover, by similar reasonings to those carried on in Remark \ref{rmk 3.18}, it can be shown that the map $\phi\mapsto \pp{\id+\phi}\inv\in \C^{1,\ali}\rnrn$ is of class $\C^1$ in a neighborhood of $0\in \C^{2,\ali}\rnrn$. By composition we conclude that the map
\[
\phi\mapsto u_\phi^c:= P_c(\restr{v_\phi}{D})\circ \pp{\id+\phi}\inv \in H^1(\rn)
\]
is an extension of $u_\phi$ to $H^1(\rn)$ that is of class $\C^1$ in a neighborhood of $0\in C^{2,\ali}\rnrn$.
\end{proof}

For $t\mapto\Phi(t)\in \C^{2,\ali}\rnrn$ smooth, $\Phi(0)=0$ and $t\ge0$ small, let $u_t$ denote the solution to \eqref{u_theta eq cl} corresponding to $\sg_t= \sg_c\chi_{D_t}+\chi_{\Om_t\setminus D_t}$. By Theorem \ref{thm diff state func 2} we know that $u_t$ admits a shape derivative, which we will call $u'$. The explicit computation of $u'$ is the content of the following theorem. 

\begin{theorem}\label{u' general thm}
Assume that $(D,\Om)$ is a pair of domains of class $C^{2}$ satisfying $\overline{D}\subset \Om$. Let $\Phi:\C^1\pp{[0,1),\C^{2,\ali}\rnrn}$ satisfy $\Phi(0)=\id$ and $\dato \Phi(t)= h$. Then, the shape derivative $u'\in L^2\Ome\cap \hi(D)\cap \hi(\Om\setminus \overline{D})$ is a weak solution of the following problem: 
\begin{equation}\label{u' eq cl}
\left\{
\begin{aligned}
\sg \De u' &= \be u' &\text{ in } D\cup (\Om\setminus \overline{D}),\\
[\sg \dn u']&= (\sg_c-1)\dv_\tau (\gr_\tau u\, h\cdot n)& \text{ on }\pa D,\\
[u']&=-[\dn u]\, h\cdot n& \text{ on }\pa D,\\
u'&=-\dn u\, h\cdot n & \text{ on }\pa \Om, 
\end{aligned}
\right.
\end{equation}
%namely: $u'+\gr u\cdot h \in \hoi\Ome$ and 
%\begin{equation}\label{u' eq wk}
%\quad \int_\Om \sg\gr u'\cdot \gr\fhi + \int_{\pa D}(\sg_c-1) (\gr_\tau u\cdot \gr_\tau \fhi )h\cdot n= -\be \int_\Om u'\fhi \quad \forall\fhi\in\C_0^\ali(\rn).
%\end{equation}
namely $u'+\gr u \cdot h$ belongs to $\hoi\Ome$ and
\begin{equation}\label{u' eq wk H01}
\int_\Om \sg \gr u'\cdot \gr \psi+\int_{\pa D} (\sg_c-1)(\gr_\tau u\cdot \gr_\tau \psi)h\cdot n = -\be\int_\Om u' \psi
\end{equation}
for all $\psi\in\C_0^\ali\Ome$.
\end{theorem}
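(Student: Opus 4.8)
The plan is to establish the variational identity \eqref{u' eq wk H01} by differentiating the state equation \emph{on the moving domain} tested against a \emph{fixed} function, and then to read off the pointwise formulation \eqref{u' eq cl} from it. Two of the four conditions in \eqref{u' eq cl} come almost for free. By Theorem \ref{thm diff state func 2} (together with Theorem \ref{thm diff state func 1}(i)) the combination $u'+\gr u\cdot h$ equals the material derivative $v'$, which lies in $\hoi\Ome$; hence $v'$ has vanishing trace on $\pa\Om$ and no jump across $\pa D$. On $\pa\Om$ the identity $u\equiv 0$ forces $\gr u=(\dn u)\,n$, so $u'=-\gr u\cdot h=-\dn u\,(h\cdot n)$; on $\pa D$ the continuity $[u]=0$ gives $[\gr_\tau u]=0$, whence $[\gr u\cdot h]=[\dn u]\,(h\cdot n)$ and therefore $[u']=-[\dn u]\,(h\cdot n)$. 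The phase-wise $\hi$-regularity of $u'$ asserted in the statement is supplied by Theorem \ref{diff u in H1}.

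For the variational identity, fix $\psi\in\C_0^\ali\Ome$. Since $\supp\psi$ is a fixed compact subset of $\Om$, it is contained in $\Om_t$ for all small $t$, so $\psi\in H_0^1(\Om_t)$ and $u_t$ satisfies
\[
\sg_c\int_{D_t}\gr u_t\cdot\gr\psi+\int_{\Om_t\sm\ol{D_t}}\gr u_t\cdot\gr\psi+\be\int_{\Om_t}u_t\psi=\ga\int_{\Om_t}\psi.
\]
I would then differentiate each term at $t=0$. To the two leading integrals I apply the Hadamard formula (Proposition \ref{hadam form}) \emph{separately on the core and the shell}; the shape derivative of the integrand is $\gr u'\cdot\gr\psi$, which is legitimate precisely because Theorem \ref{diff u in H1} gives the $\hi$-differentiability of the restrictions of $u_t$ to each phase. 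This produces the volume term $\int_\Om\sg\gr u'\cdot\gr\psi$ together with two interface integrals carrying the opposite normals $\pm n$, that is $\int_{\pa D}(\sg_c\gr u_c-\gr u_s)\cdot\gr\psi\,(h\cdot n)$. In the lower-order terms the $\pa\Om$-contributions vanish (as $\psi$ is compactly supported) and the $\pa D$-contributions cancel because $[u]=0$, leaving exactly $-\be\int_\Om u'\psi$ on the right.

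The key step is the simplification of the surviving interface integral. Decomposing $\gr u=\gr_\tau u+(\dn u)\,n$ on $\pa D$ and invoking the two transmission conditions carried by $u$ — namely $[\gr_\tau u]=0$ coming from $[u]=0$, and $[\sg\dn u]=0$ so that the normal parts satisfy $\sg_c\dn u_c-\dn u_s=0$ — the normal contribution drops out entirely and only the tangential piece survives, with coefficient $[\sg]=\sg_c-\sg_s=\sg_c-1$. This yields $\int_{\pa D}(\sg_c-1)(\gr_\tau u\cdot\gr_\tau\psi)\,h\cdot n$, and collecting everything gives precisely \eqref{u' eq wk H01}. Finally, the pointwise formulation follows by testing: localizing $\psi$ inside each phase recovers $\sg\De u'=\be u'$, while integrating $\int_\Om\sg\gr u'\cdot\gr\psi$ by parts phase-wise isolates $\int_{\pa D}[\sg\dn u']\psi$, and a tangential integration by parts of the interface term (tangential Stokes, Lemma \ref{tg stokes}, and the tangential calculus of Appendix A) rewrites $(\sg_c-1)(\gr_\tau u\cdot\gr_\tau\psi)$ as $-(\sg_c-1)\dv_\tau(\gr_\tau u\,(h\cdot n))\,\psi$, producing the flux-jump condition $[\sg\dn u']=(\sg_c-1)\dv_\tau(\gr_\tau u\,h\cdot n)$.

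I expect the main obstacle to be the phase-wise application of the Hadamard formula and the careful bookkeeping at the interface: one must track the two one-sided traces $\gr u_c,\gr u_s$ and the opposite orientations of the normal on $\pa D$, and it is only after using \emph{both} transmission conditions that the normal part cancels and the clean tangential expression with factor $\sg_c-1$ emerges. The second delicate point is justifying that the shape derivative of the integrand on each phase is genuinely $\gr u'\cdot\gr\psi$ in $L^1$; this is exactly what the $\hi$-differentiability result Theorem \ref{diff u in H1} is designed to guarantee, and without it the differentiation under the integral sign on the moving subdomains would not be warranted.
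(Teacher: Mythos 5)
Your proposal is correct and follows essentially the same route as the paper: both differentiate the weak formulation on the moving domains tested against a fixed $\psi\in\C_0^\ali\Ome$, apply the Hadamard formula phase-wise (with the $H^1$-differentiability of Theorem \ref{diff u in H1} justifying the shape derivative $\gr u'\cdot\gr\psi$ of the integrand), reduce the interface jump $[\sg\gr u]$ to $(\sg_c-1)\gr_\tau u$ via the transmission conditions, and then recover the pointwise conditions by localizing test functions and tangential integration by parts, with the conditions on $[u']$ and $u'|_{\pa\Om}$ read off from $u'=\dot u-\gr u\cdot h$. The only difference is cosmetic ordering (you derive the jump and boundary conditions before the variational identity, the paper after), so there is nothing to add.
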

\begin{proof}
By Theorem \ref{diff u in H1} we know that $u'$ is well defined. Moreover, by definition $u_t\circ \pp{\id+\Phi(t)}=v_t$. By differentiating we get $u'+\gr u \cdot h=\dot{u}$, which belongs to $\hoi\Ome$ by Theorem \ref{thm diff state func 1}.
Now, take an arbitrary function $\psi\in \C_0^\ali\Ome$. Notice that, for $t>0$ small enough, $\psi$ belongs to $\C_0^\ali(\Om_t)$ as well. Now integrate \eqref{u_theta eq cl} against the test function $\psi$:
\[
\sg_c\int_{D_t} \gr u_t \cdot\gr \psi + \int_{\Om_t\setminus D_t} \gr u_t \cdot \gr\psi = \int_{\Om_t} (\ga - \be u_t)\psi. 
\]
Computing the derivative with respect to $t$ of the above by employing the use of the Hadamard formula, Proposition \ref{hadam form} (again, the hypothesis are fulfilled by Theorem \ref{diff u in H1}), yields
\[
\int_\Om \sg\gr u'\cdot \gr \psi+ \int_{\pa D}[\sg \gr u]\cdot \gr \psi (h\cdot n)=-\beta\int_\Om u'\psi, 
\] 
which is equivalent to the weak formulation given in the statement of the theorem, since $[\sg \gr u]= [\sg \dn u] n + [\sg \gr_\tau u]=(\sg_c-1)\gr_\tau u$ by the transmission condition \eqref{tc}.
We remark that problem \eqref{u' eq wk H01} has a unique solution $u'$ such that $u'+\gr u\cdot h$ belongs to $\hoi\Ome$. Indeed suppose that $u_1$ and $u_2$ are such two solutions, then we claim that $u_3:=u_1-u_2$ is constantly $0$ in $\Om$. As a matter of fact, $u_3\in \hoi\Ome$ and satisfies
\[ 
\int_\Om \sg\gr u_3\cdot \gr \psi=-\be \int_\Om u_3 \psi \quad \text{ for all }\psi\in\C_0^\ali\Ome.
\]
Since $\be\ge0$, then $u_3\equiv 0$ in $\Om$ as claimed.

Now we show that, if $D$ and $\Om$ are smoother ($C^{2+\al}$ is enough), then $u'$ satisfies \eqref{u' eq cl} in the strong sense.
By restricting to test functions $\psi$ in $\C_0^\ali(D)$ and $\C_0^\ali(\Om\setminus \overline{D})$ we get 
\begin{equation}\label{De u'}
\sg \De u' (x) = \be u'(x)\quad \text{ for all } x\in D\cup (\Om\setminus \overline{D}).
\end{equation} 
%Take now $\fhi\in\C_0^\ali\Ome$ in \eqref{u' eq wk H01}. 
An integration by parts with \eqref{De u'} at hand gives
\[
\int_{\pa D} [\sg \dn u'] \psi =- \int_{\pa D} (\sg_c-1) (\gr_\tau u \cdot\grt \psi) h\cdot n. 
\]
Now, by an application of the tangential version of integration by parts (see Proposition \ref{tg int by parts} in the Appendix) we get 
\begin{equation}\label{dududu}
- \int_{\pa D} (\sg_c-1) (\gr_\tau u \cdot\grt \psi) h\cdot n = (\sg_c-1)\int_{\pa D}\dv_\tau(\grt u\, h\cdot n)\psi.
\end{equation}
Notice that the right hand side of \eqref{dududu} is meaningful because $u\in\C^{2+\al}(\ol D)$ if $\pa D$ and $\pa\Om$ are of class $\C^{2+\al}$ (see \cite[Theorem 2.2 and Theorem 2.3]{xb13}). 
This implies the second condition of \eqref{u' eq cl} by the arbitrariness of $\psi\in\C_0^\ali(\Om)$. The third and fourth conditions of \eqref{u' eq cl} follow easily from the fact that $u'=\dot{u}-\gr u \cdot h$. Indeed we have $[u']=[\dot{u}]- [\dn u] h\cdot n - [\grt u]\cdot \grt \psi = -[\dn u]h\cdot n$ on $\pa D$ and $u'=\dot{u}-\gr u \cdot h = -\dn u\, h\cdot n$ on $\pa \Om$ because of the boundary condition satisfied by $u_t$.
\end{proof}
\section{Optimal shapes and overdetermined problems}
\label{sec 1-ph equivalence}
In this section we will explain how to use shape derivatives in order to investigate the relationship between the two problems discussed in Chapter \ref{ch torsion+serrin}, namely the maximization of the one-phase torsional rigidity and Serrin's overdetermined problem. 

Let $\Om\subset \rn$ be a bounded domain of class $\C^2$ and $t\mapsto \Phi(t)\in \C^{2,\ali}\rnrn$ be differentiable at $t=0$ with $\Phi(0)=0$ and $\dato \Phi =h$. Moreover, suppose that the perturbation $\Phi$ leaves the volume of $\Om$ unaltered, that is 
\begin{equation}\label{phi li ante ala e suli}
\vol(\Om_t)=\vol(\Om) \tforall t\ge0 \text{ small}.
\end{equation} 
Lastly suppose that $\Om$ is a critical point for the functional $E(\emptyset, \cdottone)$ under the fixed volume constraint, i.e.
\begin{equation}\label{E' li ala}
E'(\emptyset,\Om)(\Phi)=0 \quad\tforall \Phi \text{ satisfying }\eqref{phi li ante ala e suli}.
\end{equation}
In other words, if $u_t$ represents the solution of 
\begin{equation}\label{torsio problemo}
-\De u_t = 1 \,\tin \Om_t, \quad u_t=0\,\ton\pa\Om_t,
\end{equation}
and $j(t):=\int_{\Om_t} |\gr u_t|^2$, then we can rewrite \eqref{E' li ala} by means of the Hadamard formula (Proposition \ref{hadam form}) as follows: 
\[
j'(0)= 2\int_{\Om} \gr u'\cdot \gr u + \int_{\pa\Om} |\dn u|^2 \,h\cdot n,
\]
where $u$ denotes the solution of \eqref{torsio problemo}.
By Theorem \ref{u' general thm}, we know that $\int_\Om \gr u'\cdot \gr u=0$ and thus 
\[
\int_{\pa\Om} |\dn u|^2\,h\cdot n =0. 
\]
Now, if we compute the derivative with respect to $t$ at $t=0$ of \eqref{phi li ante ala e suli} (see also Example \ref{ex l1}) we obtain $\int_{\pa\Om} h\cdot n=0$.
By the arbitrariness of $\Phi$ (see also Proposition \ref{extension of pert} in the next chapter) and Lemma \ref{almost fundamental theorem of} below, we get that the solution $u$ of \eqref{torsio problemo} on $\Om$ must verify the following overdetermined condition 
\[
|\dn u|^2 \equiv constant \;\ton\pa\Om.
\] 
By the Hopf lemma, we conclude that $\dn u$ must be constant: thus $u$ is a solution of Serrin's overdetermined problem \eqref{serrin op} and $\Om$ must be a ball by Theorem \ref{serrin thm}. 
\begin{lemma}\label{almost fundamental theorem of}
Let $\Om$ be a bounded open set and $f\in L^2(\pa\Om)$. If 
\begin{equation}\label{afto}
\int_{\pa\Om} fg =0 \;\tforall g\in L^2(\pa\Om) \;\text{such that } \int_{\pa\Om} g=0,
\end{equation}
then $f$ is constant (almost everywhere) on $\pa\Om$.
If $\pa\Om$ is of class $\C^k$, then the condition \eqref{afto} can be restricted to the subclass of functions $g\in\C^k(\pa\Om)\subset L^2(\pa\Om)$. 
\end{lemma}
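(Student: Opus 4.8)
The plan is to recognize that the hypothesis \eqref{afto} says precisely that $f$ is $L^2(\pa\Om)$-orthogonal to the subspace of mean-zero functions, whose orthogonal complement is exactly the constants; this is the familiar fundamental lemma of the calculus of variations. First I would prove the unrestricted statement directly. Since $\per(\Om)=\int_{\pa\Om}1>0$, the average $c := \frac{1}{\per(\Om)}\int_{\pa\Om} f$ is well defined, and the function $g := f - c$ lies in $L^2(\pa\Om)$ with $\int_{\pa\Om} g = 0$, so it is an admissible test function in \eqref{afto}. Plugging it in gives
\[
0 = \int_{\pa\Om} f(f-c) = \int_{\pa\Om}(f-c)^2 + c\int_{\pa\Om}(f-c) = \int_{\pa\Om}(f-c)^2,
\]
where the last equality uses $\int_{\pa\Om}(f-c)=0$. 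Hence $f=c$ almost everywhere on $\pa\Om$, which is the claim.

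For the $\C^k$ refinement I would reduce to the case just proved by a density-plus-mean-correction argument. Given an arbitrary $h\in L^2(\pa\Om)$ with $\int_{\pa\Om} h = 0$, choose $g_n\in\C^k(\pa\Om)$ with $g_n\to h$ in $L^2(\pa\Om)$, which is possible because $\C^k(\pa\Om)$ is dense in $L^2(\pa\Om)$ when $\pa\Om$ is of class $\C^k$. These approximants need not be mean-zero, so I correct them: set $c_n := \frac{1}{\per(\Om)}\int_{\pa\Om} g_n$ and $\wt{g}_n := g_n - c_n\in\C^k(\pa\Om)$, which satisfy $\int_{\pa\Om}\wt{g}_n = 0$. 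Since $g_n\to h$ in $L^2$ and $\int_{\pa\Om} h=0$, we have $c_n\to 0$ and thus $\wt{g}_n\to h$ in $L^2(\pa\Om)$. Applying the (restricted) hypothesis to each $\wt{g}_n$ gives $\int_{\pa\Om} f\wt{g}_n=0$, and passing to the limit using $f\in L^2(\pa\Om)$ and the $L^2$-convergence of $\wt{g}_n$ yields $\int_{\pa\Om} f h=0$. Therefore \eqref{afto} holds for every mean-zero $h\in L^2(\pa\Om)$, and the first part applies to conclude that $f$ is constant.

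The algebra is routine; the only step requiring a little care is the $\C^k$ case, where one must produce smooth approximants that are \emph{simultaneously} mean-zero and $L^2$-convergent to $h$. This is exactly what the correction $\wt{g}_n=g_n-c_n$ accomplishes, the key point being that the correction constants $c_n$ tend to $0$ so that the convergence $\wt{g}_n\to h$ is preserved. I would also record the harmless observation that $\per(\Om)>0$, which guarantees all the averages appearing above are well defined.
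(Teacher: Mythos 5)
Your proof is correct and takes essentially the same route as the paper: both arguments test \eqref{afto} with $f$ minus its boundary mean and conclude from the vanishing of $\int_{\pa\Om}(f-\ol f)^2$ that $f$ is constant almost everywhere. The only difference is cosmetic — you write out the density-plus-mean-correction argument for the $\C^k$ case in full, whereas the paper compresses it into the single phrase ``follows by a density argument.''
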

\begin{proof}
Let $\ol f$ denote the mean value of $f$, i.e. $\ol f=\fint_{\pa\Om} f= \pp{\int_{\pa\Om} f}/\per(\Om)$. 
Choose $g:=f-\ol f$ in \eqref{afto}. We have
\[
0=\fint_{\pa\Om} f(f-\ol f) = \fint_{\pa\Om} f^2 \,-\ol{f}^2.
\]
On the other hand,
\[
0\le \fint_{\pa\Om} \pp{f-\ol f}^2= \int_{\pa \Om} f^2\,- \ol{f}^2,
\]
with equality holding if and only if $f\equiv \ol f$ almost everywhere in $\pa\Om$.
The final claim of the lemma follows by a density argument.
\end{proof}
\begin{remark}\emph{
We have actually proved a slightly stronger version of Theorem \ref{thm polya} for $\pa\Om$ of class $\C^2$. Indeed balls are not only the {\bf unique} $\C^2$-maximizers for $E(\emptyset,\cdottone)$ under volume constraint, but more generally the only critical shape of class $\C^2$. In particular, no other maximizers or saddle shapes of class $\C^2$ exists for the one-phase functional $E(\emptyset,\cdottone)$ (compare this with Theorem \ref{thm2}).
}\end{remark}

\section{When the structure theorem does not apply}
In Chapter \ref{ch shape derivatives} we gave differentiability results under pretty weak regularity assumptions (both for integral functionals in Section \ref{sec hadam form} and state functions in Section \ref{sec state functions}). Nevertheless, when actually computing those derivatives, we imposed higher regularity in order to write shape derivatives by means of surface integrals. This aim of this section is to show how the same computations can be carried out without imposing any ``extra'' regularity.   

%{\bf When the structure theorem does not apply}\\
Suppose that $(D,\Om)$ is a pair of bounded domains of class $\C^1$ with $\ol D\subset \Om$. For $\phi\in W^{1,\ali}\rnrn$, let $u_\phi$ be the solution of \eqref{u_theta eq cl} and $v_\phi$ be the function defined by \eqref{v_phi}. Then, consider the map 
\begin{equation}\label{cE}
\phi\mapto \cE(\phi):= \int_{\Om_\phi} \sg_\phi |\gr u_\phi|^2=\int_\Om A(\phi) \gr v_\phi\cdot \gr v_\phi,
\end{equation}
where we have set $A(\phi):=\sg \pp{I+D\phi}^{-T}\pp{I+D\phi}\inv J_\phi$.
By composition we obtain that $\cE(\cdottone)$ is actually of class $\C^\ali$ in a neighborhood of $0\in W^{1,\ali}\rnrn$ (see also Theorem \ref{thm diff state func 1}, (i)). On the other hand both the domains and the perturbation field lack are not regular enough to apply the structure theorem (Theorem \ref{struct thm} on page \pageref{struct thm}). One can wonder how we can write the shape derivatives of $\cE$ then. By differentiating the integral over $\Om$ in \eqref{cE} we get for all $\zeta\in W^{1,\ali}\rnrn$: 
\[
\cE'(\phi)\zeta = \dato \cE(\phi+t\zeta) = \int_\Om A'(\phi)\zeta\, \gr v_\phi \cdot \gr v_\phi + 2\int_\Om A(\phi) \gr v'(\phi)\zeta \cdot \gr v_\phi,
\]
where $v'(\phi)\zeta$ denotes the Fr\'echet differential of the map $\phi\mapto v_\phi$  applied to $\zeta$ (which is well defined by Theorem \ref{thm diff state func 1}). We use the notation $U_\phi:= \pp{I+D\phi}\inv$ and the following identities from matrix calculus:
\[
U'_\cdottone (\phi)\zeta= -U_\phi\inv  D\zeta U_\phi\inv,\quad J'_\cdottone (\phi)\zeta = J_\phi \tr(U_\phi D\zeta). 
\]
We have
\[
A'(\phi)\zeta = -\sg J_\phi \left\{ U_\phi^T D\zeta^T U_\phi^{-T} U_\phi +U_\phi^T U_\phi\inv D\zeta U_\phi\inv+ U_\phi^T U_\phi \tr(U_\phi D\zeta)  \right\}.
\]
One could go on and compute higher order derivatives in a similar fashion. We will give the result concerning the second Fr\'echet derivative of $\cE(\cdottone)$. For $\phi\in W^{1,\ali}\rnrn$ small and arbitrary $\xi,\zeta\in W^{1,\ali}\rnrn$ we have
\begin{equation}\nonumber
\begin{aligned}
\cE''(\phi)(\xi,\zeta)=\dato \cE'(\phi+t \xi)\zeta=\int_\Om A''(\phi)(\xi,\zeta) \gr v_\phi\cdot \gr v_\phi
+ 2\int_\Om A'(\phi)\zeta\, \gr v'(\phi)\xi\cdot \gr v_\phi\\ + 2\int_\Om A'(\phi)\xi \,\gr v'(\phi)\zeta \cdot \gr v_\phi+2\int_\Om A(\phi) \gr v''(\phi)(\xi,\zeta)\cdot \gr v_\phi+2\int_\Om A(\phi) \gr v'(\phi)\xi\cdot \gr v'(\phi)\zeta.
\end{aligned}
\end{equation}
Where, 
\begin{equation}\nonumber 
\begin{aligned}
&A''(\phi)(\xi,\zeta)=\dato A'(\phi+t\xi)\zeta =\\
&\sg J_\phi\bigg\{ 
- D\xi^T D\zeta^T U_\phi^{-T} U_\phi
-U_\phi^{-T}D\zeta^T D\xi^T U_\phi 
-U_\phi^T D\xi D\zeta U_\phi\inv
-U_\phi^T U_\phi\inv D\zeta D\xi \\
&-U_\phi^{-T} D\zeta^T U_\phi^{-T} U_\phi\tr(U_\phi D\xi)
-U_\phi^T U_\phi\inv D\zeta U_\phi\inv \tr(U_\phi D\xi)
+U_\phi^{-T}D\xi^T U_\phi^{-T} U_\phi \tr(U_\phi D\zeta)\\
&+U_\phi^T U_\phi\inv D\xi U_\phi \inv \tr(U\phi D\zeta)
-U_\phi^T U_\phi \tr(U_\phi D\xi)\tr(U_\phi D\zeta) 
+U_\phi^T U_\phi \tr(U_\phi\inv D\zeta U_\phi\inv D\xi)\\
&+U_\phi^{-T}D\xi^T U_\phi ^{-T} U_\phi\inv D\zeta U_\phi\inv 
+U_\phi^{-T}D\zeta^T U_\phi^{-T} U_\phi\inv D\xi U_\phi\inv 
\bigg\}.
\end{aligned}
\end{equation} 
Notice that the expression for $\cE''(\phi)(\xi,\zeta)$ given above is a symmetric bilinear form. Further derivatives of order $k\ge 3$ can be computed inductively in the same way, although the computations will become longer at any step. Finally, notice that, independently of $k$, no second order derivatives with respect to the space variables will ever appear in the process (this confirms the fact that $\phi\in W^{1,\ali}\rnrn$ is enough regularity for the functional $\phi\mapto \cE(\phi)$ to be of class $\C^\ali$).   

\chapter{Two-phase torsional rigidity}\label{2-ph torsion}
\label{ch 2-ph torsion}

In this chapter, we will study the functional $E$ defined by \eqref{E}. In particular, we will analyze the link between optimality and radial symmetry. The results contained in this chapter are original and taken from \cite{cava} and \cite{cava2}.

\section{Perturbations verifying some geometrical constraints}
\label{sec geom constr}

Let us introduce the most general class of perturbations that we will be working with in this chapter. Since we are going to compute shape derivatives of the functional $E$ up to the second order, we want enough regularity for the structure theorem (Theorem \ref{struct thm} on page \pageref{struct thm}) to apply. We define
\[
\A:=\setbld{\Phi\in \C^2\pp{[0,1),\C^{2,\ali}\rnrn}}{\Phi(0)=0}.
\]
Moreover, for all bounded open sets $\om$ of class $\C^2$, we set
\[
\A_{\vol(\om)}:=\setbld{\Phi\in\A}{\vol(\om_t)=\vol(\om)}, \quad \A_{\bc(\om)}:=\setbld{\Phi\in\A}{\bc(\om_t)=\bc(\om)}.
\]
For all $\Phi\in\cA_{\vol(\om)}$, Example \ref{ex l1} and Example \ref{ex l2} yield the following two conditions: 
\begin{align}
&\int_{\partial \omega} h\cdot n =0, \quad\quad\quad&\text{($1^{\rm st}$ order volume preserving)} \label{1st}\\
&\int_{\partial \omega} H (h\cdot n)^2+ \int_{\partial \omega}Z=0. &\text{($2^{\rm nd}$ order volume preserving)}\label{2nd}
\end{align} 

If $\Phi\in\A_{\bc(\om)}$, then, by Example \ref{ex l1}:
\begin{equation}\label{bar 1st}
\int_{\pa\om} x_i\, (h\cdot n) =0 \quad \text{ for all }i=1,\dots,N.
\end{equation}

We will consider the following class of perturbations:
$$
\A^*:=\A_{\vol(D)}\cap \A_{\vol(\Om)}\cap \A_{\bc (\Om)}.
$$

\begin{proposition}\label{extension of pert}
Take $h\in\C^{1,\ali}(\rn, \rn)$. Suppose that $h$ satisfies \eqref{1st} for $\om=D,\Om$ and \eqref{bar 1st} for $\om=\Om$. Then there exists a perturbation $\widetilde{\Phi}\in\A^*$ such that $\widetilde{\Phi}(t)=th+o(t)$ as $t\to 0$. 
\end{proposition}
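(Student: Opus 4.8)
The plan is to satisfy the exact (nonlinear) volume and barycenter constraints by correcting the naive flow $t\mapsto th$ with a finite-dimensional family of auxiliary fields, via the implicit function theorem. First I would package the $N+2$ constraints into a single map
\[
C:\cU\subset\C^{2,\al}\rnrn\to\RR^{N+2},\quad C(\phi):=\pp{\vol(D_\phi)-\vol(D),\ \vol(\Om_\phi)-\vol(\Om),\ \bc(\Om_\phi)-\bc(\Om)},
\]
where $D_\phi=(\id+\phi)(D)$, $\Om_\phi=(\id+\phi)(\Om)$ and $\cU$ is a neighbourhood of $0$ on which $\id+\phi$ is a homeomorphism. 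By the change of variables formula each component is a polynomial in $D\phi$ (e.g. $\vol(D_\phi)=\int_D\det(I+D\phi)$, $\bc(\Om_\phi)=\int_\Om(x+\phi)\det(I+D\phi)$), so $C$ is of class $\C^\ali$ near $0$ with $C(0)=0$; its differential is read off from Example \ref{ex l1}:
\[
C'(0)\phi=\pp{\int_{\pa D}\phi\cdot n,\ \int_{\pa\Om}\phi\cdot n,\ \int_{\pa\Om}x\,(\phi\cdot n)}.
\]
Thus the hypotheses \eqref{1st} (for $\om=D$ and $\om=\Om$) together with \eqref{bar 1st} say precisely that $C'(0)h=0$.

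The key step is to produce fixed fields $g_1,\dots,g_{N+2}\in\C^{2,\al}\rnrn$ for which the matrix $M:=\pp{C'(0)g_k}_k$ is invertible, equivalently to show $C'(0)$ is onto $\RR^{N+2}$. Here I would exploit that $\ol D\subset\Om$ makes $\pa D$ and $\pa\Om$ disjoint compact sets. I take $g_1$ smooth and supported in a thin tube around $\pa D$ contained in $\Om$, with $\int_{\pa D}g_1\cdot n\neq0$; since $g_1$ vanishes near $\pa\Om$, its only nonzero entry in $M$ is the first one. The remaining $g_2,\dots,g_{N+2}$ I support near $\pa\Om$ (so they vanish near $\pa D$) and choose so that the $(N+1)\times(N+1)$ block $\pp{\int_{\pa\Om}g_k\cdot n,\ \int_{\pa\Om}x_i\,(g_k\cdot n)}$ is invertible. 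This is possible because the functionals $\xi\mapsto\int_{\pa\Om}\xi$ and $\xi\mapsto\int_{\pa\Om}x_i\xi$ are linearly independent on $\C^\ali(\pa\Om)$: a vanishing combination would force $c_0+\sum_i c_ix_i\equiv0$ on $\pa\Om$, i.e. $\pa\Om$ inside a hyperplane, which is impossible. Hence $M$ is block diagonal with invertible blocks, so invertible.

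With such $g_k$ fixed I would apply the implicit function theorem to the smooth, finite-dimensional map $\Psi(t,a):=C\pp{th+\sum_k a_kg_k}$ near $(0,0)\in\RR\times\RR^{N+2}$. Since $\Psi(0,0)=0$ and $\pa_a\Psi(0,0)=M$ is invertible, there is a smooth branch $t\mapsto a(t)$ with $a(0)=0$ and $\Psi(t,a(t))=0$ for small $t\ge0$. Differentiating this identity at $t=0$ and using $\pa_t\Psi(0,0)=C'(0)h=0$ yields $M\,a'(0)=0$, hence $a'(0)=0$ and $a(t)=o(t)$. Setting $\wt\Phi(t):=th+\sum_k a_k(t)g_k$ then gives $\wt\Phi(0)=0$, $\wt\Phi(t)=th+o(t)$, and $C(\wt\Phi(t))=0$, i.e. the volume of $D$, the volume of $\Om$ and the barycenter of $\Om$ are all preserved, so $\wt\Phi\in\A^*$.

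The genuine technical content is the surjectivity of $C'(0)$ realized by \emph{admissible} perturbations, which is why the disjoint localization around $\pa D$ and $\pa\Om$ and the linear independence of $1,x_1,\dots,x_N$ on $\pa\Om$ are essential. The only remaining bookkeeping subtlety is that the implicit function theorem supplies $a(\cdot)$ only on a small interval $[0,\eps)$, whereas $\A$ asks for maps on $[0,1)$; composing with a smooth nondecreasing time change $\tau:[0,1)\to[0,\eps)$ with $\tau(t)=t$ for $t$ near $0$ extends $\wt\Phi$ to a member of $\A$ on the full interval while leaving its first-order behaviour at $t=0$, and the constraints, intact.
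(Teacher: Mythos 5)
Your proposal is correct, but it proves the proposition by a genuinely different route than the paper. The paper's proof is an explicit construction: it corrects the naive flow $\id+th$ by the exact symmetry transformations that restore the constraints, namely a dilation by the factor $\pp{\vol(D)/\vol(D_t)}^{1/N}$ near $D$, and a dilation composed with the translation by $-\bc(\Om_t)$ near $\pa\Om$, and then blends these two corrected flows $\wt{\Phi}_-$, $\wt{\Phi}_+$ with a cutoff $\eta$ that is $1$ on $D+B_{\eps_0}$ and $0$ outside $D+B_{2\eps_0}$; the hypotheses \eqref{1st} and \eqref{bar 1st} enter only to show that the correction factors are $1+o(t)$, so that $\pato\wt{\Phi}=h$. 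Your proof replaces the explicit corrections by an abstract one: you encode the $N+2$ constraints as a smooth map $C$ into $\RR^{N+2}$, produce a finite-dimensional family of localized fields making $C'(0)$ surjective, and invoke the (finite-dimensional) implicit function theorem, with \eqref{1st} and \eqref{bar 1st} entering as $C'(0)h=0$, which forces $a'(0)=0$ and hence the $o(t)$ correction. Both arguments hinge on the same geometric fact — $\pa D$ and $\pa\Om$ are disjoint compact sets, so corrections can be localized near each boundary independently — but they buy different things. The paper's construction is elementary (no IFT), fully explicit, and automatically valid for every $t$ for which the flow is defined, whereas you need the time-reparametrization trick to pass from $[0,\eps)$ to $[0,1)$; on the other hand, your argument is more robust: it works verbatim for any finite collection of $\C^1$ constraint functionals whose linearizations are jointly surjective through admissible fields (e.g.\ perimeter constraints, as in the Corollary at the end of Chapter 5, where no explicit one-parameter family of exact corrections is available). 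One small remark: you take the domain of $C$ to be a neighborhood of $0$ in $\C^{2,\al}\rnrn$, but since $\vol$ and $\bc$ involve only $\phi$ and $D\phi$, you may (and should, given that the statement only assumes $h\in\C^{1,\ali}\rnrn$) define $C$ on a neighborhood of $0$ in $\C^{1,\ali}\rnrn$; this is harmless and in fact smooths over a regularity mismatch already present in the statement of Proposition \ref{extension of pert} itself, which the paper's own proof shares.
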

\begin{proof}
We will give an explicit construction of $\widetilde{\Phi}$. First, we put $D_t:=(\id+th)D$, $\Om_t:= (\id+th)\Om$. Now, we define the following auxiliary perturbations:
\begin{equation}\nonumber
\begin{aligned}
\widetilde{\Phi}_-:= \sqrt[\leftroot{-1}\uproot{5}\scriptstyle N]{\frac{\vol(D)}{\vol(D_t)}}\left(\id+th\right)-\id,\quad
\widetilde{\Phi}_+:=\sqrt[\leftroot{-1}\uproot{5}\scriptstyle N]{\frac{\vol(\Om)}{\vol(\Om_t)}}\left(\id+th - \bc(\Om_t)\right) - \id.
\end{aligned}
\end{equation}
By definition we have $\widetilde{\Phi}_-\in\A_{\vol(D)}$ and $\widetilde{\Phi}_+\in\A_{\vol(\Om)}\cap \A_{\bc (\Om)}$. We will now ``blend them together'' by means of a bump function. Let $\eps_0>0$ be a sufficiently small constant, such that $D+B_{2\eps_0}\subset \Om$. Take now a smooth bump function $\eta:\rn\mapsto [0,1]$ that is constantly equal to $1$ in $D+B_{\eps_0}$ and vanishes outside $D+B_{2\eps_0}$ and put: 
$$
\widetilde{\Phi}:=\eta\widetilde{\Phi}_-+\left(1-\eta\right)\widetilde{\Phi}_+ %\quad \text{ for }t\in[0,1),\,x\in\rn
.$$
By construction, $\widetilde{\Phi}\in\A^*$. Moreover, a simple calculation with \eqref{1st} and \eqref{bar 1st} at hand ensures that $\pato\widetilde{\Phi}=h$ as claimed. 
\end{proof} 

Since we are working with a shape functional that takes a pair of domains $(D,\Om)$ as input, for all $\Phi\in\A$, in what follows it will be useful for our purposes to separate its contributions on $\pa D$ and $\pa\Om$. For a fixed pair $(D,\Om)$ take some small $\eps_0$ such that $D+B_{2\eps_0}\subset \Om$ as done previously in the proof of Proposition \ref{extension of pert} and define 
\[
\A_-:=\setbld{\Phi\in\A}{\Phi(t,x)=0 \text{ if } x\notin D+B_{2\eps_0} }, \; 
\A_+:=\setbld{\Phi\in\A}{\Phi(t,x)=0\text{ if }x\in \ol{D+B_{\eps_0}}}.
\]
Notice that for every $\Phi\in\A$ there exist some $\Phi_\pm\in\A_\pm$ such that $\Phi=\Phi_-+\Phi_+$ and although such decomposition is not unique, the values of $\Phi_\pm$ are uniquely determined (and actually equal to $\Phi$) on $\ol{D+B_{\eps_0}}$ and $\rn\setminus (D+B_{2\eps_0})$ respectively. In accordance with the notation for $\Phi$ we will write 
\begin{equation}\label{hpm}
\Phi_\pm= t h_\pm + o(t) \quad \text{ as } t\to 0. 
\end{equation}
In a similar manner we put 
\[
\A_\pm^*:=\A^*\cap \A_\pm.
\]
\section{First order shape derivatives}

\subsection{Computation of $E'$ and proof of Theorem \ref{thm1}}
\label{ssec E'}

\begin{theorem}\label{E'=}
Let $(D,\Om)$ be a pair of domains of class $\C^2$ satisfying $\overline{D}\subset \Om$. The first order shape derivative of the functional $E$ computed at $(D,\Om)$ with respect to an arbitrary perturbation $\Phi\in\A$ is given by 
\[
E'(D,\Om)(\Phi)=l_1^E(h\cdot n)=(1-\sg_c) \int_{\pa D} \pp{\sg_c \abs{\dn u}^2+\abs{\gr_\tau u}^2} h\cdot n + \int_{\pa \Om} \abs{\dn u}^2 h\cdot n. 
\]
\end{theorem}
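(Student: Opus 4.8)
The plan is to differentiate the energy representation $E(D,\Om)=\int_\Om\sg|\gr u|^2$ phase by phase, using the differentiability of the state function established earlier. Writing
\[
E(D_t,\Om_t)=\sg_c\int_{D_t}|\gr u_t|^2+\int_{\Om_t\sm\ol{D_t}}|\gr u_t|^2
\]
(recall $\sg_s=1$), I would apply the Hadamard formula (Proposition \ref{hadam form}) separately on each phase. The hypotheses are met because, by Theorem \ref{diff u in H1}, the restrictions of $u_t$ to core and shell admit $H^1(\rn)$-extensions depending in a $\C^1$ way on the perturbation, so that $t\mapsto|\gr u_t^{c}|^2$ and $t\mapsto|\gr u_t^{s}|^2$ are differentiable in $L^1(\rn)$ with integrands at $t=0$ lying in $W^{1,1}$ (here $u\in H^2$ on each phase is used). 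Summing the two contributions and tracking the outward normals carefully — on $\pa D$ the shell contributes with the opposite sign, its outer normal there being $-n$ — gives
\[
E'(D,\Om)(\Phi)=2\int_\Om\sg\,\gr u\cdot\gr u'+\int_{\pa D}\pp{\sg_c|\gr u_c|^2-|\gr u_s|^2}h\cdot n+\int_{\pa\Om}|\gr u_s|^2\,h\cdot n,
\]
where $u'$ is the shape derivative of $u$ from Theorem \ref{u' general thm}.

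The second step is to evaluate the volume term $2\int_\Om\sg\,\gr u\cdot\gr u'$. Since $u\in\hoi(\Om)$ and is $H^2$ on each phase, it is an admissible test function (by density) in the weak formulation \eqref{u' eq wk H01} of $u'$, which for the functional $E$ is taken with $\be=0$; choosing $\psi=u$ there yields
\[
\int_\Om\sg\,\gr u'\cdot\gr u=-\int_{\pa D}(\sg_c-1)|\grt u|^2\,h\cdot n=(1-\sg_c)\int_{\pa D}|\grt u|^2\,h\cdot n.
\]
This is the decisive two-phase feature: unlike the one-phase situation of Section \ref{sec 1-ph equivalence}, where the analogous cross term vanishes, here the jump in the transmission condition for $u'$ turns it into a nontrivial interface integral.

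Finally I would simplify the interface term using the transmission conditions \eqref{tc}. Decomposing $|\gr u|^2=|\dn u|^2+|\grt u|^2$ and using that $\grt u$ is continuous across $\pa D$ while $\sg_c\dn u_c=\dn u_s$ (as $\sg_s=1$), one finds on $\pa D$ that $\sg_c|\gr u_c|^2-|\gr u_s|^2=(1-\sg_c)\pp{\sg_c|\dn u|^2-|\grt u|^2}$. Adding this to twice the cross term above, the tangential contributions combine as $-|\grt u|^2+2|\grt u|^2=+|\grt u|^2$, producing exactly $(1-\sg_c)\int_{\pa D}\pp{\sg_c|\dn u|^2+|\grt u|^2}h\cdot n$. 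On $\pa\Om$ the boundary condition $u=0$ forces $\grt u=0$, so $|\gr u_s|^2=|\dn u|^2$ there, which gives the remaining term and completes the identification with $l_1^E(h\cdot n)$.

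The main obstacle I anticipate is the bookkeeping at the interface: correctly orienting the normals when splitting $E$ into its two phases, and recognizing that the volume term is not zero but must be converted — by testing the equation for $u'$ against $u$ — into the interface integral that fixes both the sign and the coefficient of the $|\grt u|^2$ contribution. Checking that $u$ is an admissible test function and that the Hadamard formula applies under the available regularity (through Theorem \ref{diff u in H1}) is routine but should be stated explicitly.
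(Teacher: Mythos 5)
Your proposal is correct and follows essentially the same route as the paper: split the energy phase by phase, apply the Hadamard formula on each piece (with the regularity supplied by Theorem \ref{diff u in H1}), convert the cross term $2\int_\Om\sg\,\gr u\cdot\gr u'$ into the interface integral $2(1-\sg_c)\int_{\pa D}|\grt u|^2\,h\cdot n$ by testing the weak formulation \eqref{u' eq wk H01} (with $\be=0$) against $\psi=u$, and then simplify the jump $[\sg|\gr u|^2]$ via the transmission conditions \eqref{tc}. The algebra at the interface, including the sign conventions for the shell's normal and the final combination $(1-\sg_c)\pp{\sg_c|\dn u|^2+|\grt u|^2}$, matches the paper's computation exactly.
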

\begin{proof}
For a fixed perturbation $\Phi\in\A$, we will apply the Hadamard formula, Proposition \ref{hadam form}, to the integral functional 
\begin{equation}\label{e(t) li seme}
e(t):=E(D_t,\Om_t)=\int_{\Om_t} \sg_t \abs{\gr u_t}^2.
\end{equation} 
Notice that the integrand in \eqref{e(t) li seme} does not actually satisfy the assumptions of Proposition \ref{hadam form}. Therefore we will need to split the integrals into two parts, namely $D_t$ and $\Om_t$ and then apply the Hadamard formula to both. This yields
\begin{equation}
\begin{aligned}
E'(D,\Om)(\Phi)=e'(0)= \dato \sg_c\int_{D_t} \abs{\gr u_t}^2+\dato \int_{\Om_t\setminus \overline{D_t}}\abs{\gr u_t}^2 =\\
%2\int_D \sg_c \gr u\cdot \gr u' + \int_{\pa D} \sg_c \abs{\gr u }^2 h\cdot n + 2\int_{\Om\setminus \ol{D}} \gr u \cdot \gr u' + \int_{\pa \pp{\Om\setminus \ol{D}}}\abs{\gr u}^2 h\cdot n=\\
2\int_\Om \sg \gr u\cdot \gr u' + \int_{\pa D} [\sg \abs{\gr u}^2] h\cdot n + \int_{\pa \Om} \abs{\dn u}^2 h\cdot n.
\end{aligned}
\end{equation}
We now get rid of the volume integral $\int_\Om \sg\gr u\cdot \gr u'$ in the above. To this end, notice that, by a density argument, the weak formulation \eqref{u' eq wk H01} holds true even when we choose $u$ as a test function. Now, as $\be=0$ in this case, we obtain:
\begin{equation}\label{almost e'(0)}
e'(0)=2(1-\sg_c)\int_{\pa D} |\grt u|^2 h\cdot n + \int_{\pa D} [\sg |\gr u|^2] h\cdot n + \int_{\pa \Om} \abs{\dn u}^2 h\cdot n.
\end{equation} 
We can split the normal and tangential parts of the gradient of $u$ in the integral over $\pa D$ above: 
\[
l_1^E(h\cdot n)=e'(0)= \int_{\pa D} \sg_c \dn u [\dn u] h\cdot n+ (1-\sg_c)\int_{\pa D}\abs{\gr_\tau u}^2 h\cdot n + \int_{\pa \Om} \abs{\dn u}^2 h\cdot n. 
\]
Finally, we can rewrite the jump part by means of the transmission condition \eqref{tc} and rearrange the terms as in the statement of the theorem.
\end{proof}
\begin{remark}\emph{
If $(D_0,\Om_0)$ are concentric balls, then the corresponding solution $u$ is radially symmetric. This means that $\gr_\tau u$ vanishes on $\pa D_0$, while $\dn u$ is constant on both $\pa D_0$ and $\pa \Om_0$. Hence, $l_1^E(D_0,\Om_0)=0$ for all $\Phi\in\A$ that satisfy the first order volume preserving condition \eqref{1st} on both $\pa D_0$ and $\pa \Om_0$, and, in particular, for all $\Phi\in\A^*$. This proves Theorem \ref{thm1}. 
}\end{remark}
\begin{remark}\emph{
Just as done in Section {\rm \ref{sec 1-ph equivalence}}, the condition $E'(D,\Om)(\Phi)=0$ for all $\Phi\in\A^*$ can be restated as an overdetermined problem, as follows:
\begin{equation}
\left\{
\begin{aligned}
-{\rm div} (\sg \gr u)&=1 & \text{ in }\Om,\\
u&=0 & \text{ on }\pa\Om,\\
\sg_c\abs{\dn u}^2+\abs{\grt u}^2&=c_1 &\text{ on }\pa D,\\
\pa_n u &= c_2 &\text{ on }\pa\Om,
\end{aligned}
\right.
\end{equation}
where the overdetermined condition on $\pa D$ has to be intended in the sense of traces taken from the inside of $D$ and $c_1$, $c_2$ are real constants determined by the data of the problem.
}\end{remark}

\subsection{Explicit computation of $u'$ for concentric balls}

As we know from the abstract structure theorem (Theorem \ref{struct thm} on page \pageref{struct thm}), the shape derivative of the state function $u'$ too depends on $h\cdot n$ in a linear fashion (although this statement is also a direct consequence of the explicit calculations in Theorem \ref{u' general thm}). For arbitrary $\Phi\in\A$, with $\Phi=\Phi_-+\Phi_+$, the first order shape derivative $u'$ of the state function $u$ with respect to $\Phi$, can be decomposed as $u'=u'_-+u'_+$, where $u'_\pm$ are the shape derivatives of $u$ with respect to the perturbation $\Phi_\pm$. In the special case when $D$ and $\Om$ are concentric balls (which will be denoted by $D_0:=B_R$ and $\Om_0:=B_1$), the functions $u'_\pm$ are solutions to the following problems and can be computed explicitly by separation of variables. 
\noindent\begin{minipage}{.5\linewidth}
\begin{equation}\label{u'in}
\begin{cases}\De u'_{-}=0 \quad\mbox{ in } D_0\cup (\Om_0\setminus\ol{D_0}),\\
[\sg\, \pa_n u'_{-}]=0 \quad\mbox{ on }\pa D_0,\\
[u'_{-}]=\frac{1-\sg_c}{\sg_c}\frac{R}{N} h_{-}\cdot n\quad\mbox{ on }\pa D_0,\\
u'_{-}=0 \quad \mbox{ on }\pa\Om_0.
\end{cases}
\end{equation}
\end{minipage}%
\begin{minipage}{.5\linewidth}
\begin{equation}\label{u'out}
\begin{cases}
\De u'_{+}=0 \quad \mbox{ in } D_0\cup (\Om_0\setminus\ol{D_0}),\\
[\sg\, \pa_n u'_{+}]=0 \quad\mbox{ on }\pa D_0,\\
[u'_{+}]=0\quad \mbox{ on }\pa D_0,\\
u'_{+}= \frac{1}{N}\, h_{+}\cdot n \quad \mbox{ on }\pa\Om_0.
\end{cases}
\end{equation}
\end{minipage}

\vspace*{5mm}
\begin{proposition}\label{prop u' kepeken sphar}
Let $\Phi\in\A$ and assume it to be decomposed as $\Phi=\Phi_-+\Phi_+$. With the same notation as {\rm\eqref{hpm}}, suppose that for some real constants $\al_{k,i}^\pm$, the following expansions hold for all $\theta\in\SS^{N-1}$ (see Appendix \ref{app sphar} for the notation concerning spherical harmonics and their fundamental properties):
\begin{equation}\label{h_in h_out exp}
(h_-\cdot n)(R\theta)=\sum_{k=1}^\ali\sum_{i=1}^{d_k}\al_{k,i}^- Y_{k,i}(\theta), \quad 
(h_+\cdot n)(\theta)=\sum_{k=1}^\ali\sum_{i=1}^{d_k}\al_{k,i}^+ Y_{k,i}(\theta).
\end{equation} 
Then, the functions $u'_\pm$ admit the following explicit expression for $\theta\in\SS^{N-1}$:
\begin{equation}\label{u'pm li seme}
u'_\pm(r\theta)=\begin{cases}\displaystyle
\sum_{k=1}^\ali\sum_{i=1}^{d_k}\al_{k,i}^\pm B_k^\pm r^k Y_{k,i}(\theta) \quad &\tfor r\in[0,R],\\
\displaystyle
\sum_{k=1}^\ali\sum_{i=1}^{d_k}\al_{k,i}^\pm\pp{C_k^\pm r^{2-N-k}+D_k^\pm r^k}Y_{k,i}(\theta)\quad&\tfor r\in(R,1],
\end{cases}
\end{equation}
where the constants $B_k^\pm$, $C_k^\pm$ and $D_k^\pm$ are defined as follows
\begin{equation*}
\begin{aligned}
&B_k^- = \frac{1-\sg_c}{\sg_c}R^{-k+1}\left( (N-2+k)R^{2-N-2k} +k\right)/F, \quad\quad 
C_k^- = -D_k^- = (\sg_c-1)k R^{-k+1}/F, \\
&B_k^+ = (N-2+2k)R^{2-N-2k}/F, \quad C_k^+ = (1-\sg_c)k/F,\quad D_k^+ = (N-2+k+k\sg_c)R^{2-N-2k}/F,
\end{aligned}
\end{equation*}
and the common denominator $F= N(N-2+k+k\sg_c)R^{2-N-2k}+k N (1-\sg_c)>0$.
\end{proposition}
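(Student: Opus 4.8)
The plan is to solve each of the two transmission problems \eqref{u'in} and \eqref{u'out} explicitly by separation of variables, exploiting the rotational symmetry of the pair $(D_0,\Om_0)=(B_R,B_1)$, and then to pin down the integration constants from the three interface/boundary conditions, invoking the uniqueness of the shape derivative guaranteed by Theorem \ref{u' general thm} to know that the function produced is \emph{the} shape derivative. Since the data $h_\pm\cdot n$ have the spherical-harmonic expansions \eqref{h_in h_out exp} and the differential operators in \eqref{u'in}--\eqref{u'out} are linear and commute with rotations, the orthogonality of the $Y_{k,i}$ decouples the problems mode by mode. Accordingly, I would look for solutions of the form $u'_\pm(r\theta)=\sum_{k,i}\al^\pm_{k,i}\,\rho^\pm_k(r)\,Y_{k,i}(\theta)$, where $\rho^\pm_k$ is an unknown radial profile attached to the mode $(k,i)$.

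Next I would identify the radial profiles. Because $Y_{k,i}$ is a spherical harmonic of degree $k$, so that $\Delta_{\SS^{N-1}}Y_{k,i}=-k(k+N-2)Y_{k,i}$, harmonicity of $u'_\pm$ in each region forces $\rho^\pm_k$ to solve the Euler equation
\[
r^2(\rho^\pm_k)''+(N-1)r(\rho^\pm_k)'-k(k+N-2)\rho^\pm_k=0,
\]
whose two independent solutions are $r^{k}$ and $r^{2-N-k}$. In the core $r\in[0,R]$ one has $2-N-k<0$ (as $N\ge2$, $k\ge1$), so the solution $r^{2-N-k}$ is singular at the origin and must be discarded; hence $\rho^\pm_k(r)=B^\pm_k r^{k}$ there. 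In the shell $r\in(R,1]$ both solutions are admissible, giving $\rho^\pm_k(r)=C^\pm_k r^{2-N-k}+D^\pm_k r^{k}$. This already produces the ansatz \eqref{u'pm li seme}.

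It then remains to determine the three constants per mode from the three remaining conditions: the flux transmission $[\sg\,\pa_n u'_\pm]=0$ and the prescribed jump $[u'_\pm]$ at $r=R$, together with the Dirichlet datum at $r=1$. Using $\pa_n=\pa_r$ on $\pa D_0$ and $\sg_s=1$, these become, for each mode, the linear system
\begin{align*}
\sg_c k R^{k-1}B^\pm_k-(2-N-k)R^{1-N-k}C^\pm_k-kR^{k-1}D^\pm_k&=0,\\
R^{k}B^\pm_k-R^{2-N-k}C^\pm_k-R^{k}D^\pm_k&=J^\pm,\\
C^\pm_k+D^\pm_k&=K^\pm,
\end{align*}
where $(J^-,K^-)=\bigl(\tfrac{1-\sg_c}{\sg_c}\tfrac{R}{N},\,0\bigr)$ and $(J^+,K^+)=\bigl(0,\,\tfrac1N\bigr)$, read off directly from \eqref{u'in} and \eqref{u'out}.

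Finally I would solve this $3\times3$ system. In the ``$-$'' case the third equation gives $C^-_k=-D^-_k$ at once (matching the asserted identity), and back-substitution into the first two equations yields $B^-_k$ and $D^-_k$; in the ``$+$'' case one eliminates $B^+_k$ through the second equation and solves the resulting $2\times2$ system for $(C^+_k,D^+_k)$. In both cases the common denominator that emerges is exactly $F=N(N-2+k+k\sg_c)R^{2-N-2k}+kN(1-\sg_c)$, and straightforward simplification reproduces all the constants claimed in the statement. The work here is purely algebraic bookkeeping rather than conceptual; having produced the candidate functions, one may alternatively skip the derivation and simply verify that \eqref{u'pm li seme} with the stated constants satisfies \eqref{u'in}--\eqref{u'out}, then conclude by the uniqueness part of Theorem \ref{u' general thm}. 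The one point deserving genuine care is the positivity $F>0$, needed both to ensure the system is nonsingular and to legitimize the expansion: writing $F=N\bigl[(N-2+k)R^{2-N-2k}+k+k\sg_c(R^{2-N-2k}-1)\bigr]$ and using $0<R<1$ with $N+2k-2>0$, one has $R^{2-N-2k}>1$, so every bracketed term is strictly positive and hence $F>0$. This is the main (and only) obstacle beyond routine computation.
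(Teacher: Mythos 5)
Your proposal is correct and follows essentially the same route as the paper's proof: separation of variables with spherical harmonics, reduction mode by mode to the Euler equation with fundamental solutions $r^k$ and $r^{2-N-k}$, discarding the singular one in the core, and solving the resulting $3\times3$ linear system coming from the transmission and Dirichlet conditions (the paper carries this out explicitly only for $u'_+$, deferring $u'_-$ to \cite{cava}). Your explicit verification that $F>0$ is a welcome detail that the paper's proof leaves implicit.
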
 
\begin{proof}
We will compute here the expression for $u'_+$ only, as the case of $u'_-$ is completely analogous (we refer to \cite[Section 4]{cava} for the details). 
Let us pick an arbitrary $k\in\{1,2,\dots\}$ and $i\in\{1,\dots, d_k\}$. We will use the method of separation of variables to find the solution of problem \eqref{u'out} in the particular case when $h_+\cdot n=Y_{k,i}$ on $\pa\Om_0$ and then the general case will be recovered by linearity.
%Set $r:=|x|$ and, for $x\ne 0$, $\theta:=x/|x|$. 
We will be searching for solutions to \eqref{u'out} of the form $u'_+=u'_+(r,\theta)=f(r)g(\theta)$ (where $r:=\abs{x}$ and $\theta:=x/\abs{x}$ for $x\ne 0$).
Using the well known decomposition formula for the Laplace operator into its radial and angular components (see Proposition \ref{decomp lapl}), the equation $\De u'_+=0$ in $D_0 \cup (\Omega_0\setminus \overline{D_0})$ can be rewritten as
$$
0 = \pa_{rr}f(r)g(\theta)+\frac{N-1}{r}\pa_r f(r)g(\theta)+\frac{1}{r^2}f(r)\De_{\tau}g(\theta) \;\text{for }r\in(0,R)\cup (R,1),\, \theta\in\SS^{N-1}.
$$
Take $g=Y_{k,i}$.
Under this assumption, we get the following equation for $f$:
\begin{equation}\label{f}
\pa_{rr}f+\frac{N-1}{r}\pa_r f-\frac{\lambda_k}{r^2}f=0 \quad\text{in } (0,R)\cup (R,1).
\end{equation}
Since we know that $\la_k=k(k+N-2)$, it can be easily checked that, on each interval $(0,R)$ and $(R,1)$, any solution to the above consists of a linear combination of the following two independent solutions:
\begin{equation}\label{xieta}
%\begin{aligned}
f_{sing}(r):= r^{2-N-k}\ \quad \text{ and } \quad f_{reg}(r):= r^k. 
%\end{aligned}
\end{equation}
Since equation \eqref{f} is defined for $r\in (0,R)\cup (R,1)$, we have that the following holds for some real constants $A_k^+$, $B_k^+$, $C_k^+$ and $D_k^+$;
$$
f(r)= \begin{cases}
A_k^+r^{2-N-k}+B_k^+r^k \quad&\text{for } r\in(0,R),\\
C_k^+r^{2-N-k}+D_k^+ r^k \quad&\text{for } r\in(R,1).
\end{cases}
$$
Moreover, since ${2-N-k}$ is negative, $A_k^+$ must vanish, otherwise a singularity would occur at $r=0$.
The other three constants can be obtained by the interface and boundary conditions of problem \eqref{u'out} recalling that we are assuming $h_+\cdot n = Y_{k,i}$ on $\pa\Om_0$.
We get the following system:
\[
\begin{cases}
C_k^+ R^{2-N-k}+ D_k^+R^k-B_k^+R^k= 0,\\
\sg_c kB_k^+ R^{k-1}= {(2-N-k)} C_k^+ R^{{1-N-k}} + k D_k^+ R^{k-1},\\
C_k^++D_k^+=1/N.
\end{cases}
\]
By solving it we obtain the coefficients of the series representation \eqref{u'pm li seme} of $u'_+$.
\end{proof}

\section{Second order shape derivatives}
\label{sec second order shape der}

In this section we will carry out the computation of the second order shape derivative of the shape functional $E$ at the radially symmetric configuration $(D_0,\Om_0)$. 
\subsection{Computation of $E''$}
\label{ssec comp E''}
The computation of $E''(D_0,\Om_0)(\Phi)=l_2^E(h\cdot n, h\cdot n)+l_1^E(h\cdot n)$ for $\Phi\in\A^*$ will require two steps. First, we will compute the bilinear form $l_2^E$ by means of Hadamard perturbations as done in Example \ref{ex l2} and finally we will take care of the term containing $Z$ using the second order volume preserving condition \eqref{2nd}.
\begin{proposition}\label{prop l2e}
Let $\Phi\in\A$. Then, the bilinear form $l_2^E$ admits the following explicit expression:
\begin{equation}\nonumber
\begin{aligned}
&l_2^E(h\cdot n, h\cdot n)=2\int_{\pa \Om_0} \dn u\,\dn u' \,(h\cdot n) + 2\int_{\pa \Om_0} \pa_n u \,\pa_{nn} u (h\cdot n)^2 + \int_{\pa \Om_0} |\dn u|^2 H (h\cdot n)^2 \\
& +2\int_{\pa D_0} \left[ \sg \dn u \,\dn u'\right] (h\cdot n) + 2\int_{\pa D_0} \sg_c\pa_n u\,[\pa_{nn} u] (h\cdot n)^2 + \int_{\pa D_0} \left[ \sg |\dn u|^2\right] H (h\cdot n)^2.
\end{aligned}
\end{equation}
\end{proposition}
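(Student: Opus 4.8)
The plan is to apply Remark~\ref{rmk hadamard is enough}: since $l_2^E$ is the symmetric bilinear form furnished by the structure theorem, it suffices to compute the second shape derivative of $j(t):=E(D_t,\Om_t)$ along a single \emph{Hadamard} perturbation $\Phi(t)=th$ with $h=(h\cdot n)\,n$ on $\Ga=\pa D_0\cup\pa\Om_0$; for such perturbations the term $Z$ in \eqref{second structure t} vanishes, so $j''(0)=l_2^E(h\cdot n,h\cdot n)$. Here $u$ denotes the explicit radial solution \eqref{u} on the concentric pair $(D_0,\Om_0)$, whose crucial feature is that $\gr_\tau u\equiv 0$ on both $\pa D_0$ and $\pa\Om_0$, while $\sg_c\dn u_c=\dn u_s$ there by the transmission condition ($\sg_s=1$). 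First I would record the first order derivative at the moving configuration $(D_t,\Om_t)$, but in the \emph{unsimplified} form from the proof of Theorem~\ref{E'=},
\[
j'(t)=2\int_{\Om_t}\sg_t\,\gr u_t\cdot\gr u'_t+\int_{\pa D_t}[\sg|\gr u_t|^2]\,V_t\cdot n_t+\int_{\pa\Om_t}|\dn u_t|^2\,V_t\cdot n_t,
\]
valid for all small $t\ge 0$, where $V_t=h\circ(\id+th)\inv$; the regularity needed to differentiate once more is supplied by Theorem~\ref{diff u in H1} together with Corollary~\ref{generalized hadam form}.

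The crux is the interior integral, since differentiating it directly would summon the uncontrolled second shape derivative $u''$. I avoid this by rewriting it through the weak characterization of $u'_t$: testing \eqref{u' eq wk H01} (at the configuration $(D_t,\Om_t)$, with $\be=0$) against $u_t\in\hoi(\Om_t)$ gives
\[
2\int_{\Om_t}\sg_t\,\gr u_t\cdot\gr u'_t=2(1-\sg_c)\int_{\pa D_t}|\gr_\tau u_t|^2\,V_t\cdot n_t.
\]
Pulling this surface integral back to $\pa D_0$ along $\id+th$, the integrand contains $\gr_\tau u_t\circ(\id+th)$, which by Theorem~\ref{thm diff state func 1} depends $\C^1$-smoothly on $t$ and \emph{vanishes at $t=0$} because $\gr_\tau u\equiv 0$ on $\pa D_0$; hence its square is $O(t^2)$, the whole interior term is $O(t^2)$, and it contributes nothing to $j''(0)$.

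It then remains to differentiate the two boundary integrals. Writing them phase-wise as divergences, $\int_{\pa D_t}[\sg|\gr u_t|^2]V_t\cdot n_t+\int_{\pa\Om_t}|\dn u_t|^2V_t\cdot n_t=\int_{D_t}\dv(\sg_c|\gr u_t|^2V_t)+\int_{\Om_t\sm\ol{D_t}}\dv(|\gr u_t|^2V_t)$, I apply formula \eqref{a part} of Example~\ref{ex l2} on each phase with $f(t)=\sg|\gr u_t|^2$. This produces, on each boundary, a term $\int f'(0)(h\cdot n)$ with $f'(0)=2\sg\,\gr u\cdot\gr u'$ and a term $\int(Hf(0)+\dn f(0))(h\cdot n)^2$. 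Finally the radial symmetry is exploited: on $\pa D_0,\pa\Om_0$ one has $\gr u=\dn u\,n$, so $\gr u\cdot\gr u'=\dn u\,\dn u'$ and, choosing $n=\gr d$ (whence $\pa_n n=0$), $\dn|\gr u|^2=2\dn u\,\pa_{nn}u$; assembling the core and shell contributions on $\pa D_0$ into jumps and using $\sg_c\dn u_c=\dn u_s$ to identify $[\sg\dn u\,\pa_{nn}u]=\sg_c\dn u_c[\pa_{nn}u]$, the six terms of the statement drop out.

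The main obstacle is precisely this interior integral: the naive second differentiation involves $u''$, and the whole point is to realize that, \emph{at the concentric configuration}, the identity $\gr_\tau u\equiv 0$ on $\pa D_0$ makes the rewritten interior term quadratically small in $t$, so that $j''(0)$ reduces to a pure boundary object. A secondary technical point is the careful sign bookkeeping when the core and shell contributions on $\pa D_0$ are combined into the jumps $[\,\cdot\,]$, together with checking that the smoothness of the explicit radial $u$ on each closed phase meets the hypotheses behind \eqref{a part}.
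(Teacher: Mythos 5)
Your proposal is correct and follows essentially the same route as the paper: reduction to Hadamard perturbations via Remark \ref{rmk hadamard is enough}, elimination of the volume term $2\int_{\Om_t}\sg_t\gr u_t\cdot\gr u'_t$ by testing the weak formulation of $u'_t$ against $u_t$ (which is exactly how the paper arrives at \eqref{almost e'(0)} and then differentiates it along $(D_t,\Om_t)$), the observation that the resulting $\int_{\pa D_t}|\grt u_t|^2$ term has vanishing derivative at $t=0$ because the integrand is the square of a $\C^1$-in-$t$ quantity vanishing at $t=0$ (the paper phrases this via Proposition \ref{hadam form 2} with $g(0)=g'(0)=0$, including the same composition trick with Theorem \ref{thm diff state func 1} to justify regularity), and finally the phase-wise divergence rewriting differentiated by the Hadamard formula — your citation of \eqref{a part} is the computation the paper redoes explicitly as its terms $(A)$ and $(B)$ — followed by the same jump/transmission-condition bookkeeping. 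No gaps; the differences are purely presentational.
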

\begin{proof}
We will proceed along the same lines of Example \ref{ex l2}. As stated in Remark \ref{rmk hadamard is enough}, we know that $E''(\Phi)=l_2^E(h\cdot n,h\cdot n)$ in the special case that $\Phi\in\A$ is an Hadamard perturbation. %Since the linear form $l_1^E$ has already been dealt with in Theorem \ref{l1}, we just need to compute $l_2^E$.
Therefore, for all $\Phi\in\A$ of the form $\Phi=\id+th$ with $h_\tau \equiv 0$ on $\pa D_0\cup\pa\Om_0$, by employing the explicit form of the first order shape derivative given by \eqref{almost e'(0)} and reasoning as in the proof of Corollary \ref{generalized hadam form}, we can write
\begin{equation}\label{first E''}
\begin{aligned}
l_2^E=%l_2^E(h\cdot n, h\cdot n)=\\
\dato\pp{2(1-\sg_c) \int_{\pa D_t} |\grt u_t|^2 \xi_t 
+ \int_{\pa D_t} \left[\sg |\gr u_t|^2\right]\xi_t 
+ \int_{\pa \Om_t} \sg |\gr u_t|^2\xi_t},
\end{aligned}
\end{equation}
here we have put $\xi_t=h_t \cdot n_t$, where $h_t=h \circ \left(\id+\Phi(t)\right)^{-1}$ and $n_t$ denotes the outward unit normal to both $\pa D_t$ and $\pa \Om_t$. Let us examine with \eqref{first E''} term by term. 
First of all, we claim that 
\[\dato\int_{\pa D_t}|\grt u_t|^2\xi_t=0.\] 
By definition of tangential gradient \eqref{tg grad} and Proposition \ref{der of normal} we see that $|\grt u_t|^2$ is differentiable at $t=0$, and the same goes for $\xi_t$. We will now apply Proposition \ref{hadam form 2} with $g(t)=|\grt u_t|^2\xi_t$. At a glance it might look like we do not have enough regularity to apply Proposition \ref{hadam form 2} since we do not have control over the gradient of $u_t$ in the right Sobolev space, nevertheless, this is just one of the ``artificial'' regularity that comes from the composition $\id=\pp{\id+\Phi(t)}\circ \pp{\id+\Phi(t)}\inv$. Indeed notice that 
\[
\gr u_t\circ\pp{\id+\Phi(t)}=\pp{I+D\Phi(t)}^T \gr v_t
\]
and conclude by Theorem \ref{thm diff state func 1}.
Now, since the term $|\grt u_t|$ appears squared in $g(t)=|\grt u_t|^2\xi_t$, then $g(0)=g'(0)=0$ on $\pa D_0$ (recall that for $t=0$, $u$ is a radial function, and thus $\grt u =0$). Thus $\dato \int_{\pa D_t}|\grt u_t|^2\xi_t=0$ as claimed. 
Now, \eqref{first E''} can be rewritten in the following compact way:
\begin{equation}\label{(A) and (B)}
l_2^E(h\cdot n,h\cdot n)= \underbrace{\restr{\frac{d}{dt}}{t=0} \int_{\pa D_t} \!\!\! f(t)\, h_t\cdot n_t^1}_{(A)} + \underbrace{\restr{\frac{d}{dt}}{t=0} \int_{\pa (\Om_t\setminus \ol{D_t})}\!\!\! f(t)\, h_t\cdot n_t^2}_{(B)},
\end{equation}
where $f(t):=\sg_t |\gr u_t|^2$, and $n_t^1$ (respectively $n_t^2$) denotes the unit normal vector to $\pa D_t$ (respectively $\pa (\Om_t\setminus \ol{D_t})$) pointing in the outward direction with respect to the domain $D_t$ (respectively $\Om_t\setminus \ol{D_t}$).
We first deal with the term $(A)$ of \eqref{(A) and (B)}. We get %By the divergence theorem, $(A)$ can be rewritten as 
$$
(A)= \restr{\frac{d}{dt}}{t=0} \int_{D_t} \dv\left( f(t)\, h\circ \left(\id+\Phi(t)\right)^{-1}\right),
$$
The divergence theorem, followed by an application of the Hadamard formula (Proposition \ref{hadam form}), yields 
$$
(A)= \int_{D_0} \restr{\frac{\pa}{\pa t}}{t=0} \dv\left( f(t) \, h\circ\left(\id+ \Phi(t)\right)^{-1}\right) + \int_{\pa D_0} \dv\left( f(0) h\right) \, h\cdot n=(A1)+(A2).
$$
We have
\begin{equation}\nonumber
\begin{aligned}
(A1)= %\int_{D_0} \dv\left( f' h - f(0) \, D h \, h\right) = 
\int_{\pa D_0} f'(0) h \cdot n - \int_{\pa D_0} f(0) \left(D h \, h\right)\cdot n,%\\
\quad (A2)= \int_{\pa D_0} \left( \gr f(0)\cdot h + f(0)\dv h\right) h\cdot n.
\end{aligned} 
\end{equation}
Moreover, as $h=(h\cdot n)n$ on $\pa D_0$ by hypothesis, we get
\begin{equation}\label{(A) ii kanji}
(A)=\int_{\pa D_0} f'(0) h\cdot n+ \int_{\pa D_0} \pa_n f(0) (h\cdot n)^2
+ \int_{\pa D_0} f(0)\left(\dv h - n\cdot (Dh \, h)\right)\, h\cdot n.
\end{equation}

Now, by the definition of tangential divergence \eqref{tg div} and \eqref{decomp tg div} (recall that by assumption $h_\tau=0$ on $\pa D_0$) we get:
$
\dv h - n\cdot (Dh \, n)= \dv_\tau h =\dv_\tau \big( (h\cdot n) n\big)= H \, h\cdot n
$.

Recalling the definition of $f(t)$, we can rewrite \eqref{(A) ii kanji} as follows
$$
(A)= 2\int_{\pa D_0}\sg \gr u \cdot \gr u' \, (h\cdot n) + 2\int_{\pa D_0} \sg \pa_n u (\pa_{nn} u ) (h\cdot n)^2+\int_{\pa D_0}\sg |\gr u|^2 H (h\cdot n)^2.
$$
The term labeled $(B)$ in \eqref{(A) and (B)} can be computed analogously. The claim of Proposition \ref{prop l2e} is finally obtained by combining the two terms $(A)$ and $(B)$ and recalling that $\grt u=0$ on $\pa D_0\cup \pa\Om_0$.
%\begin{equation}
%\begin{aligned}
%&l_2^E(h\cdot n, h\cdot n)=+2\int_{\pa \Om_0} \gr u \cdot \gr u' \,(h\cdot n) + 2\int_{\pa \Om_0} \pa_n u \,\pa_{nn} u (h\cdot n)^2 + \int_{\pa \Om_0} |\gr u|^2 H (h\cdot n)^2 \\
%& -2\int_{\pa D_0} \left[ \sg_0 \gr u \cdot \gr u'\right] (h\cdot n) - 2\int_{\pa D_0} \sg\pa_n u_- [\pa_{nn} u] (h\cdot n)^2 - \int_{\pa D_0} \left[ \sg_0 |\gr u|^2\right] H (h\cdot n)^2.
%\end{aligned}
%\end{equation}
\end{proof}

The following theorem is an immediate consequence of Proposition \ref{prop l2e} and the combination of Theorem \ref{E'=} and \eqref{2nd}. 
\begin{theorem}\label{thm E''=}
For all $\Phi\in\A^*$, the following holds: 
\begin{equation}\nonumber
\begin{aligned}
E''(\Phi)=& +2\int_{\pa D_0} \left[ \sg \dn u\, \dn u'\right] (h\cdot n) + 2\int_{\pa D_0} \sg_c\pa_n u\, [\pa_{nn} u] (h\cdot n)^2\\
&+2\int_{\pa \Om_0} \dn u \,\dn u' \,(h\cdot n) + 2\int_{\pa \Om_0} \pa_n u \,\pa_{nn} u (h\cdot n)^2.
\end{aligned}
\end{equation}
\end{theorem}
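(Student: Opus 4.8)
The plan is to assemble the second order shape derivative from its two structural pieces and then show that every curvature-weighted term cancels. Since $\Phi\in\A^*\subset\A$ is of class $\C^2\pp{[0,1),\C^{2,\ali}\rnrn}$, part (ii) of Corollary \ref{struc coroll} applies and gives
\[
E''(D_0,\Om_0)(\Phi)=l_2^E(h\cdot n,h\cdot n)+l_1^E(Z),
\]
where $l_2^E$ is the bilinear form already computed in Proposition \ref{prop l2e} and $Z$ is the field appearing in Corollary \ref{struc coroll}. The first step is a mere substitution of the explicit expression of $l_2^E$; the actual work lies in rewriting $l_1^E(Z)$ so that it annihilates the three terms of $l_2^E$ that carry a factor $H$.

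First I would exploit the radial symmetry of the state function $u$ at the concentric configuration $(D_0,\Om_0)$: there $\gr_\tau u\equiv 0$ on $\pa D_0$ and $\dn u$ is constant on each of $\pa D_0$ and $\pa\Om_0$. Feeding the argument $Z$ into the linear form of Theorem \ref{E'=} then collapses its integrands to constants, so that
\[
l_1^E(Z)=(1-\sg_c)\sg_c\,\abs{\dn u}^2\int_{\pa D_0}Z+\abs{\dn u}^2\int_{\pa\Om_0}Z,
\]
where on each sphere $\abs{\dn u}^2$ denotes the relevant constant boundary value, taken from inside $D_0$ on $\pa D_0$ per the trace convention.

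Next I would invoke the second order volume preserving condition \eqref{2nd}, which holds both for $\om=D_0$ and for $\om=\Om_0$ precisely because $\Phi\in\A^*$; it yields $\int_{\pa D_0}Z=-\int_{\pa D_0}H(h\cdot n)^2$ and the analogous identity on $\pa\Om_0$. Substituting these turns $l_1^E(Z)$ into the negatives of two curvature integrals. On $\pa\Om_0$ the resulting term matches $\int_{\pa\Om_0}\abs{\dn u}^2H(h\cdot n)^2$ coming from $l_2^E$ verbatim and cancels it. The delicate point is the matching on $\pa D_0$: there $l_2^E$ carries the jump $[\sg\abs{\dn u}^2]$, whereas $l_1^E(Z)$ produces the coefficient $(1-\sg_c)\sg_c\abs{\dn u}^2$ evaluated from the core side. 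I would bridge this gap using the transmission condition \eqref{tc}, namely $\sg_c\dn u_c=\dn u_s$ (recall $\sg_s=1$), which gives $[\sg\abs{\dn u}^2]=\sg_c\abs{\dn u_c}^2-\sg_c^2\abs{\dn u_c}^2=\sg_c(1-\sg_c)\abs{\dn u_c}^2$; the two coefficients then coincide and the $\pa D_0$ curvature terms cancel as well. What survives is exactly the four-integral expression in the statement, completing the argument.

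The main obstacle I anticipate is precisely this curvature bookkeeping on $\pa D_0$: one must keep careful track of which trace (core or shell side) each factor $\abs{\dn u}^2$ refers to, and then use the transmission condition to reconcile the jump $[\sg\abs{\dn u}^2]$ with the coefficient generated by $l_1^E(Z)$. Once the radial symmetry has flattened the coefficients and \eqref{2nd} has converted $\int Z$ into $-\int H(h\cdot n)^2$, everything else is a routine term-by-term substitution.
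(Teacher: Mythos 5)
Your proposal is correct and is essentially the paper's own argument: the paper obtains Theorem \ref{thm E''=} exactly by combining the structure-theorem decomposition $E''(D_0,\Om_0)(\Phi)=l_2^E(h\cdot n,h\cdot n)+l_1^E(Z)$ with Proposition \ref{prop l2e}, the radial form of $l_1^E$ from Theorem \ref{E'=}, and the second order volume preserving condition \eqref{2nd} applied on both $\pa D_0$ and $\pa\Om_0$, with the transmission condition $[\sg\dn u]=0$ reconciling $[\sg\abs{\dn u}^2]=\sg_c(1-\sg_c)\abs{\dn u_c}^2$ so that all curvature terms cancel. You have merely spelled out the details the paper compresses into the phrase ``immediate consequence,'' and your bookkeeping of the core-side trace is the correct reading of the paper's convention.
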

\begin{remark}\emph{
Theorem {\rm\ref{thm E''=}} actually holds true for all $\Phi\in\A$ that satisfy just the second order volume preserving condition \eqref{2nd} for $\omega= D_0$, $\Om_0$. In particular, we have not used the barycenter preserving condition yet.
}\end{remark}

\subsection{Analysis of the non-resonant part}
\label{ssec anal non res}
Since we know that $u'$ depends linearly on $h\cdot n$ (see for example Theorem \ref{struct thm} on page \pageref{struct thm} or also Theorem \ref{u' general thm}), Theorem \ref{thm E''=} tells us that $E''(D_0,\Om_0)(\Phi)$ is a quadratic form in $h\cdot n$ for all $\Phi\in\A^*$. In particular, $E''(D_0,\Om_0)(\Phi_-+\Phi_+)= E''(D_0,\Om_0)(\Phi_-)+E''(D_0,\Om_0)(\Phi_+)$ for $\Phi_\pm\in\A_\pm^*$ is {\bf not true} in general (although it can happen, even in non trivial cases).

\begin{figure}[h]
\centering
\includegraphics[width=0.9\linewidth,center]{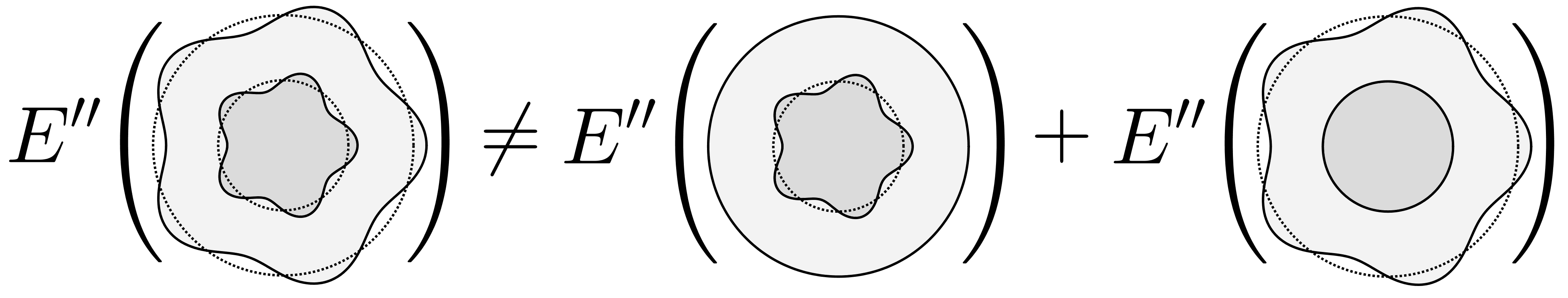}
\caption{$E''$ is nonlinear
} 
\label{nonlinear}
\end{figure}

In what follows we will assume that the expansion \eqref{h_in h_out exp} holds true for $h_\pm$. Combining the result of Theorem \ref{thm E''=} and the explicit expressions for $u$ and $u'=u'_-+u'_+$ (given by \eqref{u} and Proposition \ref{prop u' kepeken sphar} respectively) yields the following. 
\begin{equation}\label{E''=in+out+res}%\nonumber
E''(D_0,\Om_0)(\Phi)=\sum_{{k=1}}^\ali\sum_{{i=1}}^{d_k}\left\{ \left(\al_{k,i}^\ri\right)^2 E''_\ri (k) +\left(\al_{k,i}^\ro\right)^2 E''_\ro (k) + \al_{k,i}^\ri\, \al_{k,i}^\ro \, E''_{\rm res}(k)\right\},
\end{equation}
%where the symbol $\de_{\cdottone,\cdottone}$ denotes the usual Kronecker delta,
where
\begin{equation}\label{in out res li seme}%\nonumber
\begin{aligned}
E''_\ri(k)&= \frac{2 R^N}{N} \left( \frac{1-\sg_c}{\sg_c} \right) \left(F - k\left( k(1-\sg_c)+(N-2+k)(1-\sg_c)R^{2-N-2k}\right) \right)\bigg/F,\\
E''_\ro(k) &= \frac{2}{N}\left( F-k\left( (-N+2-k)(1-\sg_c)+(N-2+k+k\sg_c)R^{2-N-2k} \right)\right)\bigg/ F,\\
E''_{\rm res}(k)&= \frac{4 (\sg_c-1) R^{1-k}}{N}\left((N-2)k+2k^2\right)\bigg/ F,
\end{aligned}
\end{equation}
and $F$ is the term defined at the end of the statement of Proposition \ref{prop u' kepeken sphar}. 
The term $E''_{\rm res}$ will be referred to as the \emph{resonant part} of $E''$. As we can see from \eqref{E''=in+out+res}, the resonant part $E''_{\rm res}$ arises when the perturbations $h_-$ and $h_+$ both have a non-zero component corresponding to the same spherical harmonic $Y_{k,i}$. 

In this subsection we will consider only the coefficients $k\in\{1,2,\dots\}, i\in\{1,\dots d_k\}$ such that $\al_{k,i}^\ri\, \al_{k,i}^\ro =0$ (in other words we will consider only the non-resonant part of $E''$). 
%\begin{equation}\label{non resonance}
%\al_{k,i}^\ri\, \al_{k,i}^\ro =0. %\quad \mbox{ for all } k \in \{1,2,\dots\} \mbox{ and } i\in\{1,\dots,d_k\}, j\in\{1,\dots,d_m\}.
%\end{equation}
Under this assumption the contributions of $E''_\ri(k)$ and $E''_\ro(k)$ can be analyzed separately. We have the following result.

\begin{lemma}\label{lemma decreasing}
Consider the functions $\NN\ni k\mapsto E''_\pm$. The following holds.
\begin{enumerate}[label=(\roman*)] 
\item The function $k\mapsto E''_-(k)$ is strictly decreasing for $\sg_c\ne1$ and constantly zero otherwise.
\item The function $k\mapsto E''_+(k)$ is strictly decreasing for all $\sg_c>0$.
\end{enumerate}
\end{lemma}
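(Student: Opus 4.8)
The plan is to recast the rational expressions in \eqref{in out res li seme} into a transparent ``constant plus fraction'' form, dispose of the degenerate value $\sg_c=1$, and then reduce the whole statement to the monotonicity of a single bracketed term, whose sign behaviour I control through the exponential growth of the factor $\rho:=R^{2-N-2k}$ (note $\rho>1$, since $0<R<1$ and $2-N-2k<0$). First I would record that, by Proposition \ref{prop u' kepeken sphar}, $F=N\bigl[(N-2+k+k\sg_c)\rho+k(1-\sg_c)\bigr]$, so that the common denominator equals $N$ times the positive quantity $D:=(N-2+k+k\sg_c)\rho+k(1-\sg_c)$. Collecting this factor out of each numerator in \eqref{in out res li seme} (pure algebra, using $F>0$) yields
\[
E''_-(k)=\frac{2R^N}{N^2}\,\frac{1-\sg_c}{\sg_c}\Bigl[(N-k)+\frac{k\sg_c(N-2+2k)\rho}{D}\Bigr],
\]
\[
E''_+(k)=\frac{2}{N^2}\Bigl[(N-k)+\frac{k(1-\sg_c)(N-2+2k)}{D}\Bigr].
\]

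With these forms the bookkeeping of (i)--(ii) becomes clear. For (i): if $\sg_c=1$ the prefactor $(1-\sg_c)/\sg_c$ annihilates $E''_-$, giving the ``constantly zero'' case; for $\sg_c\ne1$ the sign of the prefactor is that of $1-\sg_c$, so I must show the bracket of $E''_-$ is strictly decreasing when $\sg_c<1$ and strictly increasing when $\sg_c>1$. For (ii) the prefactor $2/N^2$ is positive and I need the bracket of $E''_+$ strictly decreasing for every $\sg_c>0$ (at $\sg_c=1$ the fraction vanishes and the bracket is simply $N-k$, trivially decreasing).

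For the monotonicity itself I would extend $k$ to a real variable and differentiate; since $\rho'=\lambda\rho$ with $\lambda:=-2\ln R>0$, the bracket of $E''_+$ has derivative $-1+(M/D)'$, where $M:=k(1-\sg_c)(N-2+2k)$, so ``strictly decreasing'' is exactly the condition $(M/D)'<1$. Clearing the positive factor $D^2$ turns this into the polynomial inequality $D^2-M'D+MD'>0$; collecting powers of $\rho$, the $\rho^2$-coefficient is $(N-2+k+k\sg_c)^2>0$, so the inequality is automatic once $\rho$ is large, and only the lower-order-in-$\rho$ terms need to be estimated, using $\rho>1$ and $D>0$. The treatment of $E''_-$ is identical in spirit, the monotonicity direction of its bracket flipping together with the sign of its prefactor, consistently with the asymptotic bracket slope $(\sg_c-1)/(1+\sg_c)$.

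The hard part will be the \emph{uniform} sign verification of those lower-order terms over the full range $k\ge1$, $N\ge2$, $R\in(0,1)$ and both regimes $\sg_c\lessgtr1$: the natural intermediate weight $(N-2+k+k\sg_c)\rho/D$ is not monotone in $k$ when $R$ is close to $1$ (there $\lambda$ is tiny and the $\rho$-growth is slow), so one cannot simply split the bracket into monotone pieces and conclude. To get around this I expect to exploit the clean recursion $\rho(k+1)=R^{-2}\rho(k)$ and, should the continuous estimate prove unwieldy, to bound the forward difference $E''_\pm(k)-E''_\pm(k+1)$ directly: clearing the two positive denominators recasts it as a quadratic inequality in $\rho(k)$ whose leading coefficient is again manifestly positive, reducing the entire lemma to checking the signs of finitely many explicit coefficients.
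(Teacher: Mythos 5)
Your rearrangement of \eqref{in out res li seme} is algebraically correct (I checked both identities: the bracket numerators $(N-k)D+k\sg_c(N-2+2k)\rho$ and $(N-k)D+k(1-\sg_c)(N-2+2k)$ do reproduce the paper's numerators after accounting for $F=ND$), and it is essentially the paper's own first step in disguise: the paper rewrites $E''_-(x)$ as a constant minus $\frac{2R^N(1-\sg_c)^2}{N\sg_c}\,j(x)$ with $j(x)=\pp{x^2+(Mx+x^2)P}/F$, and your bracket equals $N\bigl(1-(1-\sg_c)\,j\bigr)$, so your claim ``bracket decreasing for $\sg_c<1$, increasing for $\sg_c>1$'' is exactly the paper's claim ``$j$ strictly increasing''. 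The paper also extends $k$ to a real variable and differentiates, as you propose. So the setup is sound and parallel to the paper's.

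The genuine gap is that you stop precisely where the content of the lemma lies: the inequality $D^2-M'D+MD'>0$ and its analogue for $E''_-$ are never proved, and --- more seriously --- neither of your two completion strategies can prove them, because both treat $\rho$ as a free parameter in $(1,\infty)$ and aim to conclude from signs of polynomial coefficients in $\rho$. These inequalities are \emph{false} in that decoupled setting. Concretely, for the forward difference of $E''_+$ with $N=2$, $k=1$, clearing denominators gives
\begin{equation}\nonumber
Q(\rho)=2(1+\sg_c)^2L^2\rho^2+6(1-\sg_c^2)(L^2-1)\rho-2(1-\sg_c)^2,\qquad L:=R^{-1}>1,
\end{equation}
and for $\sg_c$ large one has $Q(1)\approx-4\sg_c^2(L^2-1)<0$, so $Q$ is negative on a nontrivial interval of $\rho>1$ despite its positive leading coefficient; positivity holds only at the coupled value $\rho=L^{2k+N-2}=L^2$, where $Q(L^2)\approx2\sg_c^2(L^2-1)^3>0$. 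The differential version fails the same way: $MD'$ contains the term $k(1-\sg_c)(N-2+2k)(N-2+k+k\sg_c)\,\lambda\rho$, which for $\sg_c>1$ is negative and dominates everything else if $\lambda$ is taken large with $\rho$ held fixed. Any correct proof must therefore use the exponential relation $\rho=L^{\,N-2+2k}$ itself, not merely $\rho>1$. This is exactly what the paper's proof does: its critical terms reduce to inequalities of the form $P-P^{-1}-4x\la-2M\la\ge0$ (in the paper's notation $P=L^{2x+M}$, $\la=\log L$, $M=N-2$), which is $\sinh y\ge y$ for $y=\la(2x+M)>0$, verified there by elementary calculus in $x$ and then in $M$, and similarly its term $c(x)=P^{-1}-P+2\la(M+2x)$ is negative for the same reason. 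Until your argument incorporates this coupling, it establishes the lemma only for $k$ (equivalently $\rho$) large relative to the remaining parameters, not for all $k\ge1$, $N\ge2$, $R\in(0,1)$, $\sg_c>0$.
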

\begin{proof}
In the following, we will replace the integer parameter $k$ with a real variable $x$ and study the function $x\mapsto E''_\pm(x)$ in $(0,\infty)$.
The calculations are going to be pretty long, although elementary. For the sake of readability we will adopt the following notation:
\begin{equation}\label{notation pona}
L:=R^{-1}>1,\quad \lambda:=\log(L)>0,\quad M:=N-2\ge0;\quad P=P(x):=L^{2x+M}>1.
\end{equation}
\begin{enumerate}[label=(\roman*)]
\item First we will prove the result about $E''_-$. Rearranging the terms in \eqref{in out res li seme} yields:
\[
E_-''(x)=\frac{2R^N}{N}\pp{\frac{1-\sg_c}{\sg_c}} - \frac{2 R^N (1-\sg_c)^2}{N\sg_c}\,\frac{x^2+(Mx+x^2)P}{F}.
\]
We will show that $x\mapsto j(x):=\pp{x^2+(Mx+x^2)P}/F$ is strictly increasing in $(0,\ali)$. To this end we compute the derivative 
\[
\frac{d}{dx} j(x)= \frac{MP(MP+2Px+2x)+x^2(P+1)^2+\sg_c x^2P (P-1/P-4x\la-2M\la)}{F^2}.
\]
The denominator in the above is positive and we claim that also the numerator is. To this end it suffices to show that the quantity multiplied by $\sg_c x^2P$ in the numerator, namely $P-1/P-4x\la-2M\la$, is positive for $x\in (0,\infty)$.
\[
\frac{d}{dx}\left(P-\frac{1}{P}-4x\la-2M\la\right)= 2{\la}{\left(P+\frac{1}{P}-2\right)}>0 \quad\text{for }x>0,
\]
where we used the fact that $L>1$ and that $P\mapsto P+P^{-1}-2$ is a non-negative function vanishing only at $P=1$ (notice that, by definition $P>1$ for positive $x$).
We now claim that 
$$
\restr{\left(P-\frac{1}{P}-4x\la-2M\la\right)}{x=0}= L^M-\frac{1}{L^M}-2M \la\ge 0.
$$
This can be proven by an analogous reasoning: treating $M$ as a real variable and differentiating with respect to it yield
$$
\frac{d}{dM}\left( L^M-\frac{1}{L^M}-2M \la \right) = \la\left(L^M+\frac{1}{L^M}-2\right)\ge 0
$$
(notice that the equality holds only when $M=0$), moreover,
$$
\restr{\left( L^M-\frac{1}{L^M}-2M \la \right)}{M=0}=0,
$$
which proves the claim.
\item Differentiating the expression for $E''_\ro(x)$ in \eqref{in out res li seme} by $x$ yields the following
$$
\frac{d}{dx}E''_\ro(x)= \frac{2\left(a(x)+\sg_c b(x)+\sg_c^2 x^2 c(x)\right)}{F^2},
$$
where we have set
\begin{equation}\nonumber
\begin{aligned}
a(x)&:=x^2 P^{-1}+ M(2x+M) - (x+M)^2 P - 2\lambda (2x^3+3 Mx^2+M^2 x),\\
b(x)&:=-2x^2 P^{-1} - M(2x+M) -2(M x + x^2) P + 2\lambda M (Mx + 2x^2),\\
c(x)&:= P^{-1} - P + 2\lambda (M+2x).
\end{aligned}
\end{equation}
In order to prove claim (ii) of the lemma, it will be sufficient to show that $a(x)<0$, $b(x)<0$ and $c(x)<0$ for all $x> 0$.

We have
$$
\restr{a(x)}{M=0}= x^2\underbrace{(L^{-2x}-L^{2x})}_{<0}-4\lambda x^3 <0.
$$
Treating now $M$ as a real variable and differentiating yields:
$$
\frac{d}{dM}a(x)= -\lambda x^2 P^{-1}+ 2(x+M)\underbrace{(1-L^M)}_{<0} - \lambda (x+M)^2L^M- 2 \lambda(3x^2+2Mx)<0.
$$
This implies that $a(x)<0$ for all $x> 0$ and all $M\ge 0$. 

As far as $b(x)$ is concerned, we will decompose it further, as follows
$$
b(x)=-2x^2 P^{-1}- M(2x+M) + 2x \,\widetilde{b}(x), 
$$
where $\widetilde{b}(x):={ - (M+x)P + \lambda M(M+2x) }$.
We have $\widetilde{b}(0)=M(-L^M+\lambda M)$.
The quantity $-L^M+\lambda M$ is negative for all $M\ge 0$ because it takes the value $-1$ for $M=0$ and is a decreasing function of $M$. As a matter of fact, we have
$$
\frac{d}{dM}(-L^M+\lambda M)= -\lambda L^M + \lambda = \lambda (-L^M+1) <0.
$$
Hence $\widetilde{b}(0)<0$. We claim that $\widetilde{b}(x)$ is also decreasing in $x$, because
$$
\frac{d}{dx}\widetilde{b}(x)= -P -2\lambda (M+x)P + 2\lambda M = -P +2\lambda M (-P+1) - 2\lambda x P <0.
$$
We conclude that $\widetilde{b}(x)$ (and therefore also $b(x)$) is negative for $x\ge 0$.

Finally, we show that $c(x)<0$ for $x>0$. 
We have $c(0)=L^{-M}-L^M+2\lambda M$. We claim that this quantity is non-positive for all $M\ge 0$. Indeed
$$
\restr{c(0)}{M=0}=0, \quad\text{ and }\quad \frac{d}{dM}c(0)= -\lambda L^{-M} (L^M-1)^2<0. 
$$
Moreover, since
$$
\frac{d}{dx}c(x)= -2\lambda P^{-1}-2\lambda P +4 \lambda = -2\lambda (P-1)^2<0,
$$
we conclude that also $c(x)<0$ for $x>0$. 
This implies that the function $x\mapsto E''_\ro(x)$ is strictly decreasing in $(0,\infty)$, as claimed.
\end{enumerate}
\end{proof}

Moreover, by a simple calculation we can check that 
\[
E''_\pm(1)=2(1-\sg_c)/F(1)\quad \text{ and }\lim_{k\to\ali}E''_\pm (k)=-\ali.
\]
Now, by combining this observation with Lemma \ref{lemma decreasing}, we get the behavior of $E''_\ri$ and $E''_\ro$ (see also Figure \ref{grafici}).
\begin{proposition}[Behavior of $E''_\pm$]\label{behavior of E''_pm}
\begin{enumerate}[label=(\roman*)] Let $\sg_c>0$.
\item If $\sg_c>1$, then $E''_\pm(k)$ is negative for all integer $k\ge1$.
\item If $\sg_c=1$, then the two functions $E''_-$ and $E''_+$ behave differently from one another. Namely, $E''_-(k)=0$ for all integer $k\ge 1$. On the other hand, $E''_+(k)>0$ for all integer $k\ge2$, while $E''_+(1)=0$.
\item If $0<\sg_c<1$, then $E''_\pm$ are sign changing. Namely $E''_\pm(1)>0$, while $E''_\pm(k)<0$ for large enough $k\in\NN$. 
\end{enumerate}
\end{proposition}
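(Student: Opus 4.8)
The plan is to regard $E''_-$ and $E''_+$ as functions of a single real variable $k\in[1,\ali)$, exactly as in Lemma~\ref{lemma decreasing}, and to pin down their sign by combining the monotonicity already established there with the behaviour at the two ends of the range. The two pieces of boundary data I would use are the value at $k=1$, namely $E''_\pm(1)=2(1-\sg_c)/F(1)$, and the limit $\lim_{k\to\ali}E''_\pm(k)=-\ali$. Since the common denominator satisfies $F(1)>0$ (see Proposition~\ref{prop u' kepeken sphar}), the sign of $E''_\pm(1)$ is exactly the sign of $1-\sg_c$. All three cases are then instances of the same elementary principle: a strictly monotone function is controlled by its endpoint values.

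For case (i), when $\sg_c>1$ one has $E''_\pm(1)<0$, and $\sg_c\neq 1$ means that by Lemma~\ref{lemma decreasing} both $k\mapsto E''_-(k)$ and $k\mapsto E''_+(k)$ are strictly decreasing. Hence $E''_\pm(k)\le E''_\pm(1)<0$ for every integer $k\ge 1$, which is the claim. Case (iii), when $0<\sg_c<1$, is the genuinely sign-changing case: now $E''_\pm(1)>0$, while $\lim_{k\to\ali}E''_\pm(k)=-\ali$, so strict monotonicity forces each function (continuous in $k$, with positive denominator $F$) to cross zero exactly once. Thus $E''_\pm(1)>0$ but $E''_\pm(k)<0$ for all sufficiently large $k$, as stated. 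In both of these cases I would not need to locate the crossing point, only to use the sign of the endpoints together with strict decrease.

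The delicate case is (ii), $\sg_c=1$, and this is where I expect the main obstacle to lie, because there the generic endpoint argument degenerates. On the one hand Lemma~\ref{lemma decreasing}(i) already gives $E''_-(k)\equiv 0$ for all $k$, so the core contribution vanishes identically and nothing further is needed for $E''_-$. On the other hand, for $E''_+$ the value $2(1-\sg_c)/F(1)$ collapses at $\sg_c=1$, so knowing only the endpoint sign and the monotonicity is not enough to read off the sign of $E''_+(k)$ for the individual integers $k\ge 2$. My plan here is to substitute $\sg_c=1$ directly into the explicit expression for $E''_+$ in \eqref{in out res li seme}: the exponential factors $R^{2-N-2k}$ cancel between numerator and denominator, leaving a simple closed-form rational expression in $k$ and $N$ whose sign is read off directly for each integer $k$. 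In short, cases (i) and (iii) follow formally from monotonicity plus the two boundary values, whereas the borderline value $\sg_c=1$ must be settled by an explicit evaluation rather than by the limiting/monotonicity argument.
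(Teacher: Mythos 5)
Your treatment of cases (i) and (iii) is exactly the paper's argument: the paper deduces the whole proposition from the two endpoint facts $E''_\pm(1)=2(1-\sg_c)/F(1)$ and $\lim_{k\to\ali}E''_\pm(k)=-\ali$ combined with the monotonicity of Lemma~\ref{lemma decreasing}, and your use of these in (i) and (iii) is correct.

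Case (ii), however, is where your proposal goes wrong, and precisely because you abandoned that argument. Your premise that ``knowing only the endpoint sign and the monotonicity is not enough'' at $\sg_c=1$ is false: Lemma~\ref{lemma decreasing}(ii) asserts strict decrease of $k\mapsto E''_+(k)$ for \emph{all} $\sg_c>0$, including $\sg_c=1$, so together with $E''_+(1)=2(1-\sg_c)/F(1)=0$ it immediately gives $E''_+(k)<E''_+(1)=0$ for every integer $k\ge2$. The argument is conclusive --- it simply proves the \emph{opposite} sign of the one printed in item (ii). Indeed, the ``$E''_+(k)>0$ for all integer $k\ge2$'' in the statement is a sign misprint in the thesis: it contradicts Lemma~\ref{lemma decreasing} together with the value at $k=1$; it contradicts the final theorem of the chapter, which records that for $\sg_c=1$ one recovers a local version of P\'olya's Theorem~\ref{thm polya}, namely $E''(D_0,\Om_0)(\Phi_+)<0$ for all $\Phi_+\in\A_+^*$; and it contradicts P\'olya's theorem itself, since the ball globally maximizes the one-phase torsional rigidity under the volume constraint, so the second variation at the ball cannot be positive. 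A direct one-phase computation (with $u=(1-|x|^2)/(2N)$ and $u'_+=\frac{1}{N}r^kY_{k,i}$) gives $E''_+(k)=2(1-k)/N^2$ at $\sg_c=1$, confirming $E''_+(1)=0$ and $E''_+(k)<0$ for $k\ge2$. Your fallback --- substituting $\sg_c=1$ into \eqref{in out res li seme} and ``reading off'' positivity --- would therefore fail however it is executed: the formula as printed yields $2(N-k)/N^2$, which is positive only for $k<N$, so it neither proves the printed claim nor matches the correct value $2(1-k)/N^2$ (in fact \eqref{in out res li seme} carries a misprint of its own and is not even consistent with the paper's stated value of $E''_\pm(1)$). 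The $E''_-$ half of (ii) is fine as you wrote it. For the $E''_+$ half, the right move is to run the same monotonicity-plus-endpoint argument as in (i) and (iii) and conclude $E''_+(1)=0$ and $E''_+(k)<0$ for $k\ge2$: here it is the statement, not the method, that needs correcting.
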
 
\begin{figure}[h]
\centering
\includegraphics[width=1.1\linewidth,center]{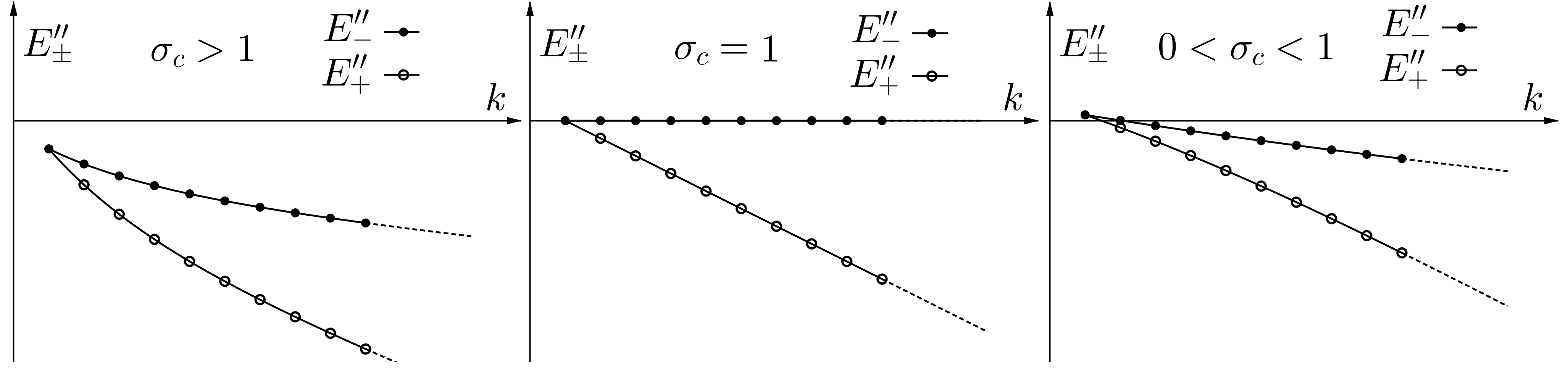}
\caption{The graphs of $E''_\pm$ for all possible values of $\sg_c$. Adapted from \cite{cava2}.
} 
\label{grafici}
\end{figure}

\subsection{Analysis of the resonance effects: proof of Theorem \ref{thm2}}
\label{ssec anal res}

Part $(iii)$ of Proposition \ref{behavior of E''_pm} tells us that $E''_\pm$ changes sign for $0<\sg_c<1$. This means that, by applying Proposition \ref{extension of pert}, we can actually construct perturbations $\Phi_1^\pm,\Phi_2^\pm\in\A_\pm^*$ such that $E''(D_0,\Om_0)(\Phi_1^\pm)>0$ and $E''(D_0,\Om_0)(\Phi_2^\pm)<0$. In other words, we have shown that $(D_0,\Om_0)$ is a \emph{saddle shape} for the functional $E$ under the volume preserving constraint (indeed, the barycenter preserving condition does not play any role in this). 

On the other hand, Proposition \ref{behavior of E''_pm} suggests that the radial configuration $(D_0,\Om_0)$ might be a local maximum for $E$ under the aforementioned constraints. This is actually the case. In order to show it, we will need the following lemma, that takes care of the resonance effects that arise when $\sg_c>1$. 
\begin{lemma}\label{lemma res then <=0}
Suppose that $\boldsymbol{\sg_c>1}$. For any $k\in\{1,2,\dots\}$ and $i\in\{1,\dots, d_k\}$ that satisfy $\al_{k,i}^\ri \al_{k,i}^\ro\ne 0$, we get: 
$$
\left(\al_{k,i}^\ri\right)^2E''_\ri(k)+\left(\al_{k,i}^\ro\right)^2E''_\ro(k)+\al_{k,i}^\ri \al_{k,i}^\ro E''_{\rm res}(k) \le0, 
$$
where equality holds if and only if $k=1$ (see Figure {\rm \ref{pentagoni}}, case V).
\end{lemma}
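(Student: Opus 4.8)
The plan is to recognize the left-hand side as a binary quadratic form and reduce the lemma to a discriminant estimate. First I would set $a:=\al_{k,i}^\ri$ and $b:=\al_{k,i}^\ro$ and introduce
\[
Q_k(a,b):=E''_\ri(k)\,a^2+E''_{\rm res}(k)\,ab+E''_\ro(k)\,b^2,
\]
so that the assertion becomes: $Q_k(a,b)\le0$ whenever $ab\neq0$, with equality achievable exactly when $k=1$. Since $\sg_c>1$, part (i) of Proposition~\ref{behavior of E''_pm} gives $E''_\ri(k)<0$ and $E''_\ro(k)<0$ for every integer $k\ge1$, so the diagonal coefficients of $Q_k$ are strictly negative. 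For a binary quadratic form with negative diagonal, negative semidefiniteness is equivalent to the discriminant condition $(E''_{\rm res}(k))^2\le 4\,E''_\ri(k)\,E''_\ro(k)$, and negative definiteness to its strict version. Thus everything reduces to this single inequality, together with the claim that it degenerates to an equality precisely at $k=1$.

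To establish the discriminant inequality I would replace the integer $k$ by a real variable $x\ge1$ and adopt the notation $L,\la,M,P$ of \eqref{notation pona}. Substituting the closed forms \eqref{in out res li seme} and clearing the common positive factor $F^2$, it suffices to show that
\[
\Delta(x):=4\,E''_\ri(x)\,E''_\ro(x)-(E''_{\rm res}(x))^2\ge0\quad\text{for }x\ge1,
\]
with $\Delta(1)=0$ and $\Delta(x)>0$ for $x>1$. The value $\Delta(1)=0$ is immediate from the explicit boundary data $E''_\pm(1)=2(1-\sg_c)/F(1)$ and $E''_{\rm res}(1)=4(\sg_c-1)/F(1)$. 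For $x>1$ the plan is to expand $F^2\Delta(x)$ and, after factoring out the overall power of $(\sg_c-1)$, organize it as a combination of the manifestly nonnegative quantities $P+P^{-1}-2$ (and their $x$- and $M$-derivatives) that already drove the proof of Lemma~\ref{lemma decreasing}; the one-variable monotonicity arguments there, namely differentiating in $x$ and in $M$ and checking base values at $x=1$ and $M=0$, should again reduce $\Delta$ to a sum of nonnegative terms vanishing only at $x=1$. This verification is the main obstacle: unlike the individual factors $E''_\pm$, the product $4\,E''_\ri E''_\ro$ mixes the $L^{\pm(2x+M)}$ terms, so the delicate point is to keep the expanded expression in a form where the nonnegativity of $P+P^{-1}-2$ can be applied cleanly.

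Finally, for the equality statement I would treat $k=1$ directly. There $E''_\ri(1)=E''_\ro(1)=2(1-\sg_c)/F(1)$ and $E''_{\rm res}(1)=4(\sg_c-1)/F(1)=-2\,E''_\ri(1)$, so the form factorizes:
\[
Q_1(a,b)=E''_\ri(1)\,(a^2-2ab+b^2)=E''_\ri(1)\,(a-b)^2\le0,
\]
with equality iff $a=b$, since $E''_\ri(1)<0$; this exhibits a genuine equality case at $k=1$. For $k\ge2$ the discriminant inequality is strict, so $Q_k$ is negative definite and $Q_k(a,b)<0$ for every $(a,b)\neq(0,0)$, in particular whenever $ab\neq0$. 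Combining the two cases yields the lemma. (One may further note that in the class $\A^*$ the barycenter constraint \eqref{bar 1st} forces $\al_{1,i}^\ro=0$, so the degenerate mode $k=1$ never actually contributes a nonzero resonant term; but this observation belongs to the application in the proof of Theorem~\ref{thm2} rather than to the lemma itself.)
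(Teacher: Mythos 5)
Your reduction is sound, and it is in fact the same reduction the paper uses: after normalizing (the paper sets $t:=\al_{k,i}^\ri/\al_{k,i}^\ro$ and studies the one-variable quadratic $Q(t)=E''_\ri(k)t^2+E''_{\rm res}(k)t+E''_\ro(k)$, which is your binary form with $b$ scaled out), everything hinges on the sign of the discriminant, together with $E''_\pm(k)<0$ for $\sg_c>1$ from Proposition \ref{behavior of E''_pm}. Your treatment of the equality case $k=1$ is also correct, and slightly more explicit than the paper's: $E''_{\rm res}(1)=-2E''_\ri(1)=-2E''_\ro(1)$ gives $Q_1(a,b)=E''_\ri(1)(a-b)^2$, so equality occurs exactly on the line $a=b$.

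The genuine gap is that the heart of the lemma --- the strict discriminant inequality $\big(E''_{\rm res}(k)\big)^2<4\,E''_\ri(k)\,E''_\ro(k)$ for $k\ge2$ --- is never established. You label it yourself as ``the main obstacle'' and offer only a plan: to reassemble the monotonicity devices of Lemma \ref{lemma decreasing} (nonnegativity of $P+P^{-1}-2$, differentiation in $x$ and $M$). That is a hope, not a proof, and it is far from clear it goes through, precisely because the product $E''_\ri E''_\ro$ mixes the powers $L^{\pm(2x+M)}$ in a way those one-variable arguments were not built to handle. The paper closes this step by a direct algebraic computation: the discriminant factors in closed form as
\[
\De= \frac{-16 (\sg_c-1)(k-1) R^N}{\sg_c N^2 F^2}\,
\Big( \sg_c k (R^{2-N-2k}-1)+ (N-2+k)R^{2-N-2k}+k \Big)\cdot G,
\quad
G:=(\sg_c-1)k (N-1+k) (R^{2-N-2k}-1)+(N-2+2k)R^{2-N-2k},
\]
and for $\sg_c>1$ every factor has a manifest sign, with the factor $(k-1)$ producing both the strict negativity of $\De$ for $k\ge2$ and its vanishing exactly at $k=1$. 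Some such explicit factorization (or an actually verified substitute) is indispensable; as written, your argument establishes the lemma only in the case $k=1$.
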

\begin{proof}
Since, by hypothesis, $\al_{k,i}^\ro\ne0$, we can put $t:=\al_{k,i}^\ri/\al_{k,i}^\ro$. For $k$ fixed, we study the following quadratic polynomial in $t$:
$$
Q(t):=E''_\ri(k) t^2 + E''_{\rm res} (k) t +E''_\ro(k).
$$
It can be checked that the discriminant of $Q$ is 
$$
\De= \underbrace{\frac{-16 (\sg_c-1)(k-1) R^N}{\sg_c N^2 F^2}}_{\le0} \underbrace{\left( \sg_c k (R^{2-N-2k}-1)+ (N-2+k)R^{2-N-2k}+k \right)}_{>0}\cdot G,
$$
where we have set $G:=(\sg_c-1)k (N-1+k) (R^{2-N-2k}-1)+(N-2+2k)R^{2-N-2k}$. 
We see immediately that $G>0$, as $\sg_c>1$ by hypothesis. We will distinguish two cases. If $k>1$, then $\De<0$ and therefore the quadratic polynomial $Q(t)$ has no real roots. Since $Q(0)=E''_\ro(k)<0$ (see Proposition \ref{behavior of E''_pm} and Figure \ref{grafici}), then $Q$ must be strictly negative for all other values of $t$ as well. If $k=1$, then $\De=0$, which means that $Q(t)$ has one double root (which actually corresponds to $t=1$). We conclude as before. 
\end{proof}
\begin{figure}[h]\label{pentagoni}
\includegraphics[width=1.0\linewidth, center]{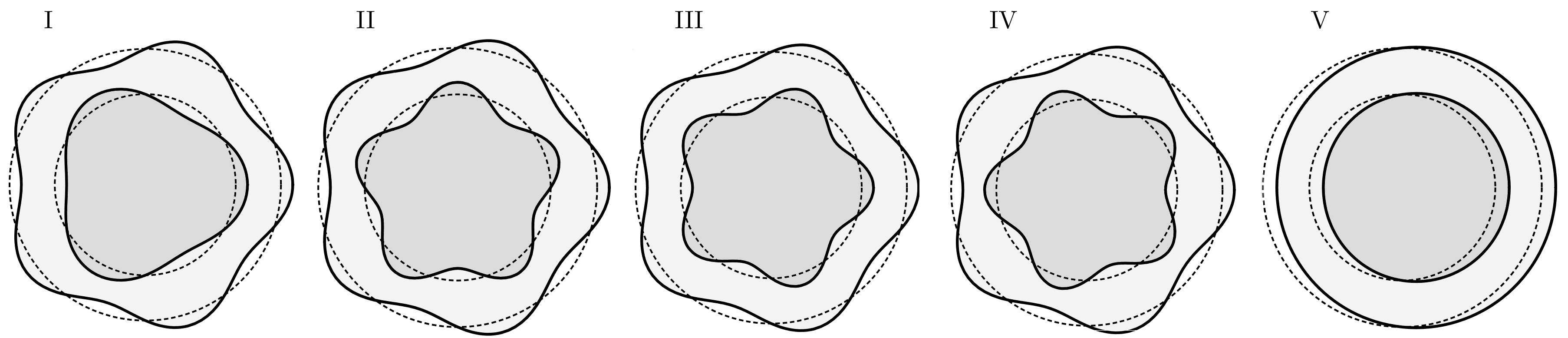}
\caption{How $(D_t,\Om_t)$ looks like for simple perturbations corresponding to
$(h_\ri\cdot n)(R\cdottone)= \al Y_{k,i}(\cdottone)$ and $h_\ro \cdot n = \be Y_{m,j}$,
for the following values of $k,i,m,j$ and $\al, \be$:\newline
I: $k=3, m=5$. II: $k=m=5$, $i\ne j$. III: $k=m=5$, $i=j$, $\al \be >0$. IV: $k=m=5$, $i=j$, $\al\be <0$. V: $k=m=1$, $i=j$, $\al=\be\ne0$.
Notice that resonance effects appear in cases III, IV and V only. Moreover, as shown in Lemma \ref{lemma res then <=0}, V is the only case when $E''(\Phi)=0$ for $\sg_c\ne 1$. Reprinted from \cite{cava2}.
}
\label{pentagoni}
\end{figure}

We notice that, for all $\Phi\in\A^*$, by \eqref{bar 1st} (see Remark \ref{lastrmk}) the coefficients $\al_{1,i}^\ro$ that appear in the expansion \eqref{h_in h_out exp} must vanish for $i=1,\dots,N$ (in particular, we are able to avoid the case V of Figure \ref{pentagoni} by considering $\Phi\in\A^*$).
Combining this observation with the one at the beginning of this subsection, yields the main result of this chapter. 
%the behavior of $E''_\ri$ and $E''_\ro$ shown in Figure \ref{grafici}, the result of Lemma \ref{lemma res then <=0} and Remark \ref{rmk extension} yields the main result of this paper. 
\begin{theorem}
If $\sg_c>1$, then $E''(D_0,\Om_0)(\Phi)<0$ for all $\Phi\in\A^{*}$. In other words the configuration $(D_0,\Om_0)$ is a local maximum for the functional $E$ under the volume-preserving and barycenter-preserving constraint. 
If $0<\sg_c<1$, then there exist two perturbation fields $\Phi_1,\Phi_2\in\A^*$ such that $E''(D_0,\Om_0)(\Phi_1)>0$ and $E''(D_0,\Om_0)(\Phi_2)<0$. In other words, the configuration $(D_0,\Om_0)$ is a saddle shape for the functional $E$ under the volume and barycenter-preserving constraint. Notice that for $\sg_c=1$ we recover a local version of P\'olya's result Theorem {\rm\ref{thm polya}}, namely $E''(D_0,\Om_0)(\Phi_+)<0$ for all $\Phi_+\in\A_+^*$. 
\end{theorem}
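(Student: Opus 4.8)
The plan is to combine the modal decomposition \eqref{E''=in+out+res} with the sign information in Proposition \ref{behavior of E''_pm} and the resonance estimate of Lemma \ref{lemma res then <=0}, handling the three regimes $\sg_c>1$, $0<\sg_c<1$ and $\sg_c=1$ in turn. Since $u'$ depends linearly on $h\cdot n$, for every $\Phi\in\A^*$ the value $E''(D_0,\Om_0)(\Phi)$ is a sum over the spherical harmonic modes of the scalar quadratic forms
\[
q_k(\al^\ri_{k,i},\al^\ro_{k,i}):=(\al_{k,i}^\ri)^2 E''_\ri(k)+(\al_{k,i}^\ro)^2 E''_\ro(k)+\al_{k,i}^\ri\al_{k,i}^\ro E''_{\rm res}(k),
\]
so the whole statement reduces to controlling the sign of each $q_k$ under the linear constraints defining $\A^*$.

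For $\sg_c>1$ I would first use \eqref{bar 1st} on $\pa\Om_0$: since the functions $x_i$ restricted to $\sss$ are exactly the first-order spherical harmonics, the barycenter constraint forces $\al^\ro_{1,i}=0$ for $i=1,\dots,N$. I then claim every $q_k\le0$. When $\al^\ri_{k,i}\al^\ro_{k,i}=0$ this is immediate from part (i) of Proposition \ref{behavior of E''_pm}, both diagonal coefficients being negative; for the genuinely resonant modes it is precisely Lemma \ref{lemma res then <=0}, which moreover shows equality can only occur at $k=1$. But the barycenter constraint has just removed the shell coefficient at $k=1$, so the surviving $k=1$ contribution is $(\al^\ri_{1,i})^2E''_\ri(1)$, strictly negative because $E''_\ri(1)=2(1-\sg_c)/F(1)<0$. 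Hence $q_k<0$ whenever the mode is nontrivial, and $E''(D_0,\Om_0)(\Phi)<0$ for every $\Phi\in\A^*$ that is nonzero to first order, which is the asserted strict local maximality.

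For $0<\sg_c<1$ the idea is to produce two competing admissible directions via Proposition \ref{extension of pert}. Part (iii) of Proposition \ref{behavior of E''_pm} gives $E''_\pm(1)>0$ together with $E''_\pm(k)<0$ for large $k$. A field $\Phi_1\in\A^*$ carrying a single core mode $\al^\ri_{1,i}$ is admissible (the barycenter constraint only annihilates shell coefficients at $k=1$), and yields $E''(\Phi_1)=(\al^\ri_{1,i})^2E''_\ri(1)>0$; a field $\Phi_2\in\A^*$ carrying a single high shell mode $\al^\ro_{k,i}$ with $k\ge2$ large yields $E''(\Phi_2)=(\al^\ro_{k,i})^2E''_\ro(k)<0$. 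This exhibits the saddle. Finally, for $\sg_c=1$ one restricts to $\Phi_+\in\A_+^*$: all core coefficients vanish and the barycenter constraint removes $k=1$, so $E''(\Phi_+)$ is the purely diagonal shell sum $\sum_{k\ge2,i}(\al^\ro_{k,i})^2E''_\ro(k)$, and part (ii) of Proposition \ref{behavior of E''_pm} (with $E''_\ri\equiv0$) gives the sign recovering P\'olya's result locally.

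The main obstacle is the resonant interaction: the off-diagonal term $E''_{\rm res}(k)$ a priori renders the $2\times2$ form in $(\al^\ri_{k,i},\al^\ro_{k,i})$ indefinite, and it is only the discriminant computation of Lemma \ref{lemma res then <=0} that forces it to be negative semidefinite when $\sg_c>1$. The subtle point, and the reason the barycenter constraint is indispensable rather than merely decorative, is that this form degenerates to a null direction exactly at $k=1$ (case V of Figure \ref{pentagoni}); strict negativity of $E''$ therefore genuinely requires discarding the $k=1$ shell modes through \eqref{bar 1st}, since without it one would obtain only $E''\le0$ and could not upgrade the critical point to a strict local maximum.
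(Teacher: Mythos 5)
Your proposal is correct and follows essentially the same route as the paper's proof: the modal decomposition \eqref{E''=in+out+res}, Proposition \ref{behavior of E''_pm} for the non-resonant modes, Lemma \ref{lemma res then <=0} for the resonant ones (with equality only at $k=1$), the observation via \eqref{bar 1st} and Remark \ref{lastrmk} that the barycenter constraint annihilates the shell coefficients $\al_{1,i}^\ro$, and Proposition \ref{extension of pert} to realize single-mode directions in $\A^*$ for the saddle case $0<\sg_c<1$. The one caveat concerns your appeal to Proposition \ref{behavior of E''_pm}(ii) in the case $\sg_c=1$: as printed it asserts $E''_+(k)>0$ for $k\ge2$, which would yield the wrong sign; that statement carries a typo (Lemma \ref{lemma decreasing} together with $E''_+(1)=2(1-\sg_c)/F(1)=0$ forces $E''_+(k)<0$ for all $k\ge2$), so your argument, like the paper's, should be read as invoking this corrected sign.
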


Finally, we would like to give some remarks on the results of our computations in the case $k=1$. 
It corresponds to studying a pair of possibly distinct (volume preserving perturbations that, up to the second order, are indistinguishable from) translations acting on $\pa D_0$ and $\pa \Om_0$ simultaneously. We know that the functional $E$ is invariant under rigid motions, i.e. $E(D,\Om)=E\big(T(D),T(\Om)\big)$ for all rigid motions $T:\rn\to\rn$. In turn this implies that for fixed $x_0\in\rn$ and $t\ge0$:
\begin{equation}\nonumber
E(D_0+tx_0, \Om_0)=E(D_0,\Om_0-tx_0)=E(D_0,\Om_0+tx_0).
\end{equation}
Therefore by differentiating twice with respect to $t$, we get $E''_-(1)=E''_+(1)$ (see also Figure \ref{grafici} on page \pageref{grafici}), as we obtained by direct computation right after the proof of Lemma \ref{lemma decreasing}. Take now two orthogonal directions, say $e_1$ and $e_2$. We have
\begin{equation}\label{onlyone}
E(D_0+t e_1, \Om_0+te_2)=E(D_0+t(e_1-e_2),\Om_0)=E(D_0+\sqrt{2}t e_1, \Om_0),
\end{equation}
and thus, 
\begin{equation}
\begin{aligned}
\restr{\frac{d^2}{dt^2}}{t=0}{E\Big(D_0+t e_1, \Om_0+te_2\Big)}= 2\,\restr{\frac{d^2}{dt^2}}{t=0} {E\Big(D_0+te_1,\Om_0\Big)}=\\
\restr{\frac{d^2}{dt^2}}{t=0} {E\Big(D_0+te_1,\Om_0\Big)}+\restr{\frac{d^2}{dt^2}}{t=0} {E\Big(D_0,\Om_0+t e_2\Big)}, 
\end{aligned}
\end{equation}
i.e. second order shape derivatives ``behave linearly'' in this case.
%Another interesting ``coincidence'' is given by the fact that $E''_{\rm res}(1)=-2E''_-(1)$ (see \eqref{in out res li seme}). As a matter of fact, 
On the other hand, if, unlike \eqref{onlyone}, we apply the same translation to both $D_0$ and $\Om_0$, then the value of $E$ does not get altered (recall that $E''_{\rm res}(1)=-2E''_-(1)$, see for example \eqref{in out res li seme}). Hopefully, this observations might serve as an intuitive explanation of the geometrical meaning of the resonant part $E''_{\rm res}$ and the inevitability thereof. 

\begin{figure}[h]
\centering
\includegraphics[width=0.7\linewidth,center]{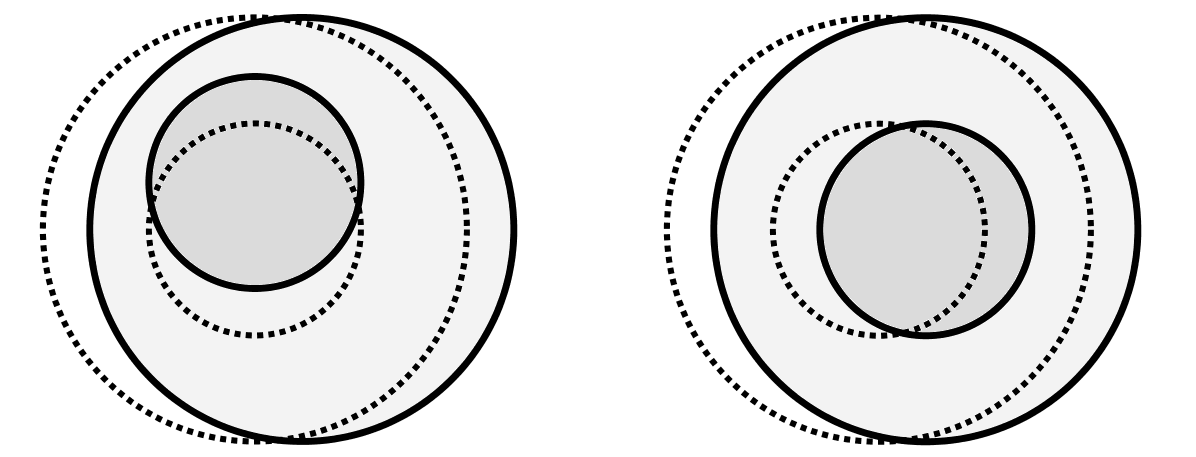}
\caption{Example of non-resonance (left) and resonance (right) due to the combined effect of two translations.
} 
\label{due}
\end{figure}

\chapter{A two-phase overdetermined problem of Serrin-type}\label{2-ph serrin}
\label{ch 2-ph serrin}

In this chapter, we will obtain the proof of Theorem \ref{thm3} by a perturbation argument. This is one of a series of results about two-phase overdetermined problems that were obtained in \cite{camasa}.
Let $D$, $\Omega\subset\rn$ be two bounded domains of class $C^{2+\al}$ with $\overline{D}\subset \Omega$. 
We look for a pair $(D,\Om)$ for which the overdetermined problem \eqref{pb 2-ph serrin} has a solution for some positive constant $d$. As remarked in Chapter \ref{ch intro}, it is sufficient to examine \eqref{pb 2-ph serrin} with $\sg_s=1$ in the form
\begin{eqnarray}
&&\dv(\sigma \nabla u)=\beta u - \gamma<0\quad\mbox{ in }\ \Omega, \label{pbcava eq}
\\
&&u=0 \ \quad\qquad\qquad\qquad\qquad \mbox{ on } \partial\Omega, \label{pbcava dirichlet}
\\ 
&&\partial_n u=-d \ \quad\qquad\qquad\qquad \mbox{ on } \pa\Omega,\label{pbcava neumann}
\end{eqnarray}
where $\be\ge 0$, $\ga>0$, and $\sg=\sg_c \chi_D+\chi_{\Om\setminus D}$.
By the divergence theorem, the constant $d$ is related to the other data of the problem by the formula:
\begin{equation}
\label{whatisd}
d=\frac1{\per(\Om)}\left\{\ga\,\vol\Ome-\beta\,\int_{\Om} u\right\},
\end{equation}
where, the functionals $\vol(\cdottone)$ and $\per(\cdottone)$ have been defined in Example \ref{ex l1}.
\par
It is obvious that, for all values of $\sg_c>0$, the pair $(B_R, B_1)$ is a solution to the overdetermined problem \eqref{pbcava eq}--\eqref{pbcava neumann} for some $d$.
We will look for other solution pairs of \eqref{pbcava eq}--\eqref{pbcava neumann} near $(B_R, B_1)$ by a perturbation argument which is based on Theorem \ref{ift}, page \pageref{ift}.

\section{Preliminaries}
We introduce the functional setting for the proof of Theorem \ref{thm3}. As done in Chapter 4, we set $D_0:=B_R$ and $\Om_0:=B_1$. For $\alpha\in(0,1)$, let
$\phi\in \C^{2+\al}(\rn, \rn)$ satisfy that $\mathrm{Id}+\phi$ is a diffeomorphism from $\rn$ to $\rn$, and
\begin{equation}\label{phi f g}
\phi=f\,n \ \mbox{ on } \ \pa D_0, \qquad \phi=g\,n \ \mbox{ on } \ \pa\Om_0,
\end{equation}
where $f$ and $g$ are given functions of class $C^{2+\al}$ on $\pa D_0$ and $\pa\Om_0$, respectively, and $n$ indistinctly denotes the outward unit normal to both $\pa D_0$ and $\pa\Om_0$.
Next, we define the sets
$$
\Om_g=(\mathrm{Id}+\phi)(\Om_0) \ \mbox{ and } \ D_f=(\mathrm{Id}+\phi)(D_0).
$$ 
If $f$ and $g$ are sufficiently small, $D_f$ and $\Om_g$ satisfy $\ol{D_f}\subset\Om_g$. 
\par
Now, we consider the Banach spaces (equipped with their standard norms, that will be denoted by $\norm{\cdottone}$):
\begin{eqnarray}\nonumber
& \cF=\setbld{f\in C^{2+\al}(\pa D_0)}{\int_{\pa D_0} f\, dS =0},\quad 
\cG=\setbld{g\in C^{2+\al}(\pa\Om_0)}{\int_{\pa\Om_0} g\, dS =0}, \\
&\cH=\setbld{h\in C^{1+\al}(\pa\Om_0)}{\int_{\pa\Om_0} h\, dS =0}.\nonumber
\end{eqnarray}
In order to be able to use Theorem \ref{ift} on page \pageref{ift}, we introduce a mapping $\Psi: \cF\times \cG\to \cH$ by:
%\begin{equation}
%\Psi(f,g)=\left\{ \pa_{n_g}u_{f,g} + d_{f,g}\right\} J_\tau({g}) \ \mbox{ for } \ (f,g)\in\cF\times\cG.
%\end{equation}
\begin{equation}\label{Psi li seme}
\Psi(f,g)=\left\{ \pp{\gr u_{f,g}\cdot n_g}\circ \pp{\id+g\,n} + d_{f,g}\right\} J_\tau({g}) \ \mbox{ for } \ (f,g)\in\cF\times\cG.
\end{equation}

Here, $u_{f,g}$ is the solution of \eqref{pbcava eq}--\eqref{pbcava dirichlet} with $\Om=\Om_g$ and $\sg=\sg_c\,\chi_{D_f}+\chi_{\Om_g\setminus D_f}$ and $d_{f,g}$ is computed via \eqref{whatisd}, with $\Om=\Om_g$ and $u=u_{f,g}$. %Also, by a slight abuse of notation, $\pa_{n_g} u_{f,g}$ means
%the function of value 
%$$
%\gr u_{f,g}(x+g(x)\,n(x)) \cdot n_g(x+g(x)\,n(x))\ \mbox{ at any } x\in\pa\Om_0,
%$$
Also, $n_g$ is the outward unit normal to $\pa\Om_g$ (hence we will agree that $n_g=n$ for $g\equiv0$).
Finally, the term $J_\tau(g)>0$ is the tangential Jacobian associated to the transformation $x\mapsto x+g(x)\,n(x)$ (see \eqref{tg Jacobian}): this term ensures that the image $\Psi(f,g)$ has zero integral over $\pa\Om_0$ for all $(f,g)\in \cF\times \cG$, as an integration of \eqref{pbcava neumann} on $\pa\Om_g$ requires, when $d=d_{f,g}$.
\par
Thus, by definition, we have $\Psi(f,g)=0$ if and only if the pair $(D_f,\Om_g)$ solves \eqref{pbcava eq}--\eqref{pbcava neumann}. Moreover, we know that the mapping $\Psi$ vanishes at $(f_0, g_0)=(0,0)$. 

\section{Computing the derivative of $\Psi$}
The first step will consist in proving the Fr\'echet differentiability of $\Psi$. %in a neighborhood of $(0,0)\in\mathcal{F}\times\mathcal{G}$. %, by following the proof of \cite[Theorem 5.3.2, pp. 183--184]{henrot}, with the help of the Schauder's theory for elliptic operators with piecewise constant coefficients (see \cite[Appendix]{DiBEF1986na} and \cite{giaquinta}). 
\begin{lemma}
The map $\Psi:\cF\times\cG\to\cH$, defined in \eqref{Psi li seme} is Fr\'echet differentiable in a neighborhood of $(0,0)\in\cF\times\cG$. 
\end{lemma}
\begin{proof}
In order to show the differentiability of $\Psi$, we will resort to the machinery developed in Chapter \ref{ch shape derivatives}. As the elements of $\cF$ and $\cG$ are only defined on the surface of spheres we first need to ``extend'' them to suitable perturbations in the whole $\rn$ in order to proceed. To this end consider $\phi\in\C^{2+\al}(\rn,\rn)$. We can rewrite an analogous formulation of \eqref{Psi li seme} for perturbations of the whole $\rn$:
\[
\widehat{\Psi}(\phi):=\restr{\Big\{ \pp{\gr u_\phi\cdot n_\phi} \circ \pp{\id+\phi} +d_\phi \Big\} J_\tau (\phi)}{\pa\Om_0}, 
\] 
where the $\phi$ subscript is used in the natural way, i.e. as in \eqref{Psi li seme} according to the notation introduced in \eqref{phi f g}. Moreover, notice that, under \eqref{phi f g} we have 
\begin{equation}\label{wt nakutemo ii}
\widehat{\Psi}(\phi)=\Psi(f,g).
\end{equation}
It is enough to inspection the differentiability of each ``piece'' of $\widehat{\Psi}$ and then conclude by composition. Put $v_\phi:=u_\phi\circ\pp{\id+\phi}$, we have 
\[
\gr u_\phi\circ \pp{\id+\phi}=\pp{I+D\phi}^T\gr v_\phi,
\] 
which is differentiable in a neighborhood of $0$ by Theorem \ref{thm diff state func 1}. The map $\phi\mapsto n_\phi\circ\pp{\id+\phi}$ is differentiable by Proposition \ref{der of normal}. The function $d_\phi$, defined as in \eqref{whatisd} with the obvious modifications, is also differentiable (its derivative can be computed by the Hadamard formula, see Example \ref{ex l1} for the details about $\per(\cdottone)$ and $\vol(\cdottone)$). Finally, since $J_\tau$ is also differentiable by Lemma \ref{J tau diff}, the proof of the differentiability of $\widehat{\Psi}$ (and thus that of $\Psi$) is complete.
\end{proof}
We will now proceed to the actual computation of $\pa_f \Psi (0,0)$. Since $\Psi$ is Fr\'echet differentiable, $\pa_f \Psi (0,0)$ can be computed as a G\^ateaux derivative:
$$
\pa_f\Psi(0,0)(f)= \lim_{t\to 0} \frac{\Psi(t f,0)-\Psi(0,0)}{t} \ \mbox{ for } \ f\in\cF.
$$
\par
From now on, we fix $f\in\cF$, set $g=0$ and, to simplify notations, we will write $D_t, u_t, d(t)$ in place of $D_{tf}, u_{tf,0}, d_{tf,0}$. As done previously, we will still write $u$ for $u_0$. 
The following characterization of the shape derivative of $u_t$ is a direct consequence of Theorem \ref{u' general thm}. 
\begin{lemma}
For every $f\in\cF$, the shape derivative $u'$ of $u_t$ solves the following:
\begin{eqnarray}
\label{pbu' eq}& \sg \De u'=\beta u' &\quad\mbox{ in }\ D_0\cup (\Om_0\setminus \overline{D_0}), \\
\label{pbu' flux}& [\sg \pa_n u']=0 &\quad \mbox{ on } \pa D_0,\\
\label{pbu' jump}& [u']=-[\pa_n u]f & \quad \mbox{ on }\pa D_0,\\ 
\label{pbu' bdary}&u'=0 &\quad\mbox{ on }\ \partial\Om_0. 
\end{eqnarray}
\end{lemma}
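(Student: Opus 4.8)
The plan is to read off the claimed boundary value problem directly from Theorem \ref{u' general thm}, exploiting the two special features of the present situation: the background configuration $(D_0,\Om_0)=(B_R,B_1)$ is radial, and we perturb only the inner interface (since $g=0$). First I would set up the flow. For the fixed $f\in\cF$ let $\phi\in\C^{2+\al}\rnrn$ be the extension chosen in \eqref{phi f g} with $g\equiv0$, so that $\phi=f\,n$ on $\pa D_0$ and $\phi=0$ on $\pa\Om_0$, and consider the flow $\Phi(t):=t\phi$. Then $\Phi\in\C^1\pp{[0,1),\C^{2,\ali}\rnrn}$ with $\Phi(0)=0$ and $\pato\Phi=h:=\phi$; moreover $(D_0,\Om_0)$ is of class $\C^{2+\al}$, hence in particular $\C^2$, so all the standing hypotheses of Theorem \ref{u' general thm} are met and $u_t=u_{tf,0}$ is exactly the state function to which it applies (here $\be\ge0$, $\ga>0$ and $\sg=\sg_c\chi_{D_0}+\chi_{\Om_0\sm\ol{D_0}}$, so that \eqref{pbcava eq}--\eqref{pbcava dirichlet} is of the form treated there).

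With this identification, Theorem \ref{u' general thm} already yields the interior equation $\sg\De u'=\be u'$ in $D_0\cup(\Om_0\sm\ol{D_0})$, which is \eqref{pbu' eq}, and the jump condition $[u']=-[\dn u]\,h\cdot n$ on $\pa D_0$; since $h\cdot n=\phi\cdot n=f$ there, the latter is precisely \eqref{pbu' jump}. It then remains to simplify the two remaining conditions using the geometry. For the flux condition, the general formula reads $[\sg\dn u']=(\sg_c-1)\dv_\tau(\grt u\,h\cdot n)$ on $\pa D_0$; but the unperturbed solution $u$ on the concentric balls is radially symmetric by the explicit expression \eqref{u}, hence $\grt u\equiv0$ on $\pa D_0$ and the right-hand side vanishes, giving \eqref{pbu' flux}. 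For the outer boundary, the general condition $u'=-\dn u\,h\cdot n$ on $\pa\Om_0$ collapses to \eqref{pbu' bdary}, because $h=\phi=0$ there; this is exactly where the assumption $g=0$ enters.

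Since each of the four conditions is obtained by a direct specialization, there is essentially no analytic obstacle beyond bookkeeping: the only points requiring genuine care are (i) verifying that the perturbation class and the domain regularity satisfy the assumptions of Theorem \ref{u' general thm}, and (ii) correctly invoking the radial symmetry of the base solution to annihilate the tangential-gradient source term in the flux condition. I would emphasize that, unlike in the general two-phase computation, the inner flux condition here is homogeneous \emph{precisely because} we linearize at the radial configuration; this structural simplification is what will ultimately make the subsequent application of the implicit function theorem tractable.
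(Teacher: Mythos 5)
Your proposal is correct and follows exactly the paper's route: the paper itself presents this lemma as ``a direct consequence of Theorem \ref{u' general thm}'', obtained precisely by the specializations you describe (namely $h\cdot n=f$ on $\pa D_0$, $h=0$ on $\pa\Om_0$ so the outer condition collapses to $u'=0$, and radial symmetry of $u$ forcing $\grt u=0$ on $\pa D_0$ so the flux jump becomes homogeneous). Your write-up is in fact more explicit than the paper's, which omits these verifications entirely.
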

%\begin{proof}
%\end{proof}

\begin{lemma}
\label{lem:shape-derivatives}
For all $f\in\cF$ we have 
$d'(0)=0$.
\end{lemma}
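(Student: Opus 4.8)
The plan is to reduce the claim to the vanishing of a single integral, $\int_{\Om_0} u'$, and then to establish that vanishing by a two-phase reciprocity (Green) identity that exploits the radial symmetry of the unperturbed configuration.

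First I observe that, since we have set $g=0$, the outer domain does not move: $\Om_g=\Om_0$ for every $t$, so in the formula \eqref{whatisd} defining $d(t)$ both $\vol(\Om_0)$ and $\per(\Om_0)$ are constants in $t$. Only the term $\int_{\Om_0} u_t$ depends on $t$ (through the moving interface $\pa D_t$), whence
\[
d'(0)=-\frac{\be}{\per(\Om_0)}\,\dato\int_{\Om_0} u_t=-\frac{\be}{\per(\Om_0)}\int_{\Om_0} u'.
\]
The interchange of derivative and integral is legitimate because the domain of integration $\Om_0$ is fixed and $\phi\mapsto u_\phi\in L^2(\rn)$ is differentiable at $0$ by Theorem \ref{thm diff state func 2} (there is no Hadamard boundary term precisely because $\Om_0$ does not vary). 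Thus everything reduces to showing $\int_{\Om_0} u'=0$; note that if $\be=0$ the claim is already immediate.

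To compute $\int_{\Om_0} u'$ I would pair $u'$ against the state function $u$ itself. Since $\sg$ is piecewise constant, $u$ solves $\sg\De u-\be u=-\ga$ in each phase while, by \eqref{pbu' eq}, $u'$ solves $\sg\De u'-\be u'=0$ in each phase. Applying Green's second identity on $D_0$ and on $\Om_0\setminus\ol{D_0}$ separately and adding, the bulk integrals collapse to $-\ga\int_{\Om_0} u'$ (the $\be\,u\,u'$ terms cancel), while the outer boundary contribution on $\pa\Om_0$ vanishes because $u=0$ there and $u'=0$ by \eqref{pbu' bdary}. What remains is an interface integral over $\pa D_0$, which I would simplify using the four transmission relations available: continuity $[u]=0$ and flux continuity $[\sg\dn u]=0$ for $u$ (from \eqref{tc}), together with $[\sg\dn u']=0$ from \eqref{pbu' flux} and the jump $[u']=-[\dn u]\,f$ from \eqref{pbu' jump}. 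The terms carrying $u\,\dn u'$ cancel identically thanks to $[u]=0$ and $[\sg\dn u']=0$, and the terms carrying $u'\,\dn u$ reduce to $q\,[u']$, where $q:=\dn u_s=\sg_c\dn u_c$ is the common normal flux of $u$ across $\pa D_0$. Substituting $[u']=-[\dn u]\,f=\tfrac{\sg_c-1}{\sg_c}\,q\,f$ yields
\[
-\ga\int_{\Om_0} u'=\frac{\sg_c-1}{\sg_c}\int_{\pa D_0} q^2\,f .
\]

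The final, decisive step is the use of radial symmetry: for the concentric configuration $(D_0,\Om_0)=(B_R,B_1)$ the unperturbed $u$ is radial, so its flux $q$ is \emph{constant} on the sphere $\pa D_0$. Hence $q^2$ factors out of the interface integral, and since $f\in\cF$ has zero mean, $\int_{\pa D_0} f\,dS=0$, forcing the right-hand side to vanish. As $\ga>0$ this gives $\int_{\Om_0} u'=0$, and therefore $d'(0)=0$. The main obstacle I anticipate is purely bookkeeping: keeping the orientation of the interface normal consistent between the two phases and correctly combining the four transmission conditions so that the $u\,\dn u'$ terms cancel and the $u'\,\dn u$ terms condense into the single expression $q\,[u']$; once this is done, the constancy of $q$ together with the zero-mean property of $f$ finishes the proof at once.
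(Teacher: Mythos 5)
Your proof is correct, and it reaches the conclusion by a genuinely different mechanism than the paper's, although both ultimately exploit the same two facts. The common starting point is the reduction via \eqref{whatisd}: since $g=0$ the outer domain is fixed, so $d'(0)=0$ is equivalent to $\dato\int_{\Om_0}u_t=0$. From there the paper differentiates the energy identity $\ga\int_{\Om_0}u_t=\be\int_{\Om_0}u_t^2+\int_{\Om_0}\sg_t|\gr u_t|^2$ by means of the Hadamard formula (Proposition \ref{hadam form}); this produces the interface term $\int_{\pa D_0}[\sg|\dn u|^2]f$, which vanishes because the jump $[\sg|\dn u|^2]$ is constant on $\pa D_0$ (radial symmetry) and $f\in\cF$ has zero mean, while the remaining volume terms $2\be\int_{\Om_0}uu'+2\int_{\Om_0}\sg\gr u\cdot\gr u'$ cancel upon testing the weak formulation \eqref{u' eq wk H01} of $u'$ with $u$. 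You instead write $\dato\int_{\Om_0}u_t=\int_{\Om_0}u'$ (legitimately, as you note, by Theorem \ref{thm diff state func 2} together with the fact that the domain of integration does not move) and evaluate this integral by Green's second identity applied phase by phase to the pair $(u,u')$, using the strong transmission problem \eqref{pbu' eq}--\eqref{pbu' bdary}; the transmission conditions condense the interface contribution into $q\int_{\pa D_0}[u']=\frac{\sg_c-1}{\sg_c}\,q^2\int_{\pa D_0}f=0$, again by the constancy of $q:=\dn u_s=\sg_c\dn u_c$ on $\pa D_0$ and the zero-mean property of $f$. Note that the interface quantities in the two arguments are literally the same, since $[\sg|\dn u|^2]=\frac{1-\sg_c}{\sg_c}q^2$; what differs is the machinery used to isolate them. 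Your reciprocity argument is more elementary and self-contained: it avoids the Hadamard formula and the differentiability of the Dirichlet energy entirely, needing only the boundary value problem for $u'$ (already established in the preceding lemma) and classical Green identities. Its small price is that Green's second identity requires regularity of $u$ and $u'$ up to $\pa D_0$ from each phase, which holds here because $(D_0,\Om_0)$ are concentric balls of class $\C^{2+\al}$ (indeed $\C^\infty$), whereas the paper's computation stays at the level of weak formulations and would survive with less regularity of the interface.
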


\begin{proof}
We rewrite \eqref{whatisd} as
$$
d(t)|\pa \Om_0|-\ga|\Om_0|=-\beta \int_{\Omega_0} u_t \, dS,
$$
then differentiate and evaluate at $t=0$. The derivative of the left-hand side equals 
$d'(0)\,|\pa\Om_0|$.
%since $H$ is constant on $\pa\Om$ and we assumed that $\int_{\pa\Om} g\, dS=0$, being $g\in\cG$.
Thus, we are left to prove that the derivative of the function defined by
$$
I(t)=\int_{\Omega_0} u_t\, dx
$$
vanishes at $t=0$.
\par
To this aim, since $u_t$ solves \eqref{pbcava eq} for $D=D_t$, we multiply both sides of this for $u_t$ and integrate to obtain that
\begin{equation}\nonumber
\ga\,I(t)=\gamma \int_{\Omega_0} u_t\, dx = \beta \int_{\Om_0} u_t^2\, dx+ \sg_c\int_{D_t} \, \abs{\gr u_t \,}^2 dx+\int_{\Omega_0\setminus\overline{D_t}} \abs{\gr u_t \,}^2 dx,
\end{equation}
after an integration by parts.
Thus, the desired derivative can be computed by using the Hadamard formula (Proposition \ref{hadam form})
\begin{equation*}
\label{gamma int u prime}
\begin{aligned}
\ga\,I'(0)&=2\beta \int_{\Omega_0} u u' 
%+ 2\sg_c \int _{D} \gr u \cdot \gr u' \,dx 
+2\int_{\Om_0} \sg \gr u \cdot \gr u' 
+ \int_{\partial D_0}[\sg |\partial_n u|^2]f 
%+ 2\int _{\Om\setminus\overline{D}} \gr u \cdot \gr u' \,dx \\
\\
&=
2\beta \int_{\Omega_0} u u' 
+ 2 \int _{\Omega_0} \sg \gr u \cdot \gr u' 
=0.
\end{aligned}
\end{equation*}
Here, in the second equality we used that the jump of $\sg |\dn u|^2$ is constant on $\pa D_0$ and that $f\in\cF$, while, the third equality ensues by integrating \eqref{pbu' eq} against $u$.
\end{proof}

\begin{theorem}
\label{prop psi'}
The Fr\'echet derivative $\pa_f \Psi (0,0)(\cdottone,0):\cF \to \cH$ is defined by the formula
$$
\pa_f\Psi (0,0)(f)=\pa_n u',
$$
where $u'$ is the solution of the boundary value problem \eqref{pbu' eq}--\eqref{pbu' bdary}.
\end{theorem}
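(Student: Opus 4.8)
The plan is to restrict the definition \eqref{Psi li seme} of $\Psi$ to the line $g=0$ and to differentiate in the $f$-variable. When $g\equiv 0$ the outer domain $\Om_g=\Om_0$ does not move: one has $n_g=n$, the map $\id+g\,n$ reduces to $\id$, and $J_\tau(0)=\det(I)\,\norm{n}=1$. Writing $u_t:=u_{tf,0}$ and $d(t):=d_{tf,0}$ as in Lemma \ref{lem:shape-derivatives}, formula \eqref{Psi li seme} therefore collapses on $\pa\Om_0$ to
\[
\Psi(tf,0)=\restr{\pp{\dn u_t+d(t)}}{\pa\Om_0}.
\]
Since the Fréchet differentiability of $\Psi$ has already been established, $\pa_f\Psi(0,0)$ agrees with the Gâteaux derivative, and I would compute
\[
\pa_f\Psi(0,0)(f)=\dato\pp{\dn u_t+d(t)}=\dato\dn u_t+d'(0).
\]

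First I would kill the scalar term by invoking Lemma \ref{lem:shape-derivatives}, which gives $d'(0)=0$. It then remains to identify $\dato\,\dn u_t$ with $\dn u'$, where $u'=\pato u_t$ is the shape derivative. The key observation is that the perturbation $tf\,n$ is supported near the interface $\pa D_0$ and leaves a neighbourhood of $\pa\Om_0$ fixed, so on $\pa\Om_0$ the surface does not move and the normal-trace operator $w\mapsto \gr w\cdot n$ is a fixed bounded linear map independent of $t$. Consequently the $t$-derivative commutes with this trace, yielding $\dato\dn u_t=\dn u'$, and by the lemma characterising the shape derivative, $u'$ is precisely the solution of \eqref{pbu' eq}--\eqref{pbu' bdary}. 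Combining these facts gives $\pa_f\Psi(0,0)(f)=\dn u'$, as claimed.

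The one genuinely delicate point is this commutation: the difference quotients $\pp{\dn u_t-\dn u}/t$ must be shown to converge not merely pointwise but in the target space $\cH=\C^{1+\al}(\pa\Om_0)$, so that the resulting linear map really lands in $\cH$. I expect to secure this from Theorem \ref{thm diff state func 1} (iii), according to which $\phi\mapsto v_\phi$ is of class $\C^\ali$ with values in $\cB\subset \C^{2+\al}(\ol\Om\sm D)$; this controls $\gr u_t$ in Hölder norm up to $\pa\Om_0$, and since $\phi=tf\,n$ vanishes there one has $u_t=v_t$ near $\pa\Om_0$, so that no transport correction appears and the normal trace of $u_t$ is differentiable in $\C^{1+\al}(\pa\Om_0)$ with derivative $\dn u'$.
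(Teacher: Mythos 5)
Your computation of the derivative itself follows the paper's own route almost exactly: restrict to $g=0$, note that $\id+g\,n$ reduces to the identity and $J_\tau(0)=1$, kill the scalar term with $d'(0)=0$ from Lemma \ref{lem:shape-derivatives}, and identify $\dato\dn u_t$ with $\dn u'$, where $u'$ solves \eqref{pbu' eq}--\eqref{pbu' bdary}. Your extra care about convergence of the difference quotients in the H\"older norm (via Theorem \ref{thm diff state func 1} (iii), together with the observation that $u_t=v_t$ near $\pa\Om_0$ because the perturbation vanishes there) is a more detailed account of what the paper compresses into a citation of definition \eqref{def shape der of state function}; that part is fine and arguably more complete than the original.

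There is, however, one piece of the statement you do not prove, and it stems from a misreading of the target space: you write ``the target space $\cH=\C^{1+\al}(\pa\Om_0)$'', but $\cH$ is by definition the subspace of $\C^{1+\al}(\pa\Om_0)$ consisting of functions with \emph{zero mean} on $\pa\Om_0$. Since the theorem asserts that $\pa_f\Psi(0,0)(\cdottone,0)$ maps $\cF$ into $\cH$, the proof must verify that $\dn u'$ integrates to zero over $\pa\Om_0$; the paper devotes its final step to exactly this, computing
\[
\int_{\pa\Om_0}\dn u'=\int_{\Om_0}\dv(\sg\gr u')=\be\int_{\Om_0}u'=\be\,I'(0)=0,
\]
where the interface terms cancel thanks to the transmission condition \eqref{pbu' flux} and $I'(0)=0$ is established inside the proof of Lemma \ref{lem:shape-derivatives}. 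You could instead close this gap abstractly: each difference quotient $\{\Psi(tf,0)-\Psi(0,0)\}/t$ lies in $\cH$, and $\cH$ is a closed subspace of $\C^{1+\al}(\pa\Om_0)$ (the kernel of the continuous functional $h\mapsto\int_{\pa\Om_0}h$), so the limit $\dn u'$ automatically belongs to $\cH$. Either argument is short, but one of them is needed; as written, your proof leaves the membership $\dn u'\in\cH$ unaddressed.
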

\begin{proof}
Since $\Psi$ is Fr\'echet differentiable, we can compute $\pa_f \Psi$ as a G\^ateaux derivative as follows:
$$
\partial_f\Psi(0,0)(f)=\restr{\frac{d}{dt}}{t=0}\Psi(t f,0)=\restr{\frac{d}{dt}}{t=0}\left\{ \gr u_t(x)\cdot n(x) + d(t) \right\}J_\tau (0).
$$
Since $J_\tau (0)=1$, the thesis is a direct consequence of Lemma \ref{lem:shape-derivatives} and definition \eqref{def shape der of state function}.
Finally, the fact that this mapping is well-defined (i.e. $\partial_n u'$ actually belongs to $\mathcal{H}$ for all $f\in\mathcal{F}$) follows from the calculation 
$$
\int_{\partial\Omega_0} \partial_n u' = \int_{\Omega_0} \dv(\sg \gr u')=\beta \int_{\Omega_0} u'=
\be\, I'(0)=0,
$$ 
where we also used \eqref{pbu' eq}--\eqref{pbu' bdary}.
\end{proof}

\section{Applying the implicit function theorem}

Here we give the main result of this Chapter, which clearly implies Theorem \ref{thm3}.

\begin{theorem}\label{mainthm cava}
There exists $\varepsilon>0$ such that, for all $g\in\mathcal{G}$ with $\norm{g}<\varepsilon$ there exists a unique $f(g)\in\mathcal{F}$ such that the pair $(D_{f(g)},\Omega_{g})$ is a solution of the overdetermined problem \eqref{pbcava eq}--\eqref{pbcava neumann}.
\end{theorem}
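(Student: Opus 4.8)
The plan is to deduce Theorem \ref{mainthm cava} from the implicit function theorem (Theorem \ref{ift} on page \pageref{ift}) applied to the map $\Psi:\cF\times\cG\to\cH$ at the point $(f,g)=(0,0)$, solving the equation $\Psi(f,g)=0$ for $f$ as a function of $g$. Three of the four hypotheses are already in hand: $\Psi(0,0)=0$ (because the concentric pair $(D_0,\Om_0)$ solves the overdetermined problem), $\Psi$ is Fr\'echet differentiable near $(0,0)$ (the preceding lemma), and by Theorem \ref{prop psi'} the partial derivative $\pa_f\Psi(0,0):\cF\to\cH$ is the linear map $f\mapsto\pa_n u'$, where $u'$ solves \eqref{pbu' eq}--\eqref{pbu' bdary}. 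Granting the last hypothesis, namely that $\pa_f\Psi(0,0)$ is a \emph{bounded invertible} operator from $\cF$ onto $\cH$, Theorem \ref{ift} produces a unique $\C^1$ branch $g\mapsto f(g)$ with $f(0)=0$ and $\Psi(f(g),g)=0$ for $\norm{g}<\eps$; by the very definition of $\Psi$ this is equivalent to $(D_{f(g)},\Om_g)$ solving \eqref{pbcava eq}--\eqref{pbcava neumann}, which is exactly the claim. Thus the whole proof reduces to showing that $\pa_f\Psi(0,0)$ is an isomorphism.

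To analyse this operator I would exploit the radial symmetry of the reference pair $(D_0,\Om_0)=(B_R,B_1)$. Since the unperturbed solution $u$ is radial, the jump $[\pa_n u]$ is a \emph{constant} on $\pa D_0$, and it is nonzero precisely when $\sg_c\ne1$ --- the genuinely two-phase feature with no counterpart in Serrin's one-phase rigidity. Consequently the transmission problem \eqref{pbu' eq}--\eqref{pbu' bdary} decouples completely across spherical harmonics. Expanding $f=\sum_{k\ge1}\sum_{i=1}^{d_k}f_{k,i}Y_{k,i}$ on $\pa D_0$ and solving mode by mode by separation of variables --- exactly as in Proposition \ref{prop u' kepeken sphar}, the only change being that the zeroth-order term $\be u'$ turns the radial profiles $r^k,\ r^{2-N-k}$ into modified Bessel functions --- one computes $u'$ explicitly and reads off its outer normal derivative, so that $\pa_f\Psi(0,0)$ acts as a Fourier multiplier on the sphere:
\[
\pa_f\Psi(0,0)\Big(\sum_{k\ge1}\sum_{i=1}^{d_k} f_{k,i}\,Y_{k,i}\Big)=\sum_{k\ge1}\sum_{i=1}^{d_k} c_k\,f_{k,i}\,Y_{k,i},
\]
where each coefficient $c_k$ depends only on $k$ (and on $N,R,\sg_c,\be$) and is proportional to the nonzero constant $[\pa_n u]$.

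It then remains to turn this multiplier into a genuine isomorphism between the Hölder spaces at hand. The mean-value constraints built into $\cF$ and $\cH$ remove exactly the exceptional mode $k=0$ (this is consistent with the normalising factor $J_\tau$ in the definition of $\Psi$ and with the identity $\int_{\pa\Om_0}\pa_n u'=\be\,I'(0)=0$ recorded in Theorem \ref{prop psi'}, both of which force the image of $\Psi$ to have zero mean), so it suffices to argue for $k\ge1$. Injectivity amounts to $c_k\ne0$ for every $k\ge1$, while surjectivity with bounded inverse amounts to controlling the size of the $c_k$ across all modes so that the reciprocal multiplier $(1/c_k)$ still carries $\cH\subset C^{1+\al}(\pa\Om_0)$ back into $\cF\subset C^{2+\al}(\pa D_0)$ continuously.

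The main obstacle is precisely this last point: computing the symbol $c_k$ through the transmission and boundary conditions and \emph{certifying that it never vanishes for $k\ge1$}, while tracking its quantitative behaviour well enough to guarantee the bounded inverse. The nonvanishing is the delicate ingredient --- a single exceptional mode $c_{k_0}=0$ would create a nontrivial kernel and a genuine obstruction, which is exactly what happens in the degenerate case $\sg_c=1$, where $[\pa_n u]=0$ and Serrin's rigidity is recovered. Carrying this out rigorously for $\be>0$ requires monotonicity and positivity estimates on ratios of modified Bessel functions and their derivatives --- a more technical analogue of the elementary algebra used in the proof of Proposition \ref{prop u' kepeken sphar} --- together with the mapping properties of such multipliers between Hölder classes on the sphere.
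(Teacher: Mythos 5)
Your proposal follows the same skeleton as the paper's proof---apply Theorem \ref{ift} at $(0,0)$, reduce everything to the bounded invertibility of $\pa_f\Psi(0,0):\cF\to\cH$, and diagonalize this operator on spherical harmonics so that it acts as a multiplier $f_{k,i}\mapsto c_k f_{k,i}$ with $c_k=\pa_r s_k(1)$, $s_k$ being the radial profile of $u'$ in the $k$-th mode. But the decisive ingredient, namely that $c_k\ne0$ for every $k\ge1$, is exactly what you leave open: you state that certifying this ``requires monotonicity and positivity estimates on ratios of modified Bessel functions and their derivatives'' and you do not carry any of this out. This is a genuine gap, not a routine verification: a single vanishing coefficient $c_{k_0}$ would create a nontrivial kernel and the implicit function theorem could not be applied, and nothing in your write-up excludes this. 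Moreover, the explicit-formula route you sketch is considerably harder than what is actually needed, since for $\be>0$ the radial solutions are indeed Bessel-type functions whose Wronskian-style combinations across the interface are painful to control.

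The paper closes this point with a short qualitative argument that avoids all explicit computation, and it is worth contrasting it with your plan. Suppose $\pa_r s_k(1)=0$; since also $s_k(1)=0$, uniqueness for the Cauchy problem of the ODE \eqref{the 2nd order ODE} forces $s_k\equiv0$ on $[R,1]$. The flux condition $\sg_c\,\pa_r s_k(R^-)=\pa_r s_k(R^+)$ then gives $\pa_r s_k(R^-)=0$, and because $\be\ge0$ and $k(k+N-2)>0$ the equation on $(0,R)$ admits neither an interior positive maximum nor an interior negative minimum, so $s_k\equiv0$ on $[0,R]$ as well. This is incompatible with the jump condition \eqref{pbu' jump}: since $\sg_c\ne1$, the jump $[\pa_n u]$ is a nonzero constant on $\pa D_0$, so $s_k(R^-)\ne s_k(R^+)$, a contradiction. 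Note that this argument treats $\be=0$ and $\be>0$ uniformly, whereas your Bessel-function strategy would have to split the two cases. Finally, your observation that a bounded inverse between $C^{2+\al}(\pa D_0)$ and $C^{1+\al}(\pa\Om_0)$ in principle demands quantitative control of $c_k$ as $k\to\infty$ is a legitimate point of rigor (the paper itself passes over it, asserting that eigenspace preservation together with $c_k\ne0$ suffices), but in your proposal it is one more item flagged rather than proved, so it does not compensate for the missing nonvanishing argument.
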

\begin{proof}
This theorem consists of a direct application of Theorem \ref{ift} on page \pageref{ift}.
We know that the mapping $(f,g)\mapsto\Psi(f,g)$ is Fr\'echet differentiable and we computed its Fr\'echet derivative with respect to the variable $f$ in Theorem \ref{prop psi'}. We are left to prove that the mapping $\pa_f\Psi(0,0):\cF\to\cH$, given in Theorem \ref{prop psi'}, is a bounded and invertible linear transformation.
\par
Linearity and boundedness of $\pa_f\Psi(0,0)$ ensue from the properties of problem \eqref{pbu' eq}--\eqref{pbu' bdary}. 

We are now going to prove the invertibility of $\pa_f \Psi(0,0)$. To this end we study the relationship between the spherical harmonic expansions of the functions $f$ and $u'$ (see Appendix \ref{app sphar} for notations and properties of the harmonic functions). Suppose that, for some real coefficients $\al_{k,i}$ the following holds
\begin{equation}\label{f in sphar}
f(R\theta) = \sum_{k=1}^\infty \sum_{i=1}^{d_k} \al_{k,i} Y_{k,i}(\theta), \quad \mbox{ for } \theta\in\mathbb{S}^{N-1}. 
\end{equation} 
%Here $Y_{k,i}$ denotes the solution of the eigenvalue problem $-\Delta_{\mathbb{S}^{N-1}}Y_{k,i}=\lambda_k Y_{k,i}$ on $\mathbb{S}^{N-1}$, with $k$-th eigenvalue $\lambda_k=k(N+k-2)$ of multiplicity $d_k$.
Under the assumption \eqref{f in sphar}, we can apply the method of separation of variables to get
\begin{equation}\label{u' in sphar}
u'(r\theta)= \sum_{k=1}^\infty\sum_{i=1}^{d_k}\al_{k,i}s_k(r)Y_{k,i}(\theta), \quad \mbox{ for }r\in (0,R)\cup (R,1) \mbox{ and } \theta \in\mathbb{S}^{N-1}.
\end{equation}
Here $s_k$ denotes the solution of the following problem:
\begin{eqnarray}
%\begin{aligned}
&&\sg\left\{ \pa_{rr} s_k+ \frac{N-1}{r}\,\pa_r s_k - \frac{k(k+N-2)}{r^2}s_k \right\} = \be s_k \,\mbox{ in } (0,R)\cup (R,1), \label{the 2nd order ODE}\\
&& s_k(R^+)-s_k(R^-)=\pa_r u(R^-)-\pa_r u(R^+), \quad \sg_c\, \pa_r s_k(R^-)= \pa_r s_k(R^+), \nonumber\\ 
&& s_k(1)=0,\qquad s_k(0)=0, \nonumber
%\end{aligned
\end{eqnarray}
where, by a slight abuse of notation, the letters $\sg$ and $u$ denote the radial functions
$\sg(|{x}|)$ and $u(|{x}|)$ respectively. Notice that the condition $s_k(0)=0$ derives from the fact that $s_k$ is non-singular at $r=0$. Indeed, this ensues by multiplying \eqref{the 2nd order ODE} by $r^2$ and letting $r \to 0$. %yield that $s_k(0) = 0$. 
By \eqref{u' in sphar} we see that $\pa_f \Psi(0,0)$ preserves the eigenspaces of the Laplace--Beltrami operator, and, in particular, $\pa_f\Psi(0,0)$ is invertible if and only if $\pa_r s_k(1)\ne0$ for all $k\in\{1,2,\dots\}$. Let us show the latter. Suppose by contradiction that $\pa_r s_k(1)=0$ for some $k\in\{1,2,\dots\}$. Then, since $s_k(1)=0$, by the unique solvability of the Cauchy problem for the ordinary differential equation \eqref{the 2nd order ODE}, 
$s_k \equiv 0$ on the interval $[R,1]$. Therefore $\pa_r s_k(R^+)=0$ and thus also $\pa_r s_k(R^-)=0$. Therefore, in view of \eqref{the 2nd order ODE}, we see that $s_k$ achieves neither its positive maximum nor its negative minimum on the interval $[0,R]$. Thus $s_k \equiv 0$ also on $[0,R]$.
On the other hand, since $\sg_c\ne1$, we see that $[\dn u]\ne 0$ on $\pa D_0$ and hence $s_k(R^-)\not=0$, which is a contradiction. \end{proof}

Lastly, we remark that the volumes of the domains $D_{f(g)}$ and $\Om_g$, found by Theorem \ref{mainthm cava}, do not necessarily coincide with those of $D_0$ and $\Om_0$ (and the same goes for surface areas). This is because only volume preserving conditions {\bf at first order} were prescribed in the definitions of $\cF$ and $\cG$. Nevertheless, the arguments of Theorem \ref{mainthm cava} can be refined to gain the control on the domains' volume (or surface area, for the matter). 
\begin{corollary}
There exists $\varepsilon>0$ such that, for all $\wt{g}\in\mathcal{C}^{2+\al}(\pa\Om_0)$ with $\norm{\wt{g}}<\varepsilon$ and such that $\vol(\Om_{\wt{g}})=\vol(\Om_0)$, there exists a unique $\wt{f}=\wt{f}(\wt{g})\in\mathcal{C}^{2+\al}(\pa D_0)$ with $\vol(D_{\wt{f}})=\vol(D_0)$ such that the pair $(D_{\wt{f}},\Omega_{\wt{g}})$ is a solution of the overdetermined problem \eqref{pbcava eq}--\eqref{pbcava neumann}. An analogous result holds true when every occurrence of $\vol(\cdottone)$ is replaced by $\per(\cdottone)$ in the statement above. 
\end{corollary}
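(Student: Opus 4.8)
The plan is to run a second implicit function theorem argument, enlarging the admissible perturbations of the core so as to trade the extra volume (or perimeter) constraint against one additional real parameter. The pair $(D_{f(g)},\Om_g)$ produced by Theorem \ref{mainthm cava} need not preserve $\vol(D_0)$ only because $\cF$ encodes merely the \emph{first order} volume condition $\int_{\pa D_0}f=0$; exact preservation is recovered by letting the core perturbation carry a nonzero mean. Concretely, I would write the unknown core datum as $\wt f=f+c$ with $f\in\cF$ and $c\in\RR$ constant, and treat the shell datum $\wt g\in C^{2+\al}(\pa\Om_0)$ as a parameter, no longer required to have zero mean. The formula \eqref{Psi li seme} makes sense for general boundary data, defining an extension of $\Psi$ to $C^{2+\al}(\pa D_0)\times C^{2+\al}(\pa\Om_0)$ which, by the same argument that established the differentiability of $\Psi$, is Fr\'echet differentiable near the origin.

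First I would record that $\Psi(\wt f,\wt g)$ always lies in $\cH$, i.e.\ has zero mean over $\pa\Om_0$: this is forced by the definition of $d_{\wt f,\wt g}$ through \eqref{whatisd} together with the tangential Jacobian $J_\tau$ in \eqref{Psi li seme}, via the divergence theorem on $\Om_{\wt g}$, and is independent of the means of $\wt f,\wt g$. I would then introduce
\[
\Theta:\pp{\cF\times\RR}\times C^{2+\al}(\pa\Om_0)\to\cH\times\RR,\qquad
\Theta\pp{(f,c),\wt g}:=\pp{\Psi(f+c,\wt g),\,\vol(D_{f+c})-\vol(D_0)},
\]
which is Fr\'echet differentiable near $((0,0),0)$ and satisfies $\Theta((0,0),0)=0$, since $(D_0,\Om_0)$ already solves \eqref{pbcava eq}--\eqref{pbcava neumann}.

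The heart of the matter is the invertibility of $\partial_{(f,c)}\Theta((0,0),0):\cF\times\RR\to\cH\times\RR$. By Example \ref{ex l1} the core-volume shape derivative is $l_1^{\vol}(\xi)=\int_{\pa D_0}\xi$, so its restriction to $\cF$ vanishes, while its value in the constant direction is $\int_{\pa D_0}1=\per(D_0)>0$. Writing $A:=\partial_f\Psi(0,0)$ and letting $w\in\cH$ be the derivative of $\Psi$ in the direction of the constant core perturbation, the derivative takes the block-triangular form $(f,c)\mapsto\pp{A f+c\,w,\,\per(D_0)\,c}$. Given $(\eta,v)\in\cH\times\RR$ one solves uniquely $c=v/\per(D_0)$ and then $f=A^{-1}(\eta-c\,w)$, using that $A:\cF\to\cH$ is an isomorphism by Theorem \ref{prop psi'} and the invertibility proved in Theorem \ref{mainthm cava}. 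Hence $\partial_{(f,c)}\Theta((0,0),0)$ is bounded and invertible, and Theorem \ref{ift} furnishes $\varepsilon>0$ together with, for each $\wt g$ with $\norm{\wt g}<\varepsilon$, a unique $(f(\wt g),c(\wt g))$ and hence a unique $\wt f:=f(\wt g)+c(\wt g)$ with $(D_{\wt f},\Om_{\wt g})$ solving \eqref{pbcava eq}--\eqref{pbcava neumann} and $\vol(D_{\wt f})=\vol(D_0)$. Restricting the parameter $\wt g$ to those functions that also satisfy $\vol(\Om_{\wt g})=\vol(\Om_0)$ yields the claim, both volumes being preserved.

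For the perimeter version I would repeat the construction with $\per$ in place of $\vol$ in the second slot of $\Theta$. By Example \ref{ex l1} the relevant derivative is $l_1^{\per}(\xi)=\int_{\pa D_0}H\xi$, and since $\pa D_0=\pa B_R$ has constant additive mean curvature $H=(N-1)/R$, it again annihilates $\cF$ and equals $H\,\per(D_0)>0$ in the constant direction, so the block-triangular structure and the invertibility are unchanged. The main obstacle is exactly this invertibility: one must check that adjoining the single scalar constraint does not spoil the isomorphism property of $\partial_f\Psi$, which works precisely because the volume (resp.\ perimeter) differential decouples, annihilating the zero-mean directions $\cF$ while being nondegenerate on the supplementary constant direction.
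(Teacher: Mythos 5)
Your proof is correct, but it takes a genuinely different route from the paper's. The paper keeps the unknown space $\cF$ fixed and instead modifies the functional: for $f\in\cF$ and $g\in\cG$ it rescales the perturbed domains by a dilation, $\wt{\Om_g}:=t\,\Om_g$ with $t=\pp{\vol(\Om_0)/\vol(\Om_g)}^{1/N}$ (and similarly for $D_{tf}$), so that volumes are preserved \emph{exactly}, and then applies the implicit function theorem to the corrected map $\wt{\Psi}(f,g):=\Psi(\wt{f},\wt{g})$. The key observation there is that the dilation correction is of second order in the perturbation, $\dato \wt{tf}=f$, so $\pa_f\wt{\Psi}(0,0)=\pa_f\Psi(0,0)$ and the invertibility established in Theorem \ref{mainthm cava} transfers verbatim; the statement for arbitrary volume-preserving $\wt{g}$ is then recovered from the local invertibility of the correspondence $g\mapsto\wt{g}$. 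You instead enlarge the unknown space, writing $\wt{f}=f+c$ with $(f,c)\in\cF\times\RR$, adjoin the exact constraint $\vol(D_{\wt{f}})=\vol(D_0)$ as one scalar equation, and invert the bordered linearization, which is block-triangular because $l_1^{\vol}$ (resp.\ $l_1^{\per}$, by constancy of $H$ on $\pa D_0=\pa B_R$) annihilates the zero-mean directions $\cF$ and is nondegenerate on constants; the off-diagonal term $w$ is harmless by back-substitution, since $\Psi$ takes values in the closed subspace $\cH$ and hence so do its directional derivatives. Both arguments are sound. The paper's dilation trick is shorter and reuses Theorem \ref{mainthm cava} essentially as a black box, but it leans on the special way scalings act on volumes, perimeters and balls; your bordered-system argument is more systematic --- it works for any scalar constraint whose shape differential at the ball is nonzero on constants while vanishing on $\cF$ --- and it proves slightly more, namely a unique volume-preserving core $\wt{f}(\wt{g})$ for \emph{every} small $\wt{g}\in\C^{2+\al}(\pa\Om_0)$, not only for those satisfying $\vol(\Om_{\wt{g}})=\vol(\Om_0)$.
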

\begin{proof}
First of all, for any $g\in\cG$ small enough we will construct a domain $\wt{\Om_g}$ such that $\vol(\wt{\Om_g})=\vol(\Om_0)$. As done in Proposition \ref{extension of pert}, we set $\wt{\Om_g}:=t\Om_g$ for $t=\sqrt[\leftroot{-1}\uproot{5}\scriptstyle N]{\frac{\Om_0}{\vol{\Om_g}}}$. If $g$ is small enough, then $\wt{\Om_g}=\Om_{\wt{g}}$ for some $\wt{g}\in\C^{2+\al}(\Om_0)$. The map $g\mapsto \wt{g}$ is continuous in a neighborhood of $g=0$. Moreover, we claim that, for $g$ small enough, the map $g\mapsto \wt{g}$ is also invertible. Indeed, by definition $\pa \Om_{\wt{g}}=t\pa \Om_g$ for some $t$ to be determined. We have
\begin{equation}\label{find t}
x+\wt{g}(x)n(x)=t\pp{x+g(x)n(x)} \quad \text{ for all }x\in\pa\Om_0.
\end{equation}
There is only one value of $t$ such that $g$ in \eqref{find t} has vanishing integral over $\pa \Om_0$. Namely, since $n(x)=x$ on $\pa\Om_0$, we obtain:
\[
t=\frac{\per(\Om_0)+\int_{\pa\Om_0}\wt{g}}{\per(\Om_0)}.
\]
Notice that we can make $t$ arbitrarily close to $1$ by controlling the size of $\wt{g}$.
Of course, the same arguments work for $f\in\cF$ as well.
We define an auxiliary function $\wt{\Psi}(f,g):=\Psi(\wt{f},\wt{g})$, where, by a slight abuse of notation, we used the letter $\Psi$ to denote the extension of \eqref{Psi li seme} to $\C^{2+\al}(\pa D_0)\times\C^{2+\al}(\pa\Om_0)$. As remarked in Proposition \ref{extension of pert}, we have
\[
\dato \wt{tf}=f,
\]
in other words the perturbations $tf$ and $\wt{tf}$ are indistinguishable at first order. Therefore, the statement of Theorem \ref{mainthm cava} holds for the functional $\wt{\Psi}$ as well. Hence there exists some $\eps>0$ such that for all $g\in\cG$ with $\norm{g}<\eps$, there exists a unique $f=f(g)\in\cF$ such that $\wt{\Psi}(f,g)=\Psi(\wt{f},\wt{g})$.
Up to choosing a smaller $\eps>0$, we can conclude by the invertibility of the map $\wt{\cdottone}$.
\end{proof}

\chapter*{Acknowledgements}
\addcontentsline{toc}{chapter}{Acknowledgements}
First of all, I would like to thank my supervisor Professor Shigeru Sakaguchi for his continuous support and stimulating discussions on a wide range of topics. I admire his way of teaching even the most abstract mathematical concepts by always revealing the geometrical essence behind it.
I am grateful to Professor Rolando Magnanini: not only for his insightful comments and challenging mathematical problems but also for the precious help he gave me when I decided to go study in Japan.
I would like to express my gratitude to Ms. Chisato Karino and Ms. Sumie Narasaka for their support. I cannot avoid mentioning the financial support received by the Ministry of Education, Culture, Sports, Science and Technology (MEXT) and the Japan Society for the Promotion of Science (JSPS), without which all this would not have been possible. 
I would also like to thank all my lab mates, especially Toshiaki Yachimura for the innumerable hours of passionate mathematical discussions, which undoubtedly fueled my love for the subject (and made my Japanese improve). Special thanks go to my girlfriend Lin Zhu, one of the greatest sources of happiness and motivation in my life right now. 
Finally, I feel in need to thank my parents for their love, support and patience. I think that letting your son be free to go to the other side of the globe, where you cannot protect him directly, is an act of deep trust that goes against the natural instinct of a parent (especially an Italian one, I would say).

\begin{appendices}
\chapter{Elements of tangential calculus}\label{tang calc}
\renewcommand{\theequation}{\thechapter.\arabic{equation}}
\setcounter{theorem}{0}
\renewcommand{\thetheorem}{\Alph{chapter}\arabic{theorem}}
Let $\om$ be a bounded open set of class $\C^1$. For every $g\in\C^1(\pa\om)$ we define its \emph{tangential gradient} as 
\begin{equation}\label{tg grad}
\grt g :=\gr \widetilde{g}-(\gr \widetilde{g}\cdot n)n \quad \text{ on }\pa\om,
\end{equation}
where $\widetilde{g}$ is an extension of class $\C^1$ of $g$ to a neighborhood of $\pa\om$. Notice that, by density, the tangential gradient can be defined in the natural way for all functions $g\in W^{1,1}(\pa\om)$.
It is easy to see that this definition does not depend on the choice of the extension. Indeed, this is equivalent to showing that $\gr \widetilde{g}=(\gr\widetilde{g}\cdot n)n$ on $\pa\om$ for all $\widetilde{g}$ of class $\C^1$ on a neighborhood of $\pa\om$ with $\widetilde{g}\equiv 0$ on $\pa\om$.
To this end, fix a point $x_0\in\pa\om$ and take a smooth path $\ga:[0,1]\to\pa\om$ with $\ga(0)=x_0$. Since, by assumption, $\widetilde{g}(\ga(t))=0$ for all $t$, we have $\gr\widetilde{g}(x_0)\cdot \ga'(0)=0$. By the arbitrariness of $x_0$ and $\ga$ we conclude that $\gr \widetilde{g}$ is parallel to $n$ at each point of $\pa\om$, which was our claim. One obvious property of the tangential gradient is the following:
\begin{equation}\label{tg.nor=0}
\grt g\cdot n =0 \quad \text{ for all }g\in\C^1(\pa\om).
\end{equation}
Let $w\in\C^1(\pa\om,\rn)$. The \emph{tangential divergence} of $w$ is defined as 
\begin{equation}\label{tg div}
\dv_\tau w:=\dv \wt{w} - n\cdot \pp{D\wt{w}\,n},
\end{equation} 
where $\wt{w}$ is a $\C^1$ extension of $w$ to a neighborhood of $\pa\om$. This definition can by extended by density to vector fields $w\in W^{1,1}(\pa\om,\rn)$. 
Just like the tangential gradient, the definition of tangential divergence is independent of the extension chosen. Indeed one can verify that
\begin{equation}\label{trace appears}
\dv\wt{w}-n\cdot\pp{D\wt{w}n}=\tr(D_\tau w),
\end{equation}
where 
\begin{equation}\label{tg jacobian}
D_\tau w \text { is the matrix whose $i$-th row is given by }\grt w_i.
\end{equation} 
The following tangential versions of the Leibniz rule hold true: for all $f,g\in\C^1(\pa\om)$ and $w\in\C^1(\pa\om,\rn):$ 
\begin{equation}\label{L.R}
\grt(fg)=f\grt g+ g\grt f, \qquad \dv_\tau(gw)=g\,\dv_\tau w+w\cdot \grt g. 
\end{equation}
The first identity above follows directly from the definition of tangential gradient \eqref{tg grad}, while the second identity can be proved by applying the first one to each row of $D_\tau (g w)$ and then taking the trace (recall \eqref{trace appears}).
Tangential divergence is used to define the (additive) mean curvature $H$ (i.e. the sum of the principal curvatures) of a surface by means of the unit normal $n$:
\begin{equation}\label{H}
H:=\dv_\tau n. 
\end{equation} 
Actually, if $\om$ is an open set of class $\C^2$, then $\dv\, \wt{n}=H$ on $\pa\om$ for all unitary extensions $\wt{n}$ of class $\C^1$ of the outward unit normal $n$. Indeed $(D\wt{n})\wt{n}=0$ on $\pa\om$ because the norm of $\wt{n}$ is constant in a neighborhood of $\pa\om$. Therefore $\dv \wt{n}=\dv_\tau n=H$. 

Now, let $w_\tau$ denote the tangential part of a vector field $w$ on $\pa\om$, that is 
\begin{equation}\label{tg part}
w_\tau:=w-(w\cdot n)n \quad \text{ on }\pa\om.
\end{equation}
Let $\om$ of class $\C^2$ and $w\in W^{1,1}(\pa\om,\rn)$. By combining \eqref{tg part}, \eqref{L.R}, \eqref{tg.nor=0} and \eqref{H}, we get the following decomposition result for the tangential divergence. 
\begin{equation}\label{decomp tg div}
\dv_\tau w= \dv_\tau w_\tau + H w\cdot n \quad \text{ on }\pa\om.
\end{equation}
\begin{lemma}[Tangential Stokes formula]\label{tg stokes}
Let $\om$ be a bounded open set of class $\C^2$ and $w\in W^{1,1}(\pa\om,\rn)$. Then
\[
\int_{\pa\om} \dv_\tau w =\int_{\pa\om} Hw\cdot n.
\]
\end{lemma}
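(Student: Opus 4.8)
The plan is to combine the decomposition of the tangential divergence \eqref{decomp tg div} with the fact that, on a closed surface, the integral of the tangential divergence of a \emph{purely tangential} field vanishes. Indeed, by \eqref{decomp tg div} we have $\dv_\tau w = \dv_\tau w_\tau + H\, w\cdot n$ on $\pa\om$, so integrating gives $\int_{\pa\om}\dv_\tau w = \int_{\pa\om}\dv_\tau w_\tau + \int_{\pa\om} H\, w\cdot n$. Hence the lemma is equivalent to the claim that $\int_{\pa\om}\dv_\tau v = 0$ for every tangential field $v$ (that is, $v\cdot n = 0$ on $\pa\om$). Since the tangential operators extend by density from $\C^1(\pa\om,\rn)$ to $W^{1,1}(\pa\om,\rn)$, it suffices to prove this for $v\in\C^1(\pa\om,\rn)$ with $v\cdot n=0$; taking $v=w_\tau$ then yields the statement.

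To prove $\int_{\pa\om}\dv_\tau v = 0$, I would first build a convenient extension of $v$ to a tubular neighborhood. Let $d_\om$ be the signed distance function \eqref{dist func}; since $\om$ is of class $\C^2$, $d_\om$ is $\C^2$ on a neighborhood $U=\{|d_\om|<\delta\}$ of $\pa\om$, the field $n:=\gr d_\om$ is a unit $\C^1$ extension of the outward normal, and the level sets $\Gamma_s:=\{d_\om=s\}$ are $\C^1$ hypersurfaces bounding the bounded domains $\om_s:=\{d_\om<s\}$ for $|s|<\delta$. Starting from any $\C^1$ extension $\wt v$ of $v$, I set $V:=\wt v-(\wt v\cdot n)\,n$ on $U$ and multiply by a cut-off depending only on $d_\om$, so that $V\in\C^1(\rn,\rn)$ is supported in $U$, $V\equiv v$ on $\pa\om$ (because $v$ is tangential), and crucially $V\cdot n\equiv 0$ throughout $U$.

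The key computation is that $\dv_\tau V=\dv V$ on each $\Gamma_s$. By the definition \eqref{tg div}, $\dv_\tau V=\dv V-n\cdot(DV\,n)$, so I must check $n\cdot(DV\,n)=0$. Differentiating the identity $V\cdot n\equiv 0$ along $n$ gives $n\cdot(DV\,n)+V\cdot(Dn\,n)=0$; and $Dn\,n=(D^2 d_\om)\,\gr d_\om=0$, since $|\gr d_\om|=1$ forces $n^T Dn=0$ while $Dn=D^2 d_\om$ is symmetric (this is exactly the vanishing $(D\wt n)\wt n=0$ noted above in this appendix). Hence $n\cdot(DV\,n)=0$ and $\dv_\tau V=\dv V$ on $U$. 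Now, for each $|s|<\delta$ the Euclidean divergence theorem on the bounded $\C^1$ domain $\om_s$ gives $\int_{\om_s}\dv V=\int_{\Gamma_s}V\cdot n=0$, the last equality because $V\cdot n\equiv 0$. Writing $F(s):=\int_{\om_s}\dv V$ and applying the coarea formula (legitimate since $|\gr d_\om|=1$) yields
\[
F(s)=\int_{-\delta}^{s}\Big(\int_{\Gamma_\tau}\dv V\Big)\,d\tau,
\]
whose integrand is continuous in $\tau$. Since $F\equiv 0$, differentiating in $s$ shows $\int_{\Gamma_s}\dv V=0$ for all $s$; at $s=0$, together with $\dv V=\dv_\tau V=\dv_\tau v$ on $\pa\om$, this gives $\int_{\pa\om}\dv_\tau v=0$.

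I expect the main obstacle to be the technical bookkeeping around the tubular neighborhood: justifying that $d_\om\in\C^2$ near $\pa\om$ with $|\gr d_\om|=1$, that the level sets $\Gamma_s$ are genuine $\C^1$ boundaries to which the divergence theorem applies, and that the coarea representation of $F$ is differentiable in $s$. All of these are standard for $\C^2$ domains (and are already used elsewhere in the excerpt, e.g.\ in the proofs relying on Proposition \ref{der of normal}), so the real content reduces to the one-line identity $\dv_\tau V=\dv V$ for normal-free extensions; everything else is routine once the extension $V$ is set up correctly.
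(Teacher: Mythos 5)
Your proof is correct, but it follows a genuinely different route from the paper. The paper proves the lemma with the shape-derivative machinery of Chapter \ref{ch shape derivatives}: after the same reduction by density to $w\in\C^1(\pa\om,\rn)$, it extends $w$ to $\rn$, takes a Hadamard normal flow $\Phi(t)=t\xi n$, writes the Euclidean divergence theorem on the moving domain, $\int_{\om_t}\dv w=\int_{\pa\om_t}w\cdot n_t$, and differentiates both sides at $t=0$ using Proposition \ref{hadam form}, Corollary \ref{surface hadam form} and Proposition \ref{der of normal}; setting $\xi\equiv1$ and using $\dn(w\cdot n)=n\cdot(Dw\,n)$ (valid for the signed-distance extension, where $Dn\,n=0$) then yields the identity. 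You instead reduce, via the decomposition \eqref{decomp tg div}, to showing $\int_{\pa\om}\dv_\tau v=0$ for tangential fields $v$, and prove that statically: build a normal-free extension $V$ so that $\dv_\tau V=\dv V$ near $\pa\om$, apply the divergence theorem on the sublevel sets of $d_\om$ (each integral vanishes since $V\cdot n\equiv0$), and differentiate through the coarea formula to get $\int_{\Gamma_s}\dv V=0$ on every leaf, in particular on $\pa\om$. What each approach buys: the paper's argument is very short once the Hadamard formulas of Chapter \ref{ch shape derivatives} are available, and is thematically consistent with the thesis (it is the Delfour--Zol\'esio proof); yours is more elementary and self-contained, using only the classical divergence theorem, the coarea formula and properties of $d_\om$, so the appendix lemma does not lean on the Chapter \ref{ch shape derivatives} propositions at all. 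Note that both proofs ultimately pivot on the same geometric fact, namely $Dn\,n=D^2d_\om\,\gr d_\om=0$ for the signed-distance normal, and both handle the $W^{1,1}$ case by the same density argument.
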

\begin{proof}
We would like to follow along the same lines as \cite[Chapter 8, Subsection 5.5, page 367]{SG}, where an elegant proof is given using shape derivatives. By density we might assume, without loss of generality, that $w\in \C^1(\pa\om,\rn)$. Moreover, in what follows the same notation $w$ will denote a $\C^1$ extension of $w$ to the whole $\rn$. 
Take now an Hadamard perturbation $\Phi(t)=t\xi n$ on $\pa\om$. By the divergence theorem applied to the perturbed domain $\om_t$, we have 
\begin{equation}\label{dec tg div}
\int_{\om_t} \dv w = \int_{\pa\om_t} w \cdot n_t, \quad \text{ for } t\ge 0 \text{ small}, 
\end{equation}
where $n_t$ is taken to be unitary. Differentiating both sides with the aid of the usual and surface Hadamard formulas (Proposition \ref{hadam form} and Corollary \ref{surface hadam form}) and Proposition \ref{der of normal} yields
\[
\int_{\pa\om} \dv w\, \xi= \int_{\pa\om} -w\cdot \grt\xi + \dn(w\cdot n)\xi + H w\cdot n \xi. 
\]
Suppose $\xi\equiv1$ on $\pa\om$. We get
\[
\int_{\pa\om} \dv w=\int_{\pa\om} \dn(w\cdot n)+ \int_{\pa\om} H w\cdot n. 
\]
Since $\dn(w\cdot n)=n\cdot (Dw\,n)$ the thesis follows by the definition of tangential divergence \eqref{tg div}. 
\end{proof}

Combining Lemma \ref{tg stokes} and the second identity of \eqref{L.R} yields 
\begin{equation}
\int_{\pa\om}w \cdot \grt g= -\int_{\pa\om}g\, \dv_\tau w +\int_{\pa\om}H g w\cdot n.
\end{equation} 
We will now introduce the last tangential differential operator of this appendix: the Laplace--Beltrami operator. For $\om$ of class $\C^2$, the Laplace--Beltrami operator, denoted by $\De_\tau$, is defined as
\begin{equation}\label{def lapt}
\De_\tau u=\dv_\tau (\grt u) \quad \text{ for }u\in W^{2,1}(\pa\om).
\end{equation}
\begin{proposition}[Decomposition of the Laplace operator]\label{decomp lapl}
Assume that $\om$ is an open set of class $\C^2$ and let $u\in\C^2(\ol{\om})$, then
\begin{equation}\label{dec lap}
\De u= \pa_{nn} u+H\dn u+\De_\tau u\quad\ton\pa\om,
\end{equation}
where $\pa_{nn}u:=n\cdot(D^2 u \,n)$. Notice that, by density, \eqref{dec lap} can remains true for functions $u\in H^3(\om)$.
\end{proposition}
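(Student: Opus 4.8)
The plan is to reduce the statement to the definitions of the tangential operators and to exploit one particularly convenient smooth extension of the unit normal. I would take $n:=\gr d_\om$ in a neighborhood of $\pa\om$, where $d_\om$ is the signed distance function of \eqref{dist func}. Since this extension is unitary near $\pa\om$, the matrix $Dn=D^2 d_\om$ is symmetric and satisfies $(Dn)\,n=0$, and moreover $\dv\,n=\dv_\tau n=H$ on $\pa\om$ (exactly as recorded after \eqref{H}). Because the tangential gradient, tangential divergence, and hence $\De_\tau$ are independent of the chosen extension, it is legitimate to prove the identity with this specific $n$; this is the only conceptual point to keep an eye on.

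First I would write, on $\pa\om$, $\grt u=\gr u-(\dn u)\,n$, and use the right-hand side as the ambient extension $\tilde w:=\gr u-(\gr u\cdot n)\,n$ of the tangential field $\grt u$. By definition \eqref{tg div} and \eqref{def lapt},
\[
\De_\tau u=\dv_\tau(\grt u)=\dv\,\tilde w-n\cdot(D\tilde w\,n).
\]
Next I would compute each piece separately. Setting $\psi:=\gr u\cdot n=\dn u$, the Leibniz rule for the divergence of $\psi\,n$ together with $\dv\,n=H$ on $\pa\om$ gives
\[
\dv\,\tilde w=\De u-\dv(\psi\,n)=\De u-\gr\psi\cdot n-\psi\,H=\De u-\gr(\dn u)\cdot n-H\,\dn u.
\]

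For the second piece, writing $D\tilde w=D^2u-\bigl(n\,(\gr\psi)^{T}+\psi\,Dn\bigr)$ and contracting with $n$ on both sides, the facts $|n|=1$ and $(Dn)\,n=0$ yield $n\cdot(D\tilde w\,n)=n\cdot(D^2u\,n)-\gr\psi\cdot n=\pa_{nn}u-\gr(\dn u)\cdot n$. Subtracting the two computations, the awkward terms $\gr(\dn u)\cdot n$ cancel exactly, leaving $\De_\tau u=\De u-H\,\dn u-\pa_{nn}u$, which rearranges to the desired decomposition \eqref{dec lap}. The step I expect to require the most care is precisely this bookkeeping of $D(\psi n)$ and the verification that the mixed gradient term cancels; everything else is a direct substitution. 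Finally, the density remark for $u\in H^3(\om)$ follows by approximating $u$ in $H^3(\om)$ by functions in $\C^2(\ol\om)$ and passing to the limit, since all operators appearing in \eqref{dec lap} are continuous from $H^3(\om)$ into $L^2(\pa\om)$ via the relevant trace theorems.
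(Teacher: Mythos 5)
Your proof is correct, but it takes a more computational route than the paper. The paper's argument is two lines: it applies the definition \eqref{tg div} of tangential divergence to the field $\gr u$ itself (which serves as its own extension), so that
\[
\De u=\dv(\gr u)=\dv_\tau(\gr u)+n\cdot\pp{D^2u\,n}=\dv_\tau(\gr u)+\pa_{nn}u \quad\ton\pa\om,
\]
and then invokes the already-established decomposition \eqref{decomp tg div} to get $\dv_\tau(\gr u)=\dv_\tau(\grt u)+H\,\dn u=\De_\tau u+H\,\dn u$. You instead apply the definition to the tangential part $\grt u$, with the explicit ambient extension $\tilde w=\gr u-(\gr u\cdot n)n$ built from the signed-distance extension $n=\gr d_\om$, and do the bookkeeping by hand; the cancellation of the $\gr(\dn u)\cdot n$ terms that you highlight is exactly what the general formula \eqref{decomp tg div} encapsulates, so your computation of $\dv(\psi n)$ and $n\cdot\pp{D(\psi n)\,n}$ amounts to re-proving that lemma in the special case $w=\gr u$. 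The paper's route buys brevity and freedom from any choice of extension of $n$, since both ingredients are already available in the appendix; your route buys self-containedness and makes visible exactly where the properties $(Dn)n=0$ and $\dv n=H$ of the distance-function extension enter. One small point of precision: the extension-independence you invoke concerns the extension $\tilde w$ of the tangential field $\grt u$ (different extensions of $n$ produce different $\tilde w$, all extending the same field on $\pa\om$), which is what legitimizes fixing $n=\gr d_\om$; stated this way your reduction is fully justified. The density remark is treated identically in both.
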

\begin{proof}
By definition of tangential divergence we have
\[
\De u=\dv(\gr u)=\dv_\tau(\gr u)+n\cdot (D(\gr u)n)\quad\ton\pa\om.
\]
We conclude noticing that, by \eqref{decomp tg div}, 
\[
\dv_\tau(\gr u)=\dv_\tau(\grt u)+H\dn u= \De_\tau u + H\dn u.
\]
\end{proof}

We conclude by stating a corollary of Lemma \ref{tg stokes}:
\begin{proposition}[Tangential integration by parts]\label{tg int by parts}
Assume that $\om$ is a bounded open set of class $\C^2$. For $f\in H^2(\om)$ and $g\in H^3(\om)$ the following holds
\[
\int_{\pa\om}\grt f\cdot \grt g= -\int_{\pa\om} f\De_\tau g.
\]
\end{proposition}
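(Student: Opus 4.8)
The plan is to reduce the statement to the tangential Stokes formula (Lemma \ref{tg stokes}) via a single application of the tangential Leibniz rule. The natural choice is to take $w:=\grt g$, which is a tangential vector field on $\pa\om$, and apply the second identity of \eqref{L.R} with the scalar function $f$. This yields
\[
\dv_\tau(f\grt g)= f\,\dv_\tau(\grt g)+\grt g\cdot \grt f = f\,\De_\tau g + \grt f\cdot \grt g,
\]
where in the last equality I used the definition \eqref{def lapt} of the Laplace--Beltrami operator.

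Next I would integrate this identity over $\pa\om$ and invoke Lemma \ref{tg stokes} on the left-hand side, obtaining
\[
\int_{\pa\om} f\,\De_\tau g + \int_{\pa\om}\grt f\cdot \grt g = \int_{\pa\om}\dv_\tau(f\grt g) = \int_{\pa\om} H\,(f\grt g)\cdot n.
\]
The crucial observation is that the right-hand side vanishes: since $\grt g$ is tangential, property \eqref{tg.nor=0} gives $\grt g\cdot n=0$ pointwise on $\pa\om$, hence $(f\grt g)\cdot n=f(\grt g\cdot n)=0$. Rearranging the two remaining terms then yields exactly the claimed identity.

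The only genuine issue is regularity, and this is where the main care is needed: one must verify that $f\grt g$ is an admissible vector field for Lemma \ref{tg stokes}, i.e. that it lies in $W^{1,1}(\pa\om,\rn)$, and that both boundary integrals are finite. By the trace theorem, $g\in H^3(\om)$ restricts to an element of $H^{5/2}(\pa\om)$, so $\grt g\in H^{3/2}(\pa\om,\rn)$ and $\De_\tau g=\dv_\tau(\grt g)\in H^{1/2}(\pa\om)\subset L^2(\pa\om)$; likewise $f\in H^2(\om)$ restricts to $H^{3/2}(\pa\om)$, whence $\grt f\in H^{1/2}(\pa\om,\rn)$. Combined with Hölder's inequality on the compact manifold $\pa\om$, these embeddings guarantee that $f\,\De_\tau g$ and $\grt f\cdot\grt g$ are integrable and that $f\grt g\in W^{1,1}(\pa\om,\rn)$, so every manipulation above is legitimate.

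The hard part is thus purely the trace bookkeeping, and to keep it clean I would prefer to first establish the identity for $f,g\in\C^\ali(\ol\om)$ (where all the tangential operators are classical and Lemma \ref{tg stokes} applies directly), and then pass to the general case by approximating $f$ in $H^2(\om)$ and $g$ in $H^3(\om)$ by smooth functions, using the continuity of the trace and tangential-derivative maps to pass to the limit in each of the three integrals. This density argument avoids invoking the sharpest fractional trace estimates while still covering the full regularity range stated in the proposition.
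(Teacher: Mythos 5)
Your proof is correct and is essentially the paper's own (implicit) argument: the paper presents this proposition as a direct corollary of Lemma \ref{tg stokes} combined with the Leibniz rule \eqref{L.R}, which is exactly your computation with $w=\grt g$, the curvature term dying because $\grt g\cdot n=0$ by \eqref{tg.nor=0}. Your additional trace/density bookkeeping is sound, though the paper does not spell it out.
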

Notice that the formula above bears a striking resemblance to the usual integration by parts formula on open sets, and the absence of the ``boundary term'' is due to the fact that $\pa\om$ has no boundary. 

\chapter{Spherical harmonics}
\label{app sphar}

For integer $N\ge2$ and $k\ge0$, let $\PP_k(\RR^N)$ denote the set of all polynomial functions $\rn\to\RR$ whose degree is at most $k$. Moreover, let $\HH_k(\rn)$ denote the set of harmonic polynomials in $\PP_k(\rn)$. Lastly, let $\YY_k(\rn)$ denote the subset of polynomials in $\HH_k(\rn)$ that are also harmonic. 
$\YY_k(\rn)$ is a vector space over the reals; its dimension is finite and will be denoted by $d_k$. A combinatoric argument shows that 
\begin{equation}
d_0=1,\quad\quad d_k=\frac{(2k+N-2)(k+N-3)}{k!(N-2)!} \quad\tfor k\ge 1.
\end{equation}

We will now introduce the so-called \emph{harmonic decomposition} of a polynomial, it will be a key ingredient in proving Theorem \ref{cos}. We refer to \cite[Theorem 2.1, Chapter IV]{sw} for a proof. 
\begin{lemma}[Harmonic decomposition]\label{harm dec}
Every polynomial $p\in\PP_k(\rn)$ can be uniquely written in the form
\[
p=h_k+|x|^2h_{k-2}+\dots+ |x|^{2m} h_{k-2m}, 
\]
where $m=\left[k/2\right]$ and $h_i\in\HH_i(\rn)$ for each $i$.
\end{lemma}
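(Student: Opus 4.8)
The plan is to deduce the decomposition from the single orthogonal splitting $\HH_k=|x|^2\HH_{k-2}\oplus\YY_k$ — where $\HH_j$ denotes the space of homogeneous polynomials of degree $j$ and $\YY_j$ its subspace of harmonic ones — and then to iterate. The asserted right-hand side is homogeneous of degree $k$, so the statement really concerns a homogeneous $p\in\HH_k$ (a general $p\in\PP_k(\rn)$ is then handled by decomposing each homogeneous component separately), and the pieces $h_i$ will be harmonic homogeneous of degree $i$, i.e. elements of $\YY_i(\rn)$.

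First I would equip the space of homogeneous polynomials of degree $j$ with the \emph{Fischer inner product} $\lan p,q\ran:=p(\pa)\,q$, where $p(\pa)$ is the constant-coefficient differential operator obtained by replacing each $x_m$ in $p$ with $\pa_{x_m}$. Testing on monomials gives $\lan x^\al,x^\be\ran=\al!\,\de_{\al\be}$, so $\lan\cdottone,\cdottone\ran$ is a genuine symmetric positive-definite inner product. The crucial observation is that constant-coefficient operators compose by multiplying their symbols, so $(r\,p)(\pa)=r(\pa)\circ p(\pa)$; consequently, for any fixed $r$, the adjoint of ``multiplication by $r$'' is ``apply $r(\pa)$''. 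Taking $r=|x|^2=\sum_m x_m^2$, whose operator is $\sum_m\pa_{x_m}^2=\De$, yields the adjointness relation
\[
\lan |x|^2 p,\,q\ran=\lan p,\,\De q\ran,
\]
so that the multiplication map $M_{|x|^2}\colon\HH_{k-2}\to\HH_k$ and the Laplacian $\De\colon\HH_k\to\HH_{k-2}$ are mutually adjoint.

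With this established, the main step is immediate from the elementary identity $W=\operatorname{im}T\oplus\ker T^{*}$, valid for any linear map $T$ between Euclidean spaces: applied to $T=M_{|x|^2}$ it gives the orthogonal direct sum
\[
\HH_k=\operatorname{im}(M_{|x|^2})\oplus\ker(\De)=|x|^2\HH_{k-2}\oplus\YY_k,
\]
where $\YY_k=\ker(\De|_{\HH_k})$ is exactly the space of harmonic homogeneous polynomials of degree $k$. I would then apply the same splitting to the factor $\HH_{k-2}$, then to $\HH_{k-4}$, and so on down to degree $0$ or $1$, peeling off one harmonic summand at each stage; collecting these gives $p=h_k+|x|^2 h_{k-2}+\dots+|x|^{2m}h_{k-2m}$ with $h_{k-2j}\in\YY_{k-2j}$ and $m=[k/2]$. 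Uniqueness is then automatic, since each splitting is a \emph{direct} sum, so the summands, hence the $h_{k-2j}$, are uniquely determined by $p$.

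The one genuinely substantive point — and the step I expect to be the main obstacle — is the adjointness relation $\lan|x|^2 p,q\ran=\lan p,\De q\ran$ (equivalently, the surjectivity of $\De\colon\HH_k\to\HH_{k-2}$ together with $\YY_k\cap|x|^2\HH_{k-2}=\{0\}$); once it is in place, the direct-sum decomposition, the induction on the degree, and uniqueness are all routine. An alternative would be to prove surjectivity of $\De$ by a dimension count after checking injectivity of $M_{|x|^2}$ and $\YY_k\cap|x|^2\HH_{k-2}=\{0\}$ by hand, but the Fischer inner product produces injectivity, surjectivity, and the orthogonality of the splitting all at once, so I would prefer that route.
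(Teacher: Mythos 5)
Your proof is correct. The paper gives no argument of its own for this lemma—it defers entirely to the citation \cite[Theorem 2.1, Chapter IV]{sw}—and the proof found there is exactly the one you reconstructed: the Fischer inner product $\lan p,q\ran=p(\pa)q$, the adjointness $\lan |x|^2p,\,q\ran=\lan p,\,\De q\ran$ making multiplication by $|x|^2$ and $\De$ mutually adjoint, the resulting orthogonal splitting of the homogeneous polynomials of degree $k$ into $|x|^2$-multiples plus harmonics, and iteration down the degrees (with your reduction to homogeneous components taking care of the fact that the paper's $\PP_k(\rn)$ consists of polynomials of degree \emph{at most} $k$); so your attempt coincides with the paper's (outsourced) proof essentially verbatim.
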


Let $\YY_k(\SS^{N-1}):=\setbld{\restr{h}{\SS^{N-1}}}{h\in\YY_k(\rn)}$. Elements of $\YY_{k-1}(\SS^{N-1})$ are usually called \emph{spherical harmonics of degree} $k$ in the literature. Notice that every homogeneous polynomial $p$ of degree $k$ is uniquely determined by its restriction to $\SS^{N-1}$ by means of the relation
\begin{equation}\label{ext by hom}
p(x)=|x|^k \restr{p}{\SS^{N-1}}(x/|x|)\quad \tfor x\ne0.
\end{equation}
Therefore, we have \[\dim \YY_k(\SS^{N-1})=\dim \YY_k(\rn)=d_k.\]
Let now $\{Y_{k,i}\}_{i=1}^{d_k}$ denote an orthonormal basis of $\YY_k(\SS^{N-1})$. 
Another simple consequence of \eqref{ext by hom} is the following.
\begin{proposition}\label{prop sphar}
Spherical harmonics $Y_k\in\YY_k(\SS^{N-1})$ solve to the following eigenvalue problem on the unit sphere:
\begin{equation}\label{eq sphar}
-\De_\tau Y_k = \lambda_k Y_k \;\ton\SS^{N-1}, \quad \text{ where }\;\la_k=k(k+N-2).
\end{equation}
In particular, spherical harmonics of distinct degree are mutually orthogonal in $L^2(\SS^{N-1})$. 
\end{proposition}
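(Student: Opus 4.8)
The plan is to derive both claims from the decomposition of the Laplacian on $\pa B_1=\sss$ proved in Proposition \ref{decomp lapl}, namely $\De u=\pa_{nn}u+H\dn u+\De_\tau u$ on the boundary. Fix a harmonic homogeneous polynomial $p\in\YY_k(\rn)$ of degree $k$ and set $Y_k:=\restr{p}{\sss}$; being a polynomial, $p$ is smooth enough for the decomposition to apply on $\ol{B_1}$. On $\sss$ the outward unit normal is $n(x)=x$, and the additive mean curvature of the unit sphere is $H=N-1$, as recorded right after the definition of $H$ in the main text.

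The heart of the matter is to evaluate the two normal derivatives appearing in the decomposition by exploiting the homogeneity of $p$. Euler's identity for a function homogeneous of degree $k$ gives $\gr p\cdot x=k\,p$, so that on $\sss$ one has $\dn p=\gr p\cdot n=k\,Y_k$. Differentiating Euler's identity in $x_j$ and then contracting against $x_j$ produces $\sum_{i,j}x_ix_j\,\pa_{ij}p=k(k-1)\,p$, whence $\pa_{nn}p=n\cdot(D^2 p\,n)=k(k-1)\,Y_k$ on $\sss$. These two identities are the only genuine computations in the argument, and I expect the (elementary) bookkeeping with Euler's theorem to be the main place where care is needed in order to land on the correct coefficients.

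Substituting these values, together with $H=N-1$, into the decomposition and using $\De p=0$ gives
\[
0=\De p=k(k-1)\,Y_k+(N-1)k\,Y_k+\De_\tau Y_k \quad\ton\sss,
\]
which rearranges to $-\De_\tau Y_k=k(k+N-2)\,Y_k=\la_k\,Y_k$, establishing \eqref{eq sphar}. For the orthogonality of spherical harmonics of distinct degrees I would use the self-adjointness of $\De_\tau$ furnished by the tangential integration-by-parts formula of Proposition \ref{tg int by parts}: for $Y_k\in\YY_k(\sss)$ and $Y_m\in\YY_m(\sss)$ it yields
\[
\la_k\int_{\sss}Y_kY_m=\int_{\sss}\grt Y_k\cdot\grt Y_m=\la_m\int_{\sss}Y_kY_m,
\]
so that $(\la_k-\la_m)\int_{\sss}Y_kY_m=0$. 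Since $k\mapsto\la_k=k(k+N-2)$ is strictly increasing for $k\ge0$, the eigenvalues are distinct whenever $k\ne m$, forcing $\int_{\sss}Y_kY_m=0$ and hence the claimed $L^2(\sss)$-orthogonality.
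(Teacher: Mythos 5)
Your proposal is correct and follows essentially the same route as the paper: both apply the decomposition of Proposition \ref{decomp lapl} to the degree-$k$ homogeneous harmonic extension of $Y_k$ (using $H=N-1$ on $\sss$) to obtain $0=k(k-1)Y_k+(N-1)kY_k+\De_\tau Y_k$, and both derive orthogonality from the tangential integration by parts of Proposition \ref{tg int by parts} together with $\la_j\ne\la_k$ for $j\ne k$. The only cosmetic difference is that you justify $\dn p=kY_k$ and $\pa_{nn}p=k(k-1)Y_k$ explicitly via Euler's identity, a computation the paper leaves implicit in the radial form $|x|^kY_k(x/|x|)$.
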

\begin{proof}
Take an arbitrary $Y_k\in\YY_k(\SS^{N-1})$. By \eqref{ext by hom} we know that the extension $H_k(x):=|x|^k Y_k(x/|x|)$ is a harmonic function. Therefore, by Proposition \ref{decomp lapl} we can write
\[
0=\De H_k = k(k-1) Y_k + (N-1)k Y_k + \De_\tau Y_k\quad \ton \SS^{N-1}. 
\]
Rearranging the terms yields $-\De_\tau Y_k= k(k+N-2)Y_k$ on $\sss$.
Orthogonality will be proved in a classical way. Let $Y_j\in\YY_j(\SS^{N-1})$ and $Y_k\in\YY_k(\SS^{N-1})$ be two spherical harmonics corresponding to different indices $j\ne k$. By tangential integration by parts (Proposition \ref{tg int by parts}), we have
\[
\int_{\SS^{N-1}}\grt Y_j\cdot \grt Y_k = -\int_\sss Y_k \De_\tau Y_j = \la_j \int_\sss Y_j Y_k.
\]
Inverting the roles of $Y_j$ and $Y_k$ we get $\int_\sss \grt Y_j \cdot \grt Y_k= \la_k \int_\sss Y_j Y_k$. 
Since, by assumption, $\la_j\ne\la_k$, then $\int_\sss Y_j Y_k$ must vanish.
\end{proof}
We have the following result.
\begin{theorem}\label{cos}

The spherical harmonics $\{Y_{k,i}\}$ (where $k\in\{0,1,\dots\}$ and $i\in\{1,\dots,d_k\}$ for every $k$) form a complete orthonormal system in $L^2(\SS^{N-1})$.
\end{theorem}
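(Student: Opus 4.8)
The plan is to prove orthonormality and completeness separately, the former being essentially already in hand. For \emph{orthonormality}, spherical harmonics of distinct degrees are mutually orthogonal in $L^2(\SS^{N-1})$ by Proposition \ref{prop sphar}, while within a fixed degree $k$ the family $\{Y_{k,i}\}_{i=1}^{d_k}$ was chosen precisely as an orthonormal basis of $\YY_k(\SS^{N-1})$. Combining these two facts shows that the whole family $\{Y_{k,i}\}$, indexed by $k\ge0$ and $1\le i\le d_k$, is an orthonormal system. Thus the only real content is \emph{completeness}, i.e. density of the span of the $Y_{k,i}$ in $L^2(\SS^{N-1})$.

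To establish density, the first key step is to identify the linear span of all spherical harmonics with the space of restrictions to $\SS^{N-1}$ of arbitrary polynomials. One inclusion is immediate, since each $Y_{k,i}$ is the restriction of a polynomial. For the reverse inclusion I would invoke the harmonic decomposition, Lemma \ref{harm dec}: given $p\in\PP_k(\rn)$, write $p=h_k+|x|^2 h_{k-2}+\dots+|x|^{2m}h_{k-2m}$ with $h_j\in\HH_j(\rn)$ harmonic. Restricting to $\SS^{N-1}$, where $|x|^2=1$, yields $\restr{p}{\SS^{N-1}}=\sum_{j}\restr{h_{k-2j}}{\SS^{N-1}}$, a finite linear combination of elements of the various $\YY_{k-2j}(\SS^{N-1})$, hence of the $Y_{k,i}$. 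Therefore the span of the spherical harmonics is exactly the algebra of polynomial restrictions.

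The second key step is to show that polynomial restrictions are dense in $L^2(\SS^{N-1})$. For this I would apply the Stone--Weierstrass theorem: the restrictions of polynomials form a subalgebra of $C(\SS^{N-1})$ that contains the constants and separates points (the coordinate functions $x_1,\dots,x_N$ already separate points of the sphere), so this subalgebra is uniformly dense in $C(\SS^{N-1})$. Since $\SS^{N-1}$ is compact with finite Hausdorff measure, uniform convergence implies $L^2$ convergence, and $C(\SS^{N-1})$ is itself dense in $L^2(\SS^{N-1})$. Chaining these density statements shows that the span of the $Y_{k,i}$ is dense in $L^2(\SS^{N-1})$, which is precisely completeness of the orthonormal system.

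The main obstacle is conceptual rather than computational: recognizing that ``completeness of the orthonormal system'' is equivalent to ``density of its linear span'' and that the bridge between abstract spherical harmonics and classical approximation theory is supplied by the harmonic decomposition of Lemma \ref{harm dec}. Once that identification is made, the density argument via Stone--Weierstrass is standard, and no delicate estimates are required.
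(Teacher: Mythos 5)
Your proposal is correct and follows essentially the same route as the paper's own proof: orthonormality from Proposition \ref{prop sphar} together with the choice of an orthonormal basis within each degree, and completeness by reducing to density of polynomial restrictions via the harmonic decomposition (Lemma \ref{harm dec}), then invoking Stone--Weierstrass and the density of $\C(\SS^{N-1})$ in $L^2(\SS^{N-1})$. If anything, your write-up is slightly more explicit than the paper's (checking the Stone--Weierstrass hypotheses and using $|x|^2=1$ on the sphere to turn the decomposition into a sum of spherical harmonics), but the underlying argument is identical.
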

\begin{proof}
Orthonormality is clear. We will now prove completeness. By invoking the density of $\C(\SS^{N-1})$ in $L^2(\SS^{N-1})$, it will be enough to show that
\[\C(\SS^{N-1})=\bigoplus_{k\ge0}\YY_k(\SS^{N-1}).\]
Now, by Stone-Weierstrass approximation theorem (see \cite[Theorem 2.2, page 36]{bressan}), we know that for every compact set $K\subset\rn$, continuous functions on $K$ can be approximated by polynomials in the max-norm with arbitrary precision. Finally, since every harmonic polynomial can be written as the sum of homogeneous harmonic polynomials, we conclude by Lemma \ref{harm dec}. 
\end{proof}
\begin{remark}\emph{
In particular, Theorem {\rm\ref{cos}} ensures that every solution of \eqref{eq sphar} can be written as the restriction to the unit sphere of some homogeneous harmonic polynomial. 
}\end{remark}

Theorem \ref{cos} can be applied in the computations regarding perturbations of the ball. 
For instance, let $\Phi=t h$ be an Hadamard perturbation acting on the unit ball $B_1$. Then, spherical harmonics form the ``right'' basis to work with. The function $h\cdot n:\sss\to \RR$ can be decomposed as $\sum_{k,i}\al_{k,i}Y_{k,i}$ for $\{\al_{k,i}\}_{k,i}\subset \RR$. This allows us to study the Hadamard perturbations generated by each spherical harmonic in the basis one by one and then recover the general case by (bi)linearity.
\begin{figure}[h]
\centering
\includegraphics[width=0.7\linewidth,center]{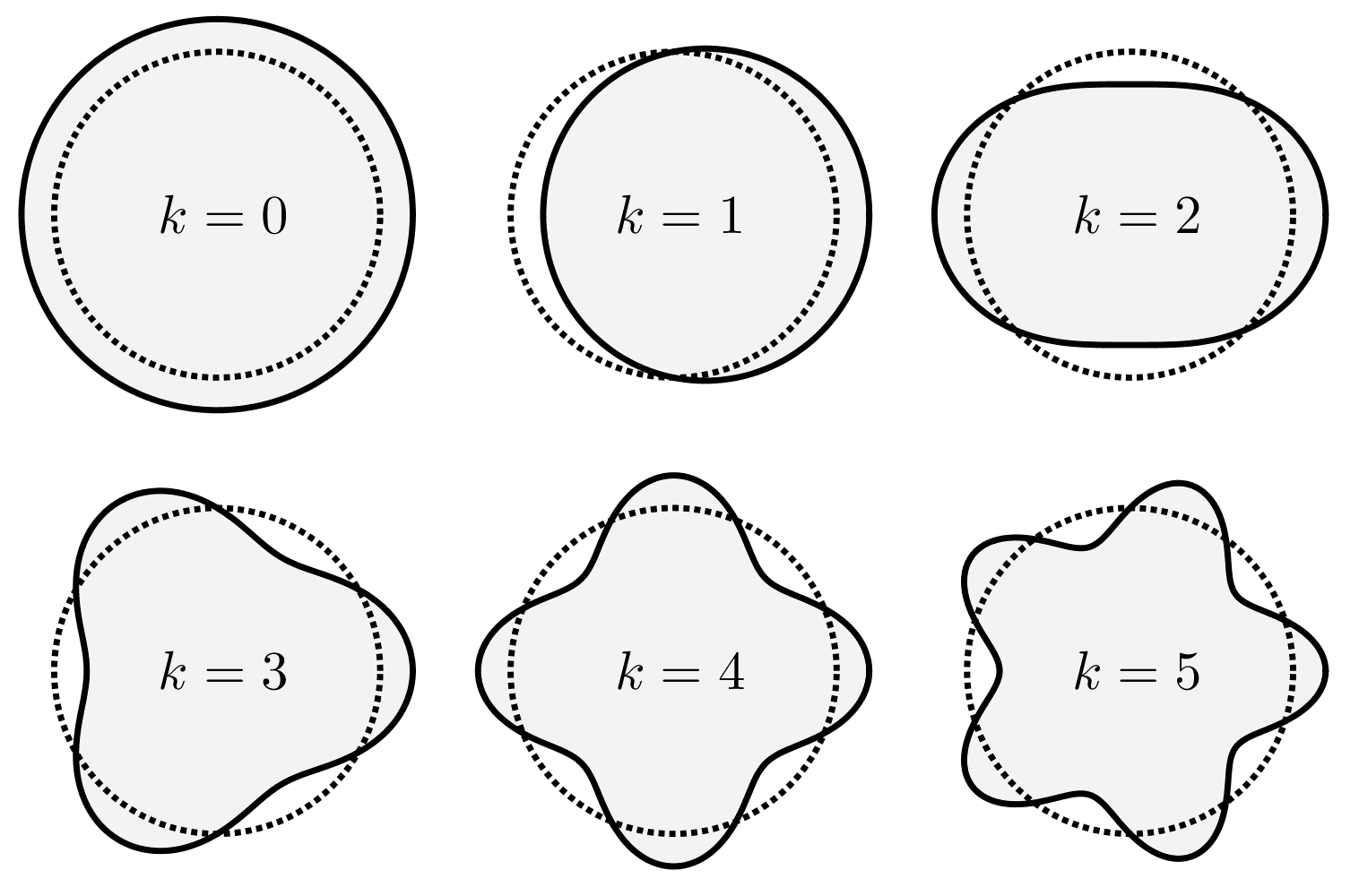}
\caption{How perturbations of the unit ball generated by spherical harmonics of various degree look (in two dimensions).
} 
\label{six}
\end{figure}
\begin{remark}\emph{\label{lastrmk}
A particular advantage brought by this approach is the following: geometrical constraints often assume an elegant form when rephrased using spherical harmonics. As a matter of fact (see Figure {\rm\ref{six}}), Hadamard perturbations generated by spherical harmonics of degree $k\ge0$ satisfy the first order volume preserving condition \eqref{1st} for all $k\ne 0$, while the first order barycenter preserving condition \eqref{bar 1st} is satisfied for all $k\ne 1$. The statement above follows immediately from the orthogonality relations among spherical harmonics proved in Proposition {\rm\ref{prop sphar}}. 
}\end{remark}
\end{appendices}

\end{document}